\newif\ifpersonal

\personaltrue 

\ifpersonal
\newcommand*{\personal}[1]{\tcolor[rgb]{0,0,1}{(Personal: #1)}}
\newcommand*{\todo}[1]{\textcolor{red}{(Todo: #1)}}
\else
\newcommand*{\personal}[1]{\ignorespaces}
\newcommand*{\todo}[1]{\ignorespaces}
\fi

\documentclass[oneside, hidelinks, 12pt, a4paper,reqno]{amsart}
\usepackage{extsizes}
\usepackage[toc,page]{appendix}
\usepackage[foot]{amsaddr}
\usepackage{etoolbox}

\makeatletter
\patchcmd{\@setaddresses}{\indent}{\noindent}{}{}
\patchcmd{\@setaddresses}{\indent}{\noindent}{}{}
\patchcmd{\@setaddresses}{\indent}{\noindent}{}{}
\patchcmd{\@setaddresses}{\indent}{\noindent}{}{}
\makeatother
\usepackage{amsthm,mathtools,bm,tensor}
\newcommand*{\boldone}{\text{\usefont{U}{bbold}{m}{n}1}}
\usepackage[T1]{fontenc}
\DeclareFontFamily{U}{min}{}
\DeclareFontShape{U}{min}{m}{n}{<-> dmjhira}{}
\usepackage[bitstream-charter]{mathdesign}
\usepackage{XCharter}
\let\mathcal\undefined
\DeclareMathAlphabet{\mathcal}{U}{dutchcal}{m}{n}
\usepackage{enumerate,comment,braket,xspace,tikz-cd,csquotes}
\usepackage[driver=dvips,centering,vscale=0.7,hscale=0.8]{geometry}
\usepackage{url}
\usepackage{microtype}
\usepackage[greek.polutoniko,english]{babel}

\usepackage[
  style=authoryear,      
  backend=biber,         
  sorting=nyt,
  giveninits=true, 
  sorting=nyt,citestyle=ieee-alphabetic, bibstyle=alphabetic,texencoding=ascii,bibencoding=utf8, 
  maxnames=5, minnames=3, maxbibnames=99
]{biblatex}

\DeclareNameAlias{author}{family-given}
\DeclareNameAlias{editor}{family-given}

\usepackage{setspace}
\usepackage{enumitem}
\usepackage[cal=dutchcal, scr=boondoxo]{mathalfa}
\usepackage{listings}
\usepackage{colordvi}
\usepackage{tikz,pgf}
\usepackage[normalem]{ulem}
\usetikzlibrary{arrows,decorations.pathreplacing,backgrounds,positioning,fit,matrix,calc}
\tikzset{curve/.style={settings={#1},to path={(\tikztostart)
			.. controls ($(\tikztostart)!\pv{pos}!(\tikztotarget)!\pv{height}!270:(\tikztotarget)$)
			and ($(\tikztostart)!1-\pv{pos}!(\tikztotarget)!\pv{height}!270:(\tikztotarget)$)
			.. (\tikztotarget)\tikztonodes}},
	settings/.code={\tikzset{quiver/.cd,#1}
		\def\pv##1{\pgfkeysvalueof{/tikz/quiver/##1}}},
	quiver/.cd,pos/.initial=0.35,height/.initial=0}
\tikzset{tail reversed/.code={\pgfsetarrowsstart{tikzcd to}}}
\tikzset{2tail/.code={\pgfsetarrowsstart{Implies[reversed]}}}
\tikzset{2tail reversed/.code={\pgfsetarrowsstart{Implies}}}
\usepackage{epsfig}
\usepackage{epstopdf}
\usepackage{tikz-cd}
\usepackage{fancyhdr}
\usepackage{graphicx}
\usepackage{fullpage}

\usepackage{comment}
\usepackage{color}
\usepackage[totoc]{idxlayout}
\usepackage[lastexercise]{exercise}
\usepackage{xcolor}
\usepackage{breakcites}
\newcommand{\bDelta}{\bm{\Delta}} 

\makeatletter
\def\@tocline#1#2#3#4#5#6#7{\relax
	\ifnum #1>\c@tocdepth 
	\else
	\par \addpenalty\@secpenalty\addvspace{#2}%
	\begingroup \hyphenpenalty\@M
	\@ifempty{#4}{%
		\@tempdima\csname r@tocindent\number#1\endcsname\relax
	}{%
		\@tempdima#4\relax
	}%
	\parindent\z@ \leftskip#3\relax \advance\leftskip\@tempdima\relax
	\rightskip\@pnumwidth plus4em \parfillskip-\@pnumwidth
	#5\leavevmode\hskip-\@tempdima
	\ifcase #1
	\or\or \hskip 1em \or \hskip 2em \else \hskip 3em \fi%
	#6\nobreak\relax
	\dotfill\hbox to\@pnumwidth{\@tocpagenum{#7}}\par
	\nobreak
	\endgroup
	\fi}
\makeatother

\makeatletter
\renewcommand\paragraph{\@startsection{paragraph}{5}{\z@}%
  {3.25ex \@plus1ex \@minus.2ex}%
  {-1em}%
  {\normalfont\normalsize\bfseries}}
\renewcommand\subparagraph{\@startsection{subparagraph}{6}{\parindent}%
  {3.25ex \@plus1ex \@minus .2ex}%
  {-1em}%
  {\normalfont\normalsize\bfseries}}
\def\toclevel@subsubsubsection{4}
\def\toclevel@paragraph{5}
\def\toclevel@paragraph{6}
\def\l@subsubsubsection{\@dottedtocline{4}{7em}{4em}}
\def\l@paragraph{\@dottedtocline{5}{10em}{5em}}
\def\l@subparagraph{\@dottedtocline{6}{14em}{6em}}
\makeatother

\usepackage{marginnote}

\usepackage{lipsum}
\usepackage{fancyhdr}
\providecommand{\abstract}{}
\usepackage{abstract}
\DeclareGraphicsExtensions{.png,.jpg,.pdf,.mps}
\usepackage{fancyhdr}
\usepackage{lastpage}
\usepackage{multirow}
\usepackage{faktor}

\usepackage{array}
\usepackage{makecell}

\newcommand{\stackspace}{1.7}
\newcommand{\stack}[2][1cm]{\;\tikz[baseline, yshift=.65ex]%
	{\foreach \k [evaluate=\k as \r using (.5*#2+.5-\k)*\stackspace] in {1,...,#2}{%
			\ifodd\k{\draw[->](0,\r pt)--(#1,\r pt);}%
			\else{\draw[<-](0,\r pt)--(#1,\r pt);}\fi
	}}\;}
 \newcommand{\stackrev}[2][1cm]{\;\tikz[baseline, yshift=.65ex]%
	{\foreach \k [evaluate=\k as \r using (.5*#2+.5-\k)*\stackspace] in {1,...,#2}{%
			\ifodd\k{\draw[<-](0,\r pt)--(#1,\r pt);}%
			\else{\draw[->](0,\r pt)--(#1,\r pt);}\fi
	}}\;}

\tikzcdset{
  diagrams={>={Straight Barb[scale=0.5]}}
}
\tikzset{
  altstackar/.style={decorate, decoration={show path construction,
    lineto code={
      \path (\tikzinputsegmentfirst); \pgfgetlastxy{\xstart}{\ystart}
      \path (\tikzinputsegmentlast); \pgfgetlastxy{\xend}{\yend}
      \path ($(0,0)!1.5pt!(\ystart-\yend,\xend-\xstart)$); \pgfgetlastxy{\xperp}{\yperp}
      \foreach \n[evaluate=\n as \k using .5*#1-\n+.5] in {1,...,#1}{
        \ifodd\n{\draw[->, shorten <=2pt, shift={($\k*(\xperp,\yperp)$)}](\xstart,\ystart)--(\xend,\yend);}
        \else{\draw[<-, shorten >=2pt, shift={($\k*(\xperp,\yperp)$)}](\xstart,\ystart)--(\xend,\yend);}\fi
      }
    }
  }}, altstackar/.default={1}
}

\usepackage{pict2e}

\makeatletter
\newcommand{\adjunction}[4]{%
  #1\colon #2%
  \mathrel{\vcenter{%
    \offinterlineskip\m@th
    \ialign{%
      \hfil$##$\hfil\cr
      \longrightharpoonup\cr
      \noalign{\kern-.3ex}
      \smallbot\cr
      \longleftharpoondown\cr
    }%
  }}%
  #3 \noloc #4%
}
\newcommand{\longrightharpoonup}{\relbar\joinrel\rightharpoonup}
\newcommand{\longleftharpoondown}{\leftharpoondown\joinrel\relbar}
\newcommand\noloc{%
  \nobreak
  \mspace{6mu plus 1mu}
  {:}
  \nonscript\mkern-\thinmuskip
  \mathpunct{}
  \mspace{2mu}
}
\newcommand{\smallbot}{%
  \begingroup\setlength\unitlength{.15em}%
  \begin{picture}(1,1)
  \roundcap
  \polyline(0,0)(1,0)
  \polyline(0.5,0)(0.5,1)
  \end{picture}%
  \endgroup
}
\makeatother

\usepackage{hyperref}
\hypersetup{
	colorlinks=true,
	linkcolor=red,
        linktoc=page,
	anchorcolor=black,
	filecolor=blue,
	citecolor = blue, 
	menucolor=., 
	urlcolor=cyan,
	breaklinks=true,
    unicode=true,
}
\usepackage[capitalize]{cleveref}
\usepackage{aliascnt}

\theoremstyle{plain}
\newtheorem{theorem}{Theorem}[section]

\newaliascnt{conjecture}{theorem}
\newtheorem{conjecture}[conjecture]{Conjecture}
\aliascntresetthe{conjecture}

\theoremstyle{definition}
\newaliascnt{definition}{theorem}
\newtheorem{definition}[definition]{Definition}
\aliascntresetthe{definition}

\newaliascnt{example}{theorem}

\aliascntresetthe{example}

\newaliascnt{remark}{theorem}
\newtheorem{remark}[remark]{Remark}
\aliascntresetthe{remark}

\newaliascnt{construction}{theorem}
\newtheorem{construction}[construction]{Construction}
\aliascntresetthe{construction}

\theoremstyle{plain}
\newaliascnt{lemma}{theorem}
\newtheorem{lemma}[lemma]{Lemma}
\aliascntresetthe{lemma}

\newaliascnt{corollary}{theorem}
\newtheorem{corollary}[corollary]{Corollary}
\aliascntresetthe{corollary}

\newaliascnt{proposition}{theorem}
\newtheorem{proposition}[proposition]{Proposition}
\aliascntresetthe{proposition}

\newaliascnt{assumptions}{theorem}
\newtheorem{assumptions}[assumptions]{Assumptions}
\aliascntresetthe{assumptions}

\newaliascnt{setup}{theorem}
\newtheorem{setup}[setup]{Setup}
\aliascntresetthe{setup}

\Crefname{theorem}{Theorem}{Theorems}
\Crefname{conjecture}{Conjecture}{Conjectures}
\Crefname{definition}{Definition}{Definitions}
\Crefname{example}{Example}{Examples}
\Crefname{remark}{Remark}{Remarks}
\Crefname{construction}{Construction}{Constructions}
\Crefname{lemma}{Lemma}{Lemmas}
\Crefname{corollary}{Corollary}{Corollaries}
\Crefname{proposition}{Proposition}{Propositions}
\Crefname{assumptions}{Assumption}{Assumptions}

 \newcommand{\Z}{\mathbb Z}

 \newcommand{\rL}{\mathrm L}

 \newcommand{\cA}{\mathscr A}
 
 \newcommand{\cC}{\mathscr C}
 \newcommand{\cD}{\mathscr D}
 
 \newcommand{\cF}{\mathscr F}
 
 \newcommand{\cG}{\mathscr G}

 \newcommand{\cM}{\mathscr M}
 
 \newcommand{\cO}{\mathscr O}
 \newcommand{\cP}{\mathscr P}

 \newcommand{\cV}{\mathscr V}
 
 \newcommand{\cX}{\mathscr X}

\newcommand{\bbE}{\mathbb E}

\newcommand{\Sch}{\mathrm{Sch}}

\newcommand{\Sp}{\mathrm{Sp}}
\newcommand{\dSp}{\mathrm{dSp}}
\newcommand{\Spec}{\mathrm{Spec}}

\newcommand{\Spa}{\mathrm{Spa}}
\newcommand{\Spk}{\mathrm{Sp}(k)}

\newcommand{\Perf}{\mathrm{Perf}}
\newcommand{\cat}{$\infty$-category }
\newcommand{\cates}{$\infty$-categories }
\newcommand{\catperf}{\mathrm{Cat}^{\Perf}}

\newcommand{\LPr}{\mathrm{Pr}^\rL}
\newcommand{\LPrst}{\mathrm{P}\mathrm{r}^\rL_{\mathrm{st}}}

\newcommand{\Nuc}{\mathrm{Nuc}}
\newcommand{\Qcoh}{\mathrm{QCoh}}
\newcommand{\Fun}{\mathrm{Fun}}
\newcommand{\Hom}{\mathrm{Hom}}
\newcommand{\Mod}{\mathrm{Mod}}
\newcommand{\RMod}{\mathrm{RMod}}
\newcommand{\LMod}{\mathrm{LMod}}

\DeclareMathOperator*{\colim}{colim}
\newcommand{\Lim}{\mathrm{lim}}

\newcommand{\LPrV}{\mathrm{Pr}^{\mathrm{L}}_{\cV}}
\newcommand{\coev}{\mathrm{coev}}
\newcommand{\ev}{\mathrm{ev}}
\newcommand{\Map}{\mathrm{Map}}
\newcommand{\map}{\mathrm{map}}

\newcommand{\Ani}{\mathrm{Ani}}
\newcommand{\otimesV}{\otimes_{\cV}}

\newcommand{\cotimes}{\widehat{\otimes}}

\newcommand{\Solid}{\blacksquare}

\DeclarePairedDelimiter{\tate}{\langle}{\rangle}

\usepackage{bbm}
\usepackage{tensor}

\newcommand{\Addresses}{{
  \bigskip
  \footnotesize

  Matteo Montagnani, \textsc{SISSA, Via Bonomea 265, 34136 Trieste TS, Italy}\par\nopagebreak
  \textit{E-mail address}: \href{mailto:mmontagn@sissa.it}{\texttt{mmontagn@sissa.it}}

  \medskip

}}

\usepackage{doc}

\title{Smooth categories in a 6 functor formalism and compact generation for nuclear categories in analytic geometry}

\author{Matteo Montagnani}

\begin{document}

\maketitle

\section*{Abstract}
In this paper, we study the notion of smooth $\infty$-categories within the framework of a six-functor formalism. By leveraging the theory of condensed mathematics and analytic stacks, we apply these results to demonstrate that a rigid analytic variety is smooth if and only if its associated category of nuclear sheaves is smooth. Furthermore, we relate the compact generation of the category of nuclear sheaves to the algebraization of the rigid analytic variety; these results are then employed to obtain an example of a non atomically generated but internally smooth category.

\tableofcontents

\section{Introduction}
Over the last few decades, algebraic geometry has undergone a profound paradigm shift, evolving toward the intrinsic analysis of varieties via their associated categories. In the modern development of non-commutative algebraic geometry, highly structured categories (such as stable \cates) are treated as geometric spaces. Within this landscape, the precise translation of geometric properties, such as a variety being smooth or proper, into purely categorical definitions over $\Qcoh(X)$ or $\Perf(X)$ has become a foundational tool. A natural ambition is to transport this powerful categorical machinery into the realm of (rigid) analytic geometry.

Another major direction is the comparison between algebraic and analytic spaces, as illustrated by GAGA-style theorems and the algebraization problem: understanding the precise conditions under which a rigid analytic variety X arises as the analytification of a scheme.

However, attempting to reconcile these algebraic and analytic perspectives at the categorical level reveals a fundamental technical obstacle. As explicitly formalized in \Cref{def: 0 introduciton}, the standard notion of smooth and proper \cates crucially relies on the tensor product of stable presentable \cates introduced in \cite{HA}. This tensor product is intrinsically an ``algebraic'' operation and behaves poorly in the analytic setting. For example, the categorical K\"unneth formula fails for Tate algebras over a non-Archimedean field $k$, in the sense that
\begin{equation}\label{eq: kunnet intro}
\Perf(k\langle t \rangle) \otimes_k \Perf(k \langle u \rangle) \neq \Perf(k\langle t,u \rangle).     
\end{equation}

Consequently, the of smooth and proper \cat is essentially algebraic in nature and ill-suited for the analytic framework.

Tate algebras are the fundamental building blocks of rigid analytic geometry. They are utilized to construct rigid analytic varieties in much the same way that affine schemes are used to construct schemes in algebraic geometry (see \cite{bosch2014lectures} for a comprehensive introduction). Throughout this paper, we consider rigid analytic varieties over a complete non-Archimedean field $k$ of characteristic $0$, and we assume all such varieties are locally of finite type. 

Following \cite[Section 5.4]{bosch2014lectures}, it is possible to define the \emph{analytification functor} $(-)^{\mathrm{an}}$ from the category of schemes locally of finite type over $k$ to the category of rigid analytic varieties over $k$. The characterization of the essential image of this functor, obtained in \cite{montagnani2025algebraizationrigidanalyticvarieties}, is a clear manifestation of the fact that the notion of a smooth \cat in \Cref{def: 0 introduciton} is intrinsically algebraic. Indeed, the following theorem was established in \emph{loc.\ cit.}

\begin{theorem}[{\cite[Theorem 3.1]{montagnani2025algebraizationrigidanalyticvarieties}}]\label{thm: intro algebraization via perf}
Let $X$ be a smooth and proper, quasi-compact, and separated rigid analytic variety over $k$. Then $\Perf(X)$ is smooth and proper as a $k$-linear \cat if and only if $X$ is algebraizable (i.e., there exists an algebraic space $Y$ such that $Y^{\mathrm{an}} \simeq X$).
\end{theorem}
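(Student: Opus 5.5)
One direction is comparatively formal: if $X \simeq Y^{\mathrm{an}}$ for an algebraic space $Y$, then we first show $Y$ is itself smooth and proper over $k$, and then use GAGA to identify $\Perf(X)$ with $\Perf(Y)$. For the first step we use that properness of $X$ forces $Y$ to be proper: analytification detects properness for algebraic spaces locally of finite type, via the valuative criterion and compactness in Köpf's and Conrad's relative GAGA. Smoothness of $Y$ follows because the completed local rings of $Y$ and $X$ agree at closed points.

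With $Y$ proper, GAGA for coherent sheaves on proper algebraic spaces gives an equivalence $D^b_{coh}(Y)\simeq D^b_{coh}(X)$. This equivalence is symmetric monoidal and preserves perfect objects, so $\Perf(Y)\simeq \Perf(X)$. Finally, for a smooth proper algebraic space over a field, $\Perf(Y)$ is smooth and proper as a $k$-linear category; this is standard, via Toën or Lunts for schemes and via the Bondal–Van den Bergh / Ben-Zvi–Francis–Nadler Künneth formalism for qcqs algebraic spaces.

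The converse is the real content. Assume $\Perf(X)$ is smooth and proper over $k$. The plan is to realize $X$ inside a moduli space of objects of a finite-type dg category, in the spirit of Toën–Vaquié.

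First, the Toën–Vaquié moduli of objects $\mathcal M_{\Perf(X)}$ is a locally geometric derived stack locally of finite presentation over $k$, because $\Perf(X)$ is smooth and proper, hence of finite type. Second, we construct a map $X \to (\mathcal M_{\Perf(X)})^{\mathrm{an}}$ sending a point $x$ to the skyscraper $k(x)$. More precisely, for an affinoid $S$ we send $S\to X$ to the graph-pushforward, which is an $S$-family of perfect objects; here we use that $X$ is smooth, so skyscrapers are perfect. Making "families over affinoids" compare with analytifications of $\mathcal M$ requires an analytic version of the moduli, and we would invoke Porta–Yu's representability of the analytic moduli of objects of a smooth proper category together with their comparison $\mathcal M^{\mathrm{an}}\simeq \mathcal M^{\mathrm{an}}_{\Perf(X)}$.

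Third, the point-like objects $k(x)$ form an open substack after rigidifying by $\mathbb G_m$. Their deformation theory is $\mathrm{Ext}^*(k(x),k(x))=\wedge^* T_x$, and the unobstructedness of point objects on smooth $X$ lets us show the map is an open immersion onto a union of components of the underived truncation. Then $X$ is the analytification of an open algebraic subspace $Y$ of the rigidified moduli. Properness and quasi-compactness of $X$ ensure $Y$ is quasi-compact and separated. Separatedness of $X$ is what guarantees the rigidified stack is an algebraic space near these points, since the only automorphisms are scalars.

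I expect the main obstacle to be the third step: proving that the analytic map is an open immersion. This means showing that any family of point-like objects over an analytic base is, locally, the skyscraper of a section of $X$. I would first attempt this with a support argument. A perfect complex on $X\times S$ that is fiberwise a skyscraper has support finite over $S$ of degree one, so it is $\Gamma_*$ of a line bundle by the rigid analytic version of the Mukai/Bridgeland argument for points. Combined with representability, this gives the desired algebraization.
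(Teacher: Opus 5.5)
The paper does not prove this statement. It imports it from \cite[Theorem 3.1]{montagnani2025algebraizationrigidanalyticvarieties}, the rigid counterpart of \cite{toen2007algebraizationcomplexanalyticvarieties}, and your outline follows the To\"en--Vaqui\'e strategy. Your argument for ``algebraizable $\Rightarrow$ $\Perf(X)$ smooth and proper'' is fine: properness and smoothness descend along analytification, and GAGA gives $\Perf(Y)\simeq\Perf(X)$. The converse has two genuine gaps.

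\textbf{The map $X\to\mathcal M_T^{\mathrm{an}}$, with $T=\Perf(X)$.} For an affinoid $S=\Sp(B)$, your graph construction produces an object of $\Perf(X\widehat{\times}S)$. A $B$-point of $\mathcal M_T$, and after analytic descent an $S$-point of $\mathcal M_T^{\mathrm{an}}$, is instead an object of $T\otimes_k\Perf(B)$. These categories differ in general; this is exactly the failure of K\"unneth for $\Perf$ that the paper stresses, $\Perf(k\langle t\rangle)\otimes_k\Perf(k\langle u\rangle)\neq\Perf(k\langle t,u\rangle)$. Representability of the analytic moduli $S\mapsto\Perf(X\widehat{\times}S)$ does not by itself identify it with $\mathcal M_T^{\mathrm{an}}$. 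Such an identification cannot be taken off the shelf for a non-algebraic $X$, so you must prove it from the hypothesis. One route:
\begin{enumerate}
\item $\mathrm{Ind}(T)$ is smooth and compactly generated, so it has a single compact generator $G$, and $E=\mathrm{End}(G)$ is smooth and proper.
\item Since $E$ is perfect over $k$, you have $E\otimes_kB=E\widehat{\otimes}_kB$.
\item Show that $\mathrm{pr}_1^{\ast}G$ generates $\Perf(X\widehat{\times}S)$ with endomorphism algebra $E\otimes_kB$. This uses Kiehl finiteness and base change for proper $X$; smoothness of $E$, to see that $\mathrm{RHom}(\mathrm{pr}_1^{\ast}G,F)$, being perfect over $B$, is perfect over $E\otimes_kB$; and a fibrewise Nakayama argument for conservativity.
\item Conclude $\mathcal M_T^{\mathrm{an}}(S)\simeq\Perf(E\otimes_kB)^{\simeq}$ by descent.
\end{enumerate}
This is where smoothness and properness of $T$ are really used, not only to make $\mathcal M_T$ locally of finite presentation.

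\textbf{The passage from an open immersion to algebraizability.} This gap is more serious. Let $Z$ be the rigidified moduli of simple objects. It is an algebraic space near $[k(x)]$ because $\mathrm{End}(k(x))=k$ and negative self-Exts vanish, not because $X$ is separated.

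The derived map is also not \'etale at $k(x)$: $\mathrm{Ext}^2(k(x),k(x))=\wedge^2T_x\neq0$ is an obstruction space. So the open-immersion claim must be made on truncations, using $\dim\mathrm{Ext}^1(k(x),k(x))=\dim X$ together with smoothness of $X$.

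Even granting an open immersion $X\hookrightarrow Z^{\mathrm{an}}$, an admissible open of $Z^{\mathrm{an}}$ is in general not the analytification of a Zariski open of $Z$; the unit disc in $(\mathbb A^1)^{\mathrm{an}}$ is an example. Your assertion that the image is a union of components would repair this, but you give no reason for the image to be closed. The argument ``$X$ proper $\Rightarrow$ image closed'' needs $Z$ separated, and moduli of simple complexes are typically non-separated. Nothing in your outline rules out a family of skyscrapers acquiring a simple limit that is not a skyscraper.

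You need one of the following:
\begin{itemize}
\item a direct proof that the skyscraper locus is closed in $Z^{\mathrm{an}}$; or
\item an extra algebraization input turning ``proper, and open in the analytification of a finite-type algebraic space'' into ``analytification of an algebraic space'', for instance a Moishezon-type criterion.
\end{itemize}
As written, the decisive conclusion $X\simeq Y^{\mathrm{an}}$ is asserted rather than proved.
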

See also \cite{toen2007algebraizationcomplexanalyticvarieties} for a closely related theorem in complex analytic geometry.

The objective of this paper is threefold. First, based on the theory of \emph{condensed mathematics} and \emph{analytic stacks} developed by Clausen--Scholze, we introduce a non-commutative criterion to detect the smoothness of rigid analytic varieties; in particular, we obtain a notion of non-commutative smoothness that is well-defined in the analytic setting. Second, we introduce a non-commutative criterion to detect their algebraizability. Finally, we apply these results to construct counterexamples to the atomic generation of smooth enriched categories.


\subsection{Categorical smoothness in the algebraic setting: Motivations}
The investigation of an algebraic variety $X$ via its abelian category of quasi-coherent sheaves, $\Qcoh(X)$, or its subcategory of coherent sheaves, $\mathrm{Coh}(X)$, perfectly captures its essential geometric properties; classical reconstruction theorems (such as those of Rosenberg \cite{brandenburg2013rosenberg}) even demonstrate that these categories can fully recover the underlying space. Nevertheless, this abelian framework ultimately proved too rigid for broader geometric and categorical manipulations.

To obtain a more flexible and homologically robust framework, the focus naturally shifted toward the associated triangulated categories. Following the seminal work of Bondal and Orlov \cite{bondal2001reconstruction}, it became evident that the bounded derived category of coherent sheaves, $\mathrm{D^{b}Coh}(X)$, encodes a wealth of deep geometric information about $X$. However, classical triangulated categories are known to suffer from technical shortcomings, most notably the lack of functorial cones and the absence of well-behaved limits and colimits.

These technical limitations necessitated a further conceptual leap: lifting these triangulated structures to appropriate homotopical enhancements. By employing \emph{differential graded (dg) categories} or, more broadly, \emph{\cates}, one can retain the higher homotopical data that is inevitably lost upon passing to the standard homotopy category. Consequently, the field has increasingly focused on the \cat of quasi-coherent sheaves $\Qcoh(X)$ and the category of perfect complexes $\Perf(X)$ (the $\infty$-categorical generalization of vector bundles).

Building on the pioneering work of Orlov \cite{Orlov_2016noncommutativeschemes} in the triangulated setting, the subsequent evolution of categorical enhancements paved the way for the modern development of \emph{non-commutative algebraic geometry}. In this framework, highly structured categories (such as dg-categories or stable \cates) are no longer viewed merely as invariants of spaces, but are treated as geometric spaces in their own right. Within this landscape, the notion of a \emph{smooth category} (respectively, a \emph{proper category}) emerges as the natural and necessary generalization of a classical smooth (respectively, proper) algebraic variety.

Formally, following the philosophy introduced by Kontsevich \cite{kontsevich1998triangulated}, a dg-category $\mathscr{C}$ is defined as \emph{smooth} if the diagonal bimodule represented by $\mathscr{C}$ in $\mathscr{C}\otimes \mathscr{C}^{\mathrm{op}}$ is a compact object. In the $\infty$-categorical setting, the definitions of smooth and proper \cates can be rephrased as follows:

\begin{definition}\label{def: 0 introduciton}
 Let $\mathscr{C}$ be a stable presentable \cat, and $\mathscr{V}$ a symmetric monoidal, stable, presentable \cat.
\begin{enumerate}
\item We say that $\mathscr{C}$ is dualizable if there exists a stable presentable \cat $\cC^{\vee}$, together with two functors in $\LPrV$, called the evaluation and coevaluation functors:
\begin{equation*}
\mathrm{ev}: \cC \otimesV \cC^{\vee} \to \cV
\end{equation*}
\begin{equation*}
\mathrm{coev}: \cV \to \cC^{\vee} \otimesV \cC,
\end{equation*}
such that the composite maps
\begin{equation}\label{eq: compatibility 1}
\begin{tikzcd}
\cC && {\cC\otimesV \cC^{\vee} \otimesV \cC} && \cC
\arrow["{\mathrm{id}\boxtimes \mathrm{coev}}", from=1-1, to=1-3]
\arrow["{\mathrm{ev} \boxtimes \mathrm{id}}", from=1-3, to=1-5]
\end{tikzcd} 
\end{equation}
\begin{equation}\label{eq: compatibility 2}
\begin{tikzcd}
{\cC^{\vee}} && {\cC^{\vee} \otimesV \cC \otimesV \cC^{\vee}} && {\cC^{\vee}}
\arrow["{\mathrm{coev} \boxtimes \mathrm{id}}", from=1-1, to=1-3]
\arrow["{\mathrm{id} \boxtimes \mathrm{ev}}", from=1-3, to=1-5]
\end{tikzcd}
\end{equation}
are equivalent to the respective identity functors.
\item We say that $\mathscr{C}$ is \emph{smooth} if the coevaluation map admits a right adjoint that preserves colimits.
\item We say that $\mathscr{C}$ is proper if the evaluation map admits a right adjoint that preserves colimits.
\end{enumerate}
\end{definition}

Similarly, if $\mathscr{C}$ is a small, idempotent complete \cat, we say that $\mathscr{C}$ is smooth (respectively, proper) if its ind-completion is smooth (respectively, proper) as in \Cref{def: 0 introduciton}.

When $\mathscr{C}$ is the \cat of quasi-coherent sheaves associated with an algebraic variety $X$, the condition that $\mathscr{C}$ is smooth (respectively, proper) is strictly equivalent to $X$ being a smooth (respectively, proper) variety in the classical sense (see \cite{lunts2009categoricalresolutionsingularities} and \cite{bondal2002generatorsrepresentabilityfunctorscommutative}). We will provide an alternative proof of these known results in a more general setting in \Cref{Appendix 1}. However, the true power of this definition lies in its applicability to purely non-commutative contexts, decoupling the condition of regularity from the existence of an underlying topological space.


Currently, categorical smoothness is not merely a technical artifact, but a foundational requirement driving several major research programs in algebraic geometry. For instance, following the seminal works of Lunts \cite{lunts2009categoricalresolutionsingularities} and Kuznetsov \cite{kuznetsov2015categorical}, the concept of smooth categories allows for the introduction of \emph{categorical resolutions of singularities}. A categorical resolution consists of a fully faithful embedding of the category of perfect complexes, $\Perf(X)$, into a smooth and proper dg-category.

In the realm of \emph{non-commutative algebraic geometry}, the concept of smooth and proper categories is now a central pillar. It facilitates the extension of fundamental algebraic geometric results to the non-commutative setting, while simultaneously presenting new challenges at the interface of the commutative and non-commutative worlds (see, for example, \cite{perry2019noncommutative} and \cite{beraldo2024proofdelignemilnorconjecture}).

Motivated by the ambition to develop a parallel non-commutative theory for rigid analytic spaces, we will work within the setting of \emph{derived rigid analytic geometry}, a homotopical generalization of the classical framework detailed in \cite{bosch2014lectures}. This theory, developed in \Cref{Section 2} and \Cref{Section 3}, is highly inspired by \cite{mikami2023fppfdescentcondensedanimatedrings}. This approach allows us to establish a six-functor formalism and obtain a categorical K\"unneth formula, in stark contrast to the failure observed in the classical analytic setting (\Cref{eq: kunnet intro}). These are the primary ingredients we will utilize to relate the categorical notion of smoothness to its geometric counterpart in the rigid analytic setting.

We emphasize that, because our methods rely on a ``categorical analytic tensor product'' and the existence of a six-functor formalism, they can be easily translated to other settings. For example, they can be adapted to complex analytic geometry or applied to other six-functor formalisms sharing similar features.

\subsection{Categorical smoothness in the analytic setting: A new framework}

Derived analytic geometry was initially developed by Porta and Porta--Yu (see, for example, \cite{porta2020representability,porta2018derivedcomplexanalyticgeometry, porta2018derived, porta2022non}), utilizing an approach similar to the one employed by Lurie in \cite{SAG}. Another approach to derived analytic geometry, closer in style to the framework developed for derived algebraic geometry in \cite{hag}, has recently been introduced and utilized, for instance, in \cite{benbassat2024perspectivefoundationsderivedanalytic, BAMBOZZI20181865}. However, the setting for derived analytic geometry employed in this paper relies on the recent work of Clausen and Scholze on \emph{condensed mathematics} and their theory of \emph{analytic stacks}, as developed in \cite{analytic, clausen2019lectures, Clausen_Scholze_lectures, complex}. Within this framework, one can study rigid analytic varieties, as demonstrated in \cite{mikami2023fppfdescentcondensedanimatedrings, andreychev2021pseudocoherent}, and even construct fundamental new analytic stacks in rigid geometry, as explained in \cite{camargo2024analyticrhamstackrigid}. One of the powerful features of this approach is that it admits a robust six-functor formalism and a ``categorical completed tensor product''.

Throughout this paper, we work over a non-Archimedean field $k$ of characteristic zero. We adopt the definition of \emph{animated affinoid algebras} introduced in \cite[Definition 2.11]{mikami2023fppfdescentcondensedanimatedrings} (see \Cref{def: derived affinoid}). Derived rigid analytic spaces are then defined by ``gluing animated affinoid algebras along analytic open immersions'' (see \Cref{def: derived rigid variety}). We introduce and study the sheaf of nuclear \cates over derived rigid analytic varieties; in \Cref{Section 3}, we will demonstrate its favorable properties.

The notions of \emph{nuclear} objects and \emph{nuclear subcategories} were introduced in \cite{complex}. These concepts have subsequently been employed---for instance, in conjunction with Efimov's seminal work on the K-theory of dualizable \cates \cite{efimov2024k}---to study the \emph{algebraic K-theory} of rigid analytic varieties \cite{andreychev2023ktheorieadischerraume}. We utilize nuclear \cates in this paper because they are rigid in our setting (see \Cref{prop: Nuc(X) is rigid}). This rigidity, in particular, provides a simple description of $\Nuc(k)$-atomic objects in $\Nuc(X)$ in terms of perfect complexes over $X$ (see \Cref{lem: perfect complexes over rigid varieties}). As explained in the following section, this is the key property that allows us to relate the smoothness of $X$ to the categorical notion of smoothness for $\Nuc(X)$ (see \Cref{thm: main theorem smoothness}). Combined with \Cref{thm: intro algebraization via perf}, this property will be crucial in relating the algebraization of a rigid analytic variety $X$ to the atomic generation of $\Nuc(X)$.

To obtain this result, another fundamental step consists of relating the geometric definition of smoothness for a rigid analytic variety $X$ to a categorical definition. The idea is to work relative to a base $\infty$-category $\mathscr{C}$ (in this case, $\Nuc(k)$), where the tensor product in $\LPr_{\mathscr{C}}$ satisfies a K\"unneth formula for rigid varieties. In our setting, this property is proven in \Cref{prop: kunneth for nuclear moduels}. This motivates us to consider presentable \cates that are linear with respect to the action of a base symmetric monoidal, stable, presentable \cat $\mathscr{V}$. In \Cref{section 1}, we review the necessary theory to work with these objects, primarily following \cite{BenMoshe2024, ramzi2024dualizablepresentableinftycategories,hinich2021yonedalemmaenrichedinfinity, heine2023, Gepner2015}, which provide a rigorous treatment of the subject. Furthermore, we generalize the notion of smooth \cates in this setting via the following definition, which is also considered in \cite{ramzi2024dualizablepresentableinftycategories}.

\begin{definition}
    Let $\cC$ be a presentable and dualizable \cat over $\cV$. We say that $\cC$ is \emph{internally smooth over $\cV$} if the coevaluation map admits a $\mathscr{V}$-linear right adjoint that preserves colimits.
\end{definition}

Within the analytic geometry framework described in this paper, we can now relate the categorical notion of smoothness to its geometric counterpart.

\begin{theorem}[\Cref{thm: main theorem smoothness}]\label{thm: main theomre smoothness intro}
    Let $f \colon X \to \Sp(k)$ be a (classical) rigid analytic variety over $k$. Then $X$ is smooth if and only if the \cat $\Nuc(X)$ is internally smooth in $\LPr_{\Nuc(k)}$.
\end{theorem}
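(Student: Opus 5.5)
The plan is to translate internal smoothness of $\Nuc(X)$ over $\Nuc(k)$ into a geometric statement about the diagonal $\Delta\colon X \to X\times_k X$, using the K\"unneth formula for nuclear modules (\Cref{prop: kunneth for nuclear moduels}) and the rigidity of $\Nuc(X)$ (\Cref{prop: Nuc(X) is rigid}).

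\textbf{Step 1: self-duality.} Since $\Nuc(X)$ is rigid, hence dualizable over $\Nuc(k)$, it is self-dual. The evaluation is $\Nuc(X)\otimes_{\Nuc(k)}\Nuc(X)\simeq \Nuc(X\times_k X)\xrightarrow{\Delta^*} \Nuc(X)\xrightarrow{f_*\text{ or } f_!}\Nuc(k)$, and the coevaluation is $\Nuc(k)\to \Nuc(X\times X)$, $\mathbf 1\mapsto \Delta_*\cO_X$. More precisely, it is $\Delta_*f^*$, or $\Delta_! f^!$ in the six-functor normalization; I will fix whichever normalization the earlier sections use. The first concrete task is to verify the triangle identities via base change and the projection formula, reducing them to the identity $\mathrm{pr}_{2,*}(\mathrm{pr}_1^*M\otimes \Delta_*\cO_X)\simeq M$.

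\textbf{Step 2: rephrasing internal smoothness.} The coevaluation is the $\Nuc(k)$-linear functor $V\mapsto V\otimes \Delta_*\cO_X$. It has a $\Nuc(k)$-linear colimit-preserving right adjoint exactly when the object $\Delta_*\cO_X$ is $\Nuc(k)$-atomic in $\Nuc(X\times X)$. By \Cref{lem: perfect complexes over rigid varieties}, atomic objects in $\Nuc(Y)$ are exactly the perfect complexes on $Y$, applied here with $Y=X\times_k X$. Internal smoothness is therefore equivalent to $\Delta_*\cO_X\in\Perf(X\times_k X)$.

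\textbf{Step 3: geometric criterion.} It remains to show that $X$ is smooth if and only if $\Delta_*\cO_X$ is perfect on $X\times X$. Perfectness is local, so both directions reduce to affinoids $X=\Sp(A)$, with $A\widehat\otimes_k A=A\langle\!\langle\cdot\rangle\!\rangle$ the completed tensor product (noetherian, by the Künneth statement), and $\Delta_*\cO_X$ corresponding to $A$ as an $A\widehat\otimes A$-module.

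For ($\Rightarrow$): if $X$ is smooth, the diagonal is a regular closed immersion of codimension $\dim X$ between smooth spaces. Its ideal is locally generated by a regular sequence, so a Koszul resolution shows $A$ is perfect.

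For ($\Leftarrow$): suppose $A$ is a perfect $A\widehat\otimes A$-module. For each maximal ideal $\mathfrak m\subset A$, base change along $A\to A/\mathfrak m=\kappa$ in the first factor gives $\kappa\otimes^{\mathbb L}_{A\widehat\otimes A}A$. Because the fibre of the first projection is $\Sp(A)_{\kappa}$, this is $\kappa$ viewed as a module over $A\widehat\otimes_k\kappa$, and it is perfect there. Localizing at the point, the residue field of the noetherian local ring $\mathcal O_{X_\kappa,x}$ has finite projective dimension. By Auslander--Buchsbaum--Serre the ring is regular, and since $\mathrm{char}\,k=0$ this gives smoothness of $X$.

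\textbf{Main obstacle.} I expect the difficulty in Step 2, namely identifying $\Nuc(k)$-linear right adjoints of the coevaluation with atomicity of $\Delta_*\cO_X$ in the analytic (condensed) setting, and ensuring the derived fibre computation in Step 3 stays inside nuclear modules and is computed by the classical completed tensor product. The latter requires the derived and classical tensor products to agree for classical affinoids. If that fails, I would fall back on flatness of Tate algebras over $k$ to compare them.
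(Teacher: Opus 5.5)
Your proposal is correct and follows essentially the paper's route. Over the rigid base $\Nuc(k)$, you use the six-functor description $\mathrm{coev}=\Delta_! f^*$ (with $f^*$, not $f^!$) and $\Delta_!=\Delta_*$ to reduce internal smoothness to compactness, equivalently atomicity, equivalently perfectness, of $\Delta_*\cO_X$, and then pass to the affinoid case: the Koszul resolution of the lci diagonal gives one direction, and base change of $\Delta_*\cO_X$ to a residue field plus Serre's criterion gives the other. The only difference is cosmetic: the paper restricts the perfect module $\tilde K$ along the finite map $A\to A\otimes_k\tilde K$ to get the residue field perfect over $A_{\mathfrak m}$ directly, whereas you get regularity of $X_\kappa$ at the rational point and then descend (flat descent plus characteristic $0$), which works equally well.
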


This theorem follows as a consequence of \Cref{cor: non commutative smoothness global case}, which demonstrates that $X$ is smooth if and only if $\Delta_\ast \mathscr{O}_X$ is a perfect complex on $X\widehat{\times}X$, where $\Delta \colon X \to X\widehat{\times}X$ is the diagonal map. These results are obtained by combining the K\"unneth formula (\Cref{prop: kunneth for nuclear moduels}) and the results for smooth \cat in a six-functor formalism developed in \Cref{section 1}. In our setting, the construction of an appropriate six-functor formalism for nuclear \cates is described in \Cref{thm: 6 functor for nuclear modules in rigid geometry}. Specifically, this six-functor formalism is constructed using the theory developed in \cite{mann2022padic6functorformalismrigidanalytic} and revisited in \cite{heyer20246functorformalismssmoothrepresentations}. Furthermore, it is inspired by the formalism described in \cite[Lecture 8]{scholze2025sixfunctorformalisms} within the setting of derived algebraic geometry. We observe that, although this theorem ultimately concerns the study of a classical property of rigid analytic varieties, obtaining the categorical (or non-commutative) characterization of smoothness requires entering the realm of condensed mathematics and the setting of derived rigid analytic geometry described herein. Indeed, these methods are also essential for obtaining the requisite six-functor formalism.

As previously explained, this theorem plays a crucial role in relating the atomic generation of $\Nuc(X)$ to the algebraization of the rigid analytic variety $X$.

\subsection{Applications: atomic generation and algebraization}
Together with the notion of smooth and proper \cates, which are closely related to their geometric counterparts, one of the main features of non-commutative algebraic geometry is the existence of a single compact generator for the \cat of quasi-coherent sheaves on a quasi-compact and quasi-separated scheme $X$ over a field $k$. In particular, there exists a non-commutative $k$-algebra $A$ such that we have an equivalence of \cates 
\[
\Qcoh(X) \simeq \LMod_{A}.
\]

These results have been established, for example, in \cite{bondal2002generatorsrepresentabilityfunctorscommutative} and, within the setting of derived algebraic geometry, in \cite{toen2011derivedazumayaalgebrasgenerators}. 
Moreover, assuming that the variety $X$ is smooth and proper, one can deduce that the algebra $A$ is also smooth and proper. We will demonstrate how to reprove a similar result in \Cref{Appendix 1} using the theory of six-functor formalisms.   

More generally, the existence of a compact generator was proved by To\"en in the setting of dg-categories; the analogous result in the setting of \cates is the following.

\begin{theorem}[{\cite[Proposition 11.3.2.4]{SAG}}]\label{thm: compact generator for cpt generated cates}
    Let $R$ be an $\mathbb{E}_\infty$-ring specturm and $\mathscr{C}$ be a stable, $R$-linear, smooth and compactly generated $\infty$-category. Then $\mathscr{C}$ has a single compact generator. 
\end{theorem}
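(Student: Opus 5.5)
The plan is to use smoothness to extract finitely many compact objects that build the diagonal bimodule, and then show that their direct sum generates $\cC$.

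Write $\cC = \mathrm{Ind}(\cC_0)$ with $\cC_0 = \cC^{\omega}$ small, stable and idempotent complete, and work over $\Mod_R$. Then $\cC$ is dualizable with dual $\cC^\vee \simeq \mathrm{Ind}(\cC_0^{\mathrm{op}})$, and
\[
\cC^\vee \otimes_R \cC \simeq \mathrm{Ind}(\cC_0^{\mathrm{op}} \otimes_R \cC_0) \simeq \Fun_R(\cC_0^{\mathrm{op}}, \cC),
\]
which is compactly generated by the objects $X^\vee \boxtimes Y$ with $X,Y \in \cC_0$. The coevaluation $\Mod_R \to \cC^\vee \otimes_R \cC$ sends $R$ to the identity bimodule $\Delta_{\cC}$, which corresponds to the functor $\cC_0^{\mathrm{op}}\to \cC$, $X \mapsto X$. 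A colimit-preserving functor out of $\Mod_R$ has a colimit-preserving right adjoint exactly when it sends $R$ to a compact object. So smoothness says precisely that $\Delta_{\cC}$ is compact in $\cC^\vee\otimes_R\cC$.

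Since the compact objects of a compactly generated stable category are the thick closure of the generators, $\Delta_{\cC}$ lies in the thick subcategory generated by finitely many objects $X_i^\vee \boxtimes Y_i$, for $i=1,\dots,n$. Set $G = \bigoplus_i Y_i \in \cC_0$. I then consider the functor
\[
\Phi_M \colon \cC^\vee \otimes_R \cC \to \cC, \qquad \text{i.e. } F \mapsto F(M), \quad M \in \cC_0 .
\]
Equivalently, $\Phi_M$ is $\mathrm{ev}(M\boxtimes -)$ applied in the first tensor factor; it is exact and $R$-linear. For the identity bimodule, $\Phi_M(\Delta_{\cC}) \simeq M$. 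For a generator, $\Phi_M(X^\vee\boxtimes Y) \simeq \map_{\cC}(X,M)\otimes_R Y$, where $\map$ is the $\Mod_R$-enriched mapping object.

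Let $\langle Y_i\rangle$ be the localizing subcategory generated by the $Y_i$; it is closed under tensoring with $R$-modules. Exactness of $\Phi_M$, together with its compatibility with retracts, sends the thick closure of the $X_i^\vee\boxtimes Y_i$ into $\langle Y_i\rangle$. Hence $M = \Phi_M(\Delta_{\cC}) \in \langle Y_i \rangle$ for every compact $M$. Since the compact objects generate $\cC$, we get $\langle G\rangle = \cC$. Thus $G$ is a single compact generator: if $\map(G,Z)=0$, then $Z$ is right orthogonal to all of $\cC$, so $Z = 0$.

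The main obstacle is identifying $\Phi_M(\Delta_{\cC})\simeq M$, that is, checking that the "evaluate the bimodule at $M$" functor recovers $M$ from the coevaluation object. I would derive it from the triangle identity \eqref{eq: compatibility 1}: the composite $(\mathrm{ev}\boxtimes\mathrm{id})\circ(\mathrm{id}\boxtimes\mathrm{coev})$ applied to $M$ is exactly $\Phi_M(\Delta_{\cC})$, and the triangle identity says this is $M$. This also sidesteps any explicit model of the bimodule category. The second subtle point is that the compact objects of $\cC^\vee\otimes_R\cC$ are exactly retracts of finite colimits of the $X^\vee\boxtimes Y$; this is standard for $\mathrm{Ind}$ of a tensor product of small categories.
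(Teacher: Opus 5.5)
Your proof is correct, but it takes a different route from the paper. For this exact statement the paper only cites Lurie. The argument it actually writes out is for the enriched generalization \Cref{thm: existence of a single compact generator}, which recovers this statement at $\cV=\Mod_R$.

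There, $\cC\simeq\cP_{\cV}(\cC^{\mathrm{at}})$ is written as the filtered colimit of the subcategories $\widehat{\cC_T}$, generated under colimits and the $\cV$-action by finite sets $T$ of atomic objects; this uses that $\cP_\cV$ preserves colimits. Compactness of $\mathrm{id}_\cC$ in $\Fun^{\mathrm{L}}(\cC,\cC)$ then forces the identity to factor through a single $\widehat{\cC_T}$.

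You instead work directly with the bimodule $\Delta_\cC=\mathrm{coev}(R)$. The explicit compact generators $X^\vee\boxtimes Y$ of $\cC^\vee\otimes_R\cC$, together with the thick-closure description of compact objects, produce finitely many $Y_i$. The evaluation functors $\Phi_M$ and the triangle identity then place every compact $M$ in $\langle Y_i\rangle$.

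What your version buys:
\begin{itemize}
\item It needs only standard facts about compactly generated stable categories.
\item It names the generator explicitly.
\item It avoids presenting $\mathrm{id}_\cC$ as a filtered colimit of endofunctors factoring through the $\widehat{\cC_T}$. Making that step precise requires the colimit-preserving colocalizations $i_T i_T^{\mathrm{R}}$, not the inclusions.
\end{itemize}

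In exchange, you use two features special to $\Mod_R$:
\begin{itemize}
\item Localizing subcategories are automatically stable under the $\Mod_R$-action, because $\Mod_R$ is generated by its unit.
\item $\cC^\vee\otimes_R\cC$ is compactly generated by pure tensors of compact objects, so its compact objects form a thick closure of those tensors.
\end{itemize}
The paper's more formal argument is built to bypass both, so it works over any $\cV$ with compact unit and for $\cV$-atomically generated $\cC$.

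One harmless slip: $\cC^\vee\otimes_R\cC\simeq\Fun^{\mathrm{L}}_R(\cC,\cC)\simeq\Fun^{\mathrm{ex}}_R(\cC_0,\cC)$, not $\Fun_R(\cC_0^{\mathrm{op}},\cC)$, since the identity bimodule corresponds to the covariant inclusion $\cC_0\hookrightarrow\cC$. Because you derive $\Phi_M(\Delta_\cC)\simeq M$ from the triangle identity rather than from this model, nothing depends on it.
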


Without the assumption of compact generation, whether the previous result holds remains an open conjecture, which can be stated as follows: 

\begin{conjecture}[{\cite[Conjecture 0.6]{ramzi2024dualizablepresentableinftycategories}}]\label{conj: congettura Einfty}
    Let $R$ be an $\mathbb{E}_\infty$-ring specturm and $\mathscr{C}$ be a smooth, stable, presentable $R$-linear $\infty$-category. Then $\mathscr{C}$ admits a single compact generator.
\end{conjecture}

This conjecture has been verified in some particular cases in \cite{stefanich2023classification}. However, in the generality presented above, the conjecture remains open to the author's knowledge. 

In \cite{ramzi2024dualizablepresentableinftycategories, ramzi2024locallyrigidinftycategories}, Maxime Ramzi developed the theory of atomic objects and dualizable \cates; see also \cite{efimov2024k, efimov2025localizing} for closely related results and their applications to the theory of localizing invariants. In this context, atomic objects can be viewed as a variant of compact objects suitably adapted to the setting of \emph{enriched \cates}.

Building upon the notion of a $\mathscr{V}$-atomically generated \cat, as detailed in \Cref{def: categorie relativamente generate}, Maxime Ramzi proposed a generalization of the preceding conjecture to the setting of $\mathscr{V}$-enriched \cates.

\begin{conjecture}\label{conj: congettura setting enriched}
    Let $\mathscr{V}$ be a presentable symmetric monoidal \cat and let $\mathscr{C}$ be a $\mathscr{V}$-linear presentable \cat, internally smooth over $\mathscr{V}$. Then $\mathscr{C}$ is atomically generated.
\end{conjecture}

In this paper, we obtain a counterexample to this conjecture by studying the nuclear $\infty$-category of a smooth and proper rigid analytic variety. Indeed, we prove the following result.

\begin{theorem}[{\Cref{thm: algebraization via generator}}]\label{thm: main theorem intro}
Let $p\colon X \to \Sp(k)$ be a smooth and proper, quasi-compact, and separated rigid analytic variety over $\Spk$. Then $X$ is algebraizable if and only if $\Nuc(X)$ admits a single $\Nuc(k)$-atomic (equivalently, compact) generator.
\end{theorem}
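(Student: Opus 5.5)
The argument reduces the statement to \Cref{thm: intro algebraization via perf}: we show that $\Nuc(X)$ has a single $\Nuc(k)$-atomic generator if and only if $\Perf(X)$ is smooth and proper as a $k$-linear \cat. The bridge is the rigidity of $\Nuc(X)$ (\Cref{prop: Nuc(X) is rigid}) together with \Cref{lem: perfect complexes over rigid varieties}, which identifies the $\Nuc(k)$-atomic objects of $\Nuc(X)$ with $\Perf(X)$. Since $\Nuc(k)$ is rigid and compactly generated by $k$, atomic and compact objects agree, which explains the parenthetical ``equivalently, compact''. So the statement becomes: $X$ is algebraizable if and only if $\Nuc(X)$ is generated under colimits and the $\Nuc(k)$-action by a single perfect complex $G$.

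For the direction algebraizable $\Rightarrow$ generator, write $X\simeq Y^{\mathrm{an}}$. By \Cref{thm: intro algebraization via perf}, $\Perf(X)\simeq\Perf(Y)$ is smooth and proper, so it has a classical generator $G$ (Bondal--van den Bergh). Let $G^{\mathrm{an}}$ be its analytification, which is a perfect complex and hence atomic in $\Nuc(X)$. To see that $G^{\mathrm{an}}$ generates, suppose $F\in\Nuc(X)$ satisfies $\underline{\Hom}_{\Nuc(k)}(G^{\mathrm{an}},F)=0$. Because $\Nuc(X)$ is rigid, it is generated by its atomics, so it suffices to produce every perfect complex from $G^{\mathrm{an}}$ by finite colimits, shifts and retracts. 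This holds since $\Perf(X)=\mathrm{thick}\langle G^{\mathrm{an}}\rangle$ by GAGA for proper $Y$. Hence $F=0$.

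For the converse, assume $G$ is a single atomic generator and put $A=\underline{\End}(G)$, an algebra in $\Nuc(k)$. Then $\Nuc(X)\simeq\LMod_A(\Nuc(k))$. Since $G$ is perfect on a proper $X$, $A$ has underlying object a perfect $k$-complex, and in fact an algebra in $\Perf(k)$. By \Cref{thm: main theomre smoothness intro}, $\Nuc(X)$ is internally smooth over $\Nuc(k)$, which translates into $A$ being a smooth algebra: the diagonal bimodule is atomic in $\LMod_{A\otimes A^{\mathrm{op}}}(\Nuc(k))$. Properness of $X$, via \Cref{prop: kunneth for nuclear moduels} and the six-functor formalism (the right adjoint of $\ev$ is $\Delta_*$ composed with pullback along $p$, and $p_*$ preserves perfects), gives that $\Nuc(X)$ is proper over $\Nuc(k)$, so $A$ is proper. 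Restricting to compacts then gives $\Perf(X)\simeq\Perf(A)$ with $A$ a smooth and proper dg $k$-algebra, since smoothness and properness of an algebra in $\Perf(k)$ can be checked in $\Mod_k$. Applying \Cref{thm: intro algebraization via perf} shows that $X$ is algebraizable.

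The main obstacle is this last transfer: passing from internal smoothness of $\LMod_A(\Nuc(k))$ in $\LPr_{\Nuc(k)}$ to ordinary $k$-linear smoothness of $\mathrm{Ind}\,\Perf(A)=\Mod_A$. I would handle it at the level of the algebra. Both conditions say that $A$ is a compact $A\otimes_k A^{\mathrm{op}}$-bimodule. The fully faithful symmetric monoidal inclusion $\Mod_k^{\omega}\subset\Nuc(k)$ preserves these tensor products because $A$ is perfect over $k$, so no completion intervenes. Compactness in bimodules over $\Nuc(k)$ then restricts to compactness in $\Mod_{A\otimes A^{\mathrm{op}}}$, as both are thick closures of the free bimodule.
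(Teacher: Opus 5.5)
The direction from a single atomic generator to algebraizability is essentially the paper's argument: Barr--Beck--Lurie, perfectness of $A$ from properness of $X$, smoothness of $A$ via \Cref{prop: smooth cates vs smooth algebras}, then $\Perf(X)\simeq\Perf(A)$, and no completion in $A\widehat\otimes_k A$ because $A$ is perfect. Your detour through properness of $\Nuc(X)$ is unnecessary and slightly off. Since $A$ is a perfect $k$-complex, it is already dualizable in $\Nuc(k)$, i.e.\ proper. Also, $\ev^{\mathrm{R}}\simeq\Delta_\ast p^{!}$, not $\Delta_\ast$ composed with a pullback.

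The genuine gap is in the other direction, at the step ``because $\Nuc(X)$ is rigid, it is generated by its atomics''. Rigidity gives dualizability, not generation. Already $\Nuc(k)$ is rigid but not compactly generated: its compact objects are the perfect $k$-complexes, and these generate under colimits only the discrete $k$-solid modules, not e.g.\ Banach spaces. So your remark that $\Nuc(k)$ is ``compactly generated by $k$'' is also inaccurate; $k$ generates only under colimits together with the $\Nuc(k)$-action.

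More to the point, by \Cref{rmk: relative generators and generated under colimits and action} and \Cref{lem: perfect complexes over rigid varieties}, the implication ``$\hom^{\Nuc(k)}(P,F)=0$ for all $P\in\Perf(X)$ $\Rightarrow$ $F=0$'' is literally the statement that $\Nuc(X)$ is $\Nuc(k)$-atomically generated. Rigidity cannot give this, because rigidity holds for every quasi-compact semi-separated $X$. If it sufficed, every smooth proper $X$ would have atomically generated $\Nuc(X)$. Then \Cref{thm: main theorem smoothness} and \Cref{thm: existence of a single compact generator} would give a single atomic generator. Your (correct) other direction would then make every smooth proper $X$ algebraizable, contradicting the $p$-adic Hopf surface; this is exactly \Cref{cor: counterxample}.

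Classical GAGA $\Perf(Y)\simeq\Perf(Y^{\mathrm{an}})$ only controls the compact objects and cannot supply generation. The missing input is the nuclear GAGA statement \Cref{cor: GAGA for nuclear categories}, $\Nuc(Y^{\mathrm{an}})\simeq\Qcoh(Y)\otimes_{\Mod_k}\Nuc(k)$. It comes from Wang's relative GAGA theorem for solid sheaves together with descent for nuclear modules on the algebraic realization $Y^{\mathrm{k-alg}}$. With it, write $\Qcoh(Y)\simeq\LMod_B$ via a compact generator. Then $\Nuc(X)\simeq\LMod_B\otimes_{\Mod_k}\Nuc(k)$, which is generated under colimits and the $\Nuc(k)$-action by the single atomic object $B\boxtimes k$. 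This is how the paper concludes.
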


If we assume $X$ to be algebraizable (i.e., $X \simeq Y^{\mathrm{an}}$ for some scheme $Y$), we can prove that $\Nuc(X)$ is $\Nuc(k)$-atomically generated. Building on the relative GAGA theorem of \cite{wang2026relativegagatheoremapplication}, we establish the following equivalence in \Cref{cor: GAGA for nuclear categories}: 
\[
\Nuc(Y^{\mathrm{an}}) \simeq \Qcoh(Y) \otimes_{\Mod_k}\Nuc(k).
\]
We then deduce our statement regarding the compact generation of $\Qcoh(Y)$. Conversely, if we assume that $\Nuc(X)$ admits a single $\Nuc(k)$-atomic generator, we can use \Cref{lem: perfect complexes over rigid varieties} to deduce that this generator must be a perfect complex. Furthermore, applying \Cref{thm: main theomre smoothness intro}, we infer that $\Perf(X)$ must also be smooth; in this way, we reduce our algebraization result to \Cref{thm: intro algebraization via perf}. 

To obtain the announced counterexample, we will also utilize the following result, which can be viewed as a generalization of \Cref{thm: compact generator for cpt generated cates} to the enriched setting.

\begin{proposition}[{\Cref{thm: existence of a single compact generator}}]
    Let $\cV$ be a commutative algebra in $\LPr_\mathrm{st}$ with compact unit, and let $\cC$ be a smooth \cat over $\cV$ which is $\cV$-atomically generated. Then $\cC$ admits a single $\cV$-atomic (equivalently, compact) generator.
\end{proposition}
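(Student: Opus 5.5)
The plan is to imitate Lurie's proof of \Cref{thm: compact generator for cpt generated cates}, replacing compact objects with $\cV$-atomic ones. Since $\cC$ is $\cV$-atomically generated, fix a set $\{G_i\}_{i\in I}$ of $\cV$-atomic generators. Because the unit of $\cV$ is compact, $\cV$-atomic objects of $\cC$ are compact: $\Map(\boldone, \underline{\Hom}(G,-))$ commutes with filtered colimits. This also gives the parenthetical ``equivalently, compact'' once we know the generator is atomic. The tensor product $\cC^{\vee}\otimesV \cC$ is then atomically generated by the objects $G_i^{\vee}\boxtimes G_j$, where $G_i^{\vee}$ denotes the atomic object of $\cC^{\vee}$ dual to $G_i$. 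We identify the coevaluation with $\cV\to \cC^\vee\otimesV\cC$, $\boldone\mapsto \coev(\boldone)$, and the latter object is the canonical colimit
\[
\coev(\boldone)\simeq \colim_{(i,j,\ldots)} G_i^{\vee}\boxtimes G_j,
\]
in the spirit of the identification $\cC^\vee\otimesV\cC\simeq \Fun^{L}_{\cV}(\cC,\cC)$ with $\coev(\boldone)$ corresponding to $\mathrm{id}_\cC$.

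The key use of smoothness is as follows. The coevaluation has a colimit-preserving right adjoint, so $\coev(\boldone)$ is compact in $\cC^{\vee}\otimesV\cC$ (indeed atomic, since the right adjoint is $\cV$-linear). Write $\cC^\vee\otimesV\cC$ as the filtered colimit over finite subsets $J\subset I$ of the thick subcategories generated by $G_i^{\vee}\boxtimes G_j$ with $i,j\in J$. Since the colimit presentation of $\coev(\boldone)$ is filtered, compactness implies that $\coev(\boldone)$ is a retract of a finite colimit of objects $V\otimes G_i^\vee\boxtimes G_j$ with $i,j$ ranging over a finite set $J$ and $V$ atomic in $\cV$; equivalently, $\coev(\boldone)$ lies in the thick closure of these objects. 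Here I would use that atomic objects of $\cV$ are closed under $\otimes$ (the unit is compact) and thick closure.

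Set $G=\bigoplus_{j\in J}G_j$, a finite sum and hence $\cV$-atomic. For any $X\in\cC$, we have $X\simeq (\ev\boxtimes\mathrm{id})(X\boxtimes\coev(\boldone))$ by \Cref{eq: compatibility 1}. Under this functor, $V\otimes G_i^{\vee}\boxtimes G_j$ goes to $\underline{\Hom}(G_i,X)\otimes V\otimes G_j$, and the functor is exact. Hence if $\underline{\Hom}(G,X)\simeq 0$, then every such term vanishes and $X\simeq0$. So $G$ generates $\cC$ under colimits and $\cV$-tensors, and by compactness of the unit it is a single compact generator.

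The main obstacle is the compactness step. We must verify carefully that $\coev(\boldone)$ is presented as a \emph{filtered} colimit of finite stages built from the $G_i^\vee\boxtimes G_j$, so that compactness lets it factor through a finite stage. Concretely, this requires that the $\cV$-atomic objects of $\cC^\vee\otimesV\cC$ are the retracts of finite colimits of $V\otimes G_i^\vee\boxtimes G_j$, which I would deduce from the earlier results on atomically generated categories (duals and tensor products of atomically generated categories being atomically generated by the evident objects).
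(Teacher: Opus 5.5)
Your global strategy is sound and close in spirit to the paper's: both play the compactness of the object classifying $\mathrm{id}_\cC$ against an exhaustion indexed by finite sets of atomic generators. Your endgame is a pleasant variant. The paper writes $\mathrm{id}_\cC\simeq\colim_T f_T$ in $\Fun^{\mathrm L}(\cC,\cC)$ and factors the identity through some $\widehat{\cC_T}$. You instead push $\coev(\boldone)$ through $\Phi_X\coloneqq(\ev\boxtimes\mathrm{id})(X\boxtimes -)$ and use the triangle identity to see that $\mathrm{hom}^{\cV}(G,-)$ detects zero, which gives generation by \Cref{rmk: relative generators and generated under colimits and action}.

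The gap is in the compactness step. You exhaust $\cM\coloneqq\cC^{\vee}\otimesV\cC$ by small thick subcategories and conclude that $\coev(\boldone)$ is a retract of a finite colimit of objects $V\otimes G_i^{\vee}\boxtimes G_j$ with $V$ atomic. That is compactly-generated reasoning. But the $G_i^{\vee}\boxtimes G_j$ generate $\cM$ only under colimits \emph{together with} tensoring by arbitrary objects of $\cV$. Unless $\cV$ is compactly generated, the union of your thick subcategories does not exhaust $\cM$, and nothing forces a compact object to lie in one of them. Compact generation of $\cV$ is not assumed, and it fails for the base $\Nuc(k)$ of interest here: its compact objects are the perfect complexes, and these generate only the discrete $k$-solid modules.

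Your proposed repair does not help, for two reasons. First, it uses that $\coev(\boldone)$ is $\cV$-atomic. That requires the right adjoint of $\coev$ to be $\cV$-linear, i.e.\ internal smoothness, whereas the hypothesis is only smoothness. Second, it relies on describing the atomic objects of $\cM$ as finitely built from atomic twists of the $G_i^{\vee}\boxtimes G_j$. This is neither proved in the paper nor available for general $\cV$.

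The missing idea is to run the compactness argument with \emph{large} subcategories, as the paper does with $\widehat{\cC_T}$. For finite $J\subset I$, let $\iota_J\colon\cM_J\hookrightarrow\cM$ be the smallest full subcategory containing the $G_i^{\vee}\boxtimes G_j$ with $i,j\in J$ and closed under colimits and the $\cV$-action.
\begin{enumerate}
\item Because these generators are atomic, $\iota_J$ has a colimit-preserving right adjoint $r_J$.
\item For every $P\in\cM$, the map $\colim_J\iota_J r_J P\to P$ is an equivalence. Indeed, $\mathrm{hom}^{\cV}(G_i^{\vee}\boxtimes G_j,-)$ commutes with this filtered colimit, the $J$-th term already agrees with $P$ once $i,j\in J$, and these functors are jointly conservative.
\item Ordinary compactness of $\coev(\boldone)$ follows from smoothness and compactness of $\boldone_{\cV}$ alone. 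It exhibits $\coev(\boldone)$ as a retract of $\iota_J r_J\coev(\boldone)$ for some $J$, so $\coev(\boldone)\in\cM_J$.
\end{enumerate}
From there your argument runs verbatim. Let $G=\bigoplus_{j\in J}G_j$ and suppose $\mathrm{hom}^{\cV}(G,X)\simeq 0$. Then $\ker\Phi_X$ contains every $G_i^{\vee}\boxtimes G_j$ with $i\in J$, and it is closed under colimits, retracts and the $\cV$-action. Hence $X\simeq\Phi_X(\coev(\boldone))\simeq 0$.
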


With these results, we construct a counterexample to \Cref{conj: congettura setting enriched} and demonstrate that it is not reasonable to extend \Cref{conj: congettura Einfty} over a generic rigid base different from $\Mod_R$. Indeed, if we consider a non-algebraizable smooth and proper rigid analytic variety $X$, see for example the p-adic Hopf Surface studied in \cite{mustafin1977padic, voskuil1991nonarchimedean}, combining \Cref{thm: main theomre smoothness intro} and \Cref{thm: main theorem intro} reveals that $\Nuc(X)$ cannot admit a single $\Nuc(k)$-atomic generator. Thus, by \Cref{thm: existence of a single compact generator}, it cannot be $\Nuc(k)$-atomically generated. 

We have been informed that recently Maxime Ramzi has also constructed a counterexample to \Cref{conj: congettura setting enriched} in the case where $\mathscr{V}$ is an \cat of the form $\mathrm{Shv}(X,\mathrm{Sp})$. 

We conclude the intruduction observing that \Cref{thm: main theomre smoothness intro} implies that it is not, in general, possible to glue local, relatively compact generators of a sheaf of categories in the analytic (or fpqc) topology to obtain a global compact generator. Locally in the analytic topology over $X$, the sheaf $\Nuc(-)$ is of the form $\Mod_{A}(\Nuc(k))$ for affinoid algebras $A$. However, \Cref{thm: algebraization via generator} demonstrates that, globally, such a generator algebra does not necessarily exist.
    
This property of gluing local compact generators is one of the foundational features of \emph{Non-Commutative Algebraic Geometry}, and it highlights a key distinction between algebraic and analytic topologies. This gluing procedure for compact generators has been shown to hold for various topologies utilized in algebraic geometry; see, for example, \cite{toen2007moduliobjectsdgcategories} and \cite{bondal2002generatorsrepresentabilityfunctorscommutative}.

\subsection{Linear overview}
We begin \Cref{section 1} with some remarks on presentable categories enriched over a symmetric monoidal base category $\mathscr{V}$. In this section, we also describe the notion of an internally smooth (respectively, proper) category and compare this definition with the notion of a smooth (respectively, proper) algebra in $\mathscr{V}$. We conclude the section by analyzing the concept of a smooth (respectively, proper) category in the presence of a six-functor formalism.

In \Cref{Section 2} and \Cref{Section 3}, we introduce the definitions of animated affinoid algebras and derived rigid analytic spaces. We construct a six-functor formalism for nuclear categories over these spaces and describe the main properties of both the spaces and their associated nuclear categories.

In \Cref{section 4}, we compare the categorical notion of smoothness obtained in this setting with the classical notion of geometric smoothness for rigid varieties. Finally, we conclude by relating the algebraizability of a rigid variety $X$ to the existence of a $\Nuc(k)$-atomic generator in the category $\Nuc(X)$.

In \Cref{Appendix 1}, utilizing the results established in \Cref{section 1}, we reprove known results concerning smooth and proper categories in (derived) algebraic geometry.

In \Cref{Appendix 2}, we demonstrate how the framework developed in \Cref{section 1} can be applied to yield examples of (internally) smooth and proper \cates within the context of analytic stacks.

\subsection{Acknowledgements}
I would like to express my gratitude to my supervisors, Mauro Porta and Nicolò Sibilla, for their constant support, guidance, and invaluable suggestions. I am also very grateful to Maxime Ramzi, Juan Esteban Rodriguez Camargo, Qixiang Wang, Maximilian Hauck, Julius Mann, Emanuele Pavia, and Enrico Lampetti for many useful conversations. Part of this work was carried out during a visit to the Max Planck Institute for Mathematics in Bonn; I would like to thank the Institute for its warm hospitality and for providing excellent working conditions.

\subsection{Notation}
We generally adopt the notation and conventions introduced by Lurie in \cite{HTT,HA}. In particular, we denote by $\Sp$ the $\infty$-category of spectra, and by $\Mod_{R}$ (resp. $\LMod_{R}$) the $\infty$-category of modules over an $\bbE_{\infty}$-algebra (resp. left modules over an $\bbE_{1}$-algebra) in $\Sp$. We depart from these conventions only in using $\Ani$ to denote the $\infty$-category of spaces (anima).

Throughout this work, we fix a presentably symmetric monoidal $\infty$-category $\cV$. We let $\LPrV$ denote the $(\infty,2)$-category $\Mod_{\cV}(\LPr_{\mathrm{st}})$ of $\cV$-tensored, presentable, stable $\infty$-categories and $\cV$-linear colimit-preserving functors. By abuse of terminology, we refer to an object of $\LPrV$ simply as a \emph{$\cV$-linear category}.

If $\cC$ is a $\cV$-linear symmetric monoidal $\infty$-category, we denote its internal hom functor by $\uline{\Hom}_{\cC}(-,-)$, while $\mathrm{hom}^{\cV}(-,-)$ denotes the $\cV$-enriched hom functor as defined in \cite[Definition 4.20]{BenMoshe2024}. See \Cref{section 1} where we make precise these notations.

For any $\infty$-category $\cC$, we denote the usual mapping space (anima) by $\mathrm{Map}_{\cC}(-,-)$. If $\cC$ is stable, we denote the mapping spectrum by $\map_{\cC}(-,-)$.

Regarding the theory of condensed mathematics and analytic stacks, we follow the formalism developed in \cite{Clausen_Scholze_lectures}.

\section{Smooth and proper categories in a six-functor formalism}\label{section 1}
The notion of smooth and proper triangulated categories was introduced in \cite{kontsevich2008notes}.
Furthermore, in the setting of dg-categories (respectively, stable \cates), an analogous notion of smooth and proper categories has been studied, for example, in \cite{toen2007moduliobjectsdgcategories, efimov2018homotopyfinitenessdgcategories} (respectively, in \cite{SAG,Antieau_2014}).

In this section, we work over a stable, presentable, symmetric monoidal \cat $\cV$. We will employ a modification of the standard notion of smooth and proper \cates, adapted to the setting of $\Mod_{\cV}(\LPr)$.
We will compare these definitions with the notion of smooth and proper algebras defined in \cite[Definitions 4.6.4.2 and 4.6.4.13]{HA}.
Finally, at the end of the section, we explore how to obtain smooth \cates in the context of a six-functor formalism, with the aim of relating these results to the usual notion of smoothness in analytic geometry in subsequent sections.

\subsection{Remarks on presentable tensored categories}
In this section, we consider a stable, presentable, symmetric monoidal \cat $\cV$, and we denote its tensor product by $\otimes$. We let $\LPrV$ denote the $(\infty,2)$-category $\Mod_{\cV}(\LPr_{\mathrm{st}})$ and refer to an object in $\LPrV$ as a $\cV$-linear \cat. We now review some known facts and definitions in the setting of $\cV$-linear \cates. The main references for this topic are \cite{BenMoshe2024, ramzi2024dualizablepresentableinftycategories,hinich2021yonedalemmaenrichedinfinity, heine2023, Gepner2015}, which provide a rigorous treatment of the subject.

We begin by observing that the $(\infty,2)$-category $\LPrV$ admits a symmetric monoidal structure \cite[Theorem 4.5.2.1]{HA}; we denote its tensor product (induced by that of $\cV$) by $\otimesV$. Furthermore, any object $\cC$ in $\LPr_{\cV}$ is equipped with a \emph{$\cV$-enriched internal hom functor} $\mathrm{hom}_{\cC}^{\cV}(-,-)$, as defined in \cite[Definition 4.20]{BenMoshe2024}. We note that, for an object $x \in \cC$, the functor $\mathrm{hom}_{\cC}^{\cV}(x,-)$ is, in general, only lax $\cV$-linear. Indeed, it is defined as the right adjoint to the $\cV$-linear functor
\[
-\otimes_{\cV} x \colon \cV \to \cC,
\]
given by the action of $\cV$ on $\cC$. It follows from \cite[Proposition 4.17]{BenMoshe2024} that the right adjoint of a $\cV$-linear left adjoint functor is lax $\cV$-linear. When the \cat under consideration is clear from the context, we will simplify the notation and denote the $\cV$-enriched internal hom by $\mathrm{hom}^{\cV}(-,-)$.

\begin{proposition}
    Let $\cC$ and $\cD$ be two presentable $\cV$-tensored \cates, and let $L \colon \cC \to \cD$ be a $\cV$-linear left adjoint. Then $R \colon \cD \to \cC$ is the right adjoint of $L$ if and only if there is a natural isomorphism
    \begin{equation*}
        \mathrm{hom}^{\cV}(L(-),-) \xrightarrow{\sim} \mathrm{hom}^{\cV}(-,R(-)).
    \end{equation*}
\end{proposition}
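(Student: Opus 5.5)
We will prove both directions by reducing everything to mapping anima via the defining adjunction of the enriched hom, $(-\otimes x) \dashv \mathrm{hom}^{\cV}(x,-)$, and then applying the Yoneda lemma in $\cV$.

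\emph{Forward direction.} Suppose $R$ is right adjoint to $L$. For $v \in \cV$, $x \in \cC$, $y \in \cD$ we have the chain of natural equivalences
\[
\Map_{\cV}(v,\mathrm{hom}^{\cV}(L x,y)) \simeq \Map_{\cD}(v\otimes Lx, y) \simeq \Map_{\cD}(L(v\otimes x),y) \simeq \Map_{\cC}(v\otimes x, Ry) \simeq \Map_{\cV}(v,\mathrm{hom}^{\cV}(x,Ry)).
\]
The second step uses the $\cV$-linearity structure $v\otimes L(x)\simeq L(v\otimes x)$ of $L$, and the third uses the adjunction $L\dashv R$. Since all of these are natural in $v$, the Yoneda lemma in $\cV$ yields a natural equivalence $\mathrm{hom}^{\cV}(L x,y)\simeq \mathrm{hom}^{\cV}(x,Ry)$.

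\emph{Naturality in $x$ and $y$.} This is the main point requiring care, since we want an equivalence of functors $\cC^{op}\times\cD\to\cV$ and not merely objectwise equivalences. I would handle it by working throughout with the equivalences of functors $\cV^{op}\times\cC^{op}\times\cD\to\Ani$ supplied by the adjunctions, in the sense of the adjunction data of \cite{BenMoshe2024}, and then invoking the Yoneda embedding $\cV \hookrightarrow \Fun(\cV^{op},\Ani)$. A cleaner alternative is to note that $\mathrm{hom}^{\cV}(L(-),-)$ and $\mathrm{hom}^{\cV}(-,R(-))$ are both obtained from the functor $\cD^{op}\times\cD\to\cV$ (respectively $\cC^{op}\times\cC\to\cV$) by precomposition, and that the composite adjunction $(-\otimes Lx)\simeq L\circ(-\otimes x)\dashv R\circ \mathrm{hom}$-type identification is natural, since adjoints of naturally equivalent functors are naturally equivalent.

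\emph{Converse.} Given a natural equivalence $\mathrm{hom}^{\cV}(L(-),-)\simeq \mathrm{hom}^{\cV}(-,R(-))$, apply $\Map_{\cV}(\mathbf 1,-)$ to it. Using
\[
\Map_{\cV}(\mathbf 1,\mathrm{hom}^{\cV}(a,b))\simeq \Map(\mathbf 1\otimes a,b)\simeq \Map(a,b),
\]
we obtain $\Map_{\cD}(Lx,y)\simeq \Map_{\cC}(x,Ry)$ naturally in $x$ and $y$. This exhibits $R$ as right adjoint to $L$ by \cite{HTT}, and by uniqueness of adjoints it is \emph{the} right adjoint.
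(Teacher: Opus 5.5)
Your proposal is correct and follows the paper's route. For the converse you do exactly what the paper does: apply $\Map_{\cV}(\boldone_{\cV},-)$ and use $\boldone_{\cV}\otimes a\simeq a$ to recover $\Map_{\cD}(L(-),-)\simeq\Map_{\cC}(-,R(-))$. For the forward direction the paper simply cites \cite[Proposition 4.21]{BenMoshe2024}, and your Yoneda argument in $\cV$, with naturality handled through functors $\cV^{\mathrm{op}}\times\cC^{\mathrm{op}}\times\cD\to\Ani$, is the standard proof of that cited fact.
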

\begin{proof}
    One direction is proved in \cite[Proposition 4.21]{BenMoshe2024}. Conversely, suppose that the canonical map
    \begin{equation*}
        \mathrm{hom}^{\cV}(L(-),-) \to \mathrm{hom}^{\cV}(-,R(-))
    \end{equation*}
    is a natural isomorphism. Applying the mapping space functor $\mathrm{Map}_{\cV}(\boldone_{\cV}, -)$ yields a natural isomorphism of spaces
    \begin{equation*}
        \mathrm{Map}(L(-), -) \simeq \mathrm{Map}(-,R(-))
    \end{equation*}
    (see \cite[Remark 7.2.12]{Gepner2015}).
\end{proof}

\begin{definition}[{\cite[Proposition 5.2]{BenMoshe2024}}]\label{def: internally left adjoint functors}
    We say that a $\cV$-linear left adjoint functor $L\colon \cC \to \cD$ is \emph{internally left adjoint} if its lax $\cV$-linear right adjoint $R\colon \cD \to \cC$ is $\cV$-linear and is itself a left adjoint.
\end{definition}

\begin{definition}\label{def: relatively compact}
    Let $\cC$ be a \cat in $\LPr_{\cV}$ and let $x$ be an object of $\cC$. We say that $x$ is \emph{compact relative to $\cV$} if the functor $\mathrm{hom}_{\cC}^{\cV}(x,-)$ preserves colimits.
\end{definition}

Note that since we always work with stable \cates, the previous definition is equivalent to requiring that $\mathrm{hom}_{\cC}^{\cV}(x,-)$ commutes with filtered colimits.

\begin{definition}\label{def: atomic object}
    Let $\cC$ be an $\bbE_{\infty}$-algebra in $\Mod_{\cV}(\LPrst)$ and let $x$ be an object of $\cC$. We say that $x$ is \emph{$\cV$-atomic} if it is compact relative to $\cV$ and the functor $\mathrm{hom}^{\cV}(x,-)$ is $\cV$-linear. Equivalently, the functor $-\otimes_{\cV} x$ is internally left adjoint.
\end{definition}

\begin{lemma}\label{lem: atomic vs relatively compact}
    Let $\cV$ be generated under colimits by dualizable objects and let $\cC$ be a symmetric monoidal presentable $\cV$-tensored \cat. Then an object $x \in \cC$ is compact relative to $\cV$ if and only if it is $\cV$-atomic.
\end{lemma}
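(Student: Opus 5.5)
One direction is immediate from \Cref{def: atomic object}: a $\cV$-atomic object is by definition compact relative to $\cV$. For the converse, I assume $x$ is compact relative to $\cV$, so $\mathrm{hom}^{\cV}(x,-)$ preserves colimits. It then remains to show that the lax $\cV$-linear structure map
\[
\alpha_{v,y}\colon v\otimes \mathrm{hom}^{\cV}(x,y) \longrightarrow \mathrm{hom}^{\cV}(x, v\otimes y)
\]
is an equivalence for every $v\in\cV$ and $y\in\cC$.

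\textbf{Step 1: dualizable $v$.} I first prove the claim when $v$ is dualizable in $\cV$, with dual $v^\vee$. For any $w\in\cV$ I use the adjunction defining $\mathrm{hom}^{\cV}$ together with the fact that $v\otimes -$ and $v^\vee\otimes -$ are mutually adjoint on both $\cV$ and $\cC$ (the module structure makes $v^\vee\otimes-$ adjoint to $v\otimes-$ on $\cC$). This gives
\[
\Map_{\cV}(w, \mathrm{hom}^{\cV}(x,v\otimes y)) \simeq \Map_{\cC}(w\otimes x, v\otimes y)\simeq \Map_{\cC}((v^\vee\otimes w)\otimes x, y)\simeq \Map_{\cV}(w, v\otimes \mathrm{hom}^{\cV}(x,y)).
\]
By Yoneda, $v\otimes\mathrm{hom}^{\cV}(x,y)\simeq \mathrm{hom}^{\cV}(x,v\otimes y)$. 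I must check that this abstract equivalence is the lax structure map $\alpha_{v,y}$, i.e.\ that it is the mate of the $\cV$-linearity equivalence of $-\otimes x$. This is a standard compatibility of mates, and I take it to be routine. This step uses neither compactness nor the symmetric monoidal structure of $\cC$.

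\textbf{Step 2: extend to all of $\cV$.} Fix $y$ and consider the two functors $\cV\to\cV$ given by $v\mapsto v\otimes\mathrm{hom}^{\cV}(x,y)$ and $v\mapsto \mathrm{hom}^{\cV}(x,v\otimes y)$, with the natural transformation $\alpha_{-,y}$ between them. The first preserves colimits because the tensor product of $\cV$ preserves colimits in each variable. The second is the composite of the colimit-preserving functor $-\otimes y\colon\cV\to\cC$ with $\mathrm{hom}^{\cV}(x,-)$, which preserves colimits by relative compactness. The full subcategory of $\cV$ on which $\alpha_{-,y}$ is an equivalence is therefore closed under colimits. By Step 1 it contains the dualizable objects, and these generate $\cV$ under colimits by hypothesis, so the subcategory is all of $\cV$. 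Hence $\mathrm{hom}^{\cV}(x,-)$ is strictly $\cV$-linear and colimit preserving, and $x$ is $\cV$-atomic.

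The only delicate point is the identification in Step 1 of the Yoneda-produced equivalence with the canonical lax structure map, rather than just some equivalence. I would handle it by writing $\alpha$ explicitly as the adjoint of $v\otimes\mathrm{hom}^{\cV}(x,y)\otimes x\to v\otimes y$, obtained from the counit, and tracing this map through the chain of equivalences above.
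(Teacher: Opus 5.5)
Your proof is correct and follows the same route as the paper: first establish $v\otimes\mathrm{hom}^{\cV}(x,y)\simeq\mathrm{hom}^{\cV}(x,v\otimes y)$ for dualizable $v$, then extend to all of $\cV$ using that both sides preserve colimits in $v$, the right-hand side by relative compactness of $x$. The only difference is that the paper cites \cite[Corollary 3.8]{ramzi2024locallyrigidinftycategories} for the dualizable case, which also supplies the identification with the canonical lax structure map that you flag, whereas you derive that case directly from the adjunction $v^\vee\otimes-\dashv v\otimes-$ and Yoneda.
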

\begin{proof}
    Every $\cV$-atomic object is compact relative to $\cV$ by definition. Conversely, assume $x$ is compact relative to $\cV$. By \cite[Corollary 3.8]{ramzi2024locallyrigidinftycategories}, for any $x, y \in \cC$ and any dualizable object $v \in \cV$, the canonical map
    \[
    v\otimes\mathrm{hom}^{\cV}(x,y) \to \mathrm{hom}^{\cV}(x,v\otimes y)
    \]
    is an equivalence. Since $\cV$ is generated under colimits by dualizable objects, any object $w \in \cV$ can be written as a colimit of dualizable objects, say $w \simeq \colim_{i \in I} v_{i}$. Using the assumption that $x$ is compact relative to $\cV$ (i.e., that $\mathrm{hom}^{\cV}(x,-)$ preserves colimits), we obtain the following equivalences:
    \begin{align*}
        \mathrm{hom}^{\cV}(x,w \otimes y) & \simeq \mathrm{hom}^{\cV}(x, (\colim_{i \in I} v_{i}) \otimes y) \\
        & \simeq \mathrm{hom}^{\cV}(x, \colim_{i \in I} (v_{i} \otimes y)) \\
        & \simeq \colim_{i \in I} \mathrm{hom}^{\cV}(x, v_{i} \otimes y) \\
        & \simeq \colim_{i \in I} (v_{i} \otimes \mathrm{hom}^{\cV}(x,y)) \\
        & \simeq (\colim_{i \in I} v_{i}) \otimes \mathrm{hom}^{\cV}(x,y) \\
        & \simeq w \otimes \mathrm{hom}^{\cV}(x,y).
    \end{align*}
    This shows that $\mathrm{hom}^{\cV}(x,-)$ is $\cV$-linear, concluding the proof.
\end{proof}

\begin{remark}
    We observe that the hypotheses of \Cref{lem: atomic vs relatively compact} are satisfied when $\cV$ is equivalent to $\Mod_{A}$ for an $\bbE_{\infty}$-algebra $A$ in $\mathrm{Sp}$ or when $\cV$ is equivalent to $\Nuc(A,A^{+})$ for a Huber pair $(A,A^{+})$ (see \cite[Satz 3.16]{andreychev2023ktheorieadischerraume} and \cite[Proposition 5.35]{andreychev2021pseudocoherent}).
\end{remark}

Moreover, when $\cV$ is the \cat of spectra (denoted by $\mathrm{Sp}$) or the \cat $\Mod_{R}$ of modules over an $\bbE_{\infty}$-ring $R$, we have the following characterization of $\cV$-atomic objects.

\begin{proposition}\label{prop: compact over Sp}
    Let $\cV$ be the \cat of spectra or $\Mod_{R}$ for an $\bbE_{\infty}$-ring $R$. Let $\cC$ be a \cat in $\LPr_{\cV}$ and let $x$ be an object of $\cC$. Then the following are equivalent:
    \begin{enumerate}
        \item The object $x$ is compact in $\cC$.
        \item The object $x$ is compact relative to $\cV$.
        \item The object $x$ is $\cV$-atomic.
    \end{enumerate}
\end{proposition}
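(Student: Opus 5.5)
The argument runs through the cycle (1)$\Rightarrow$(2)$\Rightarrow$(3)$\Rightarrow$(1), with (2)$\Leftrightarrow$(3) essentially supplied by \Cref{lem: atomic vs relatively compact}. The key tool is the relation between the enriched hom and the mapping spectrum. Take $\cV=\Mod_R$; the case $\cV=\Sp$ is $R=\mathbb{S}$. For $x,y\in\cC$, the object $\mathrm{hom}^{\cV}(x,y)\in\Mod_R$ is characterized by
\[
\Map_{\Mod_R}(M,\mathrm{hom}^{\cV}(x,y))\simeq \Map_{\cC}(M\otimes x,y).
\]
Taking $M=\Sigma^n R$, this shows that the underlying spectrum of $\mathrm{hom}^{\cV}(x,y)$ is $\map_{\cC}(x,y)$. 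This uses that $\cC$ is stable, so that the action of $\Sigma^n R$ is $\Sigma^n$.

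\textbf{(1)$\Rightarrow$(2).} Let $y=\colim_i y_i$ be a filtered colimit in $\cC$. The forgetful functor $\Mod_R\to\Sp$ is conservative and preserves colimits, and it sends $\mathrm{hom}^{\cV}(x,-)$ to $\map_{\cC}(x,-)$. So it suffices to see that $\map_{\cC}(x,-)$ preserves filtered colimits. Its homotopy groups are $\pi_0\Map_{\cC}(\Sigma^n x,-)$, and these commute with filtered colimits because $x$, and hence every $\Sigma^n x$, is compact. The natural comparison map $\colim_i \mathrm{hom}^{\cV}(x,y_i)\to \mathrm{hom}^{\cV}(x,y)$ therefore becomes an equivalence after forgetting, so it is an equivalence. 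By the remark after \Cref{def: relatively compact}, preservation of filtered colimits is enough in the stable setting.

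\textbf{(2)$\Rightarrow$(1).} Conversely, $\Map_{\cC}(x,-)\simeq \Omega^\infty\map_{\cC}(x,-)\simeq \Map_{\Mod_R}(R,\mathrm{hom}^{\cV}(x,-))$. Here $R$ is compact in $\Mod_R$ and $\mathrm{hom}^{\cV}(x,-)$ preserves filtered colimits, so $\Map_{\cC}(x,-)$ does as well, and $x$ is compact.

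\textbf{(2)$\Leftrightarrow$(3).} One direction is immediate from \Cref{def: atomic object}. For the other, $\Mod_R$ is generated under colimits by the dualizable objects $\Sigma^n R$. However, \Cref{lem: atomic vs relatively compact} was stated for symmetric monoidal $\cC$, while here $\cC$ is only $\cV$-linear, so I would redo its argument directly. First, the map $v\otimes\mathrm{hom}^{\cV}(x,y)\to\mathrm{hom}^{\cV}(x,v\otimes y)$ is an equivalence for dualizable $v$. Using the dual $v^\vee$, both sides corepresent the same functor:
\[
\Map(M,\mathrm{hom}^{\cV}(x,v\otimes y))\simeq\Map(v^\vee\otimes M\otimes x,y)\simeq \Map(M,v\otimes\mathrm{hom}^{\cV}(x,y)).
\]
This step is formal and does not use the symmetric monoidal structure on $\cC$. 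Second, write an arbitrary $w\in\Mod_R$ as a colimit of shifts of $R$ and use colimit preservation from (2). This is exactly the chain of equivalences in the proof of \Cref{lem: atomic vs relatively compact}, and it gives $\cV$-linearity of $\mathrm{hom}^{\cV}(x,-)$.

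The main obstacle is the bookkeeping needed to identify the canonical lax-linearity map with the equivalence just constructed, so that it is the structure map, and not merely some abstract isomorphism, that is invertible. I would handle this by checking naturality in $v$ on the full subcategory of dualizable objects. Since that map is the lax structure map, naturality in $v$ is automatic.
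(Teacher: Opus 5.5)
Your proposal is correct and essentially reproduces the paper's argument. For (1)$\Leftrightarrow$(2) you use the identification of the underlying spectrum of $\mathrm{hom}^{\cV}(x,y)$ with $\map_{\cC}(x,y)$, compactness of $R$, and detection of equivalences on homotopy groups, which the paper phrases as joint conservativity of the $\Omega^{\infty}\Sigma^{n}$. For (2)$\Leftrightarrow$(3) you re-run \Cref{lem: atomic vs relatively compact}; its symmetric monoidal hypothesis on $\cC$ is indeed never used, so checking it for a merely $\cV$-linear $\cC$ is a fair clarification. The bookkeeping you flag as the main obstacle is routine, though your stated fix is circular: chasing the identity through your Yoneda chain and applying the zigzag identity for $v$ gives exactly the lax structure map. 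Alternatively you can bypass duals entirely, since that structure map is an equivalence at $v=R$ by unitality, both sides are exact in $v$, and (2) lets you pass to filtered colimits of perfect modules.
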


\begin{proof}
    The equivalence $(2) \Leftrightarrow (3)$ follows from \Cref{lem: atomic vs relatively compact}. We now prove the equivalence $(1) \Leftrightarrow (2)$ in the case where $\cV$ is the \cat of spectra. An analogous argument is used in \cite[Proposition 2.8]{Ben_Moshe_2023}.
    
    Assume that $x$ is compact relative to $\mathrm{Sp}$. The mapping space $\Map_{\cC}(x,-)$ can be written as the delooping of the mapping spectrum $\Omega^{\infty}\mathrm{map}_{\cC}(x,-)$. Since the functor $\Omega^{\infty}$ preserves filtered colimits, this implies that $x$ is compact.
    
    Conversely, suppose that $x$ is compact in $\cC$. Observe that the shift functor $\Sigma^{n}\colon \mathrm{Sp} \to \mathrm{Sp}$ preserves colimits for every integer $n$. Consequently, the functor
    \[
    \Omega^{\infty}\Sigma^{n}\mathrm{map}_{\cC}(x,-) \simeq \Map_{\cC}(x,\Sigma^{n}-)
    \]
    preserves colimits. The statement now follows from the fact that the functors
    \[
    \Omega^{\infty}\Sigma^{n}: \mathrm{Sp} \to \Ani
    \]
    are jointly conservative.
    
    The equivalence $(1) \Leftrightarrow (2)$ when $\cV = \Mod_{R}$ follows from the case where $\cV = \mathrm{Sp}$, using the fact that for every objects $c$ and $c'$ in $\cC$ we have a functorial equivalence 
    \[
    \map_{\cC}(c,c') \simeq \map_{\Mod_{R}}(R, \hom^{\Mod_{R}}(c,c'))
    \]
    and $R$ is a compact generator of $\Mod_{R}$.
\end{proof}

Henceforth, for a stable $\infty$-category, implicitly using \Cref{prop: compact over Sp}, we will no longer distinguish between compact, $\mathrm{Sp}$-compact, and $\mathrm{Sp}$-atomic objects.

\begin{definition}\label{def: categorie relativamente generate}
    Let $\cC$ be a \cat in $\LPrV$, and $S$ a subset of objects in $\cC$. We denote by $\cC_{S}$ the full subcategory of $\cC$ spanned by the objects in $S$.
    \begin{enumerate}
        \item We say that $\cC$ is \emph{$\cV$-relatively generated by $S$} if $\cC$ coincides with the smallest full subcategory of itself that contains $\cC_{S}$ and is closed under colimits and the action of $\cV$.
        \item We say that $\cC$ is \emph{$\cV$-atomically generated} if it is $\cV$-relatively generated by its subset of $\cV$-atomic objects.
        \item We say that $\cC$ is \emph{$\cV$-relatively compactly generated} if it is $\cV$-relatively generated by its subset of $\cV$-compact objects.
    \end{enumerate}
\end{definition}

We note that our definition of a $\cV$-atomically generated \cat coincides with the notion of a $\cV$-molecular \cat introduced in \cite[Definition 2.10]{BenMoshe2024}.

\begin{remark}\label{rmk: V-enriched presheaves}
   We also observe that the set of $\cV$-atomic objects is small \cite[Proposition 5.9]{BenMoshe2024}. This allows us to ``control'' a $\cV$-atomically generated \cat by a small subcategory. Indeed, by applying \cite[Observation 1.28]{ramzi2024dualizablepresentableinftycategories}, it follows that if $\cM \in \LPrV$ is $\cV$-atomically generated, there is an equivalence 
\[
\cM \simeq \cP_\cV((\cM)^\mathrm{at}),
\]
where $\cP_{\cV}(-)$ denotes the category of $\cV$-enriched presheaves as defined in \cite{heine2023}, and $(\cM)^{\mathrm{at}}$ denotes the full subcategory of atomic objects in $\cM$. Moreover, it follows from \cite[Section 6]{BenMoshe2024} that $\cP_{\cV}(-)$ commutes with colimits.
\end{remark}

\begin{remark}\label{rmk: relative generators and generated under colimits and action}
    It follows from \cite[Corollary 3.13]{reutter2025enrichedinftycategoriesmarkedmodule} that $\cC$ is \emph{$\cV$-relatively generated by $S$} as in \Cref{def: categorie relativamente generate} if and only if the family of functors
    \[
    \{\hom_{\cC}^{\cV}(s,-)\colon \cC \to \cV\}_{s \in S}
    \]
    is jointly conservative.
\end{remark}

We will now consider the case where $\cV$ is a rigid \cat, in which the various definitions introduced above coincide.

In this paper, we extensively use the notion of rigid \cates as introduced in \cite{gaitsgory2019study}. The main feature is that when $\cV$ is rigid, $\LPrV$ enjoys several favorable properties (see, for example, \cite[Chapter 1, Sec 9]{gaitsgory2019study} and \cite[Lemma 3.33]{anschutz2024descentsolidquasicoherentsheaves}). Another motivation for introducing rigid \cates comes from rigid analytic geometry; indeed, the \cat of nuclear sheaves in this setting is rigid. This \cat will be studied in detail later in this paper. 

\begin{proposition}\label{prop: compact objects over rigid categories}
    Let $\cV$ be a stable, presentable, symmetric monoidal, rigid \cat, let $\cC$ be a \cat in $\LPrV$, and let $x$ be an object in $\cC$. Then the following are equivalent:
    \begin{enumerate}
        \item The object $x$ in $\cC$ is compact.
        \item The object $x$ in $\cC$ is compact relative to $\Sp$.
        \item The object $x$ in $\cC$ is compact relative to $\cV$.
        \item The object $x$ in $\cC$ is $\cV$-atomic.
    \end{enumerate}
\end{proposition}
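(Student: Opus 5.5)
We argue through the chain $(1)\Leftrightarrow(2)$, $(3)\Leftrightarrow(4)$, and $(2)\Leftrightarrow(3)$, with the last link carrying the rigidity input.

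\textbf{Step 1.} The equivalence $(1)\Leftrightarrow(2)$ is \Cref{prop: compact over Sp}. Indeed, $\cC$ is stable and presentable, hence an object of $\LPr_{\Sp}$, and the argument there uses only the mapping spectra.

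\textbf{Step 2.} For $(3)\Leftrightarrow(4)$ we apply \Cref{lem: atomic vs relatively compact}. A rigid $\cV$ is generated under colimits by dualizable objects, because it is generated by compact objects and in a rigid category compact objects are dualizable. The symmetric monoidal hypothesis in that lemma on $\cC$ plays no role in its proof: only the equivalence $v\otimes \hom^{\cV}(x,y)\simeq \hom^{\cV}(x,v\otimes y)$ for dualizable $v$ is used, and this holds in any $\cV$-module. Alternatively, one can invoke directly the standard fact that for rigid $\cV$ every lax $\cV$-linear colimit-preserving functor is strictly $\cV$-linear \cite[Chapter 1, Sec 9]{gaitsgory2019study}. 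Applied to $\hom^{\cV}(x,-)$, this gives $(3)\Rightarrow(4)$; the converse is by definition.

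\textbf{Step 3.} The remaining link is $(2)\Leftrightarrow(3)$. We have $\map_{\cC}(x,-)\simeq \map_{\cV}(\boldone,\hom^{\cV}(x,-))$, and in a rigid $\cV$ the unit $\boldone$ is compact. Hence $(3)\Rightarrow(2)$ is immediate: a composite of colimit-preserving functors preserves colimits.

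The converse $(2)\Rightarrow(3)$ is the main obstacle. Assume $x$ is compact in $\cC$. Since $\cV$ is compactly generated by dualizable objects $v$, it suffices to show that for each such $v$ the functor
\[
\map_{\cV}(v,\hom^{\cV}(x,-))\simeq \map_{\cC}(v\otimes x,-)
\]
preserves filtered colimits, because the functors $\map_{\cV}(v,-)$ for compact $v$ are jointly conservative and commute with filtered colimits. This reduces to showing that $v\otimes x$ is compact in $\cC$. For dualizable $v$ we have
\[
\map_{\cC}(v\otimes x,-)\simeq \map_{\cC}(x, v^{\vee}\otimes -),
\]
and $v^{\vee}\otimes-$ preserves colimits. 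Compactness of $x$ therefore gives compactness of $v\otimes x$.

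Running the argument in this order, from Step 1 through Step 3, closes the cycle of implications.
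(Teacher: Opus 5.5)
The gap is in Step~3, $(2)\Rightarrow(3)$, and also in your first route for Step~2. There you assume that a rigid $\cV$ is compactly generated by dualizable objects, so that the functors $\map_{\cV}(v,-)$ with $v$ compact are jointly conservative. Rigidity in the sense of \cite{gaitsgory2019study}, which is the notion in force here (compact unit, and multiplication $\mu\colon\cV\otimes\cV\to\cV$ with a continuous $\cV$-bilinear right adjoint), does not imply compact generation. It fails in exactly the case the paper needs, $\cV=\Nuc(k)$. The compact (equivalently, dualizable) objects of $\Nuc(k)$ are the perfect $k$-complexes. Their colimit closure consists only of the discrete $k$-solid modules of \Cref{def: solidification functor}, whereas $k\langle t\rangle$, for example, is nuclear but not discrete. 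Concretely, take the counit map from the discrete $k$-solid module on $k\langle t\rangle(\ast)$ to $k\langle t\rangle$. It becomes an equivalence after applying $\map_{\cV}(v,-)$ for every compact $v$, yet it is not an equivalence. So knowing that $\map_{\cC}(v\otimes x,-)$ preserves filtered colimits for all compact $v$ does not let you conclude that $\mathrm{hom}^{\cV}(x,-)$ does. Your alternative for Step~2 via \cite[Lemma 9.3.6]{gaitsgory2019study}, which is what the paper uses, is fine, and so is your proof of $(3)\Rightarrow(2)$ via the compact unit.

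What is missing is the part of rigidity you never invoke. Under $\cV\otimes\cC\simeq(\cV\otimes\cV)\otimes_{\cV}\cC$ the action is $\mathrm{act}=\mu\otimes_{\cV}\mathrm{id}_{\cC}$. Tensoring the $\cV$-linear adjunction $\mu\dashv\mu^{R}$ with $\cC$ therefore shows that $\mathrm{act}$ has a continuous right adjoint $\mathrm{act}^{R}$ (cf.\ \cite[Chapter 1, Sec 9]{gaitsgory2019study}). If $x$ is compact, the functor $\Sp\to\cC$, $\mathbb{S}\mapsto x$, has the continuous right adjoint $\map_{\cC}(x,-)$. After tensoring with $\cV$, the functor $v\mapsto v\boxtimes x$ then has the continuous right adjoint $\mathrm{id}_{\cV}\otimes\map_{\cC}(x,-)$. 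Since $-\otimes x\colon\cV\to\cC$ is the composite of these two functors, you get
\[
\mathrm{hom}^{\cV}(x,-)\simeq\bigl(\mathrm{id}_{\cV}\otimes\map_{\cC}(x,-)\bigr)\circ\mathrm{act}^{R},
\]
which preserves colimits. This gives $(1)\Rightarrow(3)$ for an arbitrary rigid $\cV$, and it is the input behind the paper's appeal to \cite{gaitsgory2019study}. As written, your argument proves the proposition only under the extra hypothesis that $\cV$ is compactly generated (e.g.\ $\cV=\Mod_R$), which excludes the paper's main case $\Nuc(k)$.
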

\begin{proof}
    This follows by combining \Cref{prop: compact over Sp} and \cite[Corollary 9.3.4, Lemma 9.3.6]{gaitsgory2019study}.
\end{proof}

From the above proposition, we can immediately deduce the following.

\begin{corollary}
    If $\cV$ is rigid, then the definitions of a $\cV$-atomically generated \cat and a $\cV$-compactly generated \cat coincide.
\end{corollary}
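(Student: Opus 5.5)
The corollary follows by comparing the two definitions in \Cref{def: categorie relativamente generate} object by object, using \Cref{prop: compact objects over rigid categories}. Let $\cC \in \LPrV$. By definition, $\cC$ is $\cV$-atomically generated when it is $\cV$-relatively generated by the subset $S_{\mathrm{at}}$ of its $\cV$-atomic objects. It is $\cV$-relatively compactly generated when it is $\cV$-relatively generated by the subset $S_{\mathrm{cpt}}$ of its objects compact relative to $\cV$. Both notions therefore have the form ``$\cC$ is the smallest full subcategory containing $\cC_S$ and closed under colimits and the $\cV$-action'' for a specific $S$. It is enough to show $S_{\mathrm{at}} = S_{\mathrm{cpt}}$ as subsets of objects of $\cC$.

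Since $\cV$ is rigid, \Cref{prop: compact objects over rigid categories} applies to every object $x \in \cC$. Its equivalence $(3)\Leftrightarrow(4)$ says that $x$ is compact relative to $\cV$ if and only if $x$ is $\cV$-atomic, so $S_{\mathrm{at}} = S_{\mathrm{cpt}}$. Hence $\cC_{S_{\mathrm{at}}} = \cC_{S_{\mathrm{cpt}}}$, the two closure conditions are literally the same, and the definitions coincide.

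The equivalence $(1)\Leftrightarrow(3)$ of the same proposition also identifies both classes with the ordinary compact objects of $\cC$. So ``$\cV$-compactly generated'' means the same thing whether compactness is read in the absolute sense or relative to $\cV$. If one wants to go further, \Cref{rmk: relative generators and generated under colimits and action} restates both conditions as joint conservativity of $\{\hom^{\cV}_{\cC}(s,-)\}_{s\in S}$ for the same family $S$.

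The only point needing care is that \Cref{def: atomic object} was phrased for $\bbE_\infty$-algebras in $\Mod_{\cV}(\LPrst)$, whereas here $\cC$ is an arbitrary object of $\LPrV$. I would simply note that the characterization ``$-\otimes_{\cV} x$ is internally left adjoint'' makes sense for any $\cC \in \LPrV$. \Cref{prop: compact objects over rigid categories} is stated at that generality, and the Gaitsgory--Rozenblyum input it cites (Corollary 9.3.4 and Lemma 9.3.6) only uses rigidity of $\cV$, so no further obstacle is expected.
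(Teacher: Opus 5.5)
Your proposal is correct and matches the paper's argument: the paper deduces this corollary immediately from \Cref{prop: compact objects over rigid categories}, whose equivalence (3)$\Leftrightarrow$(4) shows that the $\cV$-atomic and $\cV$-compact objects of $\cC$ are the same set, so the two relative-generation conditions in \Cref{def: categorie relativamente generate} are literally identical. Your remark that \Cref{def: atomic object} is phrased for $\bbE_\infty$-algebras while the proposition is applied to arbitrary objects of $\LPrV$ points to a harmless technicality, which the paper also passes over silently.
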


For this reason, we will not further distinguish between the two notions when working over a rigid base.

\subsection{Reminder on smooth and proper categories and algebras}
This subsection introduces the notions of smooth and proper enriched \cates that will be utilized throughout this paper. We also prove the main result of this section, \Cref{thm: existence of a single compact generator}, which plays a central role in refuting \Cref{conj: congettura setting enriched}.

\begin{definition}\label{def: smooth algebras}
    Let $A$ be an $\bbE_{1}$-algebra in $\cV$. We say that $A$ is \emph{smooth} if the diagonal module is dualizable as a module over $A \otimes_{\cV} A^{\mathrm{op}}$.
\end{definition}

\begin{definition}[{\cite[Definition 4.6.4.2]{HA}}]\label{def: proper algebras}
    Let $A$ be an $\bbE_{1}$-algebra in $\cV$. We say that $A$ is \emph{proper} if it is dualizable as an object of $\cV$.
\end{definition}

\begin{definition}\label{def: dualizable categories}
    Let $\cC$ be a $\cV$-linear \cat. We say that $\cC$ is \emph{dualizable} over $\cV$ if there exists a \cat $\cC^{\vee}$ in $\LPrV$, together with two functors in $\LPrV$, called the evaluation and coevaluation functors:
    \begin{equation*}
        \mathrm{ev}: \cC \otimesV \cC^{\vee} \to \cV
    \end{equation*}
    \begin{equation*}
        \mathrm{coev}: \cV \to \cC^{\vee} \otimesV \cC,
    \end{equation*}
    such that the composite maps
    \begin{equation}\label{eq: compatibility 1}
        \begin{tikzcd}
            \cC && {\cC\otimesV \cC^{\vee} \otimesV \cC} && \cC
            \arrow["{\mathrm{id}\boxtimes \mathrm{coev}}", from=1-1, to=1-3]
            \arrow["{\mathrm{ev} \boxtimes \mathrm{id}}", from=1-3, to=1-5]
        \end{tikzcd}    
    \end{equation}
    \begin{equation}\label{eq: compatibility 2}
        \begin{tikzcd}
            {\cC^{\vee}} && {\cC^{\vee} \otimesV \cC \otimesV \cC^{\vee}} && {\cC^{\vee}}
            \arrow["{\mathrm{coev} \boxtimes \mathrm{id}}", from=1-1, to=1-3]
            \arrow["{\mathrm{id} \boxtimes \mathrm{ev}}", from=1-3, to=1-5]
        \end{tikzcd}
    \end{equation}
    are equivalent to the respective identity functors.
\end{definition}

\begin{remark}
    It is a general fact that in a closed symmetric monoidal \cat, the dual of a dualizable object $X$ can be identified with the internal hom $\uline{\Hom}_{\cC}(X,\boldone)$. In particular, in our setting, we can identify $\cC^{\vee}$ with $\Fun^{\mathrm{L}}_{\cV}(\cC,\cV)$ (see \cite[4.3.3]{gaitsgory2019study}).
\end{remark}

\begin{remark}
    This definition is well-posed since $\cV$ is a symmetric monoidal $\infty$-category. If we only assume $\cV$ to be an $\bbE_{1}$-algebra object in $\LPrV$, then one can employ the definition of a dualizable \cat as explained in \cite[Definition 1.10]{efimov2025localizing}. We will not discuss the non-symmetric monoidal case further, as we assume throughout that our base $\cV$ is symmetric monoidal.
\end{remark}

\begin{remark}
    We note that \Cref{eq: compatibility 1} implies that $\coev(\boldone_{\cV})$ classifies the identity functor under the equivalence 
    \[
    \cC^\vee \otimes_{\cV} \cC \simeq \Fun^{\mathrm{L}}_{\cV}(\cC, \cC).
    \]
\end{remark}

\begin{definition}\label{def: smooth categories}
    Let $\cC$ be a presentable and dualizable \cat over $\cV$. We say that $\cC$ is \emph{smooth over $\cV$} if the coevaluation map admits a right adjoint that preserves colimits.
\end{definition}

\begin{definition}\label{def: proper categories}
    Let $\cC$ be a presentable and dualizable \cat over $\cV$. We say that $\cC$ is \emph{proper over $\cV$} if the evaluation map admits a right adjoint that preserves colimits.
\end{definition}

Note that, in general, the right adjoints of the evaluation and coevaluation maps are only lax $\cV$-linear. This limitation motivates the introduction of the following variants of the notions of smooth and proper $\cV$-linear \cates.

\begin{definition}\label{def: internally smooth}
    Let $\cC$ be a presentable and dualizable \cat over $\cV$. We say that $\cC$ is \emph{internally smooth} over $\cV$ if the coevaluation map admits a $\cV$-linear right adjoint that preserves colimits.
\end{definition}

\begin{definition}\label{def: internally proper}
    Let $\cC$ be a presentable and dualizable \cat over $\cV$. We say that $\cC$ is \emph{internally proper over} $\cV$ if the evaluation map admits a $\cV$-linear right adjoint that preserves colimits.
\end{definition}

\begin{definition}\label{def: fully dualizable}
    Let $\cC$ be a dualizable \cat over $\cV$. We say that $\cC$ is \emph{fully dualizable} over $\cV$ if it is internally smooth and internally proper; that is, if both the evaluation and coevaluation maps admit a $\cV$-linear right adjoint that preserves colimits.
\end{definition}

\begin{remark}
    Observe that $\cC$ satisfies \Cref{def: smooth categories} and \Cref{def: proper categories} if and only if it is dualizable in the \cat $\mathrm{Pr}^{\mathrm{L,dual}}_{\cV}$, which consists of dualizable \cates over $\cV$ and $\cV$-linear strongly continuous functors. Indeed, this condition is equivalent to requiring that both the coevaluation and evaluation functors admit continuous right adjoints.
\end{remark}

\begin{remark}\label{rmk: smooth cat and compact unit}
    We observe that $\cC$ is smooth if and only if $\coev(\boldone_{\cV})$ is $\cV$-atomic, in particular if $\boldone_{\cV}$ is compact in $\cV$ then also $\coev(\boldone_{\cV})$ is compact. When $\cV$ is rigid, this is equivalent to requiring that $\coev(\boldone_{\cV})$ is compact (see \Cref{prop: compact objects over rigid categories}).
\end{remark}
 
Note that, \emph{a priori}, the right adjoint of a $\cV$-linear functor is not necessarily $\cV$-linear. However, this property holds in specific cases; for example, if $\cV$ is the \cat of spectra or, more generally, a \emph{mode} as explained in \cite[Example 1.13]{ramzi2024dualizablepresentableinftycategories}. In such cases, being fully dualizable over $\cV$ is strictly equivalent to being smooth and proper over $\cV$. We observe that this equivalence also holds when $\cV$ is rigid.

\begin{lemma}\label{lem: smooth and proper over a rigid base}
    Let $\cV$ be rigid and let $\cC$ be an object in $\LPrV$. Then $\cC$ is internally smooth (respectively, internally proper) if and only if it is smooth (respectively, proper).
\end{lemma}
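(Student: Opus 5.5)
The plan is to reduce the statement to a general fact about $\cV$-linear left adjoints over a rigid base: \emph{if $\cV$ is rigid, then the right adjoint of any $\cV$-linear colimit-preserving functor $F\colon \cM \to \cN$ between objects of $\LPrV$ is automatically $\cV$-linear.} Granting this, the claim is immediate. One implication is trivial, since internally smooth (respectively, internally proper) means that $\coev$ (respectively, $\ev$) has a right adjoint that is both colimit-preserving and $\cV$-linear, which is more than smooth (respectively, proper) asks for. For the converse, we apply the general fact to $F=\coev\colon \cV\to \cC^\vee\otimesV\cC$ (respectively, $F=\ev\colon \cC\otimesV\cC^\vee\to\cV$). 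The colimit-preserving right adjoint provided by smoothness (respectively, properness) is then $\cV$-linear, which is exactly \Cref{def: internally smooth} (respectively, \Cref{def: internally proper}).

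To prove the general fact, let $G$ be the right adjoint of $F$. By \cite[Proposition 4.17]{BenMoshe2024}, $G$ is lax $\cV$-linear. So for $v\in\cV$ and $n\in\cN$ there is a natural comparison map $v\otimes G(n)\to G(v\otimes n)$, and we must show it is an equivalence. First we treat the case where $v$ is dualizable. Then $v\otimes-$ and $v^\vee\otimes-$ are mutually adjoint on both $\cM$ and $\cN$, and $F$ commutes with them. Passing to right adjoints gives $G(v\otimes -)\simeq v\otimes G(-)$, and a standard mate argument shows that this equivalence is the lax structure map.

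Next we extend to arbitrary $v$. Since $\cV$ is rigid, it is in particular dualizable and generated under colimits by dualizable objects. If $\cV$ is moreover compactly generated, as in the cases of interest such as $\Nuc(k)$, the compact objects are exactly the dualizable ones, and every $v$ is a filtered colimit of dualizable $v_i$. The left-hand side $v\otimes G(n)$ commutes with this colimit. The right-hand side $G(v\otimes n)$ commutes with it provided $G$ preserves filtered colimits, and this holds under the hypothesis of the lemma, where the right adjoint is assumed colimit-preserving. Hence the comparison map is a colimit of equivalences, so it is an equivalence.

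The main obstacle is that a general rigid $\cV$ need not be compactly generated, so dualizable objects need not generate it under colimits. For that case I would instead use the characterization of rigidity through the self-duality of $\cV$ in $\LPrV$ given by the multiplication and its colimit-preserving $\cV$-linear right adjoint, and appeal to \cite[Chapter 1, Sec.~9]{gaitsgory2019study}. There it is shown (Lemma 9.3.6 and its surroundings) that, over a rigid base, a lax $\cV$-linear functor whose underlying functor is continuous is strictly $\cV$-linear. In practice I would cite that lemma directly, in the same way \Cref{prop: compact objects over rigid categories} did, and keep the dualizable-object argument above only as the explanation in the compactly generated case.
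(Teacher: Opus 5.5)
Your proposal is correct and follows the paper's proof: both reduce the statement to the fact that, over a rigid base, a colimit-preserving right adjoint of a $\cV$-linear functor is automatically $\cV$-linear, and both ultimately rest on \cite[Lemma 9.3.6]{gaitsgory2019study}. One correction concerns your auxiliary sketch: a rigid $\cV$ need not be generated under colimits by dualizable objects, and $\Nuc(k)$ in particular is not compactly generated, since its compact objects are only the perfect $k$-complexes; so the dualizable-object argument does not cover the main case of interest, and the Gaitsgory--Rozenblyum citation is genuinely needed rather than a fallback.
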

\begin{proof}
    It suffices to verify that the right adjoint of a $\cV$-linear map is itself $\cV$-linear. This follows from the fact that $\cV$ is rigid (see \cite[Lemma 9.3.6]{gaitsgory2019study}).
\end{proof}

\begin{proposition}\label{thm: existence of a single compact generator}
     Let $\cV$ be a commutative algebra in $\LPr_\mathrm{st}$ with compact unit and let $\cC$ be a smooth \cat over $\cV$ which is $\cV$-atomically generated (equivalently, $\cV$-compactly generated). Then $\cC$ admits a single $\cV$-atomic (equivalently, compact) generator.
\end{proposition}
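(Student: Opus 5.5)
We adapt the argument of \cite[Proposition 11.3.2.4]{SAG} to the enriched setting. Let $\cC^{\mathrm{at}}$ be the (small, by \Cref{rmk: V-enriched presheaves}) full subcategory of $\cV$-atomic objects. It is closed under finite colimits, shifts and retracts, and $\cC$ is $\cV$-relatively generated by it. Since $\cC$ is $\cV$-atomically generated, it is dualizable. By \Cref{rmk: V-enriched presheaves} we have $\cC \simeq \cP_{\cV}(\cC^{\mathrm{at}})$, and the dual is $\cC^{\vee} \simeq \cP_{\cV}((\cC^{\mathrm{at}})^{\mathrm{op}})$.

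We would use that $\cP_{\cV}$ sends tensor products of small $\cV$-categories to $\otimes_{\cV}$. Then $\cC^{\vee}\otimes_{\cV}\cC$ is atomically generated by the objects $x^{\vee}\boxtimes y$ with $x,y \in \cC^{\mathrm{at}}$. The object $\coev(\boldone_{\cV})$, which classifies the identity functor, is the enriched bimodule $\hom^{\cV}(-,-)$. Via the enriched co-Yoneda lemma it is the colimit (enriched coend)
\[
\coev(\boldone_{\cV}) \simeq \colim_{i \in I} x_i^{\vee}\boxtimes x_i ,
\]
indexed over a filtered system $I$ of finite stages. Concretely, write the coend as the geometric realization of its bar construction. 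Then pass to the filtered colimit over finite subsets $F\subseteq \cC^{\mathrm{at}}$, taking the coends $E_F$ over the full $\cV$-subcategory on $F$ (or over finite skeleta of the bar complex). This exhibits $\coev(\boldone_{\cV}) \simeq \colim_F E_F$ as a filtered colimit, where each $E_F$ lies in the thick subcategory generated by objects of the form $x^{\vee}\otimes v\boxtimes y$ with $x,y\in F$.

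Next comes the key step. By smoothness and \Cref{rmk: smooth cat and compact unit}, $\coev(\boldone_{\cV})$ is $\cV$-atomic, hence compact because $\boldone_{\cV}$ is compact. So the identity of $\coev(\boldone_{\cV})$ factors through some $E_F$, and $\coev(\boldone_{\cV})$ is a retract of $E_F$. Apply the functor $\cC^{\vee}\otimes_{\cV}\cC \simeq \Fun^{\rL}_{\cV}(\cC,\cC)\to \cC$, evaluation at an arbitrary $c\in\cC$. This exhibits $c=\mathrm{id}(c)$ as a retract of an object built by finite colimits, shifts and the $\cV$-action from the objects of $F$.

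Set $G=\bigoplus_{x\in F}x$, which is atomic as a finite sum. Then every $c$ lies in the smallest subcategory containing $G$ that is closed under colimits, retracts and the $\cV$-action. Hence $\cC$ is $\cV$-relatively generated by $G$. Equivalently, by \Cref{rmk: relative generators and generated under colimits and action}, $\hom^{\cV}(G,-)$ is conservative. For the parenthetical "equivalently compact", we note that atomic objects are compact when $\boldone_{\cV}$ is compact, since $\Map(\boldone,\hom^{\cV}(G,-))$ commutes with filtered colimits.

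The main obstacle is the precise description of $\coev(\boldone_{\cV})$ as a filtered colimit of "finite stages" $E_F$, each controlled by finitely many atomic objects, in the enriched setting. This requires the compatibility $\cP_{\cV}(A)\otimes_{\cV}\cP_{\cV}(B)\simeq \cP_{\cV}(A\otimes B)$ and the enriched coend formula for the identity bimodule. We would first try to cite \cite{heine2023, BenMoshe2024} for these. Failing that, we would argue more crudely: $\coev(\boldone_{\cV})$ lies in the $\cV$-closure under colimits of the $x^{\vee}\boxtimes y$. Compactness of $\coev(\boldone_{\cV})$ would then let us write it as a retract of a finite construction, by expressing the localizing $\cV$-closure as a filtered union of the closures on finite sets $F$, each closed under filtered colimits appropriately.
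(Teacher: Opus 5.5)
Your proposal is correct and is essentially the paper's argument: both write $\mathrm{id}_\cC$ (i.e.\ $\coev(\boldone_\cV)$) as a filtered colimit of ``finite stages'' indexed by finite sets $F$ of $\cV$-atomic objects, use smoothness together with compactness of $\boldone_\cV$ to make the identity a retract of a single stage, and conclude that $\bigoplus_{x\in F}x$ generates. The paper produces the filtered colimit from $\colim_T \cP_\cV(\cC_T)\simeq \cP_\cV(\cC_S)$ rather than from a bar construction, and your ``main obstacle'' goes away if you take $E_F=i_F i_F^{R}$ for the inclusion $i_F\colon \widehat{\cC_F}\hookrightarrow \cC$ (its colimit over $F$ is $\mathrm{id}_\cC$, as you can check after applying $\hom^{\cV}(a,-)$ for each atomic $a$, cf.\ \Cref{rmk: relative generators and generated under colimits and action}); just note that $E_F(c)$ lies in the closure of $F$ under colimits and the $\cV$-action rather than in a thick subcategory, and that closure is all your final step needs.
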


\begin{proof}
    Let $S$ denote the set of $\cV$-atomic objects of $\cC$. For every subset $T \subset S$, let $\cC_{T}$ be the full subcategory of $\cC$ spanned by $T$, and let $\widehat{\cC_{T}}$ be the smallest full subcategory of $\cC$ containing $\cC_{T}$ that is closed under colimits and the action of $\cV$. Since $S$ is the set of $\cV$-atomic generators, we have a canonical equivalence
    \[
    \widehat{\cC_{S}} \simeq \cC.
    \]
    We claim that the canonical map 
    \begin{equation}\label{eq: proof single atomic generator}
    \colim_{T \subset S} \widehat{\cC_{T}} \to \cC
    \end{equation}
    is an equivalence, where the colimit is taken over the filtered system of finite subsets of $S$. By \Cref{rmk: V-enriched presheaves}, each $\widehat{\cC_{T}}$ is $\cV$-atomically generated by $T$ and can be identified with the \cat of $\cV$-enriched presheaves $\cP_{\cV}(\cC_{T})$. Similarly, $\cC$ is identified with $\cP_{\cV}(\cC_{S})$. Since $\cP_{\cV}(-)$ is a left adjoint (see \Cref{rmk: V-enriched presheaves}), it preserves colimits; thus, to show that \eqref{eq: proof single atomic generator} is an equivalence, it suffices to observe that the canonical map
    \[
    \colim_{T \subset S} \cC_{T} \to \cC_{S}
    \]
    is a tautological equivalence.
    
    We now observe that the coevaluation functor
    \[
    \mathrm{coev} \colon \cV \to \cC^{\vee} \otimes_{\cV} \cC
    \]
    classifies the identity functor $\mathrm{id}_{\cC}$. Since $\cC$ is smooth and $\boldone_\cV$ is a compact object, $\mathrm{id}_{\cC}$ is a compact object in $\Fun^{\mathrm{L}}(\cC, \cC)$ (see \Cref{rmk: smooth cat and compact unit}). Using \eqref{eq: proof single atomic generator}, we can express the identity as a filtered colimit
    \[
    \mathrm{id}_{\cC} \simeq \colim_{T \subset S} f_{T},
    \]
    where $f_{T} \colon \widehat{\cC_{T}} \to \cC$ is the natural inclusion. The compactness of $\mathrm{id}_{\cC}$ implies that the identity natural transformation
    \[
    \boldone_{\mathrm{id}_{\cC}} \colon \mathrm{id}_{\cC} \to \mathrm{id}_{\cC}
    \]
    factors through $f_{T}$ for some finite subset $T \subset S$. This implies that $\mathrm{id}_{\cC}$ factors through $\widehat{\cC_{T}}$, and in particular, that $\cC$ coincides with $\widehat{\cC_{T}}$. It follows that $\cC$ is generated by the single compact object $\bigoplus_{a_{i} \in T} a_{i}$.
\end{proof}

We now turn to a detailed study of the case where the \cat is $\LMod_{A}(\cV)$, for $A$ an algebra object in $\cV$.

\subsection{Modules over an algebra in $\cV$}
In this subsection, we study the \cat $\LMod_{A}(\cV)$ in detail. In particular, we show that it is generated under colimits and the action of $\cV$. We also analyze its dualizable and compact objects. Finally, we compare the notion of internal smoothness for $\LMod_{A}(\cV)$ to the smoothness of $A$ as an algebra in $\cV$ (see \Cref{prop: smooth cates vs smooth algebras}). This subsection, along with its main result (\Cref{prop: smooth cates vs smooth algebras}), will be particularly relevant when we study the compact generation of the \cat of nuclear sheaves, specifically in the proof of \Cref{thm: algebraization via generator}.

\begin{definition}
    Let $R$ be an $\bbE_{1}$-algebra in $\cV$, and let $X$ be an object in $\LMod_{R}(\cV)$. An object $Y$ in $\RMod_{R}(\cV)$ is said to be \emph{left dual} to $X$ if there exists an adjunction
    \begin{equation*}
        \adjunction{X\otimes-}{\cV}{\LMod_{R}(\cV)}{Y\otimes_{R}-},
    \end{equation*}
    where the tensor product on the left is given by the action of $\cV$ on $\LMod_{R}(\cV)$, and the one on the right is described in \Cref{eq: evaluation fucntor module categories}.
\end{definition}

\begin{lemma}
    Let $R$ be an $\bbE_{1}$-algebra in $\cV$. Then $R$ is $\cV$-atomic in $\LMod_{R}(\cV)$. Furthermore, for every $\cV$-atomic object $v$ in $\cV$, the object $R\otimes v$ is $\cV$-atomic in $\LMod_{R}(\cV)$.
\end{lemma}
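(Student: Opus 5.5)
The plan is to identify the functor $R\otimes - \colon \cV \to \LMod_{R}(\cV)$, i.e. the action of $\cV$ on the object $R$, with the free-module functor, and then read off $\mathrm{hom}^{\cV}(R,-)$ from the free--forgetful adjunction. The $\cV$-linear left adjoint $v \mapsto v\otimes R$ is exactly the free left $R$-module functor $\cV \to \LMod_{R}(\cV)$, with $\cV$ acting on $\LMod_R(\cV)$ from the side opposite to the $R$-action. Its right adjoint is therefore the forgetful functor $U \colon \LMod_{R}(\cV)\to\cV$. By the definition of $\mathrm{hom}^{\cV}$ as the right adjoint of $-\otimes R$, we get $\mathrm{hom}^{\cV}(R,-)\simeq U$.

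Atomicity of $R$ then reduces to two standard facts about $U$.
\begin{enumerate}
\item $U$ preserves colimits. In the stable presentable setting colimits in $\LMod_R(\cV)$ are computed underlying \cite[Corollary 4.2.3.5]{HA}, since $R\otimes -$ preserves colimits in each variable.
\item $U$ is $\cV$-linear. The action of $v\in\cV$ on a module $M$ is $v\otimes M$, with $R$ acting on the $M$ factor, so the canonical lax structure map $v\otimes U(M)\to U(v\otimes M)$ is the identity of underlying objects.
\end{enumerate}
Hence $R$ is compact relative to $\cV$, and $\mathrm{hom}^{\cV}(R,-)$ is $\cV$-linear, which is the definition of $\cV$-atomic (\Cref{def: atomic object}). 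The definition there was phrased for $\bbE_\infty$-algebras $\cC$, but only the $\cV$-module structure is actually used, so it applies to $\LMod_R(\cV)$.

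For the second claim, let $v\in\cV$ be $\cV$-atomic, i.e. $\mathrm{hom}^{\cV}_{\cV}(v,-)$ preserves colimits and is $\cV$-linear. The functor $-\otimes(R\otimes v)\colon \cV\to\LMod_R(\cV)$ factors as the composite of $-\otimes v\colon\cV\to\cV$ with $-\otimes R\colon\cV\to\LMod_R(\cV)$. Each factor is internally left adjoint: the first by the hypothesis on $v$, the second by the first claim. Internally left adjoint functors are closed under composition, since the right adjoint of the composite is the composite of the right adjoints and both colimit preservation and $\cV$-linearity compose. Explicitly,
\[
\mathrm{hom}^{\cV}(R\otimes v,M)\simeq \mathrm{hom}^{\cV}_{\cV}(v,U(M)).
\]
This is a composite of $\cV$-linear colimit-preserving functors, so $R\otimes v$ is $\cV$-atomic.

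The main obstacle is not conceptual but bookkeeping. One has to check that the identification $\mathrm{hom}^{\cV}(R,-)\simeq U$ is compatible with the lax $\cV$-linear structure of \cite[Proposition 4.17]{BenMoshe2024}, so that the lax structure maps really are the evident equivalences and not merely some maps between equivalent objects. To handle this, I would use the uniqueness of lax structures on right adjoints: the mate of the strict $\cV$-linear structure of the free functor is the structure of $U$, and we compute it directly on underlying objects.
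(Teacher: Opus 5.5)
Your proposal is correct and follows essentially the same route as the paper. The paper also identifies $\mathrm{hom}^{\cV}_{\LMod_{R}}(R,-)$ with the colimit-preserving, $\cV$-linear forgetful functor, and gets the second claim from the free functor $R\otimes-$ being internally left adjoint, hence preserving atomic objects; your composite $-\otimes v$ followed by $-\otimes R$ spells out exactly that. Your mate check on the lax structure, and your remark that only the $\cV$-module structure of $\LMod_{R}(\cV)$ is needed, fill in details the paper leaves implicit.
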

\begin{proof}
    The first assertion follows from the observation that $\mathrm{hom}^{\cV}_{\LMod_{R}}(R,-)$ can be identified with the forgetful functor
    \begin{equation*}
        \LMod_{R}(\cV) \to \cV.
    \end{equation*}
    This functor preserves colimits and is $\cV$-linear (see \cite[4.2.3.7]{HA}).
    Moreover, since the free module functor
    \begin{equation*}
        R\otimes-\colon \cV \to \LMod_{R}(\cV)
    \end{equation*}
    is an internal left adjoint, it preserves atomic objects.
\end{proof}

\begin{proposition}\label{prop: LModR is molecular}
    Let $R$ be an $\bbE_{1}$-algebra in $\cV$. Then $\LMod_{R}(\cV)$ is $\cV$-atomically generated. Moreover, if an \cat $\cC$ in $\LPrV$ is generated by a single $\cV$-atomic object $X$, then $\cC$ is equivalent to $\LMod_{R}(\cV)$, where $R$ is the $\bbE_{1}$-algebra $\hom^{\cV}(x,x)$.
\end{proposition}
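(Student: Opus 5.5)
The plan has two parts: first show that $R$ itself witnesses the atomic generation of $\LMod_{R}(\cV)$, then run an enriched Morita argument for the second claim. Throughout I use the criterion of \Cref{rmk: relative generators and generated under colimits and action}: a family $S$ relatively generates if and only if the functors $\hom^{\cV}(s,-)$, $s\in S$, are jointly conservative.

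For the first claim, the preceding lemma says that $R$ is $\cV$-atomic in $\LMod_{R}(\cV)$. Its enriched hom functor $\hom^{\cV}_{\LMod_R}(R,-)$ is identified with the forgetful functor $\LMod_{R}(\cV)\to\cV$. That forgetful functor is conservative, since a map of modules is an equivalence exactly when its underlying map in $\cV$ is. Hence $\{R\}$ relatively generates $\LMod_{R}(\cV)$, and in particular $\LMod_{R}(\cV)$ is $\cV$-atomically generated.

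For the second claim, let $x$ be a single $\cV$-atomic generator of $\cC$ and set $R=\hom^{\cV}(x,x)$. I would proceed in three steps.

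\begin{enumerate}
\item \emph{The algebra and module structures.} The functor $F=-\otimes x\colon\cV\to\cC$ is internally left adjoint by \Cref{def: atomic object}, so its right adjoint $G=\hom^{\cV}(x,-)$ is $\cV$-linear and colimit preserving. The monad $GF$ on $\cV$ is then $\cV$-linear and colimit preserving, so it is of the form $R'\otimes-$ for the $\bbE_1$-algebra $R'=GF(\boldone)=\hom^{\cV}(x,x)=R$. This uses the equivalence $\Fun^{\rL}_{\cV}(\cV,\cV)\simeq\cV$ as $\bbE_1$-monoidal categories, with composition corresponding to the tensor product. The functor $G$ therefore lifts to a $\cV$-linear functor $\tilde G\colon\cC\to\LMod_{R}(\cV)$.
\item \emph{Monadicity.} I would invoke the Barr--Beck--Lurie theorem \cite[Theorem 4.7.3.5]{HA} to show that $\tilde G$ is an equivalence. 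The functor $G$ preserves all colimits, in particular $G$-split geometric realizations. It is conservative by \Cref{rmk: relative generators and generated under colimits and action}, since $\{x\}$ relatively generates $\cC$. So $G$ is monadic, and $\cC\simeq\LMod_{T}(\cV)$ for the monad $T=GF$.
\item \emph{Identifying the monad.} From step 1, $T\simeq R\otimes-$, which gives $\LMod_{T}(\cV)\simeq\LMod_{R}(\cV)$.
\end{enumerate}

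Finally I would check that this equivalence is $\cV$-linear. This holds because $G$ is $\cV$-linear, so the comparison functor is a $\cV$-linear equivalence in $\LPrV$.

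The main obstacle is step 3. One needs the identification of the monad $GF$ with $R\otimes-$ as monads, not merely as endofunctors. I would obtain it from the $\bbE_1$-monoidal equivalence $\Fun^{\rL}_{\cV}(\cV,\cV)\simeq\cV$ given by evaluation at $\boldone$, combined with the fact that $\End(F)$ in the enriched sense is $\hom^{\cV}(x,x)$ \cite[Section 4.7]{HA}.
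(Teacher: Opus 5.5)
Your proof is correct. For the second claim it follows the paper's argument. The paper only says that the equivalence ``follows from an application of the Barr--Beck--Lurie theorem''. You supply what that application needs:
\begin{itemize}
\item colimit preservation of $\hom^{\cV}(x,-)$, which follows from atomicity;
\item conservativity, which follows from \Cref{rmk: relative generators and generated under colimits and action};
\item the identification of the monad, which is essentially the content of \cite[Proposition 4.8.5.8]{HA}. The paper cites that result when it repeats this argument in the proof of \Cref{thm: algebraization via generator}.
\end{itemize}

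For the first claim your route differs from the paper's. The paper uses the monadic (bar) resolution \cite[4.7.3.14]{HA} to show directly that every left $R$-module is a colimit of free modules $R\otimes v$. You instead deduce that $R$ relatively generates from two facts: the forgetful functor $\hom^{\cV}(R,-)$ is conservative, and joint conservativity implies relative generation by the criterion of \cite{reutter2025enrichedinftycategoriesmarkedmodule}. The paper's argument is more hands-on and gives an explicit presentation of every module by free ones, without appealing to that criterion. Yours is shorter and treats the two halves uniformly: the same criterion also gives the conservativity that Barr--Beck--Lurie requires in the second half, which the paper leaves implicit.

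One convention point in step 3. Give $\hom^{\cV}(x,x)$ the algebra structure for which $x$ is a left module. Then precomposition makes $\hom^{\cV}(x,c)$ a right module, so $GF(v)\simeq v\otimes \hom^{\cV}(x,x)$ is the monad of right $R$-modules. The comparison functor therefore lands in $\RMod_{R}(\cV)\simeq \LMod_{R^{\mathrm{op}}}(\cV)$, not $\LMod_R(\cV)$. This left/right ambiguity is already present in the statement; the paper itself writes $\RMod_A$ in \Cref{cor: non commutative algebra in alg geo}. It does not affect your argument.
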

\begin{proof}
    It suffices to show that $\LMod_{R}(\cV)$ is generated under colimits by objects of the form $R\otimes v$ with $v \in \cV$. This follows from \cite[4.7.3.14]{HA} by observing that $\LMod_{R}(\cV)$ is the category of modules over the monad $R\otimes-$. The latter statement follows from an application of the Barr--Beck--Lurie theorem.
\end{proof}

\begin{corollary}\label{cor: dualizable if and only if V-atomic}
    Let $R$ be an $\bbE_{1}$-algebra in $\cV$. Then an object $X$ in $\LMod_{R}(\cV)$ is dualizable if and only if it is $\cV$-atomic.
\end{corollary}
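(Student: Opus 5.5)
Here "dualizable" means that $X\in\LMod_R(\cV)$ has a left dual $Y\in\RMod_R(\cV)$ in the sense of the preceding definition. That is, the $\cV$-linear functor $X\otimes - \colon \cV \to \LMod_R(\cV)$ admits a right adjoint of the form $Y\otimes_R -$. Being $\cV$-atomic means that the right adjoint $\hom^{\cV}(X,-)$ of $X\otimes-$ is $\cV$-linear and preserves colimits (\Cref{def: atomic object}). So in both cases the statement concerns the right adjoint of the single functor $X\otimes -$, and the plan is to identify the two conditions directly.

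For the direction ``dualizable $\Rightarrow$ atomic'', suppose $Y$ is a left dual. The functor $Y\otimes_R - \colon \LMod_R(\cV)\to\cV$ is given by the relative tensor product (the bar construction $|Y\otimes R^{\otimes\bullet}\otimes -|$). It visibly preserves colimits, since it is a geometric realization of functors each of which preserves colimits. It is also $\cV$-linear, since $Y\otimes_R(v\otimes M)\simeq v\otimes(Y\otimes_R M)$ by symmetry of $\otimes$ on $\cV$ and commutation of $v\otimes-$ with geometric realizations. By uniqueness of adjoints, $\hom^{\cV}(X,-)\simeq Y\otimes_R-$, which is colimit-preserving and $\cV$-linear. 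Hence $X$ is $\cV$-atomic.

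For the converse, assume $X$ is $\cV$-atomic and set $G:=\hom^{\cV}(X,-)\colon \LMod_R(\cV)\to\cV$, a colimit-preserving $\cV$-linear functor. The key input is the Eilenberg--Watts type identification
\[
\Fun^{\mathrm{L}}_{\cV}(\LMod_R(\cV),\cV)\simeq \RMod_R(\cV), \qquad Y\mapsto Y\otimes_R -,
\]
with inverse $G\mapsto G(R)$, where the right $R$-action on $G(R)$ comes from $R^{\mathrm{op}}\to \mathrm{End}(R)$ acting on the free module. This is \cite[Theorem 4.8.4.1]{HA} applied in $\LPrV$, or equivalently the dualizability of $\LMod_R(\cV)$ over $\cV$ with dual $\RMod_R(\cV)$. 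Setting $Y:=G(R)=\hom^{\cV}(X,R)$ with its induced right $R$-module structure, we get $G\simeq Y\otimes_R-$. Thus $Y$ is a left dual of $X$.

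The main obstacle is justifying this identification at the $\cV$-linear level rather than just for spectra. To handle it, I would use \Cref{prop: LModR is molecular}: $\LMod_R(\cV)$ is generated under colimits and the $\cV$-action by $R$. Two colimit-preserving $\cV$-linear functors that agree on $R$, compatibly with the $R$-action, therefore agree everywhere. Concretely, I would write $M\simeq |R\otimes R^{\otimes\bullet}\otimes M|$ via the bar resolution and compare $G$ with $G(R)\otimes_R-$ termwise, using $\cV$-linearity of $G$ to get $G(R\otimes V)\simeq G(R)\otimes V$.
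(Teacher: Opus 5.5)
Your proof is correct and follows the same route as the paper. The forward direction uses uniqueness of right adjoints together with colimit-preservation and $\cV$-linearity of $Y\otimes_R-$; the converse sets $Y=\hom^{\cV}(X,R)$ and identifies $\hom^{\cV}(X,-)\simeq Y\otimes_R-$ because $\LMod_R(\cV)$ is generated by $R$ under colimits and the $\cV$-action. Your appeal to the Eilenberg--Watts equivalence $\Fun^{\mathrm{L}}_{\cV}(\LMod_R(\cV),\cV)\simeq\RMod_R(\cV)$ usefully makes explicit the compatibility with the $R$-action on $R$, which the paper's ``they coincide on $R$'' step uses only tacitly.
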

\begin{proof}
    Suppose that $X$ is dualizable in $\LMod_{R}(\cV)$. Then there exists an object $Y \in \RMod_{R}(\cV)$ and an equivalence
    \[
    \mathrm{hom}^{\cV}_{\LMod_{R}}(X,-) \simeq Y \otimes_{R}-.
    \]
    Since the functor $Y\otimes_{R}-$ preserves colimits and is $\cV$-linear (see \cite[4.4.3.14]{HA}), it follows that $X$ is $\cV$-atomic.
    Conversely, suppose that $X$ is $\cV$-atomic in $\LMod_{R}(\cV)$. We define
    \[
    X^{\vee} \coloneqq \mathrm{hom}^{\cV}_{\LMod_{R}}(X,R).
    \]
    This is canonically an object in $\RMod_{R}(\cV)$. To prove that $X^{\vee}$ is the dual of $X$, it suffices to show that the functor $X^{\vee}\otimes_{R}-$ is equivalent to $\mathrm{hom}^{\cV}_{\LMod_{R}}(X,-)$. This follows from \Cref{prop: LModR is molecular}: since both functors preserve colimits and are $\cV$-linear, it is enough to observe that they coincide when evaluated on $R$. Indeed, we have
    \[
    X^{\vee}\otimes_{R} R \simeq X^{\vee} = \mathrm{hom}^{\cV}_{\LMod_{R}}(X,R).
    \]
\end{proof}

We now turn to the notion of internal smoothness for the \cat $\LMod_{A}(\cV)$ and compare it to the smoothness of the algebra $A$. For instance, if $R$ is an algebra object in the \cat of spectra, then $\LMod_{R}$ is right dualizable with right dual $\LMod_{R^{op}}\simeq \RMod_{R}$. We will demonstrate that such results remain valid when working over our symmetric monoidal base $\cV$ instead of spectra. In particular, we prove that if $A$ is an $\bbE_{1}$-algebra in $\cV$, then the \cat of left $A$-modules, $\LMod_{A}(\cV)$, is always dualizable over $\cV$. Furthermore, if $\cV$ is rigid, then $\LMod_{A}(\cV)$ is also rigid.

We begin by observing that, as explained in \cite[4.1.1.7]{HA}, an involution can be defined on the $\infty$-operad \textbf{Assoc}, which allows for the definition of the opposite algebra for every algebra object in $\cV$. We identify $\RMod_{A}(\cV)$ with $\LMod_{A^{op}}(\cV)$ and show that this \cat is the dual of $\LMod_{A}(\cV)$.
To achieve this, we explicitly describe the coevaluation and evaluation functors and verify that they constitute a duality datum. 

The coevaluation functor is obtained as a $\cV$-linear map by sending the unit $\mathbbm{1}_{\cV}$ to $A$ equipped with the diagonal bimodule structure:
\begin{equation}\label{eq: covaluation fucntor module categories}
    \begin{tikzcd}[row sep=0.1in]
        {\mathrm{coev}\colon\cV} &&& {\LMod_{A}(\cV)\otimes \RMod_{A}(\cV)} \simeq \Mod_{A\otimesV A^{op}}(\cV) \\
        {\mathbbm{1}_{\cV}} &&& A_{\Delta}.
        \arrow[from=1-1, to=1-4]
        \arrow[maps to, from=2-1, to=2-4]
    \end{tikzcd}
\end{equation}
The diagonal bimodule structure on $A$ is obtained via the forgetful functor $m_{\ast} \colon \Mod_{A} \to \Mod_{A\otimes A^{op}}$ induced by the multiplication map $m \colon A\otimes A^{op} \to A$. It is straightforward to verify that the coevaluation functor has a right adjoint given by $\mathrm{hom}^{\cV}_{A\otimes A^{op}}(A_{\Delta},-)$.

The evaluation functor is defined via the relative tensor product; explicitly, it is given by the map:
\begin{equation}\label{eq: evaluation fucntor module categories}
    \begin{tikzcd}[row sep=0.1in]
        {\mathrm{ev} \colon \RMod_{A}(\cV)\otimes \LMod_{A}(\cV)} &&& \cV \\
        {M\boxtimes N} &&& {M \otimes_{A}N}. \\
        && {}
        \arrow[from=1-1, to=1-4]
        \arrow[maps to, from=2-1, to=2-4]
    \end{tikzcd}
\end{equation}
In particular, this functor sends $A^{op}\boxtimes A$ to $A$ (see \cite[Sec. 4.4]{HA} and \cite[4.2.1]{gaitsgory2019study}).

\begin{proposition}
    Let $\cV$ be a stable, presentable, symmetric monoidal \cat and let $A$ be an $\bbE_{1}$-algebra in $\cV$. Then $\LMod_{A}(\cV)$ is dualizable over $\cV$ with dual $\LMod_{A^{op}}(\cV)$.
\end{proposition}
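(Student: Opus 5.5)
We verify directly that the functors $\mathrm{coev}$ of \eqref{eq: covaluation fucntor module categories} and $\mathrm{ev}$ of \eqref{eq: evaluation fucntor module categories} form a duality datum in $\LPrV$. Both are $\cV$-linear and colimit preserving by construction. The point is that $\mathrm{coev}$ is determined by where it sends $\boldone_{\cV}$. Likewise, $\mathrm{ev}$ is determined by the relative tensor product, which is $\cV$-linear and colimit preserving in each variable \cite[4.4.3.14]{HA}.

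The first step is the identification
\[
\LMod_{A}(\cV)\otimesV \RMod_{A}(\cV)\simeq \Mod_{A\otimesV A^{op}}(\cV)\simeq {}_{A}\mathrm{BMod}_{A}(\cV).
\]
This follows from the general statement $\LMod_{A}(\cV)\otimesV \LMod_{B}(\cV)\simeq \LMod_{A\otimes B}(\cV)$ \cite[Theorem 4.8.4.6]{HA}, applied with $B=A^{op}$. Under this equivalence, a triple product $\LMod_A\otimesV\RMod_A\otimesV\LMod_A$ is identified with a category of ``$A$-$A$-bimodules tensored with left $A$-modules''. Concretely, an object $P\boxtimes N$ with $P$ a bimodule and $N$ a left module is sent by $\mathrm{ev}\boxtimes\mathrm{id}$ (suitably ordered) to the relative tensor product $P\otimes_{A}N$, which is a left $A$-module.

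Next we compute the zig-zag composite \eqref{eq: compatibility 1} for $\cC=\LMod_{A}(\cV)$. Starting from $N\in\LMod_A(\cV)$, the map $\mathrm{id}\boxtimes\mathrm{coev}$ produces $A_{\Delta}\boxtimes N$, and the evaluation contracts the right $A$-action of $A_\Delta$ against $N$. The result is therefore $A\otimes_{A}N\simeq N$, using the unit equivalence for relative tensor products \cite[Proposition 4.4.3.16]{HA}.

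To make this rigorous at the level of functors rather than objects, we reduce to generators. Both the composite and the identity are $\cV$-linear and colimit preserving. By \Cref{prop: LModR is molecular}, $\LMod_A(\cV)$ is generated under colimits and the $\cV$-action by $A$ itself. It therefore suffices to produce an equivalence of functors compatible with the $A$-action on $A$, i.e.\ an identification with $\mathrm{End}(A)\simeq A^{op}$ acting on the right. This holds because $A\otimes_A A\simeq A$ as $A$-$A$-bimodules. The second zig-zag \eqref{eq: compatibility 2} follows by the symmetric argument with $A^{op}$ in place of $A$, or directly from the first by the involution on \textbf{Assoc}.

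The main obstacle is coherence: upgrading the objectwise identification $A\otimes_AN\simeq N$ to a natural equivalence of $\cV$-linear functors, with the associativity of iterated relative tensor products in $\LPrV$. Our first attempt is to avoid hands-on coherence entirely by invoking the known result that $\RMod_A(\cV)$ is dual to $\LMod_A(\cV)$ in $\LPrV$, with exactly these ev and coev \cite[Theorem 4.8.4.1, Remark 4.8.4.8]{HA}. Alternatively, we can argue via $\Fun^{\rL}_{\cV}(\LMod_A(\cV),\cV)\simeq\RMod_A(\cV)$, using the Eilenberg--Watts type description $\Fun^{\rL}_{\cV}(\LMod_A,\LMod_B)\simeq{}_B\mathrm{BMod}_A$ \cite[Theorem 4.8.4.1]{HA}. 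Dualizability then follows once we check that the canonical map $\RMod_A\otimesV\cD\to\Fun^{\rL}_{\cV}(\LMod_A,\cD)$ is an equivalence for all $\cD\in\LPrV$; both sides identify with $\RMod_A(\cD)$.
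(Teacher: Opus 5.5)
Your proposal is correct and follows the same route as the paper, which also verifies the zig-zag identities directly for the explicit $\mathrm{coev}$ and $\mathrm{ev}$: it identifies the composite with $M\mapsto A_\Delta\otimes_A M\simeq M$ (citing \cite[Proposition 4.6.2.17]{HA}) and treats the second identity ``similarly''. Your additional remarks on upgrading this objectwise identification to an equivalence of $\cV$-linear functors, either by reducing to the generator $A$ or via $\Fun^{\rL}_{\cV}(\LMod_A(\cV),\cD)\simeq\RMod_A(\cD)$, address a coherence point the paper leaves implicit, but they do not change the argument.
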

\begin{proof}
    We need to check that the coevaluation and evaluation functors defined in \Cref{eq: covaluation fucntor module categories} and \Cref{eq: evaluation fucntor module categories} constitute a duality datum.
    We observe that the analogue of \Cref{eq: compatibility 1} is given by the composition
    \begin{equation*}
        \begin{tikzcd}
            {\LMod_{A}} && {\LMod_{A}\otimes \RMod_{A} \otimes \LMod_{A}} && {\LMod_{A}}
            \arrow["{\mathrm{coev} \otimes \mathrm{id}}", from=1-1, to=1-3]
            \arrow["{\mathrm{id} \otimes \mathrm{ev}}", from=1-3, to=1-5]
        \end{tikzcd}
    \end{equation*}
    which sends a module $M$ to $A_{\Delta}\otimes_{A} M$. By \cite[Proposition 4.6.2.17]{HA}, this composition is homotopic to the identity. Similarly, one can show that the analogue of the map in \Cref{eq: compatibility 2} is homotopic to the identity.
\end{proof}

\begin{proposition}\label{prop: smooth cates vs smooth algebras}
    Let $A$ be an $\bbE_{1}$-algebra in $\cV$. Then $A$ is smooth as an algebra if and only if the category of modules $\LMod_{A}(\cV)$ is internally smooth as a $\cV$-linear \cat. In particular, the notion of an internally smooth \cat is Morita invariant.
\end{proposition}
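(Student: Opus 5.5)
The plan is to reduce both conditions to statements about the single object $A_{\Delta} = \mathrm{coev}(\boldone_{\cV})$ in $\LMod_{A}(\cV)\otimesV \RMod_{A}(\cV)$. First I identify this tensor product with $\LMod_{A\otimesV A^{op}}(\cV)$, using the equivalence already invoked in \eqref{eq: covaluation fucntor module categories}. This holds because $\LMod_{A}(\cV)$ is the category of modules over the monad $A\otimes -$ (see \Cref{prop: LModR is molecular}), and relative tensor products of module categories over $\cV$ are module categories over the tensor product algebra \cite[Theorem 4.8.4.6]{HA}. Under this identification, $\mathrm{coev}$ is the $\cV$-linear functor $v\mapsto A_{\Delta}\otimes v$ described above. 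Its right adjoint is therefore $\mathrm{hom}^{\cV}_{A\otimes A^{op}}(A_{\Delta},-)$, as the paper notes.

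Next I translate internal smoothness. By \Cref{def: internally smooth}, $\LMod_{A}(\cV)$ is internally smooth exactly when $\mathrm{hom}^{\cV}_{A\otimes A^{op}}(A_{\Delta},-)$ is $\cV$-linear and preserves colimits. By \Cref{def: atomic object}, this says that $A_{\Delta}$ is a $\cV$-atomic object of $\LMod_{A\otimes A^{op}}(\cV)$; equivalently, $-\otimes A_{\Delta}\colon\cV\to \LMod_{A\otimes A^{op}}(\cV)$, which is precisely $\mathrm{coev}$, is internally left adjoint. Here I should note that \Cref{def: atomic object} is phrased for $\bbE_\infty$-algebras, but the formulation "$-\otimes_\cV x$ is internally left adjoint" makes sense in any $\cV$-linear category, and that is the one I use.

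Then I apply \Cref{cor: dualizable if and only if V-atomic} to the $\bbE_1$-algebra $R = A\otimesV A^{op}$. It says that $A_{\Delta}$ is $\cV$-atomic in $\LMod_{R}(\cV)$ if and only if it is dualizable as an $R$-module. By \Cref{def: smooth algebras}, the latter is exactly smoothness of $A$, which proves the equivalence. For Morita invariance, internal smoothness is by definition a property of the object $\LMod_{A}(\cV)\in\LPrV$. So if $\LMod_{A}(\cV)\simeq\LMod_{B}(\cV)$ $\cV$-linearly, then $A$ is smooth if and only if $B$ is.

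The main obstacle is the identification step. I must check that, under $\LMod_{A}\otimesV\RMod_{A}\simeq\LMod_{A\otimes A^{op}}(\cV)$, the coevaluation really corresponds to $A$ with its diagonal bimodule structure. I must also check that the enriched hom of the tensor product category matches the enriched hom of $R$-modules, so that atomicity in the two descriptions agrees. For this I would use that both sides are generated by the free objects $A\boxtimes A^{op}\otimes v$ and $R\otimes v$, and verify that the equivalence preserves these generators and the $\cV$-action. I would also verify that the duality data built in the preceding proposition is compatible with this choice of coevaluation, up to the equivalence of duals.
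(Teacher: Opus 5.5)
Your proposal is correct and follows essentially the same route as the paper. Both arguments identify the right adjoint of $\mathrm{coev}$ with $\mathrm{hom}^{\cV}_{A\otimes A^{op}}(A_{\Delta},-)$, so that internal smoothness becomes $\cV$-atomicity of $A_{\Delta}$, and then apply \Cref{cor: dualizable if and only if V-atomic} to $R=A\otimesV A^{op}$. The paper takes for granted the identification $\LMod_{A}(\cV)\otimesV\RMod_{A}(\cV)\simeq\LMod_{A\otimesV A^{op}}(\cV)$ and the matching of $\mathrm{coev}(\boldone_{\cV})$ with $A_{\Delta}$, which it builds into \eqref{eq: covaluation fucntor module categories}; your additional checks simply make explicit what the paper assumes there.
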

\begin{proof}
    By definition, $A$ is smooth as an algebra in $\cV$ if and only if $\mathrm{coev}(\boldone_{\cV}) \simeq A_{\Delta}$ is dualizable in $\LMod_{A\otimes_{\cV}A^{op}}(\cV)$. By invoking \Cref{cor: dualizable if and only if V-atomic}, we deduce that this holds if and only if $A_{\Delta}$ is $\cV$-atomic. In particular, this is equivalent to the condition that the right adjoint of the coevaluation map, given by
    \[
    \mathrm{hom}^{\cV}_{\LMod_{A\otimes A^{op}}}(A_{\Delta},-),
    \]
    preserves colimits and is $\cV$-linear.
\end{proof}

\subsection{Smooth categories in a six-functor formalism}

The notion of a \emph{six-functor formalism} has been extensively studied in recent years, particularly following the works of \cite{heyer20246functorformalismssmoothrepresentations,scholze2025sixfunctorformalisms, liu2024enhancedoperationsbasechange, zavyalov2023poincaredualityabstract6functor, kesting2025categoricalkunnethformulasanalytic}. These ideas have found significant applications in rigid analytic geometry and condensed mathematics, as illustrated, for instance, in \cite{mann2022padic6functorformalismrigidanalytic, camargo2024analyticrhamstackrigid, soor2025sixfunctorformalismquasicoherentsheaves}. Motivated by the objective of utilizing an appropriate $6$-functor formalism in analytic geometry to characterize smooth varieties, we first investigate the conditions under which smooth \cat arise within such a framework. This is established in \Cref{prop: smoothness in a 6 ff} and its subsequent corollaries. Specifically, in this section, we consider the following. 

\begin{setup}\label{assumptions 6 functor}
Let $\cC$ be an \cat arising from a geometric setup as defined in \cite[Definition 2.1.1]{heyer20246functorformalismssmoothrepresentations}, equipped with a class $E$ of $!$-able maps. Furthermore, let
\[
\cD \colon \cC \to \LPr_{\cD(X)}
\]
be a symmetric monoidal, stable, and presentable $6$-functor formalism, as in \cite[Definition 3.2.1]{heyer20246functorformalismssmoothrepresentations}.
\end{setup}

In this setup, employing the same argument as in \cite[Corollary 1.2.10]{montagnani2025axiomaticapproachanalytic1affineness}, we observe that for every $!$-able map $Y \to X$, the category $\cD(Y)$ is dualizable over $\cD(X)$. Indeed, utilizing the span diagrams
\[
\begin{tikzpicture}[scale=0.75,baseline=(current bounding box.center)]
\node (a) at (0,2){$Y$};
\node (b) at (-2,0){$X$};
\node (c) at (2,0){$Y\times_{X} Y$};
\draw[->,font=\scriptsize](a) to[bend right] node[left]{$f$}(b);
\draw[->,font=\scriptsize](a) to[bend left] node[right]{$\Delta$}(c);
\end{tikzpicture}\quad\text{ and }\quad\begin{tikzpicture}[scale=0.75,baseline=(current bounding box.center)]
\node (a) at (0,2){$Y$};
\node (c) at (2,0){$X$};
\node (b) at (-2,0){$Y\times_{X} Y$};
\draw[->,font=\scriptsize](a) to[bend left] node[right]{$f$}(c);
\draw[->,font=\scriptsize](a) to[bend right] node[left]{$\Delta$}(b);
\end{tikzpicture}
\]
we can explicitly express the evaluation and coevaluation maps as follows:
\begin{equation}\label{eq: coevaluation 6 functor formalism}
    \begin{tikzcd}
	{\coev\colon\cD(X)} && {\cD(Y)} && {\cD(Y\times_{X}Y)}
	\arrow["{f^*}"', from=1-1, to=1-3]
	\arrow["{\Delta_{!}}"', from=1-3, to=1-5]
\end{tikzcd}
\end{equation}
\begin{equation}\label{eq: evaluation 6 functor formalism}
    \begin{tikzcd}
	{\ev\colon\cD(Y\times_{X}Y)} && {\cD(Y)} && {\cD(X)}
	\arrow["{\Delta^*}"', from=1-1, to=1-3]
	\arrow["{f_!}"', from=1-3, to=1-5]
\end{tikzcd}
\end{equation}
Observe that the pullback functors are inherently linear. Moreover, the projection formula implies that $f_!$ is $\cD(X)$-linear, and \Cref{eq: Delta_! is linear} demonstrates that $\Delta_{!}$ is also $\cD(X)$-linear. Consequently, both the evaluation and coevaluation functors are $\cD(X)$-linear.

Their right adjoints admit explicit descriptions: the right adjoint to the evaluation functor is given by
\[
\ev^{\mathrm{R}}\simeq \Delta_{*} \circ f^!,
\]
while the right adjoint to the coevaluation functor is given by
\[
\mathrm{coev}^{\mathrm{R}} \simeq f_* \circ \Delta^!.
\]
To determine when the \cat $\cD(Y)$ is smooth over $\cD(X)$, we must investigate the conditions under which the functor $\mathrm{coev}^{\mathrm{R}}$ preserves colimits.

\begin{proposition}\label{prop: smoothness in a 6 ff}
    Let $\cC$ and $\cD$ be as in \Cref{assumptions 6 functor}, and let $f\colon Y \to X$ be a $!$-able map in $\cC$. Then the category $\cD(Y)$ is smooth over $\cD(X)$ (internally smooth) provided the following conditions are satisfied:
    \begin{enumerate}
        \item The functor $f_*$ preserves colimits (and, moreover, is $\cD(X)$-linear).
        \item The object $\Delta_{!}\boldone_{\cD(Y)}$ is relatively compact over $\cD(Y)$ in $\cD(Y \times_X Y)$ (and, moreover, is $\cD(Y)$-atomic).
        \item Letting $\mathrm{pr} \colon Y \times_{X} Y \to Y$ denote one of the projections, we require $\mathrm{pr}_{\ast}$ to preserve colimits (and, moreover, to be $\cD(X)$-linear).
    \end{enumerate}
\end{proposition}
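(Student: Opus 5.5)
We must show that $\mathrm{coev}^{\mathrm R}\simeq f_*\circ\Delta^!$ preserves colimits (and, for internal smoothness, is $\cD(X)$-linear). The plan is to compute $\Delta^!$ through an internal hom against $\Delta_!\boldone_{\cD(Y)}$, and then push forward along a projection instead of along the diagonal.

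\emph{Step 1.} We have $f=\mathrm{pr}\circ\Delta$, so $f_*\simeq \mathrm{pr}_*\circ\Delta_*$ and
\[
\mathrm{coev}^{\mathrm R}\simeq \mathrm{pr}_*\circ\Delta_*\Delta^!.
\]
Here $\Delta_*$ is computed in the six-functor formalism; this does not need condition (1) explicitly, although (1) and (3) are compatible with it.

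\emph{Step 2.} Identify $\Delta_*\Delta^!$ with an internal hom. For $M\in\cD(Y\times_XY)$ and $N\in\cD(Y)$, the projection formula gives $\Delta_!(\Delta^*M\otimes N)\simeq M\otimes\Delta_!N$. Adjunction then yields
\[
\Delta_*\Delta^!(-)\simeq \uline{\Hom}_{\cD(Y\times_XY)}(\Delta_!\boldone_{\cD(Y)},-).
\]

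\emph{Step 3.} Pass to the enriched hom. View $\cD(Y\times_XY)$ as $\cD(Y)$-linear via $\mathrm{pr}^*$. Then
\[
\mathrm{pr}_*\,\uline{\Hom}(\Delta_!\boldone,-)\simeq \mathrm{hom}^{\cD(Y)}(\Delta_!\boldone,-),
\]
since for $v\in\cD(Y)$ both sides corepresent $\Map(\mathrm{pr}^*v\otimes\Delta_!\boldone,-)$. Therefore
\[
\mathrm{coev}^{\mathrm R}\simeq \mathrm{hom}^{\cD(Y)}\bigl(\Delta_!\boldone,-\bigr)\quad\text{viewed in }\cD(Y),
\]
followed by whatever identification makes the target $\cD(X)$. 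More precisely, I would check that the composite $\cD(Y\times_XY)\to\cD(Y)\to\cD(X)$ is $f_*\Delta^!$. An alternative is to keep $\mathrm{pr}_*$ and $f'_*$ separate, writing $f_*\Delta^!\simeq f_*\mathrm{pr}_*(\ldots)$ up to reorganising. The safer route is $f_*\Delta^! \simeq f_*\,\mathrm{hom}^{\cD(Y)}(\Delta_!\boldone,-)$, where $f_*\colon\cD(Y)\to\cD(X)$.

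Condition (2) makes $\mathrm{hom}^{\cD(Y)}(\Delta_!\boldone,-)$ colimit-preserving, and condition (1) makes $f_*$ colimit-preserving, so the composite preserves colimits. Condition (3) is used in Step 3: it makes $\mathrm{pr}_*$, and hence the identification of the enriched hom with $\mathrm{pr}_*\uline{\Hom}$, compatible with colimits, and it lets us restrict the enrichment along $\mathrm{pr}^*$ consistently.

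\emph{Step 4 (internal version).} If $\Delta_!\boldone$ is $\cD(Y)$-atomic, then $\mathrm{hom}^{\cD(Y)}(\Delta_!\boldone,-)$ is $\cD(Y)$-linear, hence $\cD(X)$-linear after restriction along $f^*$. If in addition $f_*$ and $\mathrm{pr}_*$ are $\cD(X)$-linear, the composite is $\cD(X)$-linear. One then checks that the resulting linear structure agrees with the lax structure of the right adjoint, so that it really is the right adjoint in $\LPr_{\cD(X)}$.

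\emph{Main obstacle.} The delicate point is Step 3: correctly matching the $\cD(Y)$-enriched hom in $\cD(Y\times_XY)$ with $\mathrm{pr}_*\uline{\Hom}$, and confirming that the resulting composite is genuinely $f_*\Delta^!$. This requires the projection formula for $\Delta_!$ and careful bookkeeping of which projection carries the enrichment.
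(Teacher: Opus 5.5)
Once the bookkeeping is fixed, your argument is the paper's. You write $\mathrm{coev}^{\mathrm R}\simeq f_*\circ\Delta^!$, identify $\Delta^!\simeq\mathrm{pr}_*\uline{\Hom}(\Delta_!\boldone,-)\simeq\mathrm{hom}^{\cD(Y)}(\Delta_!\boldone,-)$, and conclude with (2) and (1). The paper reaches the same formula by writing $\Delta_!(M)\simeq\Delta_!\boldone\otimes\mathrm{pr}^*M$ (projection formula) and passing to right adjoints. Your route through $\Delta_*\Delta^!\simeq\uline{\Hom}(\Delta_!\boldone,-)$ in Step 2 is equally valid.

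The one real error is in Step 1. There $\mathrm{pr}\circ\Delta=\mathrm{id}_Y$, not $f$. So $\mathrm{pr}_*\Delta_*\Delta^!\simeq\Delta^!\colon\cD(Y\times_XY)\to\cD(Y)$, and it is not $\mathrm{coev}^{\mathrm R}$, which lands in $\cD(X)$.

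With this correction, Steps 2--3 give directly
$\Delta^!\simeq\mathrm{pr}_*\Delta_*\Delta^!\simeq\mathrm{pr}_*\uline{\Hom}(\Delta_!\boldone,-)\simeq\mathrm{hom}^{\cD(Y)}(\Delta_!\boldone,-)$.
The ``main obstacle'' you flag therefore does not exist: no reorganisation is needed, the composite is $f_*\circ\Delta^!$ by definition, and your ``safer route'' is exactly right.

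The Step 3 identification is pure adjunction. Equivalently, $\mathrm{hom}^{\cD(Y)}(\Delta_!\boldone,-)$ is right adjoint to $v\mapsto\mathrm{pr}^*v\otimes\Delta_!\boldone\simeq\Delta_!v$. So it does not use condition (3), contrary to what you suggest. Condition (2) alone makes $\Delta^!$ colimit-preserving; in the atomic case it also makes $\Delta^!$ $\cD(Y)$-linear, hence $\cD(X)$-linear. Condition (1) then handles $f_*$.
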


\begin{proof}
We prove only the smoothness of $\cD(Y)$ over $\cD(X)$; internal smoothness follows by an analogous argument.
Using the projection formula, we observe that $\Delta_{!}(-)$ acts as a $\cD(Y)$-linear functor. Indeed, for all $M,N$ in $\cD(Y)$, we have the following natural equivalences:
\begin{equation}\label{eq: Delta_! is linear}
    \Delta_{!}(M\otimes N) \simeq \Delta_{!}(M \otimes \Delta^{\ast}\mathrm{pr}^{\ast}N) \simeq \Delta_{!}(M)\otimes \mathrm{pr}^{\ast}(N).
\end{equation}
In particular, for every $M$ in $\cD(Y)$, we can express $\Delta_{!}(M)$ as follows:
\[ 
\Delta_{!}(M) \simeq \Delta_{!}(\boldone_{\cD(Y)}\otimes M)\simeq \Delta_{!}\boldone_{\cD(Y)}\otimes \mathrm{pr}^{\ast}(M).
\]
It follows that its right adjoint can be explicitly written as 
\[
\mathrm{pr}_{\ast} \uline{\Hom}_{\cD(Y\times_{X} Y)}(\Delta_{!}\boldone_{\cD(Y)},(-)).
\]
Consequently, by invoking the hypotheses, this right adjoint preserves colimits, thus defining a functor in $\LPr$. By condition (1), $f_{\ast}$ also belongs to $\LPr$.
\end{proof}

We can specialize \Cref{prop: smoothness in a 6 ff} to two cases of particular interest. To do this, we first recall the definitions of suave and prim maps. 

\begin{definition}\cite[Lemma 4.4.6]{heyer20246functorformalismssmoothrepresentations}\label{def: prim maps}
Let $\cD$ be a $6$-functor formalism as in \Cref{assumptions 6 functor}, let $f\colon X \to S$ be a $!$-able map, and let $P \in \mathcal{D}(X)$. Then $P$ is said to be $f$-prim if and only if the natural map
\[
    f_! \left( \pi_{2 \ast} \uline{\Hom}(\pi_{1}^* P, \Delta_! \boldone) \otimes P \right) \longrightarrow f_* \uline{\Hom}(P, P)
\]
becomes an isomorphism after applying $\mathrm{Hom}(\boldone, -)$. Here, $\pi_{i}\colon X \times_S X \to X$ denote the two projections, and $\Delta\colon X \to X \times_S X$ is the diagonal map.
Moreover, if $P$ is $f$-prim, we denote by $\mathbb{D}_f(P)$ the object $\pi_{2 \ast} \uline{\Hom}(\pi_{1}^* P, \Delta^! \boldone)$.\\ We say that $f$ is $\cD$-prim if $\boldone_{X}$ is $f$-prim. In this case, we denote by $\delta_{f}$ the object 
$\mathbb{D}_f(\boldone_{X})$.
\end{definition}

\begin{definition}\cite[Lemma 4.4.5]{heyer20246functorformalismssmoothrepresentations}\label{def: suave maps}
Let $\cD$ be a $6$-functor formalism as in \Cref{assumptions 6 functor}, let $f\colon X \to S$ be a $!$-able map, and let $P \in \cD(X)$. Then $P$ is said to be $f$-suave if and only if the natural map
\[
    \pi_{1}^* \uline{\Hom}(P, f^! \boldone) \otimes \pi_{2}^* P \longrightarrow \uline{\Hom}(\pi_{1}^* P, \pi_{2}^! P)
\]
becomes an isomorphism after applying $\mathrm{Hom}(\boldone, \Delta^!(-))$. In this case, we define $\mathbb{SD}_f(P)$ to be $\uline{\Hom}(P, f^! \boldone)$. \\ We say that $f$ is $\cD$-suave if $\boldone_{X}$ is $f$-suave. In this case, we denote by $\omega_{f}$ the object $\mathbb{SD}_f(\boldone)$.
\end{definition}

\begin{proposition}\cite[Corollary 4.5.11]{heyer20246functorformalismssmoothrepresentations}\label{prop: prima and suave maps}
Let $\cD$ be a $6$-functor formalism as in \Cref{assumptions 6 functor}, and let $f\colon X \to Y$ be a $!$-able map.
\begin{enumerate}
    \item If $f$ is $\mathcal{D}$-suave, then the natural maps
    \[
        \omega_f \otimes f^*(-) \xrightarrow{\sim} f^!(-)  \quad \text{and} \quad f^*(-) \xrightarrow{\sim}  \uline{\Hom}(\omega_f, f^!(-))
    \]
    are isomorphisms of functors $\cD(Y) \to \cD(X)$.
     
    \item If $f$ is $\mathcal{D}$-prim, then the natural maps
    \[
        f_!(\delta_f \otimes -) \xrightarrow{\sim} f_*(-) \quad \text{and} \quad f_!(-) \xrightarrow{\sim} f_* \uline{\Hom}(\delta_f, -)
    \]
    are isomorphisms of functors $\cD(X) \to \cD(Y)$.
\end{enumerate}
\end{proposition}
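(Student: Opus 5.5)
This result is cited from \cite[Corollary 4.5.11]{heyer20246functorformalismssmoothrepresentations}. To reprove it, I would use the characterization of suave and prim objects by adjunctions in the category of cohomological correspondences (the $2$-category $\mathrm{K}_{\cD,S}$ of \emph{loc.\ cit.}). In that category, a kernel $P\in\cD(X)$ gives a $1$-morphism $X\to S$. Being $f$-suave means that $P$ is a left adjoint there, with right adjoint $\mathbb{SD}_f(P)$ viewed as a $1$-morphism $S\to X$. Being $f$-prim means that $P$ is a right adjoint, with left adjoint $\mathbb{D}_f(P)$. Given these adjunctions, the strategy is to apply the $2$-functor from correspondences to $\LPr$ that sends a kernel $K$ on $X\times_S Y$ to the functor $\pi_{2!}(\pi_1^*(-)\otimes K)$. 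A $2$-functor preserves adjunctions, so we obtain adjunctions of functors between $\cD(X)$ and $\cD(S)$. These can be compared with the known adjunctions $f^*\dashv f_*$ and $f_!\dashv f^!$, and uniqueness of adjoints then gives the stated isomorphisms.

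\textbf{Part (1).} Suppose $f$ is suave, so $\boldone_X\dashv\omega_f$ in the correspondence category.
\begin{itemize}
\item The kernel $\boldone_X$, read as $X\to S$, is sent to $f_!$. The kernel $\omega_f$, read as $S\to X$, is sent to $\omega_f\otimes f^*(-)$.
\item The adjunction is therefore sent to $f_!\dashv \omega_f\otimes f^*(-)$. Uniqueness of right adjoints gives $\omega_f\otimes f^*\simeq f^!$.
\item Reading the same adjunction in the opposite orientation of correspondences sends $\omega_f$ to $f_!(\omega_f\otimes-)$ and $\boldone_X$ to $f^*$. This gives $f_!(\omega_f\otimes -)\dashv f^*$.
\item Taking right adjoints on both sides yields $f^*\simeq \uline{\Hom}(\omega_f,f^!(-))$.
\end{itemize}

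\textbf{Part (2).} Suppose $f$ is prim, so $\delta_f\dashv \boldone_X$.
\begin{itemize}
\item The kernel $\boldone_X$, read as $S\to X$, is sent to $f^*$, and $\delta_f$ is sent to $f_!(\delta_f\otimes-)$. This gives $f_!(\delta_f\otimes -)\dashv f^*$.
\item Comparing with $f^*\dashv f_*$ is not enough on its own, since it only pins down a right adjoint of $f^*$. Instead I would apply the $2$-functor to the adjunction with the opposite orientation. This sends $\boldone_X$ to $f_!$ and $\delta_f$ to $\delta_f\otimes f^*(-)$, giving $f^*(-)\otimes\delta_f\dashv f_!$.
\item The left adjoint of $f^*(-)\otimes\delta_f$ is $f_*\uline{\Hom}(\delta_f,-)$, obtained by composing the adjunctions $(-)\otimes\delta_f\dashv\uline{\Hom}(\delta_f,-)$ and $f^*\dashv f_*$. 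Hence $f_!\simeq f_*\uline{\Hom}(\delta_f,-)$.
\item For the first formula, the unit and counit of the prim adjunction give $f_*\simeq f_!(\delta_f\otimes-)$ directly.
\end{itemize}
In each case, the last step is to check that the isomorphism obtained by uniqueness of adjoints agrees with the stated natural map. This holds because both arise as mates of the same base-change and projection-formula transformations.

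\textbf{Main obstacle.} The hard step is justifying the equivalence between the pointwise definitions (\Cref{def: prim maps} and \Cref{def: suave maps}, which only require an isomorphism after applying $\mathrm{Hom}(\boldone,-)$) and the existence of the adjunctions in the correspondence $2$-category. I would build the unit and counit explicitly from $\Delta_!$, $\Delta^!$ and the given natural maps. Then I would verify the triangle identities using base change along the diagonal of $X\times_S X$. That is the content of Lemmas 4.4.5 and 4.4.6 of \emph{loc.\ cit.}, and I would cite them rather than reprove them.
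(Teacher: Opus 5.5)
Your strategy is the one used in the cited source: realize suave and prim as adjunctions in $\mathrm{K}_{\cD,S}$ and push them to $\LPr$ along the $2$-functor $K\mapsto \pi_{2!}(\pi_1^*(-)\otimes K)$. The paper gives no proof of its own, only the reference. Part (1) of your proposal is correct, but Part (2) contains a false step.

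\textbf{The error.} The swap $X\times_S Y\simeq Y\times_S X$ reverses $1$-morphisms but not $2$-morphisms. So reading an adjunction in the opposite orientation exchanges left and right adjoints. From $\delta_f\dashv\boldone_X$ (with $\delta_f\colon S\to X$) you get $\boldone_X\dashv\delta_f$ with $\boldone_X\colon S\to X$. Its image is $f^*\dashv f_!(\delta_f\otimes-)$, not $f_!(\delta_f\otimes -)\dashv f^*$. You did this swap correctly in Part (1), where $\boldone_X\dashv\omega_f$ became $f_!(\omega_f\otimes-)\dashv f^*$, but not in Part (2).

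The adjunction you wrote is false in general. When $f_!\simeq f_*$, we have $\delta_f\simeq\boldone$; this holds for every $!$-able map in the nuclear formalism of this paper. Your adjunction then reads $f_*\dashv f^*$. This already fails for $f\colon\Spec(k[t])\to\Spec(k)$ in the quasi-coherent formalism with $f_!=f_*$. There it would make the forgetful functor left adjoint to $k[t]\otimes_k-$. But $\Hom_k(k[t],k)\simeq\prod_{n}k$, which is not $k[t]$.

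\textbf{Consequence and repair.} Your proposal contains no valid argument for $f_!(\delta_f\otimes-)\simeq f_*$. The final bullet (``the unit and counit give it directly'') is only an assertion, and it contradicts your first bullet. The fix is exactly the comparison you dismissed. With the correct orientation, $f^*\dashv f_!(\delta_f\otimes-)$ and $f^*\dashv f_*$ give $f_*\simeq f_!(\delta_f\otimes-)$ by uniqueness of right adjoints.

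\textbf{A smaller slip.} In your third bullet, $f_*\uline{\Hom}(\delta_f,-)$ is the right adjoint of $f^*(-)\otimes\delta_f$, not the left adjoint. Your own composition of $(-)\otimes\delta_f\dashv\uline{\Hom}(\delta_f,-)$ with $f^*\dashv f_*$ shows this. With that correction, $f_!\simeq f_*\uline{\Hom}(\delta_f,-)$ follows as you say. The remaining steps are fine: matching the isomorphisms with the stated natural maps, and citing Lemmas 4.4.5--4.4.6 for the equivalence with the pointwise criteria.
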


\begin{corollary}\label{cor: smooth categories 6ff suave and prim case}
    Let $\cC$ and $\cD$ be as in \Cref{assumptions 6 functor}, and let $f \colon Y \to X$ be a $!$-able map in $\cC$. Then the category $\cD(Y)$ is internally smooth over $\cD(X)$ provided that $f$ is $\cD$-prim and $\Delta_f$ is $\cD$-suave.
\end{corollary}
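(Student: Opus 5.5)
The plan is to verify the three conditions of \Cref{prop: smoothness in a 6 ff} in their internal form, using \Cref{prop: prima and suave maps} to convert all the $*$-pushforwards and $!$-pullbacks into $!$-pushforwards and $*$-pullbacks. The latter are colimit preserving and linear by the projection formula.

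\textbf{Condition (1).} Since $f$ is $\cD$-prim, \Cref{prop: prima and suave maps}(2) gives $f_*(-) \simeq f_!(\delta_f \otimes -)$. Tensoring with $\delta_f$ is colimit preserving and $\cD(X)$-linear, because $\cD(Y)$ is $\cD(X)$-linear via $f^*$. The functor $f_!$ is $\cD(X)$-linear and colimit preserving by the projection formula. Hence $f_*$ has both properties.

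\textbf{Condition (3).} I will realise $\mathrm{pr}\colon Y\times_X Y \to Y$ as a base change of $f$ along $f$. Primness is stable under base change in the formalism of \cite{heyer20246functorformalismssmoothrepresentations}, so $\mathrm{pr}$ is $\cD$-prim with $\delta_{\mathrm{pr}}$ the pullback of $\delta_f$. The same argument as in (1) then shows that $\mathrm{pr}_*$ is colimit preserving. It is also linear over $\cD(Y)$, and hence over $\cD(X)$.

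\textbf{Condition (2).} This is the step I expect to be the main obstacle. We must show that $\Delta_!\boldone_{\cD(Y)}$ is $\cD(Y)$-atomic in $\cD(Y\times_X Y)$, meaning that $\mathrm{pr}_*\uline{\Hom}(\Delta_!\boldone,-)$ is colimit preserving and $\cD(Y)$-linear.

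1. By adjunction, $\uline{\Hom}(\Delta_!\boldone,-) \simeq \Delta_*\Delta^!(-)$, using the internal form of the $\Delta_!\dashv\Delta^!$ adjunction coming from the projection formula.
2. Then $\mathrm{pr}_*\uline{\Hom}(\Delta_!\boldone,-) \simeq \mathrm{pr}_*\Delta_*\Delta^!(-) \simeq \Delta^!(-)$, because $\mathrm{pr}\circ\Delta = \mathrm{id}$.
3. This composite is exactly the right adjoint of $\Delta_!$, so it suffices to show that $\Delta^!$ is colimit preserving and $\cD(Y)$-linear.
4. Since $\Delta$ is $\cD$-suave, \Cref{prop: prima and suave maps}(1) gives $\Delta^!(-) \simeq \omega_\Delta \otimes \Delta^*(-)$. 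This is manifestly colimit preserving.
5. For linearity, take $M\in\cD(Y)$ acting through $\mathrm{pr}^*$. Then $\Delta^*(\mathrm{pr}^*M\otimes N)\simeq M\otimes \Delta^*N$, and tensoring with $\omega_\Delta$ commutes with this.

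Together these give condition (2) in its atomic form.

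With all three conditions established, \Cref{prop: smoothness in a 6 ff} yields that $\cD(Y)$ is internally smooth over $\cD(X)$. Concretely, $\mathrm{coev}^{\mathrm R}\simeq f_*\Delta^!\simeq f_!(\delta_f\otimes\omega_\Delta\otimes\Delta^*(-))$, which is visibly colimit preserving and $\cD(X)$-linear.

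If base-change stability of primness in (3) turns out to be delicate, I can bypass it. Using the formula just displayed, I would verify directly that $f_*\Delta^!$ is a composite of linear colimit-preserving functors, skipping the separate conditions on $\mathrm{pr}_*$.
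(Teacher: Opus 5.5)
Your proposal is correct and follows essentially the same route as the paper. You verify the hypotheses of \Cref{prop: smoothness in a 6 ff} using $f_*\simeq f_!(\delta_f\otimes-)$, base-change stability of primness for $\mathrm{pr}$ (the paper cites the same Heyer--Mann lemma), and the identification of the $\cD(Y)$-enriched hom out of $\Delta_!\boldone$ with $\Delta^!\simeq\omega_\Delta\otimes\Delta^*(-)$. Your computation $\mathrm{pr}_*\uline{\Hom}(\Delta_!\boldone,-)\simeq\mathrm{pr}_*\Delta_*\Delta^!(-)\simeq\Delta^!(-)$ just unpacks the paper's remark that both functors are right adjoint to $\Delta_!$, and your fallback $\mathrm{coev}^{\mathrm{R}}\simeq f_!(\delta_f\otimes\omega_\Delta\otimes\Delta^*(-))$ is a valid shortcut.
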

\begin{proof}
    We verify the hypotheses of \Cref{prop: smoothness in a 6 ff}. Since $f$ is $\cD$-prim, $\mathrm{pr}$ is also $\cD$-prim (see \cite[Lemma 4.4.8]{heyer20246functorformalismssmoothrepresentations}).
    Invoking \Cref{prop: prima and suave maps}, we obtain two equivalences:
    \[
    \mathrm{pr}_{!}(\delta_{\mathrm{pr}}\otimes -) \simeq \mathrm{pr}_{\ast}(-) \quad \text{and} \quad f_{!}(\delta_{f}\otimes-) \simeq  f_{\ast}(-).
    \]
    Therefore, both $f_*(-)$ and $\mathrm{pr}_{*}(-)$ preserve colimits and are $\cD(X)$-linear by the projection formula.
    Using the $\cD(Y)$-linearity of $\Delta_{f,!}$, we obtain an equivalence
    \[
    \hom^{\cD(Y)}_{\cD(Y\times_{X}Y)}(\Delta_{f,!}\boldone, -) \simeq \Delta_{f}^{!}(-),
    \]
    since both functors are right adjoints to $\Delta_{f,!}(-)$. Furthermore, by \Cref{prop: prima and suave maps}, we can identify the internal hom functor with $\omega_{\Delta_f}\otimes\Delta_{f}^{\ast}(-)$, which is a colimit-preserving $\cD(Y)$-linear functor. This implies that $\Delta_{f,!}\boldone$ is $\mathscr{D}(Y)$-atomic. 
\end{proof}

\begin{remark}\label{remark: right adjoint to coev}
    Let $\cC$ and $\cD$ be as in \Cref{assumptions 6 functor}, and let $f\colon Y \to X$ be a $!$-able map. As observed earlier via \Cref{eq: Delta_! is linear}, the coevaluation functor is inherently $\cD(X)$-linear. Consequently, the right adjoint to $\coev$ can be identified with the $\cD(X)$-enriched hom functor:
    \begin{equation}
        \coev^{R}(-)\simeq \hom^{\cD(X)}_{\cD(Y\times_{X}Y)}(\coev(\boldone_{X}),-).
    \end{equation}
    It follows that $\cD(Y)$ is internally smooth over $\cD(X)$ if and only if $\coev(\boldone_{X})$ is $\cD(X)$-atomic.
\end{remark}

In the rigid case, this remark specializes to the following result.
\begin{lemma}\label{cor: smooth categories 6ff rigid case}
    Let $\cC$ and $\cD$ be as in \Cref{assumptions 6 functor}. Let $Y \to X$ be a $!$-able map in $\cC$. If $\cD(X)$ is rigid, then the category $\cD(Y)$ is internally smooth over $\cD(X)$ if and only if $\Delta_{!}(\boldone_{Y})$ is compact.
\end{lemma}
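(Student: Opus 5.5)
The plan is to reduce to \Cref{remark: right adjoint to coev} and then use rigidity to replace "atomic" by "compact". By \Cref{remark: right adjoint to coev}, $\cD(Y)$ is internally smooth over $\cD(X)$ if and only if $\coev(\boldone_X)$ is $\cD(X)$-atomic in $\cD(Y\times_X Y)$. Since $\cD(X)$ is rigid and $\cD(Y\times_X Y)$ is an object of $\LPr_{\cD(X)}$ (via pullback along the structure map), \Cref{prop: compact objects over rigid categories} shows that an object of $\cD(Y\times_X Y)$ is $\cD(X)$-atomic if and only if it is compact. So it only remains to identify $\coev(\boldone_X)$ with $\Delta_!(\boldone_Y)$.

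For this, use the description \eqref{eq: coevaluation 6 functor formalism} of the coevaluation as $\Delta_!\circ f^*$. Because $f^*$ is symmetric monoidal, $f^*\boldone_X\simeq \boldone_Y$, and therefore $\coev(\boldone_X)\simeq \Delta_!(\boldone_Y)$. Chaining the equivalences gives the lemma.

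The step that needs the most care is \Cref{remark: right adjoint to coev}, specifically that $\coev^R$ is the $\cD(X)$-enriched hom out of $\coev(\boldone_X)$. I would check this directly.
\begin{itemize}
\item $\coev$ is a $\cD(X)$-linear colimit-preserving functor out of $\cD(X)$. This holds because $f^*$ is linear and $\Delta_!$ is $\cD(Y)$-linear by \eqref{eq: Delta_! is linear}, hence also $\cD(X)$-linear through $f^*$.
\item Any such functor is determined by where it sends the unit: $\coev(v)\simeq v\otimes \coev(\boldone_X)$, where $v\otimes-$ denotes the action.
\item By the definition of $\hom^{\cD(X)}$ as the right adjoint of $-\otimes \coev(\boldone_X)$, we get $\coev^R\simeq \hom^{\cD(X)}(\coev(\boldone_X),-)$.
\end{itemize}

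Internal smoothness asks that $\coev^R$ be colimit-preserving and $\cD(X)$-linear, which is by definition (\Cref{def: atomic object}) the statement that $\coev(\boldone_X)$ is $\cD(X)$-atomic. Alternatively, once $\coev^R$ is known to preserve colimits, \Cref{lem: smooth and proper over a rigid base} supplies linearity for free, since $\cD(X)$ is rigid.
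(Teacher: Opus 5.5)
Your proposal is correct and matches the paper's proof, which simply combines \Cref{remark: right adjoint to coev} (internal smoothness holds if and only if $\coev(\boldone_X)$ is $\cD(X)$-atomic) with \Cref{prop: compact objects over rigid categories} (over a rigid base, $\cD(X)$-atomic is the same as compact). Your additional steps, namely identifying $\coev(\boldone_X)\simeq\Delta_!(\boldone_Y)$ via $f^*\boldone_X\simeq\boldone_Y$ and re-checking the remark from the $\cD(X)$-linearity of $\coev$, only make explicit what the paper leaves implicit.
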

\begin{proof}
    This follows directly by combining \Cref{remark: right adjoint to coev} and \Cref{prop: compact objects over rigid categories}.
\end{proof}

\subsection{Proper categories in a six-functor formalism}
By analogy with the previous section, we investigate here how to obtain a proper \cat from a six-functor formalism, as defined in \Cref{assumptions 6 functor}. We conclude by proving a criterion for obtaining $\mathscr{D}$-prim covers, which will be used in \Cref{Appendix 2}. 

\begin{proposition}\label{prop: proper categories in a 6 functor}
    Let $\cC$ and $\cD$ be as in \Cref{assumptions 6 functor}, and let $f\colon Y \to X$ be a $!$-able map in $\cC$. Then the category $\cD(Y)$ is proper over $\cD(X)$ (and internally proper) provided the following conditions are satisfied:
    \begin{enumerate}
        \item The functor $\Delta_{f,\ast}$ preserves colimits (and, moreover, is $\cD(X)$-linear).
        \item The functor $f^{!}$ preserves colimits (and, moreover, is $\cD(X)$-linear).
    \end{enumerate}
\end{proposition}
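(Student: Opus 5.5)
We follow the proof of \Cref{prop: smoothness in a 6 ff}, now for the evaluation functor. By \Cref{eq: evaluation 6 functor formalism}, $\ev \simeq f_! \circ \Delta_f^{\ast}$. Both factors are left adjoints, so $\ev$ has the right adjoint
\[
\ev^{\mathrm{R}} \simeq \Delta_{f,\ast}\circ f^{!}\colon \cD(X) \to \cD(Y\times_X Y),
\]
as already noted before \Cref{prop: smoothness in a 6 ff}. For properness, I only need this composite to preserve colimits. That is immediate from the hypotheses: $f^{!}$ preserves colimits by (2) and $\Delta_{f,\ast}$ does by (1). So $\ev^{\mathrm R}$ lies in $\LPr$, and $\cD(Y)$ is proper over $\cD(X)$ in the sense of \Cref{def: proper categories}.

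For internal properness I must also show that $\ev^{\mathrm R}$ is $\cD(X)$-linear, i.e. that the lax structure maps $M\otimes \ev^{\mathrm R}(N)\to \ev^{\mathrm R}(M\otimes N)$ are equivalences. Here $\cD(X)$ acts on $\cD(Y\times_X Y)$ by pulling back along the structure map $Y\times_X Y\to X$ and tensoring. The composite of two $\cD(X)$-linear functors is $\cD(X)$-linear, compatibly with the lax structures, because the lax structure on a composite of right adjoints is the composite of the lax structures. So the strengthened hypotheses of (1) and (2) give the claim. The one point to check is that the $\cD(X)$-action on $\cD(Y)$ used for $\Delta_{f,\ast}$ (via $f^{\ast}$) matches the one used for $f^{!}$, and that the action on $\cD(Y\times_X Y)$ is the pullback along $f\circ \mathrm{pr}$. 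Both hold because the pullbacks compose, $\Delta_f^{\ast}\mathrm{pr}^{\ast}f^{\ast}\simeq f^{\ast}$, and $\ev$ is $\cD(X)$-linear by the projection formula for $f_!$.

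There is no genuine obstacle; the statement is essentially formal. The main care point is the bookkeeping just described: making sure the linearity required of $\ev^{\mathrm R}$ is the one coming from the lax structure induced by $\ev$'s linearity, so that hypotheses (1) and (2) apply directly.
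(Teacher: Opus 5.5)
Your proposal is correct and follows the paper's own argument: the paper likewise identifies $\mathrm{ev}^{\mathrm{R}}\simeq \Delta_{f,\ast}\circ f^{!}$ and notes that a composite of colimit-preserving (respectively, $\cD(X)$-linear) functors has the same property. Your extra remark, that the lax linear structure on the composite right adjoint is the composite of the mates, just makes explicit what the paper's one-line proof leaves implicit.
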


\begin{proof}
    This statement follows immediately from the explicit descriptions of the right adjoints to the evaluation and coevaluation functors provided in \Cref{eq: evaluation 6 functor formalism} and \Cref{eq: coevaluation 6 functor formalism}.
\end{proof}

Nevertheless, from the preceding proposition, we can deduce reasonable conditions under which proper \cat arise within a six-functor formalism.

\begin{corollary}\label{cor: proper cat in 6 ff}
    Let $\cC$ and $\cD$ be as in \Cref{assumptions 6 functor}, and let $f \colon Y \to X$ be a $!$-able map in $\cC$. Then the category $\cD(Y)$ is internally proper over $\cD(X)$ provided that $f$ is $\cD$-suave and $\Delta_f$ is $\cD$-prim.
\end{corollary}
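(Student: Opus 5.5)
The plan is to verify the two hypotheses of \Cref{prop: proper categories in a 6 functor}, in parallel with the proof of \Cref{cor: smooth categories 6ff suave and prim case}. Recall that the right adjoint of $\ev$ is $\Delta_{f,*}\circ f^!$. Internal properness will follow once we show that $\Delta_{f,*}$ and $f^!$ both preserve colimits and are $\cD(X)$-linear. Since a composite of colimit-preserving $\cD(X)$-linear functors is again such a functor, it suffices to treat the two factors separately.

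For $f^!$ we use suaveness of $f$. By \Cref{prop: prima and suave maps}(1), there is a natural equivalence
\[
f^!(-)\simeq \omega_f\otimes f^*(-).
\]
The functor $f^*$ is symmetric monoidal and colimit-preserving, and tensoring with the fixed object $\omega_f$ preserves colimits. Hence $f^!$ preserves colimits. For $\cD(X)$-linearity, take $V\in\cD(X)$ and $M\in\cD(X)$; then
\[
\omega_f\otimes f^*(V\otimes M)\simeq f^*V\otimes(\omega_f\otimes f^*M),
\]
which is exactly the $\cD(X)$-linear structure, where $\cD(X)$ acts on $\cD(Y)$ through $f^*$. One should also check that the lax linear structure on the right adjoint $f^!$ coincides with this one. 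This holds because the comparison map $\omega_f\otimes f^*(-)\to f^!(-)$ is built from the projection formula, so it is compatible with the $\cD(X)$-action.

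For $\Delta_{f,*}$ we use primness of $\Delta_f$. By \Cref{prop: prima and suave maps}(2), there is a natural equivalence
\[
\Delta_{f,*}(-)\simeq\Delta_{f,!}(\delta_{\Delta_f}\otimes -).
\]
The functor $\Delta_{f,!}$ is a left adjoint, so it preserves colimits, and tensoring with $\delta_{\Delta_f}$ preserves colimits as well. By \Cref{eq: Delta_! is linear}, $\Delta_{f,!}$ is $\cD(Y)$-linear, with $\cD(Y)$ acting on $\cD(Y\times_XY)$ via $\mathrm{pr}^*$. Since $\cD(X)$ acts on $\cD(Y\times_XY)$ through $\mathrm{pr}^*f^*$, restriction along $f^*$ makes $\Delta_{f,!}$ a $\cD(X)$-linear functor. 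Tensoring with $\delta_{\Delta_f}$ is $\cD(Y)$-linear, hence also $\cD(X)$-linear. Therefore $\Delta_{f,*}$ is $\cD(X)$-linear and colimit-preserving.

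I expect the main subtle step to be the linearity claim: \Cref{prop: proper categories in a 6 functor} asks for the right adjoint of $\ev$ to be $\cD(X)$-linear, meaning that its canonical lax structure is strict. To handle this, I would argue as in the proof of \Cref{cor: smooth categories 6ff suave and prim case}. The canonical lax-linearity maps of $f^!$ and $\Delta_{f,*}$ are identified, via the equivalences above, with the projection-formula equivalences. These are invertible, so the lax structures are strict. With both hypotheses of \Cref{prop: proper categories in a 6 functor} verified, $\cD(Y)$ is internally proper over $\cD(X)$.
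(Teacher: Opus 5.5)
Your proposal is correct and is essentially the paper's proof: the paper also verifies the two hypotheses of \Cref{prop: proper categories in a 6 functor}. It does so using the equivalences $f^!(-)\simeq\omega_f\otimes f^*(-)$ and $\Delta_{f,*}(-)\simeq\Delta_{f,!}(\delta_{\Delta_f}\otimes -)$ from \Cref{prop: prima and suave maps}, together with the $\cD(Y)$-linearity of $\Delta_{f,!}$ coming from the projection formula, and your remarks on why the canonical lax $\cD(X)$-linear structures are strict fill in a point the paper leaves implicit.
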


\begin{proof}
    This follows from the natural equivalences
    $$
    \omega_f \otimes f^*(-) \xrightarrow{\sim} f^!(-)  \quad \text{and} \quad \Delta_{f,!}(\delta_{\Delta_f} \otimes -) \xrightarrow{\sim} \Delta_{f,\ast}(-),
    $$
    combined with the $\cD(Y)$-linearity of $\Delta_{f,!}(-)$, as established in \Cref{eq: Delta_! is linear}.
\end{proof}

\begin{remark}
    Another instance where the hypotheses of \Cref{prop: proper categories in a 6 functor} are satisfied occurs when considering rigid categories. Indeed, let $\cC$ and $\cD$ be as in \Cref{assumptions 6 functor}, and let $f \colon Y \to X$ be a $!$-able map in $\cC$. If $\cD(X)$ is rigid, then for $\cD(Y)$ to be internally proper over $\cD(X)$, it suffices for the right adjoint $\ev^{\mathrm{R}}(-)$ to preserve colimits.
\end{remark}

To conclude this section, we combine the obtained results to provide an explicit criterion for a \cat to be smooth and proper within a six-functor formalism.

\begin{proposition}\label{prop: smooth and proper cat in 6 ff}
    Let $\cC$ and $\cD$ be as in \Cref{assumptions 6 functor}, and let $f \colon Y \to X$ be a $!$-able map in $\cC$. Then the \cat $\cD(Y)$ is internally smooth and proper over $\cD(X)$ if $f$ is both $\cD$-suave and $\cD$-prim.
\end{proposition}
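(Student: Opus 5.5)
The plan is to combine \Cref{cor: smooth categories 6ff suave and prim case} and \Cref{cor: proper cat in 6 ff}. The former needs $f$ to be $\cD$-prim and $\Delta_f$ to be $\cD$-suave. The latter needs $f$ to be $\cD$-suave and $\Delta_f$ to be $\cD$-prim. Since $f$ is assumed both suave and prim, it remains to show that the diagonal $\Delta_f \colon Y \to Y\times_X Y$ is both $\cD$-suave and $\cD$-prim.

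For this I would use the standard stability properties of suave and prim maps in \cite[Section 4.5]{heyer20246functorformalismssmoothrepresentations}. Both classes are stable under base change, and this already gives that the projections $\mathrm{pr}_1,\mathrm{pr}_2\colon Y\times_X Y\to Y$ are suave and prim. Both classes also satisfy a two-out-of-three type cancellation: if $g\circ h$ and $g$ are suave (resp.\ prim), then so is $h$.

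The key step is to apply this cancellation to the factorization $\mathrm{id}_Y = \mathrm{pr}_1\circ \Delta_f$. Here $\mathrm{id}_Y$ is trivially suave and prim, since $\omega_{\mathrm{id}}\simeq \boldone$ and $\delta_{\mathrm{id}}\simeq\boldone$. Since $\mathrm{pr}_1$ is suave and prim, cancellation gives that $\Delta_f$ is suave and prim.

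If the cancellation lemma is not available in the needed generality, I would instead argue directly in terms of $f$-suave and $f$-prim objects. The first step is to show that $\boldone_Y$ is $\Delta_f$-suave. I would identify $\Delta_f^!$ using the suave map $\mathrm{pr}_1$: there is an equivalence $\mathrm{id}\simeq \Delta_f^!\mathrm{pr}_1^!\simeq \Delta_f^!(\omega_{\mathrm{pr}_1}\otimes \mathrm{pr}_1^*(-))$. From this, $\Delta_f^!$ has the form $\omega_{\mathrm{pr}_1}^{-1}\otimes\Delta_f^*(-)$ on the relevant objects, with $\omega_{\mathrm{pr}_1}\simeq\mathrm{pr}_2^*\omega_f$ invertible because $f$ is suave. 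The second step is the dual argument with $\delta$ to get $\Delta_f$-primness.

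The main obstacle is making sure that the cancellation properties really hold in the formalism of \cite{heyer20246functorformalismssmoothrepresentations} with the $!$-ability assumptions of \Cref{assumptions 6 functor}. In particular, $\Delta_f$ must itself be $!$-able, which I would take from the geometric setup being closed under diagonals of $!$-able maps. Once this is settled, the statement follows formally from the two corollaries.
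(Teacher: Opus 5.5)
Your reduction is the same as the paper's: show that $\Delta_f$ is $\cD$-suave and $\cD$-prim, then apply \Cref{cor: smooth categories 6ff suave and prim case} and \Cref{cor: proper cat in 6 ff}. The step that makes $\Delta_f$ suave fails.

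The cancellation you invoke (if $g\circ h$ and $g$ are suave, so is $h$) is not a valid principle. Factor $h$ through its graph as $h=\mathrm{pr}\circ\Gamma_h$. Here $\Gamma_h$ is a base change of $\Delta_g$, and $\mathrm{pr}$ is a base change of $g\circ h$. So suaveness of $h$ follows only from suaveness of $g\circ h$ \emph{and of the diagonal} $\Delta_g$. For $g=\mathrm{pr}_1$ and $h=\Delta_f$, this needs $\Delta_{\mathrm{pr}_1}$ to be suave. But $\Delta_{\mathrm{pr}_1}$ is itself a base change of $\Delta_f$, so the argument is circular. The prim half is circular in the same way, although that is harmless in formalisms where $f_!=f_*$ for all $!$-able maps.

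Your fallback has the same hole. The identity $\Delta_f^!\,\mathrm{pr}_1^!\simeq\mathrm{id}$ determines $\Delta_f^!$ only on objects of the form $\omega_{\mathrm{pr}_1}\otimes\mathrm{pr}_1^*N$. Extending $\Delta_f^!\simeq\omega^{-1}\otimes\Delta_f^*$ to all of $\cD(Y\times_XY)$ would need $\Delta_f^!$ to preserve colimits, which is exactly what suaveness of $\Delta_f$ asserts. Separately, suaveness alone does not make $\omega_f$ invertible.

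In fact this step, and the statement in this generality, are false. Work in the formalism of \Cref{thm:6-functor-for-qc-qs-schemes}, where $f_!=f_*$. Take $A=k[\epsilon]/(\epsilon^2)$ and $f\colon Y=\Spec(A)\to\Spec(k)$.
\begin{itemize}
\item $f$ is prim, because $f_!=f_*$.
\item $f$ is suave, because $A$ is finite free over $k$: $f^!\simeq\Hom_k(A,-)\simeq A^\vee\otimes_k(-)$, compatibly with base change. Here $\omega_f\simeq A^\vee\simeq A$ is even invertible. Hence $\mathrm{pr}_1$ is suave as well.
\item $\Delta_f$ is not suave. Indeed $\Delta_f^!\simeq\Hom_{A\otimes_kA}(A,-)$ does not preserve colimits, whereas a suave map has colimit-preserving $(-)^!$ by \Cref{prop: prima and suave maps}. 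The ring $A\otimes_kA\simeq k[x,y]/(x^2,y^2)$ is local Artinian of length $4$. So every perfect complex over it has Euler characteristic (in lengths) divisible by $4$, while $A$ has length $2$; thus $A$ is not perfect.
\end{itemize}
So $\mathrm{id}_Y=\mathrm{pr}_1\circ\Delta_f$ with $\mathrm{id}_Y$ and $\mathrm{pr}_1$ suave but $\Delta_f$ not, which refutes the cancellation principle. Moreover, $\coev(\boldone)\simeq\Delta_{f,*}\cO_Y$ is not compact, so $\Mod_A$ is not smooth over $\Mod_k$ (cf.\ \Cref{prop: smooth cates vs smooth algebras}), even though $f$ is suave and prim.

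The paper's proof gets the diagonal step by citing \cite[Lemma 4.5.9 (i)]{heyer20246functorformalismssmoothrepresentations} for ``$f$ suave (resp.\ prim) $\Rightarrow$ $\Delta_f$ suave (resp.\ prim)''. This example contradicts the suave half of that implication too. What your argument, and the paper's, actually proves is the statement with the extra hypothesis that $\Delta_f$ is suave and prim, which is just the conjunction of the two corollaries. Suaveness of $\Delta_f$ must be assumed or checked separately; it cannot be deduced from properties of $f$ alone.
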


\begin{proof}
    By \cite[Lemma 4.5.9 (i)]{heyer20246functorformalismssmoothrepresentations}, if $f$ is $\cD$-suave (respectively, $\cD$-prim), then $\Delta_f$ is $\cD$-suave (respectively, $\cD$-prim). The statement then follows from \Cref{cor: smooth categories 6ff suave and prim case} and \Cref{cor: proper cat in 6 ff}.
\end{proof}

We now describe a criterion for obtaining $\mathscr{D}$-prim covers, which will be used in \Cref{Appendix 2}. We refer the reader to \cite{heyer20246functorformalismssmoothrepresentations} for the standard definition of the $\mathscr{D}$-topology (also called the $!$-topology) and for the notion of a $\mathscr{D}$-cover. In particular, a map $W \to X$ is a $\mathscr{D}$-cover if the functor $\mathscr{D}(-)$ satisfies universal $*$- and $!$-descent along $W \to X$.  

From now on, we assume the following:

\begin{assumptions}\label{assumpions 2}
    Let $\mathscr{C}$ and $\mathscr{D}$ be as in \Cref{assumptions 6 functor}. Assume, moreover, that $\mathscr{C}$ is the \cat of sheaves of $\mathrm{Ani}$ on an $\infty$-site $\mathscr{C}_{0}$, and that the six-functor formalism $\mathscr{D}$ arises as the extension of a stable, presentable six-functor formalism on $\mathscr{C}_0$, as explained in \cite{heyer20246functorformalismssmoothrepresentations}.
\end{assumptions}

\begin{proposition}\label{prop: D-prim cover}
    Let $\mathscr{C}$ and $\mathscr{D}$ be as in \Cref{assumpions 2}, and let $X$ be an object in $\mathscr{C}$ such that:
    \begin{enumerate}
        \item[(1)] There exists an affine object $W$ and an effective epimorphism $g\colon W \to X$ along which $\mathscr{D}(-)$ satisfies $*$-descent, and there exists an integer $n$ such that the \v{C}ech nerve of $g$ is an $n$-truncated diagram (i.e., the canonical map
    $$
    X\simeq \colim_{[m]\in\bDelta_{\leq n}} (W \stackrev{3} W\times_X W \stackrev{5 }W\times_{X}W\times_{X}W \stackrev{7}\cdots ) 
    $$
    is an equivalence).
    \item[(2)] The projection maps $W\times _{X} W \to W$ are $\mathscr{D}$-prim.
    \end{enumerate}
     Then $g\colon W \to X$ is a descendable $\mathscr{D}$-cover (i.e., $g_* \mathbb{1}$ is descendable). Moreover, $g$ is $\mathscr{D}$-prim.
\end{proposition}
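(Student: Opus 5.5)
The plan has three steps: first show that $g_*\boldone_W$ is descendable, then deduce that $g$ is a $\mathscr{D}$-cover, and finally prove that $g$ is $\mathscr{D}$-prim by descending primness from the projections.

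\textbf{Step 1: descendability from $n$-truncation.} Let $W_m = W^{\times_X (m+1)}$ be the \v{C}ech nerve and $g_m\colon W_m\to X$ the structure maps. Since $\mathscr{D}(-)$ satisfies $*$-descent along $g$, we have
\[
\mathscr{D}(X)\simeq \lim_{[m]\in\bDelta}\mathscr{D}(W_m),
\]
and in particular $\boldone_X\simeq \lim_{[m]} g_{m,*}\boldone_{W_m}$. The $n$-truncation hypothesis says that $X$ is already the colimit of the diagram restricted to $\bDelta_{\leq n}$. Applying the limit-preserving functor $\mathscr{D}(-)$ (a sheaf on $\mathscr{C}$), we get
\[
\boldone_X\simeq \lim_{[m]\in\bDelta_{\leq n}} g_{m,*}\boldone_{W_m}.
\]
This is a \emph{finite} limit. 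Each term $g_{m,*}\boldone_{W_m}$ is a $g_*\boldone_W$-module, since $g_m$ factors through $g$. Hence $\boldone_X$ lies in the thick $\otimes$-ideal generated by $g_*\boldone_W$, which is exactly descendability in the sense of Mathew. The subtle point here is that $\bDelta_{\leq n}$ is not finite as an indexing category for a stable limit. I would handle this via the standard fact that a limit over $\bDelta_{\leq n}$ is computed by a finite totalization (the Tot-tower stabilizes, and the partial totalizations are iterated fibres of finitely many terms). So $\boldone_X$ is a retract of a finite-stage totalization, which yields descendability.

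\textbf{Step 2: $g$ is a $\mathscr{D}$-cover.} Descendability of $g_*\boldone$ gives universal $*$-descent: descendable algebras are stable under base change, since $f^*$ is symmetric monoidal and preserves finite limits. To get $!$-descent I would use Step 3. Once $g$ is $\mathscr{D}$-prim, \Cref{prop: prima and suave maps} gives $g_*\simeq g_!(\delta_g\otimes-)$, so $!$-descent follows from $*$-descent by the standard argument that prim covers satisfying $*$-descent also satisfy $!$-descent (as in \cite{heyer20246functorformalismssmoothrepresentations}). Alternatively, I would cite the statement there that descendable $*$-covers with prim (or suave) structure are $\mathscr{D}$-covers.

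\textbf{Step 3: $g$ is $\mathscr{D}$-prim.} Primness is local on the target in the $\mathscr{D}$-topology (\cite[Lemma 4.5.7]{heyer20246functorformalismssmoothrepresentations} or its analogue), so it suffices to check primness after base change along $g$ itself. That base change is the projection $W\times_X W\to W$, which is $\mathscr{D}$-prim by hypothesis (2). The main obstacle is circularity: locality of primness requires $g$ to be a $\mathscr{D}$-cover, which used primness in Step 2. I would break the circle by invoking the version of locality that needs only descendability / universal $*$-descent along $g$. That version is available because the defining condition of $f$-prim (\Cref{def: prim maps}) is an isomorphism of objects of $\mathscr{D}(X)$ built from $*$-pushforwards and internal homs. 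This condition can be checked after $*$-pullback along a descendable cover, using base change for $f_!$ and for $\pi_{2*}\uline{\Hom}$, with the latter holding because the projections are prim.
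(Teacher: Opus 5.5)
Your skeleton uses the same three ingredients as the paper, but orders them differently and proves descendability differently. The ingredients are the finite limit $\boldone_X\simeq\lim_{\bDelta_{\leq n}}g_{m,*}\boldone_{W_m}$, primness of $g$ from hypothesis (2) by target-locality \cite[Lemma 4.5.7]{heyer20246functorformalismssmoothrepresentations}, and the cover property from \cite[Lemma 4.7.4]{heyer20246functorformalismssmoothrepresentations}. The paper first declares $g$ prim. It then uses base change and the projection formula for $g_*$, that is, primness of $g$, to identify $g_{m,*}\boldone_{W_m}$ with a tensor power of $g_*\boldone_W$. You instead observe that $g_{m,*}\boldone_{W_m}\simeq g_*(q_*\boldone_{W_m})$, for $q\colon W_m\to W$ a projection, is a $g_*\boldone_W$-module because $g_*$ is lax monoidal. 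It is therefore a retract of $g_*\boldone_W\otimes g_{m,*}\boldone_{W_m}$. This uses nothing beyond hypothesis (1), so descendability no longer depends on primness; it is a genuine simplification. You are also right that applying target-locality to $g$ itself, before $g$ is known to be a $\mathscr{D}$-cover, needs comment. The paper simply cites the lemma at the outset.

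Your repair of that circularity in Step 3, however, is missing an input.

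\emph{What you can actually test.} The criterion in \Cref{def: prim maps} is only an isomorphism after applying $\Hom(\boldone,-)$, which cannot be checked after pullback. What you can test is the stronger object-level claim that $g_!\delta_g\to g_*\boldone_W$ is an isomorphism, and that suffices.

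\emph{The missing base change.} After applying $g^*$ (conservative by (1)), the source is handled as you say, by base change for $(-)_!$ and for the $*$-pushforward along the prim projection. But the target $g^*g_*\boldone_W$ must be identified with $\pi_{1*}\boldone_{W\times_X W}$. That is base change for $g_*$ along $g$. It is not among your listed inputs, and it is ordinarily a \emph{consequence} of the primness you are trying to prove.

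\emph{The same gap in Step 2.} Your first sentence there uses this input silently. Base-changing the descendable algebra $g_*\boldone_W$ along $h\colon X'\to X$ gives $*$-descent along the base change $g'$ only once $h^*g_*\boldone_W\simeq g'_*\boldone$ and $g'_*$ satisfies the projection formula.

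\emph{How to supply it.} The input follows from (1) and (2). The face maps of the \v{C}ech nerve are base changes of the prim projections $W\times_X W\to W$. Hence the cosimplicial category $\mathscr{D}(W_\bullet)$ satisfies the Beck--Chevalley condition. Together with $*$-descent along $g$, Lurie's descent theorem \cite[\S 4.7.5]{HA} then gives $g^*g_*\simeq\pi_{1*}\pi_2^*$. The projection formula for $g_*$ follows by checking after $g^*$.

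Alternatively, you can descend the adjunction witnessing primness in the $2$-category of kernels. The hom-categories involved are $\mathscr{D}(X)$, which descends by (1), and $\mathscr{D}(W)$ and $\mathscr{D}(W\times_X W)$. For the latter two the pulled-back \v{C}ech nerve is split, because the base change of $g$ acquires a section.

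With this added, your order is genuinely non-circular: descendability, then primness, then the cover property via \cite[Lemma 4.7.4]{heyer20246functorformalismssmoothrepresentations}.
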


\begin{proof}
    We begin by observing that since being prim is local on the target \cite[Lemma 4.5.7]{heyer20246functorformalismssmoothrepresentations}, it follows from hypothesis (2) that $g$ is $\mathscr{D}$-prim.
    
    First, we prove that $g$ is a descendable map. We need to verify that $g\colon W \to X$ is such that $g_* \mathbb{1} \in \mathscr{D}(X)$ is descendable (i.e., $\mathbb{1}_X$ belongs to the thick tensor ideal $\langle g_* \mathbb{1}_W \rangle$).
    
    Observe that by denoting the $n$-fold fiber product $W\times_X W \times_X \cdots \times_X W$ by $W_n$ and considering the maps
    $$
    g_n \colon  W_n \to W \to X,
    $$
    we obtain
    \begin{equation}\label{eq: limite 1}
        \mathbb{1}_X \simeq \lim_{[m] \in \bDelta_{\leq n}} g_{m,*} \mathbb{1}_{W_m}.
    \end{equation}

    We now show that $g_{m,*} \mathbb{1}_{W_m} \simeq g_* \mathbb{1}_W \otimes_X \dots \otimes_X g_* \mathbb{1}_W$. This follows by induction, starting with the base case $m=2$. We consider the following diagram: 
    $$
    \begin{tikzcd}
    & W \times_X W \arrow[ld, "pr_2"'] \arrow[rd, "pr_1"] & \\
    W \arrow[rd, "g"'] & & W \arrow[ld, "g"] \\
    & X. &
    \end{tikzcd}
    $$
    Since $g$ is prim, we have the following equivalences:
    $$
    \begin{aligned}
    g_{2,*} \mathbb{1}_{W_2} &\simeq g_* {pr_2}_* \mathbb{1}_{W \times_X W} \\ 
    &\simeq g_* ( {pr_2}_* pr_1^* \mathbb{1}_W ) \\
    &\simeq g_* ( \mathbb{1}_W \otimes {pr_2}_* pr_1^* \mathbb{1}_W ) \\
    &\simeq g_* ( \mathbb{1}_W \otimes g^* g_* \mathbb{1}_W ) \\
    &\simeq g_* \mathbb{1}_W \otimes g_* \mathbb{1}_W.
    \end{aligned}
    $$
    Here, the fourth equivalence follows from the base change property, and the last one from the projection formula. This computation implies that the finite limit in \Cref{eq: limite 1} belongs to the thick tensor ideal $\langle g_* \mathbb{1}_W \rangle$. Now, from \cite[Lemma 4.7.4]{heyer20246functorformalismssmoothrepresentations}, we deduce that $g$ is a $\mathscr{D}$-cover. 
\end{proof}

\begin{corollary}\label{cor: D-prim map}
    Let $\mathscr{C}$ and $\mathscr{D}$ be as in \Cref{assumpions 2}, and let $f\colon X \to Y$ be a map in $\mathscr{C}$.
    Assume the following conditions:
    \begin{enumerate}
        \item[(1)] The induced map $W \to Y$ is $\mathscr{D}$-prim.
        \item[(2)] There exists an affine object $W$ and an effective epimorphism $g\colon W \to X$ along which $\mathscr{D}(-)$ satisfies $*$-descent, and there exists an integer $n$ such that the \v{C}ech nerve of $g$ is an $n$-truncated diagram (i.e., the canonical map
        $$
        X\simeq \colim_{[m]\in\bDelta_{\leq n}} (W \stackrev{3} W\times_X W \stackrev{5 }W\times_{X}W\times_{X}W \stackrev{7}\cdots ) 
        $$
        is an equivalence).
        \item[(3)] The projection maps $W\times _{X} W \to W$ are $\mathscr{D}$-prim.
    \end{enumerate}
    Then $f$ is $\mathscr{D}$-prim.
\end{corollary}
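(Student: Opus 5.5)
The plan is to reduce to \Cref{prop: D-prim cover} and then deduce the primness of $f$ from that of $f\circ g$, by exhibiting $\boldone_X$ as a finite limit of objects that are already known to be $f$-prim. (I read hypothesis (1) as saying that the composite $f\circ g\colon W \to Y$ is $\cD$-prim, with $W$ and $g$ as in hypothesis (2).)

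First, hypotheses (2) and (3) are exactly the hypotheses of \Cref{prop: D-prim cover}. Hence $g$ is a descendable $\cD$-cover and $g$ is $\cD$-prim. Write $W_m$ for the $m$-fold fibre product $W\times_X\cdots\times_X W$ and $g_m\colon W_m \to X$ for the structure maps. Each $g_m$ factors as a composite of base changes of the projections $W\times_X W\to W$ followed by $g$. Prim maps are stable under base change and composition \cite[Lemma 4.4.8]{heyer20246functorformalismssmoothrepresentations}, so every $g_m$ is $\cD$-prim. For the same reason every composite $f\circ g_m$ is $\cD$-prim: it equals $(f\circ g)$ precomposed with a composite of prim projections. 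As in the proof of \Cref{prop: D-prim cover}, the $n$-truncated \v{C}ech nerve together with $*$-descent gives the finite limit
\[
\boldone_X \simeq \lim_{[m]\in\bDelta_{\leq n}} g_{m,*}\boldone_{W_m}
\]
in $\cD(X)$.

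The second step uses that the $f$-prim objects of $\cD(X)$ form a stable subcategory closed under retracts; this follows from the description of $f$-prim objects in \cite[Section 4.4]{heyer20246functorformalismssmoothrepresentations} as the objects $P$ for which a certain natural map is an isomorphism, both sides being exact in $P$. It therefore suffices to show that each $g_{m,*}\boldone_{W_m}$ is $f$-prim. Since $g_m$ is prim, \Cref{prop: prima and suave maps} gives $g_{m,*}\boldone \simeq g_{m,!}\delta_{g_m}$. I would then invoke the general fact that for a $!$-able $h\colon W'\to X$, the functor $h_!$ sends $(f\circ h)$-prim objects to $f$-prim objects (see \cite[Lemma 4.4.8 and Section 4.5]{heyer20246functorformalismssmoothrepresentations}).

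This reduces the problem to checking that $\delta_{g_m}$ is $(f\circ g_m)$-prim, and I expect this to be the main obstacle. I would approach it as follows. Write $f\circ g_m = (f\circ g_m)\circ \mathrm{id}$ and use the compatibility of the dualities $\mathbb{D}$ with composition of prim maps, which gives $\delta_{f\circ g_m}\simeq \delta_{g_m}\otimes g_m^{*}\delta_f$ whenever both factors are prim. Since $\boldone_{W_m}$ is $(f\circ g_m)$-prim and the prim objects form a $\mathbb{D}$-stable class, $\delta_{g_m}$, being a dual of the unit, is $(f\circ g_m)$-prim.

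If that compatibility is awkward to establish, I would fall back on a direct argument. Heyer–Mann show that for a prim map $h$, the functor $h_*$ preserves prim objects relative to the further map $f$. Applying this to $h=g_m$ and to the $(f\circ g_m)$-prim object $\boldone_{W_m}$ gives $g_{m,*}\boldone_{W_m}$ $f$-prim directly. Either route concludes that $\boldone_X$ is $f$-prim, i.e.\ that $f$ is $\cD$-prim.
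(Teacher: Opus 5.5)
Your reduction to $f\circ g$ through \Cref{prop: D-prim cover} is the same as the paper's. The paper first reduces to $Y$ affine (primness is local on the target, \cite[Lemma 4.5.7]{heyer20246functorformalismssmoothrepresentations}) and then simply quotes the descent statement \cite[Corollary 4.7.5]{heyer20246functorformalismssmoothrepresentations} for the prim descendable cover $g$.

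Your hand-made proof of that descent step has a genuine gap. Showing that each term $g_{m,*}\boldone_{W_m}$ of the finite limit is $f$-prim cannot work, because these terms are in general not $f$-prim. The ``general fact'' you invoke, that $h_!$ (or $h_*$ for prim $h$) carries $(f\circ h)$-prim objects to $f$-prim objects, is false. A counterexample exists already in the nuclear formalism of \Cref{thm: 6 functor for nuclear modules in rigid geometry}, where every $!$-able map is prim:
\begin{itemize}
\item Take $f=\mathrm{id}_X$ for $X=\dSp(k\langle x\rangle)$. Setting $f=\mathrm{id}$ in \Cref{def: prim maps} shows that $f$-prim means dualizable, i.e.\ perfect by \Cref{lem: perfect complexes over rigid varieties}.
\item Let $h\colon\dSp(B)\to X$ be the smaller disc $B=k\langle x\rangle\langle x/\omega\rangle$. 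Then $\boldone_B$ is $h$-prim, but $h_*\boldone_B=B$ is flat and not finite over $k\langle x\rangle$, hence not perfect.
\item Similarly, take a finite rational cover $g\colon W=\coprod_i\dSp(A_i)\to X$ that contains this disc. Then $g$ is a prim descendable cover and $f$ is trivially prim, yet $g_*\boldone_W=\prod_i A_i$ is not $f$-prim.
\end{itemize}
So the thick subcategory of $f$-prim objects contains $\boldone_X$ without containing the terms of its descent resolution. In the $2$-category of kernels you can see why. The object $g_{m,*}\boldone_{W_m}$ is $R_m\circ\boldone_{W_m}$, where $R_m$ is the kernel of $g_{m,*}$. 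That kernel is a right adjoint, so nothing forces the composite to be a left adjoint. A statement of the shape you want holds when $h$ is suave, since then the kernel of $h_!$ is a left adjoint.

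Your first route is also circular. The formula $\delta_{f\circ g_m}\simeq\delta_{g_m}\otimes g_m^*\delta_f$ already presupposes that $f$ is prim. Moreover, $\mathbb{D}$-stability only shows that $\delta_{f\circ g_m}=\mathbb{D}_{f\circ g_m}(\boldone)$ is $(f\circ g_m)$-prim, not $\delta_{g_m}=\mathbb{D}_{g_m}(\boldone)$.

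The descent has to be run in the other variable: transport adjointability through $g^*\dashv g_*$ instead of resolving $\boldone_X$ itself. Work over the base $Y$.
\begin{itemize}
\item $f$ is prim if and only if $\boldone_X$, viewed as a $1$-morphism of kernels $Y\to X$, is a left adjoint.
\item Let $L\colon X\to W$ and $R\colon W\to X$ be the kernels of $g^*$ and $g_*$. Since $g$ is prim, $L\dashv R$, and $RL$ is the kernel of $g_*\boldone_W\otimes-$.
\item Descendability then makes $\mathrm{id}_X$ a retract of a finite totalization of $(RL)^{\bullet+1}$.
\item Hence $\mathrm{Hom}(\boldone_X\circ a,b)$ is a natural retract of $\mathrm{Tot}_n\,\mathrm{Hom}(L\circ\boldone_X\circ a,(LR)^{\bullet}\circ L\circ b)$.
\item Since $L\circ\boldone_X=\boldone_W$ is a left adjoint (this is where primness of $f\circ g$ enters), the right adjoint of $\boldone_X$ is a retract of a finite totalization of the $1$-morphisms $\delta_{f\circ g}\circ(LR)^m\circ L$.
\end{itemize}
This is roughly what the cited descent result supplies, and quoting it as the paper does closes the argument.
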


\begin{proof}
    The condition that $f\colon X \to Y$ is $\mathscr{D}$-prim can be checked locally on $Y$ (see \cite[Lemma 4.5.7]{heyer20246functorformalismssmoothrepresentations}); in particular, we may assume $Y$ is affine. Moreover, by \Cref{prop: D-prim cover} and \cite[Corollary 4.7.5]{heyer20246functorformalismssmoothrepresentations}, it suffices to show that the composite map $W \to Y$ is $\mathscr{D}$-prim. This follows directly from condition (1).
\end{proof}

\begin{remark}
    We observe that condition (1) in \Cref{cor: D-prim map} is implied by the following condition:
    \begin{itemize}
        \item[(1')] For every affine object $T$ equipped with a map $T \to Y$, the pullback map $W\times_Y T \to T$ is $\mathscr{D}$-prim, and $Y$ is semi-separated (i.e., it has an affine diagonal).
    \end{itemize}
\end{remark}

We note that although the hypotheses of \Cref{cor: D-prim map}, especially condition (2), appear stringent, they are satisfied in various situations, such as the case considered in \Cref{Appendix 2}.

\section{Some Remarks on Condensed Mathematics}\label{Section 2}
\subsection{Analytic rings}
In this subsection, we review the definition of analytic rings as presented in \cite{Clausen_Scholze_lectures}, with a primary focus on those arising from rigid geometry. Throughout this section, we work over a complete non-archimedean base field $k$ of characteristic $0$, with a pseudo-uniformizer $\omega$ and ring of integers $\cO_{k}$.

The main objective of this section is to introduce the notation used in subsequent parts of this paper, such as the definition of \emph{animated affinoid $k$-algebras} as given in \cite{mikami2023fppfdescentcondensedanimatedrings}, and to describe their tensor product (see \Cref{prop: animated affinoid stable under products and tensor products}). We begin by introducing some basic notions related to condensed mathematics and analytic stacks, as explained in \cite{Clausen_Scholze_lectures}.

We denote by $\Mod_{\mathbb{Z}}^{\mathrm{cond}}$ the unbounded derived \cat of condensed abelian groups. We say that $\mathscr{A}$ is a \emph{condensed $\bbE_\infty$-algebra} if $\mathscr{A}$ is an $\bbE_\infty$-algebra in $\Mod^{\mathrm{cond}}_\Z$. In this case, we denote by $\Mod^{\mathrm{cond}}_\mathscr{A}$ the \cat of $\mathscr{A}$-modules in $\Mod^{\mathrm{cond}}_\Z$.

\begin{definition}
\label{def:analytic_ring}
    An analytic ring is a pair $A=(\cA, \cD(A))$, where $\cA$ is a condensed $\bbE_{\infty}$-algebra and $\cD(A)$ is a full subcategory of $\Mod^{\mathrm{cond}}_{\cA}$ satisfying the following properties:
    \begin{enumerate}
        \item The \cat $\cD(A)$ contains the object $\cA$.
        \item \label{def:completion} Let $\iota_{A}\colon \cD(A) \hookrightarrow \Mod_{\cA}^{\mathrm{cond}}$ denote the natural inclusion functor. Then $\iota_A$ commutes with all limits and colimits, and it admits a left adjoint (called the \emph{completion functor along $A$}), which we denote by $-\otimes_{\cA}A$. The composition $\iota_A \circ (-\otimes_{\cA}A)$ preserves connective objects.
        \item For every object $M$ in $\cD(A)$ and every $N$ in $\Mod^{\mathrm{cond}}_{\Z}$, the internal mapping object $\uline{\Hom}_{\Z}(N,M)$ lies in $\cD(A)$. 
    \end{enumerate}
\end{definition}

\begin{remark} \cite[Remark 2.2.3]{montagnani2025axiomaticapproachanalytic1affineness} 
    By definition, the essential image of $\iota_{A}$ is a localization of $\Mod_{\cA}^{\mathrm{cond}}$. In particular, since $\Mod_{\cA}^{\mathrm{cond}}$ is presentable and stable, it follows that $\cD(A)$ is presentable and stable as well. Moreover, $\cD(A)$ admits a symmetric monoidal structure obtained by completing the natural tensor product of $\Mod_{\cA}^{\mathrm{cond}}$ along $A$.
\end{remark}

In this paper, we are mainly interested in analytic rings arising from rigid analytic geometry and affinoid algebras (i.e., quotients of Tate algebras by a finitely generated ideal). Recall that to every complete Huber pair $(R,R^{+})$, we can associate an analytic ring 
\[
(R,R^+)_{\Solid} \coloneqq (R^{\triangleright},\cD(R)_{\Solid})
\]
defined as follows.

We define $R^{\triangleright}$ to be the condensed ring associated with $R$ via its given topology. We then consider the following construction, as explained in \cite{andreychev2021pseudocoherent}.
For $f \in R^+$ and $M \in \Mod_{R}^{\mathrm{cond}}$, we say that $M$ is \emph{$f$-solid} if
\[
1 - f\sigma \colon \uline{\Hom}_\Z(\mathrm{Null}_\Z, M) \longrightarrow \uline{\Hom}_\Z(\mathrm{Null}_\Z, M)
\]
is an equivalence. The inclusion of the full sub-$\infty$-category of $f$-solid $R$-modules admits a left adjoint $(-)^{f\Solid}$, called \emph{$f$-solidification}. We define 
\[
\cD(R)_\Solid \subseteq \Mod_{R}^{\mathrm{cond}}
\]
to be the full sub-$\infty$-category of condensed $R$-modules that are $f$-solid for all $f \in R^+$. This construction defines an analytic ring, as shown in \cite[Theorem 3.28]{andreychev2021pseudocoherent}. Furthermore, using \cite[Theorem 3.33]{andreychev2021pseudocoherent}, we see that this construction determines a fully faithful embedding from the category of complete Huber pairs into the category of analytic rings. In particular, when the Huber pair arises from an affinoid algebra $A$, we denote the associated analytic ring by 
\[
A_\Solid \coloneqq (A,A^\circ)_\Solid \coloneqq (A^\triangleright, \cD(A)_\Solid).
\]
The objects in $\cD(A)_\Solid$ are called $A^\circ$-solid modules. We denote the symmetric monoidal structure of $\cD(A)_\Solid$ by $-\otimes^{\Solid}_{A}-$.
\begin{remark}
    We note that, by \cite[Proposition 2.3]{mikami2023fppfdescentcondensedanimatedrings}, Tate algebras over $k$ and, more generally, affinoid $k$-algebras lie in $\cD(k)_{\Solid}$.
\end{remark}

The following is an analogue of \cite[Definition 5.8]{andreychev2021pseudocoherent}.

\begin{definition}\label{def: solidification functor}
    We define the \emph{$k$-solidification functor} to be the functor 
    \[
    \uline{(-)}\colon \Mod_{k} \to \cD(k)_{\Solid}
    \]
    determined by mapping $k$ to $k^{\triangleright}$ (endowed with its non-archimedean topology) and requiring that it commutes with colimits. The objects in the image of this functor are called \emph{discrete $k$-solid modules}. 
\end{definition}

Note that, although the term ``discrete'' appears in the nomenclature, the objects in the image of the functor defined in \Cref{def: solidification functor} do not necessarily carry the discrete topology. Furthermore, this functor is by definition a left adjoint; we denote its right adjoint by
\[
(-)(\ast)\colon \cD(k)_{\Solid} \to \Mod_{k}.
\] 
Similarly, for every affinoid $k$-algebra $A$, we can define an adjunction
\[
\begin{tikzcd}
	{\Mod_{A^{\triangleright}(\ast)}} && {\cD(A)_{\Solid}}
	\arrow[""{name=0, anchor=center, inner sep=0}, "{\uline{(-)}}"', shift right=2, from=1-1, to=1-3]
	\arrow[""{name=1, anchor=center, inner sep=0}, "{(-)(\ast)}"', shift right=2, from=1-3, to=1-1]
	\arrow["\dashv"{anchor=center, rotate=90}, draw=none, from=0, to=1]
\end{tikzcd}
\]
The objects in the image of $\uline{(-)}$ are referred to as \emph{discrete $A$-solid modules}.

\begin{lemma}\label{lem: tensor product of discete modules}
    Let $\uline{A}$ and $\uline{B}$ be two discrete $k$-solid modules. Then the tensor product $\uline{A} \otimes^{\Solid}_{k} \uline{B}$ is equivalent to the discrete $k$-solid module associated with the $k$-linear tensor product of $A$ and $B$.
\end{lemma}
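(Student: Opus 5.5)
The plan is to show that the solidification functor $\uline{(-)}\colon \Mod_{k} \to \cD(k)_{\Solid}$ is symmetric monoidal, or at least compatible with the binary tensor product. Since both sides of the claimed equivalence preserve colimits in each variable, it is enough to check the statement on a set of generators. The first step is to record why colimits are preserved. By \Cref{def: solidification functor}, $\uline{(-)}$ preserves colimits. The solid tensor product $-\otimes^{\Solid}_{k}-$ preserves colimits separately in each variable, because it is the completion of the condensed tensor product, which is itself colimit-preserving, and the completion functor is a left adjoint. Hence both
\[
(A,B)\mapsto \uline{A}\otimes^{\Solid}_{k}\uline{B} \quad\text{and}\quad (A,B)\mapsto \uline{A\otimes_{k}B}
\]
are functors $\Mod_{k}\times\Mod_{k}\to\cD(k)_{\Solid}$ that preserve colimits separately in each variable.

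Next I would construct a natural comparison map between these two bifunctors. The cleanest route is to observe that $\uline{(-)}$ is the unique colimit-preserving functor sending $k$ to $k^{\triangleright}$. Since $\Mod_{k}$ is the free presentable stable $\infty$-category on one generator over $\Mod_{k}$, or, more precisely, since $\cD(k)_{\Solid}$ is a $k$-linear presentable category whose unit is $k^{\triangleright}$, the functor $\uline{(-)}$ coincides with the unit map $\Mod_{k}\to \cD(k)_{\Solid}$, $M\mapsto M\otimes_{k} k^{\triangleright}$, of a commutative algebra in $\LPr_{\mathrm{st}}$ under $\Mod_{k}$. The unit map of a commutative algebra is canonically symmetric monoidal. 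This supplies the natural equivalence directly, and no separate comparison map needs to be built.

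To double-check concretely, I would reduce to generators. $\Mod_{k}$ is generated under colimits and shifts by $k$, so by the two-variable colimit preservation it suffices to treat $A=B=k$. In that case
\[
\uline{k}\otimes^{\Solid}_{k}\uline{k}\simeq k^{\triangleright}\otimes^{\Solid}_{k}k^{\triangleright}\simeq k^{\triangleright}\simeq \uline{k\otimes_k k},
\]
because $k^{\triangleright}$ is the unit of $\cD(k)_{\Solid}$.

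The main obstacle is coherence: it is not enough to match values on generators, because the comparison must be a natural transformation. For that reason I would lean on the identification of $\uline{(-)}$ with the symmetric monoidal unit functor $\Mod_{k}\to\cD(k)_{\Solid}$. This functor is induced by the initial map of commutative algebras $\Mod_k\to\cD(k)_{\Solid}$ in $\LPr_{\mathrm{st}}$, since $\Mod_k$ is the initial $k$-linear presentably symmetric monoidal stable category. Once that identification is in place, the lemma follows.
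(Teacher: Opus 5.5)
Your proposal is correct, and its generator check (both bifunctors preserve colimits in each variable and agree at $A=B=k$ because $k^{\triangleright}$ is the unit of $\cD(k)_{\Solid}$) is exactly the paper's argument. Your extra step, identifying $\uline{(-)}$ with the unit functor $\Mod_k \to \cD(k)_{\Solid}$, $M \mapsto M \otimes_k k^{\triangleright}$, of a commutative algebra under $\Mod_k$, is a worthwhile refinement. It supplies the natural (indeed symmetric monoidal) comparison map that the paper's two-line proof leaves implicit, and that stronger form is what the paper later uses in \Cref{lem: descent for discrete modules}.
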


\begin{proof}
    It suffices to observe that every object in $\Mod_{k}$ can be written as a colimit of copies of $k$. Since the $k$-solidification functor commutes with colimits and sends $k$ to $k^{\triangleright}$ in $\cD(k)_{\Solid}$, we deduce that
    \[
    \uline{A} \otimes^{\Solid}_{k} \uline{B} \simeq \uline{A \otimes_{k} B}.
    \]
\end{proof}
  
\subsection{Animated affinoid algebras}
In this section, we review the definition of condensed derived affinoid $k$-algebras, following \cite{mikami2023fppfdescentcondensedanimatedrings}. 
Throughout this section, we work over the analytic ring $(k,\Z)_\Solid$, as is also done in \cite{mikami2023fppfdescentcondensedanimatedrings}. However, the essential requirement for the arguments in the subsequent subsection is a tensor product that recovers the classical completed tensor product for affinoid algebras, as explained in \Cref{rmk: completed tensor}. Any other choice of base analytic ring satisfying this property, such as $k_\Solid$, functions equally well. 

In this subsection, we denote by $\otimes^{\Solid}_k$ the tensor product in $\cD(k,\Z)_\Solid$.

\begin{definition}\label{def:nuclear}
Let $\cC$ be a stable closed symmetric monoidal $\infty$-category. An object $X \in \cC$ is called \emph{nuclear} if, for every compact object $P \in \cC$, the canonical map
\[
 \Map_{\cC}\!\bigl(\boldone_{\cC},\, \uline{\Hom}_{\cC}(P,\boldone_{\cC}) \otimes_{\cC} X \bigr) 
  \;\longrightarrow\;
 \Map_{\cC}(P,X)
\]
is an equivalence of anima. The full subcategory of $\cC$ spanned by nuclear objects is denoted by $\Nuc(\cC) \subseteq \cC$.
\end{definition}

Let $A$ be an analytic ring and let $\cD(A)$ be its associated \cat. The $\infty$-category $\Nuc(A)$ is a presentable, stable, full sub-$\infty$-category of $\cD(A)$ that is closed under colimits and tensor products \cite[Theorem 8.6]{complex}. This closure property equips $\Nuc(A)$ with a symmetric monoidal structure inherited from $\cD(A)$.

\begin{definition}\cite[Definition 2.11]{mikami2023fppfdescentcondensedanimatedrings}\label{def: derived affinoid}
    An \emph{animated affinoid $k$-algebra} $A$ is a condensed animated $k$-algebra such that: 
    \begin{enumerate}
        \item $A$ is a $\Z$-solid $k$-algebra and, moreover, $A$ is nuclear over $(k, \Z)_\Solid$.
        \item $\pi_{0}(A)$ is an underived condensed affinoid algebra over $k$ (i.e., there exists a classical affinoid $k$-algebra $B$ such that, if we denote by $\uline{B}$ the algebra $B$ regarded as a condensed $k$-module, then there is an equivalence $\pi_{0}(A) \simeq \uline{B}$).
    \end{enumerate}
\end{definition}

\begin{remark}\label{rmk: animated affinoid nuclear over k}
    We observe that the first condition of \Cref{def: derived affinoid} is equivalent to requiring $A$ to be nuclear over $k_\Solid$. Indeed, by \cite[Corollary 2.7]{mikami2023fppfdescentcondensedanimatedrings}, an animated affinoid $k$-algebra is nuclear over $k_\Solid$ if and only if it is nuclear over $(k,\Z)_\Solid$.
\end{remark}

We denote by $\mathrm{dAfdAlg}_{k}$ the \cat of animated affinoid $k$-algebras, regarded as a full subcategory of $\cD(k,\Z)_{\Solid}$. 

\begin{proposition}\label{prop: animated affinoid stable under products and tensor products}
    The \cat $\mathrm{dAfdAlg}_{k}$ is stable under finite products and tensor products.
\end{proposition}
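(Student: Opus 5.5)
To prove the proposition I check the two conditions of \Cref{def: derived affinoid} separately, for the product $A\times B$ and for the tensor product $A\otimes^{\Solid}_k B$ of two animated affinoid $k$-algebras $A,B$. In both cases the tensor product is the one in $\cD(k,\Z)_\Solid$, and it agrees with the pushout of condensed animated $k$-algebras over $k$.

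\emph{Finite products.} I would handle these first, since they are easy.
\begin{enumerate}
\item The underlying object of $A\times B$ is $A\oplus B$, since $\cD(k,\Z)_\Solid$ is stable.
\item Nuclear objects are closed under colimits, by \cite[Theorem 8.6]{complex}. In particular they are closed under finite direct sums, so $A\times B$ is nuclear and $\Z$-solid.
\item $\pi_0(A\times B)\simeq \pi_0A\times\pi_0B$.
\item This is the finite product of the classical affinoid algebras $\uline{A_0}\times\uline{B_0}$. That product is again a classical affinoid algebra, being a quotient of a Tate algebra by a finitely generated ideal; for instance $A_0\times B_0$ is a quotient of $A_0\langle e\rangle$ or of a suitable Tate algebra.
\item Passing to condensed objects commutes with finite products.
\end{enumerate}
The empty product is the zero ring, which is trivially affinoid.

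\emph{Tensor products, nuclearity.} The same closure result \cite[Theorem 8.6]{complex} gives closure under tensor products. Hence $A\otimes^{\Solid}_k B$ is nuclear, and in particular it is solid.

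\emph{Tensor products, $\pi_0$.} Here I would argue in three steps.
\begin{enumerate}
\item Since $A$ and $B$ are connective and the solid tensor product is right t-exact, we get
\[
\pi_0(A\otimes^{\Solid}_k B)\simeq \pi_0\bigl(\pi_0A\otimes^{\Solid}_k \pi_0B\bigr)\simeq \pi_0\bigl(\uline{A_0}\otimes^{\Solid}_k \uline{B_0}\bigr).
\]
\item Present $A_0=k\langle T_1,\dots,T_n\rangle/I$ and $B_0=k\langle U_1,\dots,U_m\rangle/J$.
\item Use the key computation $k\langle T\rangle\otimes^{\Solid}_k k\langle U\rangle\simeq k\langle T,U\rangle$. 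This holds over $(k,\Z)_\Solid$, concentrated in degree $0$; it is the statement that the solid tensor product recovers the completed tensor product, as recorded in \Cref{rmk: completed tensor} and in \cite{mikami2023fppfdescentcondensedanimatedrings}. Right exactness then identifies $\pi_0$ of the tensor product with
\[
k\langle T,U\rangle/(I,J)=A_0\widehat{\otimes}_k B_0,
\]
which is a classical affinoid algebra.
\end{enumerate}

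\emph{Main obstacle.} The real obstacle is the key computation in step 3. I would establish it by writing $k\langle T\rangle$ as $(\cO_k\langle T\rangle)[1/\omega]$ with $\cO_k\langle T\rangle\simeq\prod_{\mathbb N}\cO_k$ as condensed modules. The relevant identity is $\prod_I\Z_\Solid\otimes\prod_J\Z_\Solid\simeq\prod_{I\times J}\Z_\Solid$ for countable sets, which still has to be upgraded to the $\omega$-adically completed statement $\cO_k\langle T\rangle\otimes\cO_k\langle U\rangle\simeq\cO_k\langle T,U\rangle$. I would derive this via nuclearity and Banach descriptions: $\cO_k\langle T\rangle$ is the $\omega$-completion of a direct sum, and completion commutes appropriately for nuclear objects. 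Alternatively, I would simply cite \cite[Proposition 2.3]{mikami2023fppfdescentcondensedanimatedrings}.

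Finally, the quotient by a finitely generated ideal needs some care. The ideals $I$ and $J$ are closed, and quotients of Tate algebras are Banach and hence nuclear. For the identification $\uline{A_0}\simeq\uline{k\langle T\rangle}/\uline{I}$ as condensed modules, I would use that the classical quotient map is open, by the Banach open mapping theorem.
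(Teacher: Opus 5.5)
Your plan is the same as the paper's. Finite products are handled exactly as there: finite products are finite direct sums, so nuclearity is preserved, and $\pi_0$ commutes with products. For tensor products the paper simply cites \cite[Proposition 2.14]{mikami2023fppfdescentcondensedanimatedrings}. As \Cref{rmk: completed tensor} indicates, the proof of that result rests on the same nuclearity-plus-$\pi_0$ computation you sketch. So once you fall back on citing $k\langle T\rangle\otimes^{\Solid}_k k\langle U\rangle\simeq k\langle T,U\rangle$, you are doing what the paper does. Two cautions on citations. Citing \Cref{rmk: completed tensor} amounts to citing that same Proposition 2.14. And this paper invokes \cite[Proposition 2.3]{mikami2023fppfdescentcondensedanimatedrings} only for the solidity of affinoid algebras, so check that your reference really contains the K\"unneth identity.

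\textbf{First repair: the self-contained route to the K\"unneth identity.} It starts from a false claim. $\cO_k\langle T\rangle$ is the module of null sequences, i.e.\ the $\omega$-adic completion of $\bigoplus_{\mathbb N}\cO_k$, not $\prod_{\mathbb N}\cO_k$. Modulo $\omega$ you get the discrete $\bigoplus_{\mathbb N}\cO_k/\omega$ versus $\prod_{\mathbb N}\cO_k/\omega$; for $\cO_k=\Z_p$, one side is profinite and the other is not compact. So the product formula does not apply directly. Nor does \emph{completion commutes appropriately for nuclear objects} show that the tensor product of the two lattices is still $\omega$-complete, which is the real content.

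One correct way to use the product formula is as follows.
\begin{itemize}
\item $\uline{\cO_k\langle T\rangle}$ is the filtered union of the weighted products $\{(a_n)\colon |a_n|\le|\omega|^{f(n)}\}\cong\prod_{\mathbb N}\cO_k$, over $f\colon\mathbb N\to\mathbb N$ with $f\to\infty$. This holds because compact subsets of $c_0$ are uniformly null, and on these subsets the sup-norm and product topologies agree.
\item Every weight on $\mathbb N\times\mathbb N$ tending to infinity dominates one of the form $f(i)+g(j)$.
\end{itemize}
This yields the identity whenever $\prod_{\mathbb N}\cO_k\otimes\prod_{\mathbb N}\cO_k\simeq\prod_{\mathbb N\times\mathbb N}\cO_k$ holds in the relevant solid category, e.g.\ for $\cO_k$ finite over $\Z_p$. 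Otherwise, cite the known result for Banach spaces.

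There is also a small slip in the product part. $A_0\times B_0$ is not a quotient of $A_0\langle e\rangle$, which would give $A_0\times A_0$. Use instead $k\langle T,U,e\rangle$ modulo $e^2-e$, $eI$, $eU_j$, $(1-e)J$ and $(1-e)T_i$.

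\textbf{Second repair: relative tensor products.} You only treat $\otimes_k$. The paper uses this proposition for relative tensor products $A\widehat{\otimes}_B C$: in \Cref{Kunneth formula nuclear affine case}, in \Cref{rmk: completed tensor}, and for pushouts of rational localizations. Mikami's result covers this case. Your method does extend, but the extension has to be stated.
\begin{itemize}
\item Nuclearity: $A\otimes_B C$ is the geometric realization of the bar construction $A\otimes_k B^{\otimes\bullet}\otimes_k C$, hence nuclear.
\item $\pi_0$: by right exactness, $\pi_0(A\otimes_B C)$ is the cokernel of $a\otimes b\otimes c\mapsto ab\otimes c-a\otimes bc$ from $\pi_0A\widehat{\otimes}_k\pi_0B\widehat{\otimes}_k\pi_0C$ to $\pi_0A\widehat{\otimes}_k\pi_0C$.
\item This cokernel is the quotient by the ideal generated by $b_i\otimes 1-1\otimes b_i$, for topological generators $b_i$ of $\pi_0B$. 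That quotient is the affinoid algebra $\pi_0A\widehat{\otimes}_{\pi_0B}\pi_0C$.
\item Closedness of ideals and the open mapping theorem again identify the condensed cokernel with this quotient.
\end{itemize}
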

\begin{proof}
    The fact that the tensor product of animated affinoid $k$-algebras yields an animated affinoid $k$-algebra is proven in \cite[Proposition 2.14]{mikami2023fppfdescentcondensedanimatedrings}. To verify that the \cat $\mathrm{dAfdAlg}_{k}$ is stable under finite products, we observe that $\pi_0(-)$ commutes with products, and that nuclear categories are stable under finite limits.
\end{proof}

\begin{remark}\label{rmk: completed tensor}
As explained in the proof of \cite[Proposition 2.14]{mikami2023fppfdescentcondensedanimatedrings}, the tensor product of derived affinoid $k$-algebras recovers the completed tensor product of classical affinoid algebras. In particular, we have 
\[
\pi_0(A\otimes^{\Solid}_B C) \simeq \pi_0A\widehat{\otimes}_{\pi_{0}B} \pi_0{C}.
\]
\end{remark}
Motivated by this property, we denote the tensor product in $\mathrm{dAfdAlg}_{k}$ by $-\widehat{\otimes}-$. In particular, it can be regarded as a tensor product between animated affinoid algebras in $\Nuc(k)$.

\subsection{Nuclear categories for animated affinoid algebras}

By \cite[Kollar 3.18]{andreychev2023ktheorieadischerraume}, we deduce an equivalence between $\Nuc(\cD(k)_\Solid)$ and $\Nuc(\cD(k,\Z)_\Solid)$. For this reason, hereafter we will simply write $\Nuc(k)$.

For animated affinoid $k$-algebras in $\Nuc(k)$, we consider the tensor product described above, denoted by $-\widehat{\otimes}-$.

If $A$ is an animated affinoid $k$-algebra, by \cite[Proposition 12.21]{analytic} and using the analytic ring structure on $\pi_{0}(A)$ (as described in \cite[Theorem 3.28]{andreychev2021pseudocoherent}), we can associate to it an analytic ring
\begin{equation*}\label{eq: analytic ring structure on derived affinoid}
    A_{\Solid} \coloneqq (A, \cD(A)_{\Solid}).
\end{equation*} 
In particular, an object $M \in \Mod_{A}^{\mathrm{cond}}$ lies in $\cD(A)_\Solid$ if and only if $\pi_{i}(M)$ belongs to $\cD(\pi_{0}(A))_{\Solid}$ for every integer $i$. 

In this way, as explained in \cite[Proposition 2.18]{mikami2023fppfdescentcondensedanimatedrings}, we obtain a fully faithful embedding:
\[\begin{tikzcd}[row sep=0.05cm, column sep=1.5cm]
	{\mathrm{dAdfdAlg}_k} && {\mathrm{AnRing}_{(k,\Z)_\Solid}} \\
	A && {A_\Solid}.
	\arrow[from=1-1, to=1-3]
	\arrow[maps to, from=2-1, to=2-3]
\end{tikzcd}\]

For an animated affinoid $k$-algebra $A$, we will hereafter denote by $\Nuc(A)$ the \cat $\Nuc(\cD(A)_\Solid)$ of nuclear objects in $\cD(A)_\Solid$. The following proposition provides an explicit description of this \cat.

\begin{proposition}\label{prop: nuclear categories as module categories}
    Let $A$ be an animated affinoid $k$-algebra. Then there is an equivalence 
    \[
    \Nuc(A) \simeq \Mod_{A}(\Nuc(k)).
    \]
    In particular, $\Nuc(A)$ is rigid.
\end{proposition}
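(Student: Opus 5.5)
We first identify $\Nuc(A)$ with $\Mod_{A}(\Nuc(k))$, and then deduce rigidity from general facts about module categories over a rigid base.

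\textbf{Step 1: the equivalence.} By \Cref{rmk: animated affinoid nuclear over k}, $A$ is nuclear over $k_\Solid$, so $A$ is a commutative algebra object of $\Nuc(k)$. Hence $\Mod_{A}(\Nuc(k))$ makes sense and embeds into $\Mod_{A}(\cD(k)_\Solid)$. We then show that both sides are the same full subcategory of $\cD(A)_\Solid$, in three moves:
\begin{enumerate}
\item For a $k$-solid $A$-module, being $A$-solid is automatic. Solidity is tested on homotopy groups over $\pi_0(A)$, and for affinoid $\pi_0(A)$ the induced analytic ring structure from $k$ agrees with $(\pi_0 A,\pi_0 A^\circ)_\Solid$; for affinoid algebras $A^\circ$ is integral over the image of $\cO_k\langle T\rangle$. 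So $\cD(A)_\Solid\simeq \Mod_A(\cD(k)_\Solid)$, with $-\otimes^{\Solid}_A-$ equal to the relative tensor product over $A$ in $\cD(k)_\Solid$.
\item Under this identification we compare nuclearity. Compact objects of $\cD(A)_\Solid$ are generated by $A\otimes^{\Solid}_k P$ with $P$ compact in $\cD(k)_\Solid$. Using the adjunction $\Map_A(A\otimes P,M)\simeq \Map_k(P,M)$ and $\uline{\Hom}_A(A\otimes P,A)\simeq A\otimes \uline{\Hom}_k(P,k)$, which holds because $P$ is compact projective, hence dualizable up to the nuclear comparison, the nuclearity test for $M$ over $A$ reduces to the nuclearity test for $M$ over $k$. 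Thus $M\in\Nuc(A)$ if and only if its underlying object lies in $\Nuc(k)$.
\item Since the inclusion $\Nuc(k)\subset\cD(k)_\Solid$ is symmetric monoidal and colimit-preserving, \cite[Theorem 8.6]{complex}, modules over $A$ in $\Nuc(k)$ are exactly $A$-modules whose underlying object is nuclear. This gives the equivalence.
\end{enumerate}

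\textbf{Step 2: rigidity.} We use two standard facts, \cite[Chapter 1, Sec. 9]{gaitsgory2019study}. First, $\Nuc(k)$ is rigid; this is known, e.g.\ \cite[Satz 3.16]{andreychev2023ktheorieadischerraume}. Second, for a commutative algebra $A$ in a rigid $\cV$, the category $\Mod_A(\cV)$ is rigid. To verify the second fact, note that the unit $A$ of $\Mod_A(\cV)$ is the image of the compact unit under the free functor $A\otimes-$. Multiplication $\Mod_A\otimes_\cV\Mod_A\simeq\Mod_{A\otimes A}\to\Mod_A$ is restriction along $A\otimes A\to A$, and its right adjoint is continuous and $A\otimes A$-linear. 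Finally, $\Mod_A(\cV)$ is generated under colimits by $A\otimes v$ with $v$ ranging over a generating set of $\cV$ satisfying the rigidity condition, and the free functor preserves the relevant compact/basic-nuclear morphisms. If $\Nuc(k)$ is in fact compactly generated by dualizable objects, this step is even easier.

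\textbf{Main obstacle.} The hardest step is Step 1(b): the nuclearity comparison between $A$-linear and $k$-linear internal homs, in particular the identity $\uline{\Hom}_A(A\otimes P,A)\simeq A\otimes\uline{\Hom}_k(P,k)$ and the corresponding statement after tensoring with $M$. This needs $A$ to be nuclear and the compact objects $P=\prod_I k$ to have a well-behaved dual, $\bigoplus_I k$. I would first try to quote \cite[Theorem 8.6]{complex} and \cite{andreychev2023ktheorieadischerraume}, which characterize $\Nuc$ via trace-class maps and show that $\Nuc(B)\simeq\Mod_B(\Nuc(k))$ for nuclear $B$. Failing that, I would check the nuclearity condition directly on the generators $\prod_I A$ using $A\otimes^{\Solid}_k\prod_I k\simeq\prod_I A$, which holds for nuclear Fréchet-type $A$.
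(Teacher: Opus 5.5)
There is a genuine gap, and it is your Step~1(a). The rest of Step~1 depends on it. The paper defines $\cD(A)_\Solid$ through the Huber pair $(\pi_0A,(\pi_0A)^\circ)$. By contrast, $\Mod_A(\cD(k)_\Solid)$ is the analytic ring structure \emph{induced} from $k$, which corresponds to the Huber pair $(A,A^+)$ with $A^+$ the smallest ring of integral elements containing the image of $k^\circ$, i.e.\ the integral closure of $k^\circ+A^{\circ\circ}$.

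Integrality of $A^\circ$ over $\cO_k\langle T_1,\dots,T_n\rangle$ at best reduces $A^\circ$-solidity to $T_i$-solidity. It does not make $T_i$-solidity automatic, and in fact it is not automatic. Already for $A=k\langle T\rangle$ one has $T\notin A^+$: reducing an integral equation modulo $A^{\circ\circ}$ would make $T$ integral over $\widetilde{k}$ inside $\widetilde{k}[T]$. So $\Spa(A,A^+)$ is the compactified disc, with an extra rank-two point where $|T|=1^+$. By the full faithfulness of $(A,A^+)\mapsto(A,A^+)_\Solid$ quoted in the paper, the two analytic rings are different. Hence $\Mod_A(\cD(k)_\Solid)$ contains $k$-solid $A$-modules that are not $T$-solid. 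The paper itself uses these compactifications $\Spa(A,k^\circ)$ in Appendix~B. Even for $A=k$ it needs \cite[Korollar 3.18]{andreychev2023ktheorieadischerraume} to compare $\Nuc(k_\Solid)$ with $\Nuc((k,\Z)_\Solid)$.

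Consequently your Steps~1(b)--(c) compute the nuclear objects of the induced ring $(A,k^\circ)_\Solid$, not $\Nuc(A)$. In $\cD(A)_\Solid$ the compact generators are the $A^\circ$-solidifications $A_\Solid[S]$ of $A\otimes^\Solid_k k_\Solid[S]$, and the tensor product is the $A^\circ$-solidified one. Moreover, Step~1(c) never shows that objects of $\Mod_A(\Nuc(k))$ lie in $\cD(A)_\Solid$ at all.

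The missing idea is that nuclear objects do not see the $+$-ring. Consider the dual of a compact generator, $\uline{\Hom}_A(A_\Solid[S],A)\simeq\uline{\Hom}(\Z[S],A)=\uline{\mathscr{C}(S,A)}$. It has three useful properties:
\begin{itemize}
\item it is independent of $A^+$;
\item it lies in $\cD(A)_\Solid$ automatically, being an internal Hom into $A$;
\item it equals $\uline{\mathscr{C}(S,k)}\otimes^\Solid_k A$, because $A$ is nuclear over $k$.
\end{itemize}
This is what lets one show that objects of $\Mod_A(\Nuc(k))$ are automatically $(\pi_0A)^\circ$-solid, and that the nuclearity tests in $\cD(A)_\Solid$ and over $k$ agree. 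The paper imports exactly this comparison from \cite[Lemma 4.4.7]{wang2026relativegagatheoremapplication}, applied to $(k,\Z)_\Solid\to A_\Solid$. That is why its proof only checks $A\in\Nuc(k)$ and $\uline{\mathscr{C}(S,k)}\in\Nuc(k)$ for profinite $S$. With that input your Steps~1(b)--(c) can be repaired. Step~2 (modules over a commutative algebra in the rigid category $\Nuc(k)$ form a rigid category) is fine.

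One smaller inaccuracy: the compact projective generators of $\cD(k)_\Solid$ are not $\prod_I k$. Their duals are the $c_0$-type Banach spaces $\uline{\mathscr{C}(S,k)}$, not $\bigoplus_I k$. Also, the identity $\uline{\Hom}_k(P,A)\simeq\uline{\Hom}_k(P,k)\otimes^\Solid_kA$ comes from nuclearity of $A$, not from $P$ being dualizable, which it is not.
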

\begin{proof}
    This is a special case of \cite[Lemma 4.4.7]{wang2026relativegagatheoremapplication}. We apply the cited lemma to the canonical map of analytic rings 
    \[
    (k,\Z)_\Solid \to A_\Solid.
    \]
    Indeed, by \Cref{def: derived affinoid} and \Cref{rmk: animated affinoid nuclear over k}, $A$ belongs to $\Nuc(k)$. Moreover, for every profinite set $S$, the condensed $k$-module of continuous functions $\uline{\mathscr{C}(S,k)}$ lies in $\Nuc(k)$ by \cite[Kollar 3.14]{andreychev2023ktheorieadischerraume}.
\end{proof}

From the above proposition, we immediately deduce the following corollary.

\begin{corollary}\label{cor: relative nuclearity affinoid case}
    If $A \to B$ is a map of animated affinoid $k$-algebras, then $B$ is nuclear over $A$. 
\end{corollary}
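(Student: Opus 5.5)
The corollary asks for nuclearity of $B$ as an object of $\cD(A)_\Solid$, that is, $B \in \Nuc(A)$. By \Cref{prop: nuclear categories as module categories} we have $\Nuc(A) \simeq \Mod_A(\Nuc(k))$, so the plan is to show that $B$, viewed as an $A$-module, is an $A$-module object in $\Nuc(k)$, and that this module object corresponds to $B$ itself under the equivalence.

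First, by \Cref{def: derived affinoid} and \Cref{rmk: animated affinoid nuclear over k}, $B$ is nuclear over $k_\Solid$, so $B \in \Nuc(k)$. The map $A \to B$ of condensed animated $k$-algebras makes $B$ an $A$-module. Since both $A$ and $B$ lie in $\Nuc(k)$, and $\Nuc(k)$ is a full symmetric monoidal subcategory of $\cD(k)_\Solid$ closed under tensor products, the action map $A \otimes^{\Solid}_k B \to B$ lives in $\Nuc(k)$. Hence $B$ defines an object of $\Mod_A(\Nuc(k))$.

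Next we identify this object with an object of $\Nuc(A)$. The equivalence of \Cref{prop: nuclear categories as module categories}, which comes from \cite[Lemma 4.4.7]{wang2026relativegagatheoremapplication}, is compatible with the forgetful functors to condensed $A$-modules. Concretely, an $A$-module in $\Nuc(k)$ is automatically $A$-solid and nuclear in $\cD(A)_\Solid$. Moreover, $B$ already lies in $\cD(A)_\Solid$, because each $\pi_i(B)$ is a $\pi_0(A)$-module that is solid for $\pi_0(B)^\circ$, hence for the image of $\pi_0(A)^\circ$. Thus $B$ is exactly the image of the module object above, so $B \in \Nuc(A)$.

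The one point that needs care is this compatibility of the equivalence with the embedding into $\cD(A)_\Solid$. The abstract equivalence of categories alone does not say that the object corresponding to $B$ is $B$ itself. The cited lemma is formulated as the statement that the natural functor $\Mod_A(\Nuc(k)) \to \cD(A)_\Solid$ is fully faithful with essential image $\Nuc(A)$. Granting that formulation, the argument is immediate.
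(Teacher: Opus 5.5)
Your proposal is correct and follows the paper's own argument: the paper likewise uses $\Nuc(A)\simeq\Mod_A(\Nuc(k))$ to reduce the claim to the nuclearity of $B$ over $k$, which is \Cref{rmk: animated affinoid nuclear over k}. Your extra care that the equivalence is compatible with the forgetful functor to condensed $A$-modules makes explicit what the paper's one-line proof leaves implicit. Your separate check that $B$ is $A$-solid is correct but redundant, since that compatibility already gives it.
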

\begin{proof}
    Since $B$ can be viewed as an $A$-module, the statement reduces, by \Cref{prop: nuclear categories as module categories}, to showing that $B$ is nuclear over $k$. This follows directly from \Cref{rmk: animated affinoid nuclear over k}.
\end{proof}

\begin{corollary}\label{Kunneth formula nuclear affine case}
    Let $A$, $B$, and $C$ be animated affinoid $k$-algebras. Then there is an equivalence 
    \[
    \Nuc(A) \otimes_{\Nuc(B)} \Nuc(C) \simeq \Nuc(A \widehat{\otimes}_{B} C).
    \]
\end{corollary}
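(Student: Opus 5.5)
The plan is to reduce everything to module categories in $\Nuc(k)$ via \Cref{prop: nuclear categories as module categories}, and then invoke the standard Lurie-type formula for relative tensor products of module categories. By that proposition we have $\Nuc(A)\simeq \Mod_A(\Nuc(k))$, $\Nuc(B)\simeq \Mod_B(\Nuc(k))$, $\Nuc(C)\simeq\Mod_C(\Nuc(k))$ and $\Nuc(A\widehat{\otimes}_B C)\simeq \Mod_{A\widehat{\otimes}_B C}(\Nuc(k))$. Here we use \Cref{prop: animated affinoid stable under products and tensor products} to know that $A\widehat{\otimes}_B C$ is again an animated affinoid $k$-algebra, so the proposition applies to it. Moreover, the relative tensor product computed in $\Nuc(k)$ agrees with $\widehat{\otimes}$: $\Nuc(k)$ is closed under tensor products in $\cD(k,\Z)_\Solid$, so the bar construction computing $A\otimes^{\Solid}_B C$ stays inside $\Nuc(k)$.

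First I would rewrite $\Mod_A(\Nuc(k))\simeq \Mod_A(\Nuc(k))\otimes_{\Nuc(k)}\Nuc(k)$. I would then use the general fact \cite[Theorem 4.8.4.6]{HA}: for a presentably symmetric monoidal $\cV$ and a commutative algebra $R$ in $\cV$, the assignment $M\mapsto \Mod_M(\cV)$ gives $\Mod_A(\cV)\simeq \Mod_A(\Mod_B(\cV))$ whenever $A$ is a $B$-algebra. In addition, $\Mod_A(\Mod_B(\cV))\otimes_{\Mod_B(\cV)}\Mod_C(\Mod_B(\cV))\simeq \Mod_{A\otimes_B C}(\Mod_B(\cV))$. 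Applying this with $\cV=\Nuc(k)$ and $B$ as the base, using $\Mod_B(\Nuc(k))\simeq\Nuc(B)$, gives
\[
\Nuc(A)\otimes_{\Nuc(B)}\Nuc(C)\simeq \Mod_{A\otimes_B C}(\Nuc(B))\simeq \Mod_{A\widehat{\otimes}_B C}(\Nuc(k)),
\]
and a final application of \Cref{prop: nuclear categories as module categories} identifies the right-hand side with $\Nuc(A\widehat{\otimes}_B C)$.

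The main point requiring care is compatibility: the equivalences of \Cref{prop: nuclear categories as module categories} must be compatible with the $\Nuc(B)$-module structures. Concretely, the $\Nuc(B)$-action on $\Nuc(A)$, namely base change along $B\to A$ followed by the tensor product in $\cD(A)_\Solid$, has to match the action on $\Mod_A(\Nuc(B))$. I would check this by noting that both are induced by the symmetric monoidal functor $\Nuc(B)\to\Nuc(A)$, $M\mapsto A\otimes_B M$. Its restriction to nuclear objects is the free-module functor, because the analytic completion is already trivial on nuclear $A$-modules by \Cref{cor: relative nuclearity affinoid case}. A second point is that the tensor product must be taken in $\LPr_{\mathrm{st}}$-modules, i.e.\ in $\LPr_{\Nuc(B)}$; this is the setting of Lurie's theorem, so no extra hypothesis such as rigidity is needed, although rigidity of $\Nuc(B)$ could be used as an alternative check through compact generators.
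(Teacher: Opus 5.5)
Your proposal is correct and follows the same route as the paper. You use \Cref{prop: animated affinoid stable under products and tensor products} to know that $A\widehat{\otimes}_B C$ is animated affinoid, identify all the nuclear categories with module categories in $\Nuc(k)$ via \Cref{prop: nuclear categories as module categories}, and conclude with the standard identification $\Mod_A(\cV)\otimes_{\Mod_B(\cV)}\Mod_C(\cV)\simeq\Mod_{A\otimes_B C}(\cV)$, which the paper leaves implicit in ``easily follows''. Your extra check that the $\Nuc(B)$-actions are compatible is a welcome addition. For identifying base change with the free-module functor, however, the more direct reference is \Cref{lem: pullback derived affinoids} rather than \Cref{cor: relative nuclearity affinoid case}.
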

\begin{proof}
    By \Cref{prop: animated affinoid stable under products and tensor products}, the tensor product $A \widehat{\otimes}_B C$ is an animated affinoid algebra. The statement then easily follows from the explicit description of nuclear \cates provided in \Cref{prop: nuclear categories as module categories}. 
\end{proof}

\begin{lemma}\label{lem: pullback derived affinoids}
    Let $f \colon A \to B$ be a map of animated affinoid $k$-algebras. Then the pullback functor 
    \[
    f^{\ast} \colon \cD(A)_{\Solid} \to \cD(B)_{\Solid}
    \]
    preserves nuclear modules. Moreover, under the equivalence of \Cref{prop: nuclear categories as module categories}, the induced map
    \[
    f^{\ast} \colon \Nuc(A) \to \Nuc(B)
    \]
    can be identified with the tensor product $-\widehat{\otimes}_{A} B$.
\end{lemma}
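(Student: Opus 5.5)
The plan is to compute $f^\ast$ on the analytic-ring level and then restrict to nuclear objects. For a map of analytic rings $A_\Solid \to B_\Solid$, the pullback $f^\ast \colon \cD(A)_\Solid \to \cD(B)_\Solid$ is the completed base change $M \mapsto M \otimes^{\Solid}_{A} B$. In other words, one first forms $M \otimes_{A} B$ in condensed modules and then completes along $B_\Solid$. By \cite[Proposition 12.21]{analytic} and the description of $\cD(B)_\Solid$ via homotopy groups, the analytic ring structure on $B$ is induced from that of $A$ (and of $k$). This is because both come from the solid structure on $\pi_0$, and $\pi_0(B)$ receives a map from $\pi_0(A)$, which is a map of Huber pairs with $A^\circ \to B^\circ$. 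I therefore expect $M \otimes^{\Solid}_{A} B$ to be computed as the tensor product inside $\cD(A)_\Solid$, which is already $B$-solid. The first step is thus to identify $f^\ast(M) \simeq M \otimes^{\Solid}_{A} B$ as a $B$-module in $\cD(A)_\Solid$.

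Second, I will show that nuclear objects are preserved. By \Cref{prop: nuclear categories as module categories}, $\Nuc(A) \simeq \Mod_A(\Nuc(k))$. Every $M \in \Nuc(A)$ is therefore nuclear as an object of $\cD(k)_\Solid$ (or $\cD(k,\Z)_\Solid$), and by \Cref{cor: relative nuclearity affinoid case} the algebra $B$ is nuclear over $k$. Since $\Nuc(k)$ is closed under tensor products and colimits \cite[Theorem 8.6]{complex}, the object $M \otimes^{\Solid}_{A} B$ lies in $\Nuc(k)$. To see this, write it as the geometric realization of the bar construction $M \otimes^{\Solid}_k A^{\otimes n} \otimes^{\Solid}_k B$: each term is nuclear, and colimits of nuclear objects are nuclear. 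It follows that $f^\ast M$ is a $B$-module in $\Nuc(k)$, hence lies in $\Mod_B(\Nuc(k)) \simeq \Nuc(B)$.

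Finally, I will identify the restricted functor with $-\widehat{\otimes}_A B$, understood as the relative tensor product $\Mod_A(\Nuc(k)) \to \Mod_B(\Nuc(k))$. Both functors preserve colimits and are $\Nuc(k)$-linear, and both send $A$ to $B$. By \Cref{prop: LModR is molecular}, $\Mod_A(\Nuc(k))$ is generated under colimits and the $\Nuc(k)$-action by $A$, so the comparison map $M \otimes_A B \to f^\ast M$, coming from the unit and linearity, is an equivalence.

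I expect the main obstacle to be the first step. The issue is checking that completion along $B_\Solid$ does nothing to $M \otimes^{\Solid}_A B$ when $M$ is nuclear, so that the base change really agrees with the relative tensor product computed in $\Nuc(k)$. If the general fact about induced analytic structures is not directly available, I would instead argue on homotopy groups, using the criterion that an $A$-module is $B$-solid iff its $\pi_i$ are $\pi_0(B)$-solid. I would also use that $\pi_0(B)$-modules that are $\pi_0(A)$-solid are automatically $\pi_0(B)$-solid, since $B^\circ$ is integral over the image of $A^\circ$ up to the relevant completion. Failing that, I would reduce via \cite[Lemma 4.4.7]{wang2026relativegagatheoremapplication}, where the same identification appears.
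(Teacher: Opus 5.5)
Your overall strategy can be made to work, but the justifications you give for the step you yourself single out as the crux are false.

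\textbf{What fails.} The solid structure on $B$ is not induced from that on $A$, and $\pi_0(A)$-solid $\pi_0(B)$-modules need not be $\pi_0(B)$-solid. Take $A=k$ and $B=k\langle T\rangle$. The element $T\in k^\circ\langle T\rangle$ is not integral over $k^\circ$, not even modulo topologically nilpotent elements. By the full faithfulness of $(R,R^+)\mapsto (R,R^+)_\Solid$ \cite[Theorem 3.33]{andreychev2021pseudocoherent}, $\cD(k\langle T\rangle)_\Solid$ is therefore strictly smaller than the category of $k_\Solid$-complete condensed $k\langle T\rangle$-modules; this is the analogue of $\Z[T]_\Solid\neq(\Z[T],\Z)_\Solid$. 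The larger category is generated under colimits by base changes of free $k_\Solid$-modules on profinite sets, and $\cD(B)_\Solid$ is closed under colimits. Hence some (necessarily non-nuclear) free $M$ has uncompleted $M\otimes^{\Solid}_A B$ that is not $B_\Solid$-complete. So no argument purely about analytic ring structures can give your first step; it holds only because $M$ is nuclear. Your third step does not get around this either. There, $f^\ast$ is $\Nuc(k)$-linear for the completed action on $\cD(B)_\Solid$, and comparing that with the uncompleted action on $\Mod_B(\Nuc(k))$ is the same completeness question.

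\textbf{The fix.} The missing input is \Cref{prop: nuclear categories as module categories}, i.e.\ \cite[Lemma 4.4.7]{wang2026relativegagatheoremapplication}, your last fallback. Read it as saying that the forgetful functor $\Nuc(B)\to\Mod_B(\Nuc(k))$ is an equivalence, so every $B$-module in $\Nuc(k)$ already lies in $\cD(B)_\Solid$. You already use exactly this in your second step, so the repair is a reordering:
\begin{enumerate}
\item For $M\in\Nuc(A)\simeq\Mod_A(\Nuc(k))$, let $P$ be the relative tensor product $M\otimes_A B$, computed by the bar construction in $\Nuc(k)$.
\item $P$ is a $B$-module in $\Nuc(k)$, hence lies in $\Nuc(B)\subseteq\cD(B)_\Solid$.
\item $\cD(A)_\Solid$ and $\cD(B)_\Solid$ are full subcategories of $\Mod_A(\cD(k,\Z)_\Solid)$ and $\Mod_B(\cD(k,\Z)_\Solid)$. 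The free--forgetful adjunction therefore gives $\Map_{\cD(B)_\Solid}(P,N)\simeq\Map_{\cD(A)_\Solid}(M,N)$ for every $N\in\cD(B)_\Solid$, so $P\simeq f^\ast M$.
\end{enumerate}
This proves both assertions at once, and the generator argument becomes unnecessary.

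\textbf{Comparison with the paper.} The paper gets preservation of nuclear objects from a general result \cite[Theorem 2.9 (c)]{meyer2025qhodgecomplexesrefinedoperatornametc}. It then identifies the restricted functor with $-\widehat{\otimes}_A B$ by the argument of \cite[Proposition 3.2.8]{montagnani2025axiomaticapproachanalytic1affineness}. Your corrected route avoids that general theorem and makes explicit why no completion occurs on nuclear objects.
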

\begin{proof}
    From \cite[Theorem 2.9 (c)]{meyer2025qhodgecomplexesrefinedoperatornametc}, we deduce that the pullback functor $f^{\ast}$ restricts to the nuclear subcategories. The latter statement follows from the exact same argument used in \cite[Proposition 3.2.8]{montagnani2025axiomaticapproachanalytic1affineness}, which extends verbatim to our setting.
\end{proof}

\section{Nuclear categories for derived rigid analytic spaces}\label{Section 3}
In this section, we introduce the notion of derived rigid analytic spaces, which will be employed in the final section. A derived rigid space will be described via the gluing of animated affinoid $k$-algebras along rational localizations. We will then consider quasi-compact semi-separated derived rigid analytic spaces and their nuclear \cates, proving a K\"{u}nneth formula (\Cref{prop: kunneth for nuclear moduels}) and establishing the existence of a $6$-functor formalism (\Cref{thm: 6 functor for nuclear modules in rigid geometry}). 

\subsection{Rational localization and derived rigid analytic spaces}
In this subsection, we define condensed derived rigid analytic varieties as the gluing of animated affinoid $k$-algebras along rational localizations. A similar approach has been carried out in the bornological setting by Aron Soor in \cite{soor2025sixfunctorformalismquasicoherentsheaves}.

As explained in \cite[Lemma 2.24]{mikami2023fppfdescentcondensedanimatedrings}, let $A$ be a classical affinoid algebra, and let $f_{1},\dots, f_{n}, g$ be elements that generate the unit ideal of $A$. We can view $A$ as a $\Z[T]$-algebra via the map sending $T$ to $g$. Under this $\Z[T]$-module structure, we have the equivalence: 
\begin{equation*}
    A \otimes_{(\Z[T],\Z)_{\Solid}} (\Z[T,T^{-1}], \Z)_{\Solid} \simeq A[g^{-1}].
\end{equation*}
Moreover, we can consider $A[g^{-1}]$ as an algebra over $\Z[U_{1},\dots, U_{n}]$ via the map sending $U_{i}$ to $f_{i}/g$. This yields an isomorphism: 
\begin{equation*}
    (A \otimes_{(\Z[T],\Z)_{\Solid}} (\Z[T,T^{-1}], \Z)_{\Solid})\otimes_{(\Z[U_{1},\dots, U_{n}],\Z)_{\Solid}} \Z[U_{1},\dots, U_{n}]_{\Solid} \simeq A\left\langle \frac{f_1}{g}, \dots, \frac{f_n}{g} \right\rangle.
\end{equation*}
Here, $A\left\langle \frac{f_1}{g}, \dots, \frac{f_n}{g} \right\rangle$ denotes the usual rational localization of $A$. 

When $A$ is a derived affinoid $k$-algebra, we define: 
\begin{equation}\label{eq: rational loc}
    A\left\langle \frac{f_1}{g}, \dots, \frac{f_n}{g} \right\rangle \coloneqq
    (A \otimes_{(\Z[T],\Z)_{\Solid}} (\Z[T,T^{-1}], \Z)_{\Solid}) \otimes_{(\Z[U_{1},\dots, U_{n}],\Z)_{\Solid}} \Z[U_{1},\dots, U_{n}]_{\Solid}.
\end{equation}
This leads to the following proposition:

\begin{proposition}\cite[Proposition 2.25]{mikami2023fppfdescentcondensedanimatedrings}
    If $A$ is an animated affinoid $k$-algebra, then $A\left\langle \frac{f_1}{g}, \dots, \frac{f_n}{g}\right\rangle$ is also an animated affinoid $k$-algebra such that
\[ \pi_0 \left(A\left\langle \frac{f_1}{g}, \dots, \frac{f_n}{g}\right\rangle\right) \simeq (\pi_0 A)\left\langle \frac{f_1}{g}, \dots, \frac{f_n}{g} \right\rangle. \]
\end{proposition}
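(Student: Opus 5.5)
The statement has two parts. First, $A\langle f_1/g,\dots,f_n/g\rangle$ as defined in \eqref{eq: rational loc} is an animated affinoid $k$-algebra in the sense of \Cref{def: derived affinoid}. Second, its $\pi_0$ is the classical rational localization of $\pi_0A$. I would prove both by analysing the two base changes in \eqref{eq: rational loc} separately, writing each as a tensor product of $A$ with an idempotent (or nuclear) algebra over a polynomial analytic ring.

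The first step is to rewrite each stage as an explicit tensor product. For the localization at $g$:
\[
A\otimes_{(\Z[T],\Z)_\Solid}(\Z[T,T^{-1}],\Z)_\Solid \simeq A\otimes^{\Solid}_{\Z[T]}\Z[T,T^{-1}],
\]
and $\Z[T,T^{-1}]$ is a filtered colimit of free $\Z[T]$-modules (a colimit of multiplication by $T$). This stage is therefore just the algebraic localization $A[g^{-1}]$ computed in condensed modules, and its homotopy is $\pi_*(A)[g^{-1}]$.

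Since $f_1,\dots,f_n,g$ generate the unit ideal of $\pi_0A$, the element $g$ is invertible on the rational subset, and $\pi_0$ is $\pi_0(A)[g^{-1}]$. For the second stage, I would use the standard description of solidification along $\Z[U]\to(\Z[U],\Z[U])_\Solid$ as the tensor product with the idempotent algebra $\Z[U]_\Solid$. Concretely, $M\mapsto M\otimes_{\Z[U]}\Z((U^{-1}))[-1]$ is the fibre term, so base change to $\Z[U]_\Solid$ is computed by
\[
\mathrm{fib}\bigl(M \to M\otimes_{\Z[U]}\Z((U^{-1}))\bigr)
\]
up to shift conventions, as in \cite{andreychev2021pseudocoherent}. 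Applying this for each $U_i$ in turn gives a finite complex. Its terms are obtained from $A[g^{-1}]$ by tensoring with the nuclear algebras $\Z((U_i^{-1}))$.

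The second step is to compute $\pi_0$ and verify nuclearity. For $\pi_0$, I would take $M=A[g^{-1}]$ and use right $t$-exactness of the solidification to get
\[
\pi_0\bigl(A\langle f/g\rangle\bigr)\simeq \pi_0\bigl(\pi_0(A)[g^{-1}]\otimes_{(\Z[U],\Z)_\Solid}\Z[U]_\Solid\bigr).
\]
This reduces the $\pi_0$ claim to the classical case recalled just before \eqref{eq: rational loc}, namely the isomorphism with $\pi_0(A)\langle f_1/g,\dots,f_n/g\rangle$. That isomorphism is an affinoid algebra, so condition (2) of \Cref{def: derived affinoid} holds. Condition (1) requires nuclearity over $(k,\Z)_\Solid$. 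Nuclear objects are closed under colimits, finite limits and tensor products (\cite[Theorem 8.6]{complex}). Both $\Z[T^{\pm1}]$ and $\Z((U^{-1}))$ are nuclear over their polynomial bases, and pullback preserves nuclearity (as in \Cref{lem: pullback derived affinoids}). Hence each term of the fibre sequences above is nuclear, and so is the limit.

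The main obstacle is the $\pi_0$ computation in the derived setting. Solidification is not $t$-exact: the fibre description contributes a possible $\pi_{-1}$ and mixes $\pi_1(A)$ into degree $0$. One must show that the relevant $\lim^1$/$\pi_{-1}$-type term vanishes, so that $A\langle f/g\rangle$ is connective and hence animated, and that no extra classes appear in $\pi_0$. I would handle this by a Tor-amplitude argument. First filter $A$ by its Postnikov tower, reducing to the case of a discrete $\pi_0A$-module $N$. Then prove that $N\otimes_{\Z[U]}\Z[U]_\Solid$ is concentrated in degree $0$ whenever $N$ is a finitely generated (or nuclear, solid) module over the affinoid $\pi_0(A)[g^{-1}]$ on which $U$ acts through the power-bounded element $f/g$ of the rational localization. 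This is exactly where the hypothesis that $f_1,\dots,f_n,g$ generate the unit ideal enters, and where I expect to cite the flatness of rational localizations from \cite{andreychev2021pseudocoherent} and \cite[Lemma 2.24]{mikami2023fppfdescentcondensedanimatedrings}.
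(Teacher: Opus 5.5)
The paper does not prove this statement itself; it only quotes Mikami's Proposition 2.25. Your argument therefore has to stand on its own, and it does so for only half of the statement.

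\textbf{The $\pi_0$ part is correct.} $A[g^{-1}]$ is connective with $\pi_0=\pi_0(A)[g^{-1}]$. The completion along $(\Z[U_1,\dots,U_n],\Z)_\Solid\to\Z[U_1,\dots,U_n]_\Solid$ preserves connective objects (condition (2) in the definition of an analytic ring). Hence $A\langle f/g\rangle$ is automatically connective, its $\pi_0$ depends only on $\pi_0(A)[g^{-1}]$, and the classical computation identifies it with $\pi_0(A)\langle f/g\rangle$.

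This right $t$-exactness, which you invoke yourself, already removes what you call the main obstacle. No $\pi_{-1}$ can appear, and $\pi_1(A)$ cannot reach $\pi_0$. So the Postnikov/flatness step is not needed for this statement. As phrased it is also off: $\pi_0(A)[g^{-1}]$ is not affinoid, and $f_i/g$ is not power-bounded in it.

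\textbf{The genuine gap is condition (1), nuclearity over $(k,\Z)_\Solid$.} This is the only point that needs more than the classical computation, and your argument for it rests on a wrong model of the completion.

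The functor $M\mapsto\mathrm{fib}\bigl(M\to M\otimes_{\Z[U]}\Z((U^{-1}))\bigr)$ projects onto the objects killed by the idempotent algebra $\Z((U^{-1}))$, not onto $\Z[U]_\Solid$-modules. For $M=\Z[U]$ it returns $(\Z((U^{-1}))/\Z[U])[-1]$. That object is not even $U$-solid, and it sits in degree $-1$, which is where your spurious $\pi_{-1}$ comes from. The actual completion of $\Z[U]$ is $\Z[U]$. The completion is not smashing. The correct formula is $M\otimes_{(\Z[U],\Z)_\Solid}\Z[U]_\Solid\simeq R\underline{\Hom}_{\Z[U]}\bigl(\Z((U^{-1}))/\Z[U],M\bigr)[1]$, that is, the cofibre of $R\underline{\Hom}_{\Z[U]}(\Z((U^{-1})),M)\to M$.

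There are three further problems with the nuclearity step.
\begin{enumerate}
\item $\Z((U^{-1}))$ is not nuclear over $(\Z[U],\Z)_\Solid$. It is compact with $R\underline{\Hom}_{\Z[U]}(\Z((U^{-1})),\Z[U])=0$, so nuclearity would force its identity map to vanish.
\item Nuclearity relative to the polynomial base is the wrong notion anyway, since such modules are already $U$-solid.
\item Appealing to the pullback lemma for animated affinoid algebras is circular here: it presupposes the target is already an animated affinoid algebra.
\end{enumerate}

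\textbf{A fix.} Resolve $\Z((U^{-1}))$ by $\Z[U]\otimes\Z[[V]]\xrightarrow{UV-1}\Z[U]\otimes\Z[[V]]$. Then $R\underline{\Hom}_{\Z[U]}(\Z((U^{-1})),M)$ is the fibre of $UV-1$ acting on $R\underline{\Hom}_{\Z}(\Z[[V]],M)\simeq\underline{\Hom}_k(P,M)$, where $P=k\otimes^{\Solid}\Z[[V]]$ is compact in $\cD((k,\Z)_\Solid)$. For nuclear $M$ this equals $\underline{\Hom}_k(P,k)\otimes M$, with $\underline{\Hom}_k(P,k)=c_0(\mathbb{N},k)$, so it is nuclear. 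Hence each $U_i$-completion preserves nuclearity over $(k,\Z)_\Solid$. Applying this for $i=1,\dots,n$, starting from the nuclear object $A[g^{-1}]$, gives condition (1).
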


\begin{definition}\cite[Definition 2.26]{mikami2023fppfdescentcondensedanimatedrings}
    Let $A$ be a derived affinoid $k$-algebra. A rational localization of $A$ is a map of animated affinoid $k$-algebras   
    \begin{equation*}
        A \to A\left\langle \frac{f_1}{g}, \dots, \frac{f_n}{g} \right\rangle
    \end{equation*}
    induced by \Cref{eq: rational loc}.
\end{definition}

If $A$ is an animated affinoid $k$-algebra, we denote by $\mathrm{Rat}_{A}$ the \cat of rational localizations of $A$.

\begin{remark}\label{rmk: rational loc on pi0}
    By \cite[Proposition 2.29]{mikami2023fppfdescentcondensedanimatedrings}, there is an equivalence of categories from $\mathrm{Rat}_{A}$ to $\mathrm{Rat}_{\pi_{0}(A)}$. This category, as explained above, coincides with the category of usual rational localizations of the affinoid $k$-algebra $\pi_{0}A(\ast)$.
\end{remark}

\begin{definition}
    We say that a map $A \to B$ of animated affinoid $k$-algebras is a \emph{homotopy epimorphism} if the induced map
    \[
    B\widehat{\otimes}_{A}B \to B
    \]
    is an equivalence.
\end{definition}

\begin{lemma}
    The class of rational localizations is stable under pushouts and compositions. Moreover, every rational localization is a homotopy epimorphism.
\end{lemma}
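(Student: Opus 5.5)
The plan is to reduce everything to formal properties of the solid tensor product of analytic rings, using the explicit description \eqref{eq: rational loc}. A rational localization $A \to A\langle \frac{f_1}{g},\dots,\frac{f_n}{g}\rangle$ is the base change of $A$ along the composite of two maps of analytic rings:
\[
(\Z[T],\Z)_\Solid \to (\Z[T,T^{-1}],\Z)_\Solid
\]
and
\[
(\Z[U_1,\dots,U_n],\Z)_\Solid \to \Z[U_1,\dots,U_n]_\Solid .
\]
The structure maps are $T\mapsto g$ and $U_i \mapsto f_i/g$. The key input is that both universal maps are idempotent, i.e.\ homotopy epimorphisms of analytic rings: the self-tensor of each target over the source is the target. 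For $(\Z[T,T^{-1}],\Z)_\Solid$ this holds because inverting $T$ is a localization, so $\Z[T^{\pm1}]\otimes_{\Z[T]}\Z[T^{\pm1}]\simeq \Z[T^{\pm 1}]$. For $\Z[U]_\Solid$ over $(\Z[U],\Z)_\Solid$, the target differs only in its analytic structure, with the same underlying condensed ring; $\cD(\Z[U]_\Solid)$ is a full subcategory of $\cD((\Z[U],\Z)_\Solid)$ cut out by an additional solidity condition, and the completion is a symmetric monoidal localization. Hence $\Z[U]_\Solid\otimes_{(\Z[U],\Z)_\Solid}\Z[U]_\Solid\simeq \Z[U]_\Solid$.

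For the homotopy epimorphism claim, write $B = A\langle \frac{f}{g}\rangle = A\otimes_{R} S$, where $R\to S$ is the composite of the two universal idempotent maps above (tensored together over $\Z$, with the $U_i$ step done after inverting $T$). Associativity of base change gives
\[
B\widehat{\otimes}_A B \simeq (A\otimes_R S)\otimes_A(A\otimes_R S)\simeq A\otimes_R (S\otimes_R S).
\]
Since a composite of idempotent maps is idempotent, $S\otimes_R S\simeq S$, and so $B\widehat\otimes_A B\simeq B$. We still have to check that $\widehat\otimes$ on $\mathrm{dAfdAlg}_k$ (computed in $\Nuc(k)$, or equivalently in $\cD(k,\Z)_\Solid$, by \Cref{prop: nuclear categories as module categories} and \Cref{rmk: completed tensor}) agrees with the analytic-ring pushout used above. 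I would do this via the fully faithful embedding $\mathrm{dAfdAlg}_k\to \mathrm{AnRing}_{(k,\Z)_\Solid}$ of \cite[Proposition 2.18]{mikami2023fppfdescentcondensedanimatedrings}, which is compatible with pushouts by the construction in \cite[Proposition 2.14]{mikami2023fppfdescentcondensedanimatedrings}.

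For stability under pushouts, take a rational localization $A\to A\langle\frac{f}{g}\rangle$ and any map $A\to C$. Again by associativity,
\[
C\widehat\otimes_A A\langle\tfrac{f}{g}\rangle \simeq C\otimes_R S \simeq C\langle\tfrac{\phi(f)}{\phi(g)}\rangle .
\]
We must check that $\phi(f_1),\dots,\phi(f_n),\phi(g)$ still generate the unit ideal of $\pi_0C$, which is clear since $\phi$ is a ring map. For compositions, I would pass to $\pi_0$. By \Cref{rmk: rational loc on pi0}, $\mathrm{Rat}_A\simeq\mathrm{Rat}_{\pi_0A}$, and classically a rational subdomain of a rational subdomain is rational. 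The composite $A\to B\to B\langle\frac{f'}{g'}\rangle$ then corresponds to some rational localization $A\to A\langle\frac{h}{k}\rangle$ of $\pi_0 A$, and the equivalence of categories lifts this identification to the derived level.

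The main obstacle I expect is the composition step. The difficulty is to ensure that the equivalence $\mathrm{Rat}_A\simeq\mathrm{Rat}_{\pi_0A}$ genuinely identifies the derived composite with the derived rational localization attached to the classical composite, rather than merely matching objects on $\pi_0$. If the cited equivalence is not strong enough, I would argue directly instead. Both the composite and the candidate $A\langle \frac{h}{k}\rangle$ are idempotent $A$-algebras whose $\pi_0$ agree. Each is therefore determined by its support in the Huber spectrum, as idempotent algebras are classified by that locus, which reduces the claim to the classical statement.
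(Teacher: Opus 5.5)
Your proposal is correct in substance. On compositions it follows the paper; for the other two claims it takes a different route.

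The paper obtains the homotopy-epimorphism property by citing Mikami's Lemma 2.27 together with Exercise 12.17 of Scholze's notes. It obtains pushout stability by passing to $\pi_0$ via \Cref{rmk: rational loc on pi0} and identifying $\pi_0(B\otimes^{\Solid}_A C)$ with the classical completed tensor product. You instead derive both formally from two inputs: the idempotence of $(\Z[T],\Z)_\Solid\to(\Z[T^{\pm1}],\Z)_\Solid$ and of $(\Z[U],\Z)_\Solid\to\Z[U]_\Solid$, and the associativity of pushouts of analytic rings.

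One small repair is needed. Since $U_i\mapsto f_i/g$ only makes sense after inverting $g$, a single map $R\to S$ as you literally describe it is not well defined. Either run the argument as two successive base changes, or add variables $X_i\mapsto f_i$ and send $U_i\mapsto X_i/T$.

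What your route buys is a literal identification $C\widehat{\otimes}_A A\langle\frac{f}{g}\rangle\simeq C\langle\frac{\phi(f)}{\phi(g)}\rangle$. You therefore never need the criterion that a map which is a rational localization on $\pi_0$ is one on the nose. The equivalence $\mathrm{Rat}_A\simeq\mathrm{Rat}_{\pi_0A}$ does not by itself give this: the map $A\to A\oplus\Sigma A$ is the identity on $\pi_0$ but is not a rational localization. The paper's shorter route implicitly leans on a stronger form of Mikami's result, or on flatness of rational localizations.

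The price is that you must compare $\widehat{\otimes}$ with the analytic-ring pushout, and what makes this work is nuclearity. The completion step of the pushout, which imposes $f_i/g$-solidity, is the identity, because every $B$-module in $\Nuc(k)$ already lies in $\cD(B)_\Solid$ by \Cref{prop: nuclear categories as module categories}. This is the same mechanism the paper uses in \Cref{Appendix 2}.

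On compositions, your primary argument is the paper's, and your worry about it is legitimate. The fallback, however, should not appeal to a classification of idempotent algebras by their support in the Huber spectrum. No such classification is available here, and none is needed, because your own two steps close the gap:

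Let $E=B\langle\frac{f'}{g'}\rangle$. Choose $h_1,\dots,h_m,e\in\pi_0A$ so that $\pi_0A\to\pi_0E$ is the classical rational localization $\pi_0A\to\pi_0A\langle\frac{h}{e}\rangle$, and set $E'=A\langle\frac{h}{e}\rangle$.

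By pushout stability, $E\to E\widehat{\otimes}_AE'\simeq E\langle\frac{h}{e}\rangle$ is a rational localization of $E$. Likewise $E'\to E'\widehat{\otimes}_AE\simeq\bigl(E'\langle\frac{f}{g}\rangle\bigr)\langle\frac{f'}{g'}\rangle$ is a composite of two rational localizations.

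Each of these maps is an isomorphism on $\pi_0$. This follows from the classical inclusions of domains: the two domains coincide and lie inside the one cut out by $\frac{f}{g}$.

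The identity of any $D$ is the rational localization $D\langle\frac{1}{1}\rangle$. So the equivalence $\mathrm{Rat}_D\simeq\mathrm{Rat}_{\pi_0D}$ forces any rational localization that is trivial on $\pi_0$ to be an equivalence.

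Hence both canonical maps $E\to E\widehat{\otimes}_AE'\leftarrow E'$ are equivalences, and $E\simeq E'$ as $A$-algebras. This is exactly the statement you need.
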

\begin{proof}
Let $A \to B$ and $A \to C$ be maps of derived affinoid $k$-algebras, with $A \to B$ a rational localization. To prove that $C \to B\widehat{\otimes}_A C$ is a rational localization, using \cite[Proposition 2.29]{mikami2023fppfdescentcondensedanimatedrings}, it suffices to show that the induced map on $\pi_0$ is a classical rational localization.

Using the equivalence
    \[
    \pi_{0}(B\otimes^{\Solid}_{A}C) \simeq \pi_{0}(B)\otimes^{\Solid}_{\pi_{0}(A)} \pi_{0}(C),
    \]
    and the fact that the tensor product can be identified with the completed tensor product, we deduce that the induced map 
    \[
    \pi_{0}(C) \to \pi_{0}(B\otimes^{\Solid}_{A}C) \simeq \pi_{0}(B)\otimes^{\Solid}_{\pi_{0}(A)} \pi_{0}(C)
    \]
    can be identified with a rational localization.

The fact that a rational localization is a homotopy epimorphism follows from \cite[Lemma 2.27]{mikami2023fppfdescentcondensedanimatedrings} and \cite[Exercise 12.17]{analytic}. 

\begin{remark}
    In particular, this lemma implies that a rational localization is a Zariski morphism in the opposite \cat $\mathrm{dAfdAlg}^{\mathrm{op}}$, as defined in \cite[Definition 1.3.4]{montagnani2025axiomaticapproachanalytic1affineness}. This property will be utilized later. 
\end{remark}
    
To conclude the proof, by invoking \cite[Proposition 2.29]{mikami2023fppfdescentcondensedanimatedrings}, we deduce that rational localizations are stable under compositions, since classical rational localizations are stable under compositions.
\end{proof}

We denote by $\mathrm{dAfd}_k$ the opposite \cat of $\mathrm{dAfdAlg}_k$, and we denote by $\dSp(A)$ the object in $\mathrm{dAfd}_k$ associated with the animated affinoid $k$-algebra $A$. We refer to the objects in the \cat $\mathrm{dAfd}_k$ as \emph{derived $k$-affinoid spaces}.

\begin{definition}\cite[Definition 4.12]{soor2025sixfunctorformalismquasicoherentsheaves}\label{def: weak analytic site}
Let $X \coloneqq \dSp(A)$ be a derived $k$-affinoid space.
\begin{enumerate}
    \item[(i)] The \emph{small analytic site} of $X$ is the $\infty$-site with:
    \begin{enumerate}
        \item[(a)] underlying $\infty$-category given by the full subcategory $\mathrm{Rat}_X$ of $\mathrm{dAfd}_{/X}$ spanned by rational subdomains $\Sp(B) \to \Sp(A)$;
        \item[(b)] covering sieves generated by finite families of rational subdomains 
        \[
        \{\dSp(B_i) \to \dSp(B)\}_{i=1}^n
        \]
        such that 
        \[
        \{\Sp(\pi_0(B_i)(\ast)) \to \Sp(\pi_0(B)(\ast))\}_{i=1}^n
        \]
        is an analytic cover of the classical rigid space $\Sp(\pi_0(B)(\ast))$.
    \end{enumerate}
    
    \item[(ii)] The \emph{big analytic site} on $\mathrm{dAfd}$ is the $\infty$-site with:
    \begin{enumerate}
        \item[(a)] underlying $\infty$-category given by $\mathrm{dAfd}$;
        \item[(b)] covering sieves generated by finite families of rational subdomains 
        \[
        \{\dSp(B_i) \to \dSp(B)\}_{i=1}^n
        \]
         such that 
         \[
        \{\Sp(\pi_0(B_i)(\ast)) \to \Sp(\pi_0(B)(\ast))\}_{i=1}^n 
         \]
        is an analytic cover of the classical rigid space $\Sp(\pi_0(B)(\ast))$.
    \end{enumerate}
\end{enumerate}
\end{definition}

\begin{definition}
We define
\begin{equation}
    \mathrm{Shv}(\mathrm{dAfd}) \coloneqq \mathrm{Shv}(\mathrm{dAfd}, \mathrm{Ani})
\end{equation}
to be the $\infty$-category of sheaves on the $\infty$-site $\mathrm{dAfd}$ equipped with the analytic topology.
\end{definition}

\begin{remark}\label{rmk: finite cover}
    Since every classical affinoid space $\Sp(A)$ is quasi-compact, from \Cref{def: weak analytic site} and \Cref{rmk: rational loc on pi0} we immediately deduce that every analytic cover of $\Sp(A)$ can be refined to a finite cover. 
\end{remark}

\begin{lemma}\label{lem: Nuc is a sheaf}
The following properties hold.
\begin{enumerate}
    \item[(i)] The functor $\Nuc \colon \mathrm{dAfd}^{\mathrm{op}} \to \mathrm{CAlg}(\mathrm{Pr}^L_{\mathrm{st}})$ is a sheaf on $\mathrm{dAfd}$ in the analytic topology.
    \item[(ii)] For every animated affinoid $k$-algebra $A$, the functor represented by $\mathrm{dSp}(A)$,
    \[
    \mathrm{dSp}(A) \colon \mathrm{dAfd}^{\mathrm{op}} \to \mathrm{Ani},
    \]
    is a sheaf in the analytic topology.
\end{enumerate}
\end{lemma}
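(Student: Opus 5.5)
Part (ii) follows from part (i), so I would prove (i) first and then deduce (ii).

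For (i), the first step is to reduce to finite covers by rational subdomains. By \Cref{rmk: finite cover}, every covering sieve of $\dSp(A)$ is generated by a finite family of rational localizations $A\to A_i$, $i=1,\dots,n$. Rational localizations are stable under pushout and composition, so the \v{C}ech nerve consists of the algebras
\[
A_{i_0\cdots i_m}=A_{i_0}\widehat{\otimes}_A\cdots\widehat{\otimes}_A A_{i_m},
\]
which are again rational localizations of $A$. Since these are homotopy epimorphisms, self-intersections collapse: $A_i\widehat{\otimes}_A A_i\simeq A_i$. Sheafiness for such covers is therefore equivalent to descent along the finite cover, and descent reduces to two claims. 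The first is that the Čech complex
\[
A\to\prod_i A_i\to\prod_{i,j}A_{ij}\to\cdots
\]
is a finite limit diagram in $\Nuc(A)$. The second is that
\[
\Nuc(A)\to\lim \Nuc(A_{i_0\cdots i_m})
\]
is an equivalence.

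For the first claim, I would reduce to the classical Tate acyclicity theorem. By \Cref{prop: nuclear categories as module categories} and \Cref{lem: pullback derived affinoids}, each $A_I$ is $A\widehat{\otimes}_{\pi_0A}\pi_0A_I$ or is built by solid base change, since rational localization is defined by solid tensor products in \Cref{eq: rational loc}. So the augmented Čech complex for $A$ is obtained from the one for $\pi_0A$ by tensoring with $A$ over $\pi_0 A$. Alternatively, I would filter by Postnikov truncations, using the fact that each $\pi_j(A)$ is a $\pi_0A$-module and that solid tensor product with a rational localization is exact on $\pi_0A$-modules. The classical Tate acyclicity theorem, in the condensed form proved in \cite{andreychev2021pseudocoherent}, then gives exactness. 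This is the step where the real content lies.

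For the second claim, I would use the descent formalism for Zariski-type covers in the sense of \cite{montagnani2025axiomaticapproachanalytic1affineness}. The functors $\Nuc(A)\to\Nuc(A_i)$ are symmetric monoidal localizations, because the maps $A\to A_i$ are homotopy epimorphisms. Hence $\Nuc(A_i)\simeq\Mod_{A_i}(\Nuc(A))$ is an idempotent algebra localization. The unit is a finite limit of the $A_I$, hence lies in the thick tensor ideal generated by $\prod_i A_i$, so $\prod_i A_i$ is descendable. Mathew's descendability criterion then gives
\[
\Mod_A(\Nuc(k))\simeq \lim \Mod_{A_I}(\Nuc(k)),
\]
and by \Cref{prop: nuclear categories as module categories} this is the desired equivalence. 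The main obstacle I anticipate is verifying the derived Tate acyclicity compatibly with nuclearity, which enters through the identification of the solid tensor product with the completed one in \Cref{rmk: completed tensor}. I would first try to cite \cite{mikami2023fppfdescentcondensedanimatedrings} or \cite{andreychev2021pseudocoherent} for descent on $\cD(A)_\Solid$, and then restrict to nuclear objects using \Cref{lem: pullback derived affinoids}.

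For (ii), I would compute maps from $\dSp(B)$ to $\dSp(A)$ as $\Map(A,B)$ in animated condensed algebras. For a cover $\{B\to B_i\}$, the limit of the Čech nerve of the $B_I$ recovers $B$ in $\Nuc(k)$ by the acyclicity step of (i). The same limit holds in $\mathrm{CAlg}$, because the forgetful functor from algebras to modules preserves and reflects limits. Since $\Map(A,-)$ preserves limits, this gives
\[
\Map(A,B)\simeq\lim\Map(A,B_I),
\]
which is the sheaf condition.
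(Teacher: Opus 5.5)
Your plan matches the paper's proof. For (i), reduce to a finite rational cover $\{B\to A_i\}$, show that $C=\prod_i A_i$ is descendable in $\Nuc(B)$, and apply Mathew's descent criterion, implicitly using $\Mod_{\prod_i A_i}(\Nuc(k))\simeq\prod_i\Mod_{A_i}(\Nuc(k))$. For (ii), use that descendability exhibits $B$ as the limit of its \v{C}ech conerve, and $\Map(A,-)$ preserves that limit.

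The only difference is how descendability of $C$ is obtained. The paper cites \cite[Theorem 4.15]{mikami2023fppfdescentcondensedanimatedrings}, which is your fallback, together with $C\in\Nuc(B)$. Your self-contained Tate-acyclicity sketch contains a flaw: $A\widehat{\otimes}_{\pi_0A}\pi_0A_I$ is meaningless, since $A$ is not a $\pi_0A$-algebra. The Postnikov variant should instead use $\pi_0A\widehat{\otimes}_A A_I\simeq\pi_0(A_I)$, applied to each $\pi_j(A)$, together with a connectivity argument to pass from the truncations $\tau_{\le j}A$ to $A$.
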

\begin{proof}
    To prove (i), we observe that by \Cref{rmk: finite cover} we can reduce the problem to considering a finite cover
    \[
    \{B \to A_i\}_{i\in I}
    \]
    of animated affinoid $k$-algebras. 
    By \Cref{prop: nuclear categories as module categories} and \cite[Lemma D.3.5.5]{SAG}, the functor 
    \[
    \Nuc(-)\simeq \Mod_{(-)}(\Nuc(k))\colon \mathrm{dAfd}^{\mathrm{op}} \to \LPr_{\mathrm{st}}
    \]
    commutes with finite products. Under this observation, using \cite[Proposition 3.22]{mathew2016galoisgroupstablehomotopy}, we can reduce the problem to showing that 
    \[
    C \coloneqq \prod_{i \in I} A_{i}
    \]
    is a descendable algebra in $\Nuc(B)$. 
    First, we observe that by \Cref{prop: animated affinoid stable under products and tensor products}, $C$ is an animated affinoid $k$-algebra. The statement then follows from \cite[Theorem 4.15]{mikami2023fppfdescentcondensedanimatedrings}, observing that by \Cref{cor: relative nuclearity affinoid case}, $C$ lies in $\Nuc(B)$. 
    
    We now prove that the functor 
\[
\mathrm{Map}_{\mathrm{dAfdAlg}_k}(A,-)\colon \mathrm{dAfdAlg}_k \to \Ani
\]
is a sheaf in the analytic topology. Consider an animated affinoid $k$-algebra $B$ and a cover as described above. As already explained, $C$ is descendable in $\Nuc(B)$; therefore, by \cite[Proposition 3.20]{mathew2016galoisgroupstablehomotopy}, we can realize $B$ as the following limit in $\Nuc(B)\simeq \Mod_{B}(\Nuc(k))$:
    \[
   \begin{tikzcd}
	B & {\lim(C } & {C\otimes_{\Nuc(B)}C} & {\dots).}
	\arrow["{\simeq }", from=1-1, to=1-2]
	\arrow[shift right, from=1-2, to=1-3]
	\arrow[shift left, from=1-2, to=1-3]
	\arrow[from=1-3, to=1-4]
	\arrow[shift left=3, from=1-3, to=1-4]
	\arrow[shift right=3, from=1-3, to=1-4]
\end{tikzcd}
    \]
    Now, since $\Map_{\mathrm{dAfdAlg}_k}(A,-)$ commutes with limits, the statement follows from \cite[Proposition 4.6.2.17]{HA}.
\end{proof}

\begin{definition}\cite[Definition 4.19]{soor2025sixfunctorformalismquasicoherentsheaves}\label{def: derived rigid variety}
\leavevmode
\begin{enumerate}
    \item[(i)] A \emph{derived $k$-affinoid space} is an object of $\mathrm{Shv}(\mathrm{dAfd})$ that is isomorphic to $\mathrm{dSp}(A)$ for some $A \in \mathrm{dAfdAlg}_{k}$.
    
    \item[(ii)] Let $X \coloneqq \mathrm{dSp}(A)$ be a derived $k$-affinoid space. An \emph{analytic subspace} $U \hookrightarrow X$ is a subsheaf $U$ of $X$ such that:
    \begin{itemize}
        \item[$\star$] There exists a small collection $\{\mathrm{dSp}(A_i)\}_{i \in I}$ of derived affinoid spaces and an effective epimorphism $\coprod_{i \in I} \mathrm{dSp}(A_i) \to U$ in $\mathrm{Shv}_{\mathrm{weak}}(\mathrm{dAfd})$ such that each map $\mathrm{dSp}(A_i) \to \mathrm{dSp}(A)$ is a rational subdomain.
    \end{itemize}

    \item[(iii)] Let $X \in \mathrm{Shv}(\mathrm{dAfd})$. An \emph{analytic subspace} $Y \hookrightarrow X$ is a subsheaf such that for every derived affinoid space $\mathrm{dSp}(A) \to X$ mapping to $X$, the pullback $Y \times_X \mathrm{dSp}(A) \hookrightarrow \mathrm{dSp}(A)$ is an analytic subspace in the sense of (ii). The morphism $Y \to X$ is then termed an \emph{open immersion}.

    \item[(iv)] A \emph{derived rigid analytic space} is an object $X \in \mathrm{Shv}(\mathrm{dAfd})$ such that there exists a small collection $\{\mathrm{dSp}(A_i)\}_{i \in I}$ of derived affinoid subspaces for which $\coprod_{i \in I} \mathrm{dSp}(A_i) \to X$ is an effective epimorphism. We denote the full subcategory of $\mathrm{Shv}(\mathrm{dAfd})$ consisting of derived rigid spaces by $\mathrm{dRig}$.
\end{enumerate}
\end{definition}

We remark that the category $\mathrm{dRig}$ admits all fiber products and colimits. The terminal object is $\mathrm{Sp}(k)$. Moreover, every classical rigid analytic variety can be viewed as a derived rigid analytic space; we reserve the term \emph{rigid analytic varieties} exclusively for those derived rigid analytic spaces that arise from classical rigid analytic varieties.

\begin{definition}
    Let $f \colon X \to Y$ be a map of derived rigid analytic spaces. 
    \begin{enumerate}
        \item We say that $f$ is \emph{affine} if for every derived affinoid map $\dSp(A) \to Y$, the pullback $X \times_Y \dSp(A)$ is a derived affinoid space. 
        \item We say that $X$ is \emph{quasi-compact} if every affinoid cover of $X$ can be refined to a finite cover. 
        \item We say that $X$ is \emph{semi-separated} if the diagonal map 
        \[
        X \to X\times_{\dSp(k)}X
        \]
        is affine. 
    \end{enumerate}
\end{definition}




\subsection{Nuclear categories over derived rigid analytic spaces}

Let $X$ be a derived rigid analytic space over $\mathrm{Sp}(k)$. We define the \emph{nuclear category} of $X$, via Kan extension using \Cref{lem: Nuc is a sheaf}, as the following limit over a cover of $X$ consisting of derived affinoid spaces:
\begin{equation*}
    \Nuc(X) \coloneqq \Lim_{\dSp(A_{i})\to X}\Nuc(\dSp(A_{i})).
\end{equation*}
In this way, for every map $f\colon X \to Y$ of derived rigid analytic varieties, we obtain a pullback functor for nuclear categories
\begin{equation}\label{eq: pullback}
    f^{\ast}\colon \Nuc(Y) \to \Nuc(X)
\end{equation}
which extends the one introduced in \Cref{lem: pullback derived affinoids}.

In particular, given a derived rigid analytic space $X$, its nuclear category $\Nuc(X)$ is a presentable \cat, tensored over $\Nuc(k)$.

Consequently, given two rigid analytic varieties, we can define the tensor product of their associated nuclear categories as the tensor product in $\Mod_{\Nuc(k)}(\LPr)$. We will employ the following notation: let $X$ and $Y$ be two rigid analytic varieties over $\Spk$; we define the tensor product of their nuclear categories, denoted by $-\cotimes-$, as follows:
\begin{equation*}
    \Nuc(X)\cotimes \Nuc(Y) \coloneqq \Nuc(X)\otimes_{\Nuc(k)}\Nuc(Y),
\end{equation*}
where the term on the right-hand side denotes the tensor product in $\Mod_{\Nuc(k)}(\LPr)$.

\begin{proposition}\label{prop: Nuc(X) is rigid}
    Let $X$ be a quasi-compact and semi-separated derived rigid analytic space. Then $\Nuc(X)$ is a rigid \cat.
\end{proposition}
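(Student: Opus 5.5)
We argue by reduction to the affinoid case together with a finite-limit argument for rigid categories. By \Cref{prop: nuclear categories as module categories}, for every animated affinoid $k$-algebra $A$ we have $\Nuc(A)\simeq \Mod_A(\Nuc(k))$, and this category is rigid. Since $X$ is quasi-compact, it admits a finite cover by derived affinoid subspaces $\dSp(A_1),\dots,\dSp(A_n)\to X$. Because $X$ is semi-separated, the diagonal is affine, so every finite intersection $\dSp(A_{i_0})\times_X\cdots\times_X\dSp(A_{i_m})$ is again a derived affinoid space $\dSp(A_{i_0\dots i_m})$. Indeed, it is the pullback of $\dSp(A_{i_0})\times\cdots\times\dSp(A_{i_m})$ (affinoid by \Cref{prop: animated affinoid stable under products and tensor products}) along the iterated diagonal. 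Setting $W=\coprod_i \dSp(A_i)\simeq \dSp(\prod_i A_i)$, the \v{C}ech nerve of $W\to X$ consists of derived affinoids. By the sheaf property (\Cref{lem: Nuc is a sheaf} and the definition of $\Nuc(X)$ via Kan extension) we then obtain
\[
\Nuc(X)\simeq \lim_{[m]\in\bDelta}\Nuc\bigl(W^{\times_X (m+1)}\bigr),
\]
which is a cosimplicial diagram of rigid categories $\Mod_{B_m}(\Nuc(k))$ with symmetric monoidal pullback functors.

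The key step is to show that this limit is rigid. The plan is to use the fact that, for an open cover by opens with affine intersections, the cosimplicial limit can be replaced by a finite limit. The intersections $\dSp(A_{i_0\dots i_m})$ with repeated indices reduce to those with distinct indices, because rational localizations are homotopy epimorphisms, so $B\widehat{\otimes}_A B\simeq B$. Hence $\Nuc(X)$ is the limit over the finite poset of nonempty subsets $J\subseteq\{1,\dots,n\}$ of $\Nuc(A_J)$, with transition maps given by base change along rational localizations. Each transition functor $\Nuc(A_J)\to\Nuc(A_{J'})$ is $-\widehat{\otimes}_{A_J}A_{J'}$ (\Cref{lem: pullback derived affinoids}), and its right adjoint, the forgetful functor, preserves colimits and is linear. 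In fact these are smashing localizations: $A_{J'}$ is an idempotent algebra in $\Nuc(A_J)$ since rational localizations are homotopy epimorphisms. We then want to invoke the general statement that a finite limit in $\mathrm{CAlg}(\LPr_{\mathrm{st}})$ of rigid categories along functors with colimit-preserving, linear right adjoints is rigid; compare \cite[Chapter 1, Sec.~9]{gaitsgory2019study}. Alternatively, one can write $\Nuc(X)$ as $\Mod_{\cA}(\Nuc(k))$-type data glued by a Zariski/recollement argument as in \cite{montagnani2025axiomaticapproachanalytic1affineness}, using the fact that rational localizations are Zariski morphisms in $\mathrm{dAfdAlg}^{\mathrm{op}}$.

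Concretely, we verify rigidity directly. The unit $\mathscr O_X=(A_J)_J$ is compact, since $\Map_{\Nuc(X)}(\mathscr O_X,-)$ is a finite limit of the functors $\Map(A_J,-)$, each preserving filtered colimits, and finite limits commute with filtered colimits in stable categories. Next, $\Nuc(X)$ is compactly generated by dualizable objects. Here we would use the extension-by-zero/Mayer–Vietoris gluing: for a cover $X=U\cup V$ with $U,V,U\cap V$ handled by induction on $n$, the category $\Nuc(X)$ is a pullback $\Nuc(U)\times_{\Nuc(U\cap V)}\Nuc(V)$ of rigid categories along smashing localizations. Compact objects of $\Nuc(U)$ extend to $\Nuc(X)$ up to adding a shift, by a Thomason–Trobaugh-type argument, and compact objects coincide with dualizable ones because the unit is compact and the inner hom is computed locally.

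We expect this gluing step to be the main obstacle. It requires that compacts (equivalently dualizables, i.e. perfect complexes) generate after gluing, that is, a Thomason–Trobaugh style extension statement for $\Nuc$ along rational opens. If the general ``finite limits of rigid categories along smashing localizations are rigid'' statement is available (for instance via dualizability of the limit and the criterion that the multiplication has a linear colimit-preserving right adjoint, checked locally), we would use it instead of building generators by hand.
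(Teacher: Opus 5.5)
\textbf{Gap: the step that is supposed to produce rigidity.} Your reduction of $\Nuc(X)$ to a finite limit $\lim_J \Nuc(A_J)$ of rigid categories along smashing localizations is fine, and so is your proof that the unit is compact. The problem is what comes next.

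\emph{The concrete route.} It needs $\Nuc(X)$ to be compactly generated by dualizable objects, so that the Gaitsgory--Rozenblyum criterion for compactly generated categories applies. This is false already for $X=\Sp(k)$. By \Cref{lem: perfect complexes over rigid varieties}, the compact objects of $\Nuc(k)$ are the perfect $k$-complexes. These generate only the discrete $k$-solid modules of \Cref{def: solidification functor}, and not, say, the Banach algebra $k\langle t\rangle$. So $\Nuc(k)$ is rigid without being compactly generated, and that criterion is simply unavailable. Generation relative to $\Nuc(k)$ does not rescue the plan either. Combining \Cref{thm: existence of a single compact generator} with \Cref{cor: counterxample}, for a smooth, proper, non-algebraizable $X$ the category $\Nuc(X)$ is not $\Nuc(k)$-compactly generated at all. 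Hence no Thomason--Trobaugh-type extension of compact objects along rational subdomains can supply generators. The failure of exactly this kind of gluing is the phenomenon the paper exhibits in its final section.

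\emph{The other route.} The closure of rigid categories under finite limits along smashing localizations is only asserted in your proposal; you give no argument for it. It is essentially what the paper imports. The paper checks that derived affinoids inside $\mathrm{Shv}(\mathrm{dAfd})$, with rational localizations as Zariski morphisms (they are homotopy epimorphisms), satisfy Assumption~1.3.10 of \cite{montagnani2025axiomaticapproachanalytic1affineness}, and then cites Proposition~1.3.17 there. This is the alternative you mention only in passing.

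\emph{A self-contained repair.} Keep your reduction and the compactness of the unit, and verify the remaining conditions in the definition of rigidity directly, with no generation statement: the right adjoint of the multiplication must be continuous and bilinear. Relative rigidity over the rigid base $\Nuc(k)$ suffices.
\begin{itemize}
\item After identifying $\Nuc(X)\cotimes\Nuc(X)$ with $\Nuc(X\times X)$, the multiplication is $\Delta^*$ and its right adjoint is $\Delta_*$.
\item Because $\Delta$ is affine (this is where semi-separatedness enters), $\Delta_*$ is computed on each $\dSp(A_J\widehat{\otimes}A_{J'})$ by the forgetful functor along $A_J\widehat{\otimes}A_{J'}\to A_{J\cup J'}$.
\item Hence $\Delta_*$ is continuous and satisfies the projection formula, which gives bilinearity.
\end{itemize}
Be careful about circularity: the paper's own K\"unneth formula, \Cref{prop: kunneth for nuclear moduels}, uses the rigidity of $\Nuc(X)$. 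You would therefore need to obtain this identification independently. One way is to exploit that the transition functors are smashing localizations, so that tensoring with any $\Nuc(k)$-linear category commutes with the finite limit.
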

\begin{proof}
    Observe that our setup satisfies \cite[Assumption 1.3.10]{montagnani2025axiomaticapproachanalytic1affineness} by taking $\cC$ to be the \cat of derived affinoid spaces, and $\widehat{\cC}$ to be the \cat $\mathrm{Shv}(\mathrm{dAdf})$. Therefore, this is a special case of \cite[Proposition 1.3.17]{montagnani2025axiomaticapproachanalytic1affineness}.
\end{proof}

Using this tensor product, we obtain a Künneth formula for nuclear \cates.

\begin{proposition}\label{prop: kunneth for nuclear moduels}
    Let $X$ and $Y$ be two quasi-compact, semi-separated derived rigid analytic spaces over $\Spk$. Then we have the following equivalence of categories:
    \begin{equation*}
        \Nuc(X)\cotimes \Nuc(Y) \simeq \Nuc(X\times Y).
    \end{equation*}
\end{proposition}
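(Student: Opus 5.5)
We reduce the Künneth formula to the affinoid case, \Cref{Kunneth formula nuclear affine case}, using descent for $\Nuc$ and the fact that tensoring with a dualizable object commutes with limits. Since $X$ and $Y$ are quasi-compact and semi-separated, we choose finite affinoid covers $\{\dSp(A_i)\}_{i\in I}$ of $X$ and $\{\dSp(B_j)\}_{j\in J}$ of $Y$. Semi-separatedness makes all finite intersections $\dSp(A_{i_0})\times_X\cdots\times_X\dSp(A_{i_n})$ derived affinoid, and likewise for $Y$. Hence the \v{C}ech nerves $U_\bullet\to X$ and $V_\bullet\to Y$ are simplicial objects with affinoid terms, each a finite disjoint union of derived affinoids. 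By \Cref{lem: Nuc is a sheaf} and the definition of $\Nuc$ by right Kan extension, we get
\[
\Nuc(X)\simeq \lim_{[n]\in\bDelta}\Nuc(U_n),\qquad \Nuc(Y)\simeq\lim_{[m]\in\bDelta}\Nuc(V_m).
\]
The products $U_n\times V_m$ form an affinoid hypercover (indeed a bisimplicial \v{C}ech nerve) of $X\times Y$. Therefore $\Nuc(X\times Y)\simeq\lim_{[n],[m]}\Nuc(U_n\times V_m)$, and by \Cref{Kunneth formula nuclear affine case} (with $B=k$, using that $\Nuc$ of a finite disjoint union is a finite product),
\[
\Nuc(U_n\times V_m)\simeq \Nuc(U_n)\cotimes\Nuc(V_m).
\]

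The key step is to commute $-\cotimes-$ with these totalizations, that is, to show
\[
\Big(\lim_n\Nuc(U_n)\Big)\cotimes\Big(\lim_m\Nuc(V_m)\Big)\simeq\lim_{n,m}\Nuc(U_n)\cotimes\Nuc(V_m).
\]
We do this one variable at a time. By \Cref{prop: Nuc(X) is rigid}, $\Nuc(X)$ is rigid, hence dualizable over $\Nuc(k)$; and each $\Nuc(V_m)$ is dualizable as a module category (\Cref{prop: nuclear categories as module categories}). For a dualizable object $\cM$ of $\LPr_{\Nuc(k)}$, the functor $\cM\otimes_{\Nuc(k)}-\simeq\Fun^{\mathrm{L}}_{\Nuc(k)}(\cM^\vee,-)$ preserves all limits. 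We first apply this with $\cM=\Nuc(X)$ to move $\Nuc(X)$ inside $\lim_m$. Then, for each fixed $m$, we apply it with $\cM=\Nuc(V_m)$ to move $\Nuc(V_m)$ inside $\lim_n$.

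One subtlety is that limits in $\LPr_{\Nuc(k)}$ are computed along pullback functors and must agree with limits of underlying categories. This holds because the forgetful functor to $\widehat{\mathrm{Cat}}_\infty$ preserves limits. Compatibility of the transition maps with the affinoid Künneth equivalence follows from \Cref{lem: pullback derived affinoids}, which identifies pullbacks with base change $-\widehat{\otimes}_A B$; this is natural in both variables.

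The main obstacle we expect is the justification that the bisimplicial diagram $U_n\times V_m$ computes $\Nuc(X\times Y)$. We must check that $\{\dSp(A_i)\times\dSp(B_j)\}$ is an analytic cover of $X\times Y$ and that its \v{C}ech nerve is $U_\bullet\times V_\bullet$, whose diagonal is cofinal. Covering can be checked on $\pi_0$ using \Cref{rmk: completed tensor} and classical rigid geometry. Descent then follows from \Cref{lem: Nuc is a sheaf} together with the cofinality of the diagonal $\bDelta\to\bDelta\times\bDelta$. If this becomes awkward, an alternative is to invoke \cite[Proposition 1.3.17]{montagnani2025axiomaticapproachanalytic1affineness}-type results in the axiomatic setup used for \Cref{prop: Nuc(X) is rigid}.
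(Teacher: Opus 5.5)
Your proof is correct and follows essentially the same route as the paper: descent for $\Nuc$, commuting $\cotimes$ past the limits one variable at a time using dualizability of the affinoid nuclear categories and of $\Nuc(X)$ (via \Cref{prop: Nuc(X) is rigid}), and then the affinoid K\"unneth formula \Cref{Kunneth formula nuclear affine case}. The only difference is packaging: the paper first proves $\Nuc(X\times\dSp(B))\simeq\Nuc(X)\cotimes\Nuc(\dSp(B))$ and then repeats the argument with a cover of $Y$, whereas you handle both covers at once as a bisimplicial diagram with cofinality of the diagonal. The covering property of $U_0\times V_0\to X\times Y$ that you flag as the main obstacle is automatic, since in a topos effective epimorphisms are stable under base change and composition, hence under products.
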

\begin{proof}
    Consider an affinoid cover $\{\dSp(A_i)\}_{i}$ of the derived rigid analytic space $X$. We first prove that
    \begin{equation}\label{eq:Kunneth step 1}
        \Nuc(X \times \dSp(B)) \simeq \Nuc(X) \cotimes \Nuc(\dSp(B)).
    \end{equation}
    By descent, we can express $\Nuc(X)$ as $\Lim_{i}\Nuc(\dSp(A_{i}))$. Since $\Nuc(\dSp(B))$ is rigid, it is dualizable as a $\Nuc(k)$-linear \cat; hence, the functor $-\cotimes \Nuc(\dSp(B))$ commutes with limits. In particular, we obtain
    \begin{equation*}
        \Lim_{i}(\Nuc(\dSp(A_{i}))\cotimes \Nuc(\dSp(B))) \simeq \left(\Lim_{i}\Nuc(\dSp(A_{i}))\right)\cotimes \Nuc(\dSp(B)).
    \end{equation*}
    Since $\{\dSp(A_{i})\times \dSp(B) \}_{i}$ constitutes a cover of $X \times \dSp(B)$, we can again use descent for nuclear categories and the completed tensor product to identify $\Lim_{i}(\Nuc(\dSp(A_{i}))\cotimes \Nuc(\dSp(B)))$ with $\Nuc(X \times \dSp(B))$.
    
    To conclude, we observe that the same argument applies using an affinoid cover of $Y$ instead of $\dSp(B)$, utilizing the fact that $\Nuc(X)$ is also rigid.
\end{proof}

\subsection{6-functor formalism for nuclear categories over rigid analytic spaces}

As explained in \Cref{eq: pullback}, to every map of derived rigid analytic spaces $f\colon Y \to X$, we can associate a pullback functor at the level of nuclear \cates: 
\[
f^*\colon \Nuc(X)\to \Nuc(Y).
\]
This functor is a left adjoint, and we denote its right adjoint by $f_{\ast}$. When $X$ and $Y$ are derived affinoid spaces, $f_{\ast}$ can be identified with the forgetful functor.

\begin{theorem}\label{thm: 6 functor for nuclear modules in rigid geometry}
    The assignment that sends a derived rigid analytic space $X$ over $\Spk$ to its \cat $\Nuc(X)$ of nuclear sheaves can be extended to a $6$-functor formalism with values in stable, presentable \cates:
    \[
    \mathrm{Corr}(\cC)_{\mathrm{qcqs}, \mathrm{all}}\colon \mathrm{dRig}_{k} \to \LPr_{\mathrm{st}},
    \]
    where, for every map of quasi-compact and semi-separated derived rigid analytic spaces, we define the lower shriek functor $f_{!}$ to be the pushforward $f_{\ast}$.
\end{theorem}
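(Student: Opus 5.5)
The plan is to build the formalism with the Mann/Heyer--Mann extension machinery (\cite{mann2022padic6functorformalismrigidanalytic}, \cite[Section 3]{heyer20246functorformalismssmoothrepresentations}). First I would construct a $6$-functor formalism on derived affinoid spaces, with $E$ the class of all maps. Then I would extend it to $\mathrm{dRig}_k$ by descent in the analytic topology. Finally I would identify $f_!$ with $f_*$ for qcqs maps. This mirrors the algebraic construction in \cite[Lecture 8]{scholze2025sixfunctorformalisms}.

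\textbf{Affinoid level.} On $\mathrm{dAfd}_k$, \Cref{prop: nuclear categories as module categories} identifies $\Nuc(\dSp(A))$ with $\Mod_A(\Nuc(k))$, and \Cref{lem: pullback derived affinoids} identifies $f^*$ with $-\widehat{\otimes}_A B$. The formalism is then the standard one for modules over commutative algebras in the rigid base $\Nuc(k)$, with $f_! := f_*$ the forgetful functor. To produce it I would use the criterion \cite[Proposition 3.3.3]{heyer20246functorformalismssmoothrepresentations}, taking $I$ = all maps and $P$ = isomorphisms, and check three things.
\begin{enumerate}
\item The projection formula $f_*(M\otimes f^*N)\simeq f_*M\otimes N$. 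This holds because the forgetful functor is $\Mod_A$-linear: $M\otimes_B(B\otimes_A N)\simeq M\otimes_A N$.
\item Base change along pushouts of algebras $B\widehat{\otimes}_A C$. This is the module statement $(M\otimes_A C)$ regarded over $B\widehat{\otimes}_AC$, and it follows from \Cref{Kunneth formula nuclear affine case}.
\item $f_*$ preserves colimits, which is clear.
\end{enumerate}
Symmetric monoidality of $\Nuc$ on products is again \Cref{Kunneth formula nuclear affine case}, together with \Cref{prop: animated affinoid stable under products and tensor products}.

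\textbf{Extension.} Next I would apply the extension theorem \cite[Theorem 3.4.11]{heyer20246functorformalismssmoothrepresentations} (Mann's \cite[Proposition A.5.16]{mann2022padic6functorformalismrigidanalytic}). This needs $\Nuc$ to satisfy $*$-descent along analytic covers, which is \Cref{lem: Nuc is a sheaf}(i). It also needs analytic covers to satisfy $!$-descent. Since covers are finite and consist of rational localizations $j$, which are homotopy epimorphisms, I would show that $j_*=j_!$ on the affinoid level is fully faithful with a left adjoint $j^*$. Then $!$-descent along a finite cover is equivalent to $*$-descent via the usual passage to adjoints, limits in $\LPr$ being computed by right adjoints. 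This extends the formalism to all of $\mathrm{dRig}_k$, with $!$-able maps those that are locally on the target representable in derived affinoids, which includes qcqs maps.

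\textbf{Identification $f_!\simeq f_*$ for qcqs maps.} For a qcqs map $f\colon Y\to X$ of quasi-compact semi-separated spaces, I would choose a finite affinoid cover of $Y$. By semi-separatedness its Čech nerve consists of affinoids. In the extended formalism, $f_!$ is computed by gluing the affinoid $f_!=f_*$ along this nerve, and the colimit of $!$-pushforwards agrees with the finite limit of $*$-pushforwards by the same adjoint argument as above. To conclude, I would check that open immersions $j$ of qcqs type satisfy $j_!\simeq j_*$, that is, that they are $\Nuc$-prim with trivial $\delta$. This reduces to the affinoid rational case, where $j_*$ is already the extension by the homotopy epimorphism property.

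\textbf{Main obstacle.} I expect the hard step to be this last one, namely showing that open immersions are simultaneously "proper" ($j_!=j_*$) in the extended formalism. This is special to nuclear/solid geometry. I would handle it by first treating rational localizations, via idempotent algebras and homotopy epimorphisms (\cite[Exercise 12.17]{analytic}), and then descending.
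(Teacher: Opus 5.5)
Your route differs from the paper's. The paper builds no affinoid formalism and does no extension. It applies \cite[Proposition A.5.10]{mann2022padic6functorformalismrigidanalytic} directly to quasi-compact semi-separated derived rigid spaces, with $I$ the equivalences and $P$ all maps between such spaces. Its only input is \Cref{lem: base change and projection}: base change along arbitrary quasi-compact semi-separated $V\to X$, the projection formula, and colimit-preservation of $f_*$, all imported from the $1$-affineness framework. So $f_!=f_*$ holds by definition. Your strategy starts at the affinoid level, where base change and the projection formula are formal in $\Mod_A(\Nuc(k))$, and it would give a formalism on a larger class of maps in $\mathrm{Shv}(\mathrm{dAfd})$. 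The price is a $!$-descent verification and a comparison of the glued $f_!$ with $f_*$, and that comparison still uses the properties of $f_*$ in \Cref{lem: base change and projection}. As written, these steps have problems.

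First, you have swapped $I$ and $P$. In the Mann/Heyer--Mann criterion, a map in $I$ gets $f_!$ equal to a left adjoint of $f^*$, and a map in $P$ gets $f_!=f_*$. With $I$ equal to all maps, every $-\widehat{\otimes}_AB$ would need a left adjoint satisfying base change and the projection formula, which is not available. What you mean, and what your three checks verify, is $I$ the isomorphisms and $P$ all maps of derived affinoids.

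Second, $!$-descent does not follow from $*$-descent by passage to adjoints. Under $\LPr\simeq(\mathrm{Pr}^{\mathrm{R}})^{\mathrm{op}}$, $*$-descent (the limit along the pullbacks $j^*$) says $\Nuc(X)$ is the colimit of the simplicial diagram of pushforwards $j_*$ computed in $\mathrm{Pr}^{\mathrm{R}}$. By contrast, $!$-descent (the limit along $j^!=\underline{\Hom}_A(B,-)$) says it is the colimit of the same diagram computed in $\LPr$, and these need not agree. What actually gives $!$-descent is descendability. The algebra $\prod_iA_i$ is descendable over $B$ by \cite[Theorem 4.15]{mikami2023fppfdescentcondensedanimatedrings}, as used in \Cref{lem: Nuc is a sheaf}. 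A descendable map with $f_!=f_*$ is then a $\Nuc$-cover by \cite[Lemma 4.7.4]{heyer20246functorformalismssmoothrepresentations}, exactly as in \Cref{prop: D-prim cover}.

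Third, the obstacle you single out is tautological in your own setup. Since $P$ contains all maps of derived affinoids, rational localizations have $j_!=j_*$ by construction. The real content of the comparison lies elsewhere. Take $f\colon Y\to X$ with finite affinoid \v{C}ech nerve $g_n\colon U_n\to Y$. Then $!$-descent gives $M\simeq\colim_n g_{n*}g_n^!M$, hence
\[
f_!M\simeq\colim_n(fg_n)_*g_n^!M\simeq f_*M.
\]
This uses that $f_*$ preserves colimits (\Cref{lem: base change and projection}(3)). When $X$ is not affinoid, it also uses base change to identify $(fg_n)_!$ with $(fg_n)_*$. Since $g_n^!\neq g_n^*$, this is not a termwise match between a colimit of $!$-pushforwards and a limit of $*$-pushforwards.

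Finally, the extension produces as $!$-able maps those that are $!$-local on source and target in $E$. Your description, "locally on the target representable in derived affinoids," captures only affine maps. It would exclude, for example, $Y\to\Sp(k)$ for proper $Y$.
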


In order to prove this theorem, we require the following lemma.

\begin{lemma}\label{lem: base change and projection}
    Let $\alpha \colon Y \to X$ be a map of quasi-compact, semi-separated derived rigid analytic spaces. Then the following properties hold:
    \begin{itemize}
        \item[1] (Base change) Let $\beta \colon V \to X$ be a morphism of quasi-compact and semi-separated derived rigid analytic spaces, and consider the diagram
        \[
        \begin{tikzcd}
            Y \times_X V \arrow[r, "\alpha'"] \arrow[d, "\beta'"'] 
            & V \arrow[d, "\beta"] \\
            Y \arrow[r, "\alpha"] 
            & X.
        \end{tikzcd}
        \]
        Then, for every object $M$ in $\Nuc(Y)$, the canonical map 
        \begin{equation}\label{eq: base change}
            \beta^* \alpha_* M \longrightarrow \alpha'_* \beta'^* M
        \end{equation}
        is an equivalence.
        
        \item[2] (Projection formula) For every $N$ in $\Nuc(Y)$ and $M$ in $\Nuc(X)$, the natural map 
        \begin{equation}\label{eq: projection formula}
            \alpha_{\ast}(N \otimes_{\Nuc(Y)} \alpha^{\ast}M) \longrightarrow \alpha_{\ast}N \otimes_{\Nuc(X)}M
        \end{equation} 
        is an equivalence.
        
        \item[3] The functor
        \[
        \alpha_{\ast} \colon \Nuc(Y) \to \Nuc(X)
        \]
        commutes with colimits; in particular, it admits a right adjoint.
    \end{itemize}
\end{lemma}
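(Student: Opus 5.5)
The plan is to reduce all three statements to the affinoid case by descent, and then use the explicit description $\Nuc(A)\simeq \Mod_A(\Nuc(k))$ of \Cref{prop: nuclear categories as module categories} together with \Cref{lem: pullback derived affinoids}. In the affinoid case $\alpha^*$ is identified with $-\widehat{\otimes}_A B$ and $\alpha_*$ with the forgetful functor. The order of proof is: first (3), then base change (1), then the projection formula (2), since the latter two use that $\alpha_*$ commutes with colimits.

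\textbf{Affinoid case.} Suppose $X=\dSp(A)$, $Y=\dSp(B)$ and $V=\dSp(C)$. The forgetful functor $\Mod_B(\Nuc(k))\to\Mod_A(\Nuc(k))$ preserves all colimits. For base change, $Y\times_X V=\dSp(B\widehat{\otimes}_A C)$, and both sides of \eqref{eq: base change} are the module $M\otimes_A C\simeq M\otimes_B(B\widehat{\otimes}_AC)$ regarded as a $C$-module. The projection formula is the standard identity $N\otimes_B(B\otimes_A M)\simeq N\otimes_A M$ of relative tensor products.

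\textbf{Reduction to affinoid target.} All three statements are local on $X$. Take a finite affinoid cover $\{\dSp(A_i)\to X\}$, which exists by quasi-compactness. By \Cref{lem: Nuc is a sheaf}, $\Nuc(X)=\lim\Nuc(\dSp(A_{i_0\cdots i_n}))$. Semi-separatedness makes the intersections affinoid and the \v{C}ech nerve finite in each degree. The pullbacks $\Nuc(X)\to\Nuc(\dSp(A_i))$ are jointly conservative and commute with colimits. Once we know $\alpha_*$ commutes with restriction to opens (base change along the rational localizations $\dSp(A_i)\to X$), statements (2) and (3) can be checked after pulling back to each $\dSp(A_i)$. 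This reduces everything to the case $X$ affinoid.

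\textbf{Main obstacle: base change along open immersions for general $Y$.} Let $X=\dSp(A)$, $Y$ quasi-compact semi-separated, and $\{\dSp(B_j)\}$ a finite affinoid cover of $Y$. Write $\alpha_*M=\lim_{[n]}\alpha_{J*}(M|_{\dSp(B_J)})$, a finite limit. Its terms are affinoid by semi-separatedness, so only finitely many nonzero stages contribute up to a finite totalization: the \v{C}ech nerve of a finite cover is a finite cubical limit.

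Because this limit is finite and the categories are stable, it commutes with colimits and with the exact functors $\beta^*$ and $-\otimes M$. Hence (3) follows from the affinoid case. Base change (1) for general $\beta$ follows from the affinoid case termwise, using that $\{\dSp(B_J)\times_X V\}$ is again a cover computing $\alpha'_*$. The projection formula (2) follows termwise as well.

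The subtle point is justifying that $\alpha_*$ is computed by this finite \v{C}ech limit of the affinoid pushforwards. This is a consequence of the descent statement in \Cref{lem: Nuc is a sheaf}: $M\simeq\lim j_{J*}j_J^*M$ holds in $\Nuc(Y)$, and $\alpha_*$ preserves limits. One must also check that the affinoid pushforwards $j_{J*}$ for open immersions satisfy base change; this uses that rational localizations are homotopy epimorphisms. If needed, I would instead invoke the general framework of \cite{montagnani2025axiomaticapproachanalytic1affineness} (Zariski morphisms), as in \Cref{prop: Nuc(X) is rigid}.

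Finally, $V$ need not be affinoid. For (1) this is handled by covering $V$ as well, since pullback to a cover of $V$ is conservative and compatible with both sides.
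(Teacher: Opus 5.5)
Your affinoid computations are correct. The formula $\alpha_*M\simeq\lim_J\alpha_{J*}j_J^*M$ over a finite affinoid cover $\{\dSp(B_j)\}$ of $Y$ is also correct; it follows formally from $\Nuc(Y)\simeq\lim_J\Nuc(\dSp(B_J))$ and adjunction. Together with exactness of finite limits, this proves (1)--(3) when $X$ and $V$ are affinoid.

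The gap is the passage to a non-affinoid target. Two of your steps presuppose base change along an affinoid open of a \emph{non-affinoid} space:
\begin{itemize}
\item your reduction ``once we know $\alpha_*$ commutes with restriction to opens $\dSp(A_i)\to X$'', which needs base change along $\dSp(A_i)\hookrightarrow X$ for $\alpha$;
\item your final step ``pullback to a cover of $V$ is compatible with both sides'', which needs base change along $W\hookrightarrow V$ for $\alpha'$.
\end{itemize}
Each is an instance of (1) that you never prove. Your ``main obstacle'' paragraph fixes $X=\dSp(A)$. The \v{C}ech formula over a cover of $Y$ does not help either: the pieces $\alpha_J\colon\dSp(B_J)\to X$ still have non-affinoid target, so controlling $j_i^*\alpha_{J*}$ is the same problem again. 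As written, the reduction is circular. Homotopy epimorphisms are not the relevant input here; affinoid base change is just $M\otimes_A C\simeq M\otimes_B(B\widehat\otimes_A C)$.

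The missing idea is to glue the adjunctions over a cover of the target.
\begin{itemize}
\item Let $\{U_i\}$ be a finite affinoid cover of $X$ with affinoid intersections $U_I$ and inclusions $j_I\colon U_I\to X$.
\item Put $Y_I\coloneqq Y\times_X U_I$, with $j'_I\colon Y_I\to Y$. Each $Y_I$ is again quasi-compact and semi-separated, since $U_I\to X$ is affine.
\item Then $\Nuc(X)\simeq\lim_I\Nuc(U_I)$, $\Nuc(Y)\simeq\lim_I\Nuc(Y_I)$, and $\alpha^*\simeq\lim_I\alpha_I^*$.
\item For every inclusion $U_J\to U_I$, the square formed by $\alpha_I^*$, $\alpha_J^*$ and the restrictions is right adjointable. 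This is exactly the affinoid-target case you did prove, with target $U_I$, base $U_J$ and source $Y_I$.
\end{itemize}
By \cite[Corollary 4.7.4.18]{HA}, the right adjoint of $\alpha^*$ is then computed termwise and commutes with the projections, i.e.\ $j_I^*\alpha_*\simeq\alpha_{I*}(j'_I)^*$. With this in hand, your reductions of (2), of (3), and of (1) for arbitrary $V$ go through.

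This is precisely the part the paper does not do by hand. It reduces (1) to affinoid $V$, checks Assumption 1.3.10 of \cite{montagnani2025axiomaticapproachanalytic1affineness}, and then cites \cite[Propositions 1.3.13 and 1.3.14]{montagnani2025axiomaticapproachanalytic1affineness} for base change, the projection formula and continuity of $\alpha_*$. Once completed as above, your explicit module-theoretic argument is a self-contained alternative to that citation.
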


\begin{proof}
    We begin the proof by observing that our setup satisfies \cite[Assumption 1.3.10]{montagnani2025axiomaticapproachanalytic1affineness}, taking $\cC$ to be the \cat of derived affinoid spaces and $\widehat{\cC}$ to be the \cat $\mathrm{Shv}(\mathrm{dAdf})$.
    
    To prove (1), suppose $V$ is a derived rigid analytic variety. We can use a derived affinoid cover $\coprod_{i \in I} \dSp(A_{i})$ of $V$ and \Cref{lem: Nuc is a sheaf} to verify \Cref{eq: base change} after restriction to each derived affinoid space $\dSp(A_{i})$. In this way, we can reduce the verification of \Cref{eq: base change} to the case where $V$ is a derived affinoid space. In this case, the result follows directly from \cite[Proposition 1.3.13]{montagnani2025axiomaticapproachanalytic1affineness}.
    
    We observe that (2) is a special case of \cite[Proposition 1.3.14]{montagnani2025axiomaticapproachanalytic1affineness}, and (3) follows from \cite[Proposition 1.3.13 (2)]{montagnani2025axiomaticapproachanalytic1affineness}.
\end{proof}

\begin{proof}[Proof of \Cref{thm: 6 functor for nuclear modules in rigid geometry}]
    This follows from \Cref{lem: base change and projection} and \cite[Proposition A.5.10]{mann2022padic6functorformalismrigidanalytic}, taking $I$ to be the class of equivalences and $P$ to be the class of maps between quasi-compact and semi-separated derived rigid analytic spaces. 
\end{proof}

\begin{remark}
    We observe that in the $6$-functor formalism of \Cref{thm: 6 functor for nuclear modules in rigid geometry}, every !-able map (i.e., every map between quasi-compact and semi-separated derived rigid analytic spaces) is prim. This follows from the fact that for all such maps, the lower star and lower shriek functors coincide, by \cite[Lemma 4.5.5]{heyer20246functorformalismssmoothrepresentations}.
\end{remark}


\section{Smooth categories and compact generation in rigid analytic geometry}\label{section 4}

In this section, we study the compact generation of the \cat of nuclear modules over rigid analytic varieties. In particular, we will show that this \cat is compactly (equivalently, atomically) generated if and only if the rigid analytic variety is algebraizable (see \Cref{thm: algebraization via generator}). We then use this result to falsify \Cref{conj: congettura setting enriched}. Two important steps in the proof are a version of the GAGA theorem, as described in \cite{wang2026relativegagatheoremapplication} in the setting of nuclear \cates, and the comparison between the geometric notion of smooth rigid varieties and the categorical definition of smoothness (see \Cref{thm: main theorem smoothness}).

\subsection{Non-commutative smoothness and classical smoothness}

We begin by comparing the classical notion of smoothness in rigid analytic geometry with the notion of a smooth \cat introduced in \Cref{section 1}. 
In particular, we will prove the following theorem:

\begin{theorem}\label{thm: main theorem smoothness}
    Let $f \colon X \to \Sp(k)$ be a (classical) rigid analytic variety over $k$. Then $X$ is smooth if and only if the \cat $\Nuc(X)$ is internally smooth in $\LPr_{\Nuc(k)}$.
\end{theorem}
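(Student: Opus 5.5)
The plan is to reduce internal smoothness of $\Nuc(X)$ to a compactness statement about the diagonal, and then to a local, purely algebraic statement on affinoids. First I would treat the case where $X$ is quasi-compact and semi-separated, so that $f$ is $!$-able in the six-functor formalism of \Cref{thm: 6 functor for nuclear modules in rigid geometry}. There $\Nuc(k)$ is rigid (\Cref{prop: nuclear categories as module categories}), and \Cref{prop: kunneth for nuclear moduels} identifies $\Nuc(X)\cotimes\Nuc(X)\simeq\Nuc(X\widehat{\times}X)$. The coevaluation is then $\Delta_!f^*=\Delta_*f^*$. By \Cref{cor: smooth categories 6ff rigid case} and \Cref{lem: smooth and proper over a rigid base}, $\Nuc(X)$ is internally smooth over $\Nuc(k)$ if and only if $\Delta_*\cO_X$ is compact in $\Nuc(X\widehat{\times}X)$.

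Since $X\widehat{\times}X$ is again quasi-compact and semi-separated, $\Nuc(X\widehat{\times}X)$ is rigid by \Cref{prop: Nuc(X) is rigid}, so its compact objects are exactly its dualizable objects. I would then check, as in \Cref{lem: perfect complexes over rigid varieties}, that these are the perfect complexes. Dualizability is local for the analytic topology by \Cref{lem: Nuc is a sheaf}, so the condition becomes: $\Delta_*\cO_X$ is perfect on $X\widehat{\times}X$, which is the statement of \Cref{cor: non commutative smoothness global case}. To make it affinoid-local, cover $X$ by affinoids $\Sp(A_i)$. Because $\Delta$ is affine (semi-separatedness) and $\Delta_*$ satisfies base change (\Cref{lem: base change and projection}), restricting $\Delta_*\cO_X$ to $\Sp(A_i\widehat{\otimes}A_j)$ gives the pushforward along the closed immersion $\Sp(A_{ij})\to\Sp(A_i\widehat{\otimes}A_j)$, where $A_{ij}$ corresponds to $\Sp(A_i)\cap\Sp(A_j)$. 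The question is therefore whether $A_{ij}$ is perfect over $A_i\widehat{\otimes}_kA_j$. Compactness being local, it suffices to treat the pieces $\Sp(A)\times\Sp(A)$ together with the off-diagonal pieces, and on the latter perfectness follows from the diagonal pieces via the rational-localization (homotopy epimorphism) structure.

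The core step is the affinoid statement: for a classical affinoid $k$-algebra $A$ (characteristic $0$), $A$ is smooth over $k$ if and only if $A$ is a perfect $A\widehat{\otimes}_kA$-module, using Noetherianity. For ``$\Rightarrow$'', smoothness means that locally $\Sp(A)$ is étale over a polydisc. The diagonal ideal $I\subset A\widehat{\otimes}A$ is then locally generated by a regular sequence $t_i\otimes1-1\otimes t_i$, so a Koszul resolution exhibits $A$ as perfect. For ``$\Leftarrow$'', $A\widehat{\otimes}A$ is a Noetherian affinoid algebra (classical), and the derived tensor agrees with the classical one for $\pi_0$ (\Cref{rmk: completed tensor}). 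Finite Tor-dimension of $A$ over $A\widehat{\otimes}A$, after base change along $A\widehat{\otimes}A\to A\widehat{\otimes}\kappa(x)$ at maximal points, bounds the Tor-dimension of the residue field $\kappa(x)$ over the local rings of $A$. Hence $A$ is regular, and in characteristic $0$ regular implies smooth. I expect the main obstacle to be this converse direction, together with making sure that perfectness in the condensed, nuclear sense agrees with classical perfectness over the Noetherian ring $A\widehat{\otimes}A$. I would first try Serre's criterion via finite global dimension, using that nuclear modules over affinoids which are finitely generated are the classical finite modules.

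Finally, for general $X$ that is not quasi-compact or separated: smoothness of $X$ is local, and I would deduce internal smoothness of $\Nuc(X)$ from the local data. Here I am least sure of the formalism, so I would either restrict to the qcqs case handled above or argue that $\Nuc$ of an admissible cover is a limit preserved by the relevant operations.
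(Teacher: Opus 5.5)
Your overall route is the paper's. You reduce internal smoothness to compactness of $\Delta_*\cO_X$ via \Cref{cor: smooth categories 6ff rigid case}, identify compact objects with perfect complexes, and localize. You then handle affinoids by a Koszul resolution in one direction, and by base change along $\mathrm{id}\times x$ plus Serre's criterion in the other. Using characteristic $0$ to pass from regular to geometrically regular, instead of first base-changing to finite extensions, is a harmless variant.

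The gap is in the globalization step. You assume only that $X$ is quasi-compact and semi-separated. Yet you assert that the restriction of $\Delta_*\cO_X$ to $\Sp(A_i\widehat{\otimes}A_j)$ is a pushforward along a \emph{closed immersion} $\Sp(A_{ij})\to\Sp(A_i\widehat{\otimes}A_j)$. An affine diagonal only makes this map affinoid. Moreover, transferring perfectness from the diagonal pieces to the off-diagonal ones ``via the homotopy epimorphism'' $A_i\widehat{\otimes}A_j\to A_{ij}\widehat{\otimes}A_{ij}$ does not work as stated. A module perfect over the target of a homotopy epimorphism need not be perfect over its source; the target itself is a counterexample, e.g.\ $k\langle t/\omega\rangle$ is not perfect over $k\langle t\rangle$.

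This is not just presentation: without separatedness the implication ``$X$ smooth $\Rightarrow$ $\Nuc(X)$ internally smooth'' fails. Glue two closed unit discs $U_1,U_2$ along the subdisc $|t|\le|\omega|$. Then $X$ is smooth and quasi-compact with affinoid pairwise intersections. On $U_1\widehat{\times}U_2$, however, $\Delta_*\cO_X$ is $k\langle t/\omega\rangle$, viewed as a $k\langle x,y\rangle$-module via $x,y\mapsto t$. This module is not finitely generated, hence not perfect, so $\Delta_*\cO_X$ is not compact. This is the rigid analogue of the affine line with doubled origin.

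So you need $X$ quasi-compact and separated, and then the repair is easy:
\begin{itemize}
\item For ``$\Rightarrow$'', the diagonal of a smooth separated $X$ is an lci closed immersion, and this property restricts to every affinoid open of $X\widehat{\times}X$. The Koszul argument therefore gives perfectness on all pieces $\Sp(A_i\widehat{\otimes}A_j)$ at once.
\item For ``$\Leftarrow$'', only the diagonal pieces matter, since $\Delta_*\cO_X$ restricts on $U_i\widehat{\times}U_i$ to $\Delta_{U_i,*}\cO_{U_i}$.
\end{itemize}
The paper's own proof glosses over the same point (``both properties are local on $X$'') and implicitly needs the same hypotheses.
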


We observe that an analogous theorem has been proved for schemes in \cite{Orlov_2016noncommutativeschemes} and in \cite{lunts2009categoricalresolutionsingularities} for the category of perfect complexes. However, the generalization to rigid analytic varieties is not straightforward, and we cannot reproduce the same proof as in \cite{Orlov_2016noncommutativeschemes}. Indeed, the analogue of \Cref{thm: main theorem smoothness} does not hold for the \cat of perfect complexes in rigid analytic geometry. It has been shown in \cite{montagnani2025algebraizationrigidanalyticvarieties} that the notion of smoothness for the \cat of perfect complexes over a rigid analytic variety is intrinsically algebraic and does not reflect the smoothness of the variety itself. To obtain a correspondence between smooth ``categories of sheaves'' and smooth varieties, it is necessary to use a ``completed tensor product of \cates'' with the property that, for a variety $X$,
\[
\mathrm{Shv}(X\times X) \simeq \mathrm{Shv}(X)\widehat\otimes\mathrm{Shv}(X).
\]
This can be achieved, for example, in the framework of analytic geometry developed by Clausen and Scholze \cite{Clausen_Scholze_lectures}, or in the setting developed by Ben-Bassat, Kelly, and Kremnizer \cite{benbassat2024perspectivefoundationsderivedanalytic}. Here, we have chosen to explore these ideas within the setting of analytic stacks as in \cite{Clausen_Scholze_lectures}, focusing on the sheaf of nuclear \cates. However, we believe that it would not be difficult to adapt some of the ideas presented here to the setting of \cite{benbassat2024perspectivefoundationsderivedanalytic}, utilizing the $6$-functor formalism described in \cite{soor2025sixfunctorformalismquasicoherentsheaves}, or to other ``analytic sheaf theories,'' such as the $\mathscr{D}$-solid sheaves described in \cite{camargo2024analyticrhamstackrigid}.

We will prove \Cref{thm: main theorem smoothness} at the end of this section as a corollary of \Cref{prop: non commutative smoothness affine case}.

\begin{definition}\label{def:smoothAffinoid}
    An affinoid algebra $A$ over $k$ is called \emph{smooth} if, for every finite field extension $K$ of $k$, the tensor product $A\otimes_{k}K$ is a regular ring (i.e., for every maximal ideal $\mathfrak{m}$ in $A\otimes_{k}K$, the local ring $(A\otimes_{k}K)_{\mathfrak{m}}$ is regular). 
\end{definition}

We now relate this classical notion of smoothness to the notion of a smooth algebra in $\Nuc(k)$, as defined in \Cref{def: smooth algebras}.





\begin{proposition}\label{prop: non commutative smoothness affine case}
    Let $A$ be a (classical) affinoid algebra over $k$, and let $A^{\triangleright}$ be the associated condensed ring. Then $A^{\triangleright}$ is a smooth algebra in $\Nuc(k)$ in the sense of \Cref{def: smooth algebras} if and only if $A$ is smooth over $k$ in the sense of \Cref{def:smoothAffinoid}.
\end{proposition}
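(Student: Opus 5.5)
We reduce the statement to a question about perfectness of the diagonal module over the completed tensor product $A\widehat{\otimes}_k A$. By \Cref{def: smooth algebras}, $A^{\triangleright}$ is smooth in $\Nuc(k)$ exactly when the diagonal bimodule $A_\Delta$ is dualizable in $\Mod_{A\widehat{\otimes}A}(\Nuc(k))$. By \Cref{Kunneth formula nuclear affine case} and \Cref{prop: nuclear categories as module categories}, this category is $\Nuc(A\widehat{\otimes}_k A)$, and $A\widehat{\otimes}_k A$ is the classical affinoid algebra of $X\times X$ with $X=\Sp(A)$. This uses \Cref{rmk: completed tensor}, together with flatness of the completed tensor product over a field, so that the derived tensor product has no higher homotopy. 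Dualizable objects of $\Mod_B(\Nuc(k))$ are the perfect $B$-modules. Indeed, $\Mod_B(\Nuc(k))$ is rigid, so dualizable objects are the compact ones. Compact objects are generated by $B$ under finite colimits and retracts, since $\Nuc(k)$ is compactly generated with compact unit $k$ and its compacts are perfect $k$-modules. The statement therefore becomes: $A$ is smooth if and only if $A$, viewed as a module over $B:=A\widehat{\otimes}_kA$ via the multiplication map, is a perfect complex of $B$-modules. Here $B$ and $A$ are noetherian rings, and we pass freely between condensed and discrete finite modules, since finitely generated modules over affinoid algebras carry their canonical topology.

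For ``smooth $\Rightarrow$ perfect'', assume $A$ is smooth. Then $X\times X$ is smooth, and in particular $B$ is regular: smoothness is stable under completed products, as can be checked after finite base field extension via the Jacobian criterion for affinoids. Every finitely generated $B$-module then has finite projective dimension locally at maximal ideals, with dimension bounded by $\dim B$. So $A=B/I$ is a perfect $B$-module. A cleaner route uses the geometric description: $\Delta$ is a regular closed immersion of codimension $d=\dim X$, because $I/I^2\simeq\Omega_{A/k}$ is locally free. The Koszul complex then locally resolves $A$ by finite free modules, and perfectness, being local for rational covers by descent (\Cref{lem: Nuc is a sheaf}), follows.

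For ``perfect $\Rightarrow$ smooth'', we may first base change along a finite extension $K/k$, since perfectness is preserved by $-\widehat{\otimes}_kK$, and $A_K\widehat{\otimes}_K A_K = B\otimes_k K$. This reduces the claim to showing that $A$ is regular at every maximal ideal $\mathfrak m$. Let $\mathfrak n\subset B$ be the maximal ideal lying over $(\mathfrak m,\mathfrak m)$ via the diagonal. Tensoring the perfect resolution of $A$ with $B\to A$ along the second factor gives, by base change, $A\otimes^{\mathbb L}_{B}(A\widehat{\otimes}_k A)\simeq A\otimes^{\mathbb L}_A\ldots$. More usefully, for any $A$-module $M$ one has
\[
M\simeq A\otimes^{\mathbb L}_{B}(A\widehat{\otimes}_k M)
\]
for finite $M$. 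Taking $M=A/\mathfrak m$, the perfectness of $A$ over $B$ yields a finite-length resolution of $A/\mathfrak m$ by finite projective $A$-modules: a perfect $B$-complex base changed along $B\to A\widehat{\otimes}_k(A/\mathfrak m)\simeq A\otimes_k \kappa(\mathfrak m)$ stays perfect. Hence $A/\mathfrak m$ has finite projective dimension over $A_{\mathfrak m}$ (after the finite residue field extension is accounted for by the base change to $K$). By Serre's theorem, $A_{\mathfrak m}$ is regular.

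I expect the main obstacle to be this last direction. The difficulty is making the base-change identity $M\simeq A\otimes^{\mathbb L}_B(A\widehat{\otimes}_kM)$ precise in the condensed/nuclear setting and verifying that completed tensor products with finite modules agree with algebraic ones. The residue field $\kappa(\mathfrak m)$ may also be a nontrivial finite extension of $k$, which is exactly why \Cref{def:smoothAffinoid} quantifies over all finite extensions $K$. If the direct route stalls, I would instead compare with the cotangent complex: perfectness of $\Delta_*\mathscr O$ forces $I/I^2$ to have finite Tor-amplitude, and combined with finiteness this gives local freeness of $\Omega_{A/k}$ of the correct rank, i.e.\ smoothness.
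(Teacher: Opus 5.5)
Your architecture is the paper's. You reduce to dualizability of the diagonal module in $\Nuc(A\widehat{\otimes}_kA)$. For smooth $\Rightarrow$ smooth algebra you use a finite free (Koszul) resolution of the diagonal. For the converse you base change along $\mathrm{id}\times x\colon \Sp(A\otimes_k\kappa(\mathfrak m))\to\Sp(A\widehat{\otimes}_kA)$ to see that $\kappa(\mathfrak m)$ is perfect, then apply Serre's criterion. Your identity $M\simeq A\otimes_B(A\widehat{\otimes}_kM)$ with $M=\kappa(\mathfrak m)$ is exactly the paper's pullback-square computation, obtained from the base-change property in \Cref{lem: base change and projection}. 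Your alternative forward route, via regularity of $A\widehat{\otimes}_kA$ and finite global dimension, is also fine. The gap is in how you justify that dualizable objects of $\Mod_B(\Nuc(k))$ are the perfect $B$-modules. You claim $\Nuc(k)$ is compactly generated and that its compact objects are the perfect $k$-modules. These two claims together give $\Nuc(k)\simeq\mathrm{Ind}(\Perf(k))\simeq\Mod_k$, which is false. For instance, $k\langle T\rangle$ is nuclear but not in the colimit closure of $k$: a continuous map from a profinite set into $k\langle T\rangle$ need not land in a finite-dimensional subspace. Compact objects of $\Nuc(k)$ really are perfect (\Cref{lem: perfect complexes over rigid varieties}), so it is compact generation that fails. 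Hence no formal thick-subcategory argument shows that compact objects of $\Nuc(B)$ are built from $B$.

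The implication ``perfect $\Rightarrow$ dualizable'' is formal, and it is all you need for smooth $\Rightarrow$ smooth algebra. The converse direction, however, rests on ``dualizable $\Rightarrow$ perfect'', which is a genuine input. The paper takes it from Andreychev \cite{andreychev2021pseudocoherent}: dualizable objects of $\cD(B)_\Solid$ are perfect complexes, and they coincide with the dualizable objects of $\Nuc(B)$. For the finite modules you actually use, you can also argue directly. The functor $\uline{(-)}\colon\Mod_B\to\Nuc(B)$ preserves colimits and is fully faithful: the unit $N\to\uline{N}(\ast)$ is an equivalence for $N=B$, and both sides commute with colimits because the unit of $\cD(B)_\Solid$ is compact. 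This functor sends a finite $B$-module to itself with its canonical topology. Since the unit of $\Nuc(B)$ is compact, a dualizable $\uline{M}$ is compact in $\Nuc(B)$. Then $M$ is compact in $\Mod_B$, i.e.\ perfect. Apply this to $\kappa(\mathfrak m)$ over $A\otimes_k\kappa(\mathfrak m)$, as the paper does, or directly to $A$ over $B$. One minor point: going from perfectness over $A\otimes_k\kappa(\mathfrak m)$ to perfectness over $A$ is just restriction along the finite free map $A\to A\otimes_k\kappa(\mathfrak m)$. The base change to $K$ plays no role in that step; it is needed only because \Cref{def:smoothAffinoid} quantifies over finite extensions.
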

\begin{proof}
    We begin by assuming that $A^\triangleright$ is smooth in $\Nuc(k)$. To prove that $A$ is smooth, after replacing $k$ with a finite field extension $K$ and $A$ with its base change to $K$, it suffices to show that for every maximal ideal $\mathfrak{m}$ in $A$, the localization $A_{\mathfrak{m}}$ is regular. By Serre's criterion \cite[Lemma 11.3.3.3]{SAG}, it is enough to show that the residue field $\Tilde{K}\coloneqq A_{\mathfrak{m}}/\mathfrak{m}A_{\mathfrak{m}}$ is a perfect complex when regarded as a module over $A_{\mathfrak{m}}$. First, we observe that the following diagram is a pullback of derived affinoid spaces (where all tensor products are taken relative to $k$): 
    \begin{equation}\label{eq: pullback affinoid spaces}
    \begin{tikzcd}
        {\Sp(\Tilde{K})} && {\Sp(A)} \\
        \\
        {\Sp(A\otimes \Tilde{K})} && {\Sp(A\widehat\otimes A).}
        \arrow["x", from=1-1, to=1-3]
        \arrow["x"', from=1-1, to=3-1]
        \arrow["\Delta", from=1-3, to=3-3]
        \arrow["\mathrm{id}\times x", from=3-1, to=3-3]
    \end{tikzcd}
    \end{equation}
    This follows from the following standard computation: 
    \begin{align*}
        A \widehat{\otimes}_{A\widehat{\otimes}A}(A\widehat{\otimes}\Tilde{K}) & \simeq A \widehat{\otimes}_{A\widehat{\otimes}A}(A\widehat{\otimes}A\widehat{\otimes}_{A}\Tilde{K}) \\ & \simeq 
        (A\widehat{\otimes}_{A\widehat{\otimes}A}A\widehat{\otimes}A)\widehat{\otimes}_A \Tilde{K} \\ &
        \simeq A\widehat{\otimes}_A \Tilde{K} \simeq \Tilde{K}.
    \end{align*}
    Note that since $\Tilde{K}$ is a finite field extension, the tensor product $A\otimes_{k}\Tilde{K}$ does not need to be completed. Since $A^{\triangleright}$ is smooth, we have that $\Delta_\ast A^{\triangleright}$ is dualizable in $\Nuc(A\widehat\otimes A)$.
    
    Using the base change property of \Cref{lem: base change and projection}, we see that $\Tilde{K}$, viewed as a module over $A\otimes_{k}\Tilde{K}$, can be written as
\[
x_{\ast}\Tilde{K}^\triangleright \simeq x_\ast x^\ast (A\otimes_{k}\Tilde{K})^\triangleright \simeq (\mathrm{id} \times x)^\ast \Delta_\ast A^{\triangleright}.
\]
In particular, since $\Delta_\ast A^\triangleright$ is dualizable over $\Nuc(A\widehat\otimes A)$, it follows that $\Tilde{K}^\triangleright$ is also dualizable over $\Nuc(A\otimes_{k}\Tilde{K})$. By invoking \cite[Theorem 5.50]{andreychev2021pseudocoherent} and \cite[Lemma 5.47]{andreychev2021pseudocoherent}, we deduce that $\Tilde{K}$ is dualizable over $\Mod_{A\otimes_{k}\Tilde{K}}$. Since dualizable objects in $\Mod_{A\otimes_{k}\Tilde{K}}$ are perfect complexes and $\Tilde{K}$ is a finite field extension of $k$, we conclude that $\Tilde{K}$ is dualizable in $\Mod_{A}$.
To conclude, we observe that 
\[
\Tilde{K} \simeq \Tilde{K} \otimes_{A}A_{\mathfrak{m}}.
\]
This implies, in particular, that $\Tilde{K}$ is dualizable as a module over $A_{\mathfrak{m}}$.

To conclude the proof, we assume that $A$ is smooth, and we need to prove that $A^\triangleright$ is dualizable as a diagonal module in $\Nuc(A\widehat\otimes_{k} A)$. By \cite[Corollary 5.51.1]{andreychev2021pseudocoherent}, it is enough to show that $A$ is dualizable in $\Mod_{A\widehat\otimes_{k}A}$. The diagonal map is a closed immersion and is, moreover, a local complete intersection (lci) by \cite[Corollary 5.11]{zavyalov2025foundationalresultsadicgeometry}. Thus, we can locally assume it to be a regular immersion; using the Koszul resolution, this implies that $\Delta_{\ast}A$ is dualizable.
\end{proof}

\begin{corollary}\label{cor: non commutative smoothness global case}
    Let $X$ be a rigid analytic variety over $\Sp(k)$. Then $X$ is smooth if and only if $\Delta_{\ast}\mathscr{O}_{X}$ is a compact object in $\Nuc(X\widehat\times X)$.
\end{corollary}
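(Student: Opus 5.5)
The plan is to reduce the global statement to the affinoid case, \Cref{prop: non commutative smoothness affine case}, by working locally on $X\widehat\times X$. Both sides are local. Smoothness of a rigid analytic variety is local for the analytic topology. Compactness of an object in $\Nuc(X\widehat\times X)$ should likewise be checkable on an affinoid cover, at least when the variety is quasi-compact and semi-separated. So first I cover $X$ by affinoids $\{\Sp(A_i)\}$. If $X$ is not quasi-compact, I interpret the statement locally: $\Nuc(X\widehat\times X)$ is a limit, and compactness is to be read on each quasi-compact open, or I simply assume $X$ is qcqs, as is implicit in the six-functor setting of \Cref{thm: 6 functor for nuclear modules in rigid geometry}.

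\textbf{Step 1: compactness is local.} Take a finite affinoid cover $\{U_i=\Sp(A_i)\}$ of $X$. Semi-separatedness makes each $U_i\cap U_j$ affinoid, and $\{U_i\times U_j\}$ is a finite affinoid cover of $X\widehat\times X$. By \Cref{prop: Nuc(X) is rigid}, $\Nuc(X\widehat\times X)$ is rigid. In a rigid category, compact objects coincide with dualizable objects, since the unit is compact. Dualizability is local for descent: $\Nuc(-)$ is a sheaf of symmetric monoidal categories by \Cref{lem: Nuc is a sheaf}, the pullbacks are symmetric monoidal, and a finite limit of symmetric monoidal categories detects dualizable objects componentwise. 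Hence $\Delta_\ast\cO_X$ is compact if and only if each restriction $(\Delta_\ast\cO_X)|_{U_i\times U_j}$ is dualizable in $\Nuc(A_i\widehat\otimes A_j)$.

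\textbf{Step 2: compute the restrictions.} By base change (\Cref{lem: base change and projection}), the restriction to $U_i\times U_j$ is $\Delta_{ij,\ast}\cO_{U_i\cap U_j}$, the pushforward along the closed immersion $U_i\cap U_j\to U_i\times U_j$. For $i=j$ this is the diagonal module $A_i^\triangleright$ over $A_i\widehat\otimes A_i$.

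\textbf{Step 3: compare with the affinoid case.} For the direction ($\Leftarrow$), assume compactness. Then each $A_i^\triangleright$ is dualizable over $A_i\widehat\otimes A_i$. By \Cref{cor: dualizable if and only if V-atomic} and \Cref{prop: smooth cates vs smooth algebras} this means $A_i^\triangleright$ is smooth in $\Nuc(k)$, and \Cref{prop: non commutative smoothness affine case} then makes each $A_i$ smooth, so $X$ is smooth. For the direction ($\Rightarrow$), assume $X$ is smooth. For $i=j$ dualizability is given by \Cref{prop: non commutative smoothness affine case}. For $i\neq j$ I write $U_i\cap U_j\to U_i\times U_j$ as the composite
\[
U_i\cap U_j\to (U_i\cap U_j)\times(U_i\cap U_j)\to U_i\times U_j.
\]
The first map is the diagonal of a smooth affinoid, so its pushforward of the structure sheaf is dualizable by the affinoid case. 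The second map is a rational localization, hence an open immersion. The obstacle is the pushforward along this open immersion: it need not preserve dualizability, since it is not proper.

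\textbf{Main obstacle.} I expect the $i\neq j$ terms to be the crux. I would handle them by refining the cover, choosing a cover of $X\widehat\times X$ adapted to the diagonal. Take the pieces $U_i\times U_i$ together with opens $W$ of $X\widehat\times X$ disjoint from $\Delta(X)$, which exist because $X$ is separated. Restricted to such a $W$, $\Delta_\ast\cO_X$ is $0$. Restricted to $U_i\times U_i$, it is the diagonal module. Since locality of dualizability only needs some cover, this reduces both directions cleanly to \Cref{prop: non commutative smoothness affine case}. The remaining points to verify carefully are that these pieces do cover $X\widehat\times X$ in the analytic topology, and that $(\Delta_\ast\cO_X)|_W=0$ follows from base change.
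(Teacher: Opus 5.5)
Your proposal is correct, and its skeleton is the paper's: reduce to \Cref{prop: non commutative smoothness affine case} by locality. The difference lies in how locality is obtained.

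The paper settles it in one line. It asserts that compactness of $\Delta_\ast\mathscr{O}_X$ is local on $X$ (citing Andreychev's Theorems 5.41 and 5.42) and then treats an affinoid $X$. You argue on $X\widehat\times X$ instead: rigidity turns compactness into dualizability, descent tests dualizability on a cover, and base change computes the restrictions. Your cover is adapted to the diagonal: the $U_i\widehat\times U_i$ together with affinoids in the complement of $\Delta(X)$, where $\Delta_\ast\mathscr{O}_X$ vanishes. This is what ``local on $X$'' has to mean, since a cover of $X$ by itself only controls the restrictions to the $U_i\widehat\times U_i$.

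Your version also exposes hypotheses that the paper leaves implicit. The implication ``smooth $\Rightarrow$ compact'' genuinely needs them.
\begin{itemize}
\item Without quasi-compactness it fails. For an infinite disjoint union of points, $\Delta_\ast\mathscr{O}_X$ has infinitely many nonzero components in a product of copies of $\Nuc(k)$, so it is not compact.
\item Without separatedness it also fails. Take two closed discs glued along $\{|x|=1\}$. The restriction to $D_1\widehat\times D_2$ is $\delta_\ast j_\ast\mathscr{O}$, where $j$ is the open immersion of the annulus and $\delta$ is the diagonal of the disc. Pulling back along $\delta$ gives $j_\ast\mathscr{O}\oplus j_\ast\mathscr{O}[1]$, and $j_\ast\mathscr{O}$ is not dualizable. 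So the obstruction you identified really occurs.
\end{itemize}

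The converse direction needs neither hypothesis, as you observe, since it only uses restriction to the $U_i\widehat\times U_i$. There, dualizability of the diagonal module is smoothness of $A_i^\triangleright$ by definition. Citing \Cref{cor: dualizable if and only if V-atomic} and \Cref{prop: smooth cates vs smooth algebras} is therefore unnecessary.

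The two points you left open are routine. Since $\Delta(X)$ is closed, the $U_i\widehat\times U_i$ and affinoid subdomains of the complement form an open cover of the quasi-compact adic space of $X\widehat\times X$. A finite subcover is then an admissible covering. The vanishing on the complementary pieces follows from base change, because their fibre product with $X$ over $X\widehat\times X$ is empty.
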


\begin{proof}
    Both the smoothness of $X$ and the compactness of $\Delta_{\ast}\mathscr{O}_{X}$ are local properties on $X$ in the analytic topology (see \cite[Theorem 5.41 and Theorem 5.42]{andreychev2021pseudocoherent}). Consequently, we can reduce the proof to the affine case treated in \Cref{prop: non commutative smoothness affine case}.
\end{proof}

We conclude this subsection by proving its main result.

\begin{proof}[Proof of \Cref{thm: main theorem smoothness}]
    Since in our $6$-functor formalism $\Delta_{!}\coloneqq\Delta_{\ast}$, and the \cates considered here are rigid, we can deduce from \Cref{cor: smooth categories 6ff rigid case} that $\Nuc(X)$ is internally smooth over $\Nuc(k)$ if and only if $\Delta_\ast(\mathscr{O}_X)$ is compact. This reduces the statement to \Cref{cor: non commutative smoothness global case}.
\end{proof}

\subsection{GAGA for nuclear categories in rigid analytic geometry}

In this subsection, we will describe a GAGA theorem for nuclear \cates, utilizing some ideas presented in \cite{wang2026relativegagatheoremapplication}. 
In \emph{loc. cit.}, the author introduces two analytic stacks associated with an algebraic stack $X$, called respectively the \emph{algebraic realization} $X^{\mathrm{k-alg}}$ and the analytification $X^{\mathrm{an}}$. When $X$ is a proper scheme, the author proves a GAGA-type theorem relating the solid derived \cates of $X^{\mathrm{k-alg}}$ and $X^{\mathrm{an}}$ (see \cite[Theorem 4.1.1]{wang2026relativegagatheoremapplication}).

We begin by reviewing the \emph{algebraic realization} construction as introduced in \cite{wang2026relativegagatheoremapplication}; we refer to \emph{loc. cit.} for a comprehensive treatment. We will not review the \emph{analytification} construction described in \cite{wang2026relativegagatheoremapplication} here. Indeed, we will only consider the case where $X$ is an underived scheme of finite type. In this scenario, the analytification functor described in \cite{wang2026relativegagatheoremapplication} produces exactly the analytic stack associated to the rigid analytic variety $X^{\mathrm{an}}$ (see \cite[Lemma 3.2.9 and Lemma 3.2.12]{wang2026relativegagatheoremapplication}). For this reason, by a slight abuse of notation, we will still denote this analytic stack by $X^{\mathrm{an}}$.

If $R$ is an animated ring of finite type over $k$, we can associate to it an analytic ring $R^{\mathrm{triv}}\coloneqq (\uline{R}, \Mod_{\uline{R}}^{\mathrm{cond}})$, where $\uline{R}$ is the discrete condensed ring associated with $R$. We can also define the \emph{algebraic realization} of $R$, following \cite[Definition 3.2.1]{wang2026relativegagatheoremapplication}, to be the following pushout of analytic rings:
\[
\begin{tikzcd}
	{k^{\mathrm{triv}}} && {R^{\mathrm{triv}}} \\
	\\
	{k_{\Solid}} && {R^{\mathrm{k-alg}}}
	\arrow[from=1-1, to=1-3]
	\arrow[from=1-1, to=3-1]
	\arrow[from=1-3, to=3-3]
	\arrow[from=3-1, to=3-3]
	\arrow["\lrcorner"{anchor=center, pos=0.125, rotate=180}, draw=none, from=3-3, to=1-1]
\end{tikzcd}
\]
In particular, using \cite[Proposition 12.12]{analytic}, we can identify $R^{\mathrm{k-alg}}$ with the analytic ring given by $(\uline{R}, \cD(R^{\mathrm{k-alg}}))$, where $\uline{R}$ is the condensed derived ring defined in \Cref{def: solidification functor}, and we have an equivalence: 
\[
\cD(R^{\mathrm{k-alg}}) \simeq \Mod_{\uline{R}}(\cD(k)_{\Solid}).
\]
Moreover, since $\uline{R}$ is discrete relative to $k$ (and in particular nuclear over $k$), and $R^{\mathrm{k-alg}}$ carries the induced analytic ring structure from $k_{\Solid}$, we obtain the following equivalence via \cite[Lemma 2.9]{mikami2023fppfdescentcondensedanimatedrings}:
\[
\Nuc(R^{\mathrm{k-alg}}) \simeq \Mod_{\uline{R}}(\Nuc(k)).
\]
Similarly, if $X$ is a derived scheme of finite type, we can regard it as an analytic stack $X^{\mathrm{triv}}$, obtained by gluing analytic rings of the form $R^{\mathrm{triv}}$ in the Zariski topology. Note that since we are considering the trivial analytic ring structure on $R^{\mathrm{triv}}$, all transition maps in the gluing process to obtain $X^{\mathrm{triv}}$ are !-able. This ensures that $X^{\mathrm{triv}}$ is a well-defined analytic stack over $\mathrm{AnSpec}(k^{\mathrm{triv}})$.

We define $X^{\mathrm{k-alg}}$ via the following pullback of analytic stacks:
\[
\begin{tikzcd}
	{X^{\mathrm{k-alg}}} && {X^{\mathrm{triv}}} \\
	\\
	{\mathrm{AnSpec}(k_{\Solid})} && {\mathrm{AnSpec}(k^{\mathrm{triv}}).}
	\arrow[from=1-1, to=1-3]
	\arrow[from=1-1, to=3-1]
	\arrow[from=1-3, to=3-3]
	\arrow[from=3-1, to=3-3]
\end{tikzcd}
\] 
In particular, $X^{\mathrm{k-alg}}$ is obtained by gluing the analytic stacks associated with analytic rings of the form $R^{\mathrm{k-alg}}$. 


\begin{lemma}\label{lem: descent for discrete modules}
    The functor
    \begin{equation}\label{eq: discrete descent}
        \begin{tikzcd}
	{ \mathrm{CAlg}(\Mod_{k})} && {\mathrm{AnRing}} && \LPr
	\arrow["{(-)^{\mathrm{k-alg}}}", from=1-1, to=1-3]
	\arrow["{\Nuc(-)}", from=1-3, to=1-5]
\end{tikzcd}
    \end{equation}
    defined by sending a commutative algebra $R$ to the \cat of nuclear modules over the analytic ring $R^{\mathrm{k-alg}}$, satisfies descent for the fppf topology.
\end{lemma}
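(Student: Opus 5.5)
The functor in \eqref{eq: discrete descent} is $R \mapsto \Mod_{\uline{R}}(\Nuc(k))$. The plan is to factor it through ordinary algebraic module categories and then use fppf descent for quasi-coherent sheaves. Concretely, by the identification $\Nuc(R^{\mathrm{k-alg}}) \simeq \Mod_{\uline{R}}(\Nuc(k))$ established above, together with $\uline{R}\simeq R\otimes_k \uline{k}$ (the solidification is symmetric monoidal by \Cref{lem: tensor product of discete modules}), I expect a natural equivalence
\[
\Mod_{\uline{R}}(\Nuc(k)) \simeq \Mod_{R}\otimes_{\Mod_k}\Nuc(k),
\]
functorial in $R$. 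To prove it, I would use the general fact that for a commutative algebra $R$ in $\Mod_k$ and any $\cV\in \mathrm{CAlg}(\LPr_{\Mod_k})$, one has $\Mod_R\otimes_{\Mod_k}\cV\simeq \Mod_{R\otimes \boldone_\cV}(\cV)$ \cite[Theorem 4.8.4.6]{HA}. Here I take $\cV=\Nuc(k)$, so that $R\otimes \boldone_\cV=\uline{R}$. Naturality in $R$ comes for free, since both sides are defined through base change of module categories.

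Given this, descent reduces to the algebraic statement. By fppf descent for quasi-coherent sheaves (\cite[Theorem D.6.3.5]{SAG}, or Lurie's fpqc descent), the functor $R\mapsto \Mod_R$ from $\mathrm{CAlg}(\Mod_k)$ to $\LPr_{\Mod_k}$ satisfies fppf descent. It then remains to show that $-\otimes_{\Mod_k}\Nuc(k)$ preserves the relevant limit diagrams, namely the cosimplicial limits $\Mod_R\simeq\lim \Mod_{S^{\otimes_R \bullet+1}}$ for an fppf cover $R\to S$, and finite products. Finite products are preserved because they are biproducts in $\LPr_{\mathrm{st}}$. For cosimplicial limits I would invoke the dualizability of $\Nuc(k)$ over $\Mod_k$. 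Indeed, $\Nuc(k)$ is rigid (\Cref{prop: Nuc(X) is rigid} in the affinoid case), and it is compactly generated, hence dualizable over $\Mod_k$ (a dualizable category is also fine). Tensoring with a dualizable object is a right adjoint, namely $\Fun^{\mathrm L}_{\Mod_k}(\Nuc(k)^\vee,-)$, so it commutes with all limits in $\LPr_{\Mod_k}$. This is the same argument used in the proof of \Cref{prop: kunneth for nuclear moduels}. Combining the two, $\lim \Mod_{S^\bullet}(\Nuc(k)) \simeq (\lim\Mod_{S^\bullet})\otimes_{\Mod_k}\Nuc(k)\simeq \Mod_{\uline R}(\Nuc(k))$.

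The main obstacle is the dualizability of $\Nuc(k)$ over $\Mod_k$, as opposed to over itself. If I cannot cite it directly, I would fall back on an alternative route: for fppf $R\to S$, the map $\uline R\to \uline S$ is descendable in $\Mod_{\uline R}(\Nuc(k))$. This holds because $R\to S$ is descendable in $\Mod_R$ (fppf maps of finite-type $k$-algebras are descendable, by Mathew's criterion), and the symmetric monoidal functor $\Mod_R\to \Mod_{\uline R}(\Nuc(k))$ preserves descendability. Descent then follows from \cite[Proposition 3.22]{mathew2016galoisgroupstablehomotopy}, exactly as in the proof of \Cref{lem: Nuc is a sheaf}.
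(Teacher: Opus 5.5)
Your proposal is correct. Your fallback argument is exactly the paper's proof: an fppf map is descendable in the algebraic module category (\cite[Corollary 3.33]{mathew2016galoisgroupstablehomotopy}), descendability is transported along the symmetric monoidal solidification functor into $\Mod_{\uline{R}}(\Nuc(k))$ (Corollary 3.21 of \emph{loc.\ cit.}), and then Proposition 3.22 of \emph{loc.\ cit.} gives descent.

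Your main route is genuinely different. You identify the functor with $R\mapsto \Mod_R\otimes_{\Mod_k}\Nuc(k)$ and push descent for $R\mapsto\Mod_R$ through the limit-preserving functor $-\otimes_{\Mod_k}\Nuc(k)$. This is the same argument the paper uses in the proof of \Cref{prop: algebraic GAGA}, run here on the \v{C}ech conerve of a cover. What your route buys:
\begin{itemize}
\item It transfers any descent that $R\mapsto\Mod_R$ enjoys. This includes fpqc descent, whereas descendability of faithfully flat maps is only guaranteed under countability hypotheses.
\item It directly exhibits the sheaf as $\Qcoh(-)\otimes_{\Mod_k}\Nuc(k)$.
\end{itemize}
The paper's argument is more economical. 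It needs only that exact symmetric monoidal functors preserve descendable algebras, and it uses no dualizability of $\Nuc(k)$ over $\Mod_k$.

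One justification in your main route is wrong, although the conclusion survives: $\Nuc(k)$ is not compactly generated. Its compact objects are the perfect $k$-modules (take $X=\Sp(k)$ in \Cref{lem: perfect complexes over rigid varieties}). Compact generation would therefore force $\Nuc(k)\simeq \mathrm{Ind}(\Perf(k))\simeq\Mod_k$. This fails: for example, $k\langle T\rangle$ is nuclear but is not of the form $\uline{V}$ for a $k$-module $V$.

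Rigidity alone gives what you need. A rigid category with compact unit is self-dual in $\LPr_{\mathrm{st}}$. Since $\Mod_k$ is rigid, a $\Mod_k$-linear category that is dualizable in $\LPr_{\mathrm{st}}$ is also dualizable in $\LPr_{\Mod_k}$ (cf.\ \cite[Chapter 1, Section 9]{gaitsgory2019study}). This is precisely the input the paper invokes when it asserts, in the proof of \Cref{prop: algebraic GAGA}, that $\Nuc(k)$ is dualizable over $\Mod_k$.
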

\begin{proof}
    Let $S \to R$ be an fppf cover of commutative algebras in $\Mod_{k}$. By \cite[Corollary 3.33]{mathew2016galoisgroupstablehomotopy}, $R$ is a descendable algebra in $\Mod_{S}$. Since $\uline{R}$ is nuclear over $k$ and the $k$-solidification functor in \Cref{def: solidification functor} is symmetric monoidal (see \Cref{lem: tensor product of discete modules}), it induces a functor
    \[
        \Mod_{S} \to \Nuc(S^{\mathrm{k-alg}})\simeq \Mod_{\uline{S}}(\Nuc(k)).
    \]
    
    Applying \cite[Corollary 3.21]{mathew2016galoisgroupstablehomotopy} to this functor, we deduce that $\uline{R}$ is a descendable algebra in $\Nuc(S^{\mathrm{k-alg}})\simeq \Mod_{\uline{S}}(\Nuc(k))$. Consequently, by \cite[Proposition 3.22]{mathew2016galoisgroupstablehomotopy}, we deduce that the functor in \Cref{eq: discrete descent} is a sheaf for the fppf topology.
\end{proof}

If $\{\Spec(A_{i})\}_{i \in I}$ is a Zariski cover of a separated scheme $X$ over $\Spec(k)$, we can observe that $\{\mathrm{AnSpec}(A_{i}^{\mathrm{k-alg}})\}_{i \in I}$ is a cover of $X^{\mathrm{k-alg}}$. Using \Cref{lem: descent for discrete modules}, we obtain an equivalence:
\[
\Nuc(X^{\mathrm{k-alg}}) \simeq \lim_{[n]\in\bDelta^{\mathrm{op}}}\prod_{\left\{i_1,\cdots,i_n\right\}\subseteq I}\Nuc(A_{i_1, \dots, i_{n}}^{\mathrm{k-alg}})
\]
where $\Spec(A_{i_1, \dots, i_{n}})$ denotes the intersection $\Spec(A_{i_{1}})\times_{X}\dots \times_{X}\Spec(A_{i_{n}})$.

\begin{proposition}\label{prop: algebraic GAGA}
    Let $X$ be a connective derived scheme, separated and of finite type over $k$. Then there is an equivalence of categories between $\Nuc(X^{\mathrm{k-alg}})$ and $\Qcoh(X)\otimes_{\Mod_{k}}\Nuc(k)$.
\end{proposition}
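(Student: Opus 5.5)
The plan is to compare both sides affine-locally and then glue, using that $\Nuc(k)$ is rigid, hence dualizable as an object of $\LPr_{\mathrm{st}}$ over $\Mod_k$, so that $-\otimes_{\Mod_k}\Nuc(k)$ commutes with limits.

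\textbf{Affine case.} First treat $X=\Spec(R)$ with $R$ a connective animated $k$-algebra of finite type. By the discussion preceding \Cref{lem: descent for discrete modules} we have $\Nuc(R^{\mathrm{k-alg}})\simeq \Mod_{\uline{R}}(\Nuc(k))$. On the other side, $\Qcoh(\Spec R)\otimes_{\Mod_k}\Nuc(k)\simeq \Mod_R(\Mod_k)\otimes_{\Mod_k}\Nuc(k)\simeq \Mod_{R\otimes \boldone}(\Nuc(k))$, by the standard identification $\Mod_A(\cV)\otimes_{\cV}\cW\simeq \Mod_{F(A)}(\cW)$ for a symmetric monoidal colimit-preserving $F\colon\cV\to\cW$ (Lurie, HA 4.8.4.6). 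Here $F$ is the $k$-solidification functor $\uline{(-)}\colon \Mod_k\to\Nuc(k)$; it lands in $\Nuc(k)$ since $\uline{k}=k^{\triangleright}$ is nuclear and nuclear objects are closed under colimits, and it is symmetric monoidal by \Cref{lem: tensor product of discete modules}. Thus $F(R)=\uline{R}$, and the two sides agree. This identification is natural in $R$: for $R\to R'$, base change on both sides is $-\otimes_{\uline{R}}\uline{R'}$.

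\textbf{Gluing.} For $X$ separated with a finite affine Zariski cover $\{\Spec A_i\}$, every finite intersection $\Spec A_{i_1,\dots,i_n}$ is affine. Zariski descent for $\Qcoh$ gives
\[
\Qcoh(X)\simeq\lim_{[n]\in\bDelta}\prod\Qcoh(\Spec A_{i_1,\dots,i_n}).
\]
Since $\Nuc(k)$ is dualizable in $\mathrm{Pr}^{\mathrm L}_{\Mod_k}$, the functor $-\otimes_{\Mod_k}\Nuc(k)$ preserves all limits in $\mathrm{Pr}^{\mathrm L}_{\Mod_k}$. It therefore carries this limit to
\[
\lim\prod \Mod_{\uline{A_{i_1,\dots,i_n}}}(\Nuc(k)).
\]
By the descent statement recorded after \Cref{lem: descent for discrete modules}, the latter limit is $\Nuc(X^{\mathrm{k-alg}})$. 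The affine equivalences are compatible with the transition functors by the naturality noted above, so they assemble into the global equivalence.

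\textbf{Main obstacle.} The delicate point is the limit-preservation step. Limits in $\LPr$ are computed in $\widehat{\mathrm{Cat}}_\infty$, and one needs that tensoring with a dualizable object commutes with them. I would argue that $-\otimes\Nuc(k)\simeq \Fun^{\mathrm L}_{\Mod_k}(\Nuc(k)^{\vee},-)$, and that functor categories commute with limits. Rigidity of $\Nuc(k)$ (\Cref{prop: Nuc(X) is rigid}) is what makes it dualizable, with $\Nuc(k)^\vee\simeq\Nuc(k)$.

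A secondary check is that the descent equivalence for $\Nuc((-)^{\mathrm{k-alg}})$ uses the same Čech nerve and the same pullback functors, namely base change of modules. This holds because Zariski covers are fppf and pullback along $A\to B$ is $-\otimes_{\uline{A}}\uline{B}$.
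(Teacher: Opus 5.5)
Your proposal is correct and follows the paper's proof essentially step for step. Both arguments use Zariski descent for $\Qcoh(X)$ along the \v{C}ech nerve of an affine cover (using separatedness), then commute $-\otimes_{\Mod_k}\Nuc(k)$ past that limit because $\Nuc(k)$ is dualizable over $\Mod_k$ (the paper deduces this from the rigidity of both $\Nuc(k)$ and $\Mod_k$). They then identify each $\Mod_A\otimes_{\Mod_k}\Nuc(k)$ with $\Nuc(A^{\mathrm{k-alg}})$ via $\Mod_{\underline{A}}(\Nuc(k))$, and conclude by the descent of \Cref{lem: descent for discrete modules}. Your explicit remarks on the naturality of the affine identifications in $A$, and on why tensoring with a dualizable category preserves limits, spell out points the paper leaves implicit.
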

\begin{proof}
    As above, let $\{\Spec(A_{i})\}_{i \in I}$ be a cover of $X$, and let $\Spec(A_{i_1, \dots, i_{n}})$ denote the intersection $\Spec(A_{i_{1}})\times_{X}\dots \times_{X}\Spec(A_{i_{n}})$. Note that since $X$ is separated, $\Qcoh(X)$ can be written as a limit of quasi-coherent sheaves on affine schemes. In particular, using the same notation as above, we have that 
    \[
    \Qcoh(X) \simeq \lim_{[n]\in\bDelta^{\mathrm{op}}}\prod_{\left\{i_1,\cdots,i_n\right\}\subseteq I}\Mod_{A_{i_1, \dots, i_{n}}}.
    \]
    Thus, $\Qcoh(X)$ can be written as a limit of $\Mod_{A}$ for affine opens $\Spec(A)$ in $X$.
    Now, since both $\Nuc(k)$ and $\Mod_{k}$ are rigid, it follows that $\Nuc(k)$ is dualizable over $\Mod_{k}$; in particular, the functor 
    $-\otimes_{\Mod_{k}}\Nuc(k)$ commutes with limits. Therefore, we can write 
    \begin{equation}\label{eq:Kunneth limit}
        \Qcoh(X)\otimes_{\Mod_{k}}\Nuc(k) \simeq \lim_{[n]\in\bDelta^{\mathrm{op}}}\prod_{\left\{i_1,\cdots,i_n\right\}\subseteq I}\left(\Mod_{A_{i_1, \dots, i_{n}}}\otimes_{\Mod_k}\Nuc(k)\right).
    \end{equation}
    The right-hand side can be identified with 
    \[
     \lim_{[n]\in\bDelta^{\mathrm{op}}}\prod_{\left\{i_1,\cdots,i_n\right\}\subseteq I}\Mod_{\uline{A}_{i_1, \dots, i_{n}}}(\Nuc(k)).
    \]
    To conclude, it is enough to observe that $\Mod_{\uline{A}}(\Nuc(k))$ can be identified with $\Nuc(A^{\mathrm{k-alg}})$, and that $\{\mathrm{AnSpec}(A_i^{\mathrm{k-alg}})\}_{i \in I}$ defines a cover of $X^{\mathrm{k-alg}}$. Hence, using descent for $\Nuc(-)$ (\Cref{lem: descent for discrete modules}), we can identify the limit in \Cref{eq:Kunneth limit} with $\Nuc(X^{\mathrm{k-alg}})$.
\end{proof}

\begin{corollary}\label{cor: GAGA for nuclear categories}
    Let $X$ be a proper scheme of finite type, quasi-compact and separated, over $\Spec(k)$, and let $X^{\mathrm{an}}$ be its analytification as a rigid analytic variety. Then we have an equivalence 
    \[
    \Nuc(X^{\mathrm{an}}) \simeq \Qcoh(X) \otimes_{\Mod_{k}} \Nuc(k).
    \]
\end{corollary}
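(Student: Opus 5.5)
The proof will go through \Cref{prop: algebraic GAGA}. That result gives $\Nuc(X^{\mathrm{k-alg}})\simeq \Qcoh(X)\otimes_{\Mod_k}\Nuc(k)$, so what remains is to show that the natural comparison (analytification) map of analytic stacks $\iota\colon X^{\mathrm{an}}\to X^{\mathrm{k-alg}}$ induces an equivalence on nuclear categories. It helps to recall how this map arises. Each affine $\Spec(R)$ in a Zariski cover of $X$ gives $R^{\mathrm{k-alg}}$. The analytification of $\Spec(R)$ is a union of affinoids $\dSp(A_n)$, for instance the closed polydisks of radius $|\omega|^{-n}$ cut out in an embedding $\Spec R\hookrightarrow \mathbb{A}^N$. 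The maps $\uline{R}\to A_n$ then assemble to $\iota$. By \cite[Lemma 3.2.9 and Lemma 3.2.12]{wang2026relativegagatheoremapplication}, the source of $\iota$ is the analytic stack of the classical rigid variety $X^{\mathrm{an}}$, so $\Nuc(X^{\mathrm{an}})$ is the category defined in \Cref{Section 3}.

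\textbf{Step 1: the adjunction.} Pullback gives a symmetric monoidal functor $\iota^*\colon \Nuc(X^{\mathrm{k-alg}})\to\Nuc(X^{\mathrm{an}})$ with right adjoint $\iota_*$. I will show the unit and counit are equivalences. I plan to deduce this from the solid GAGA theorem \cite[Theorem 4.1.1]{wang2026relativegagatheoremapplication}. That theorem says that, for proper $X$, pullback $\cD(X^{\mathrm{k-alg}})_\Solid\to\cD(X^{\mathrm{an}})_\Solid$ is an equivalence, or at least is fully faithful with an explicitly described image. Pullback always carries nuclear objects to nuclear objects. This is the global version of \Cref{lem: pullback derived affinoids}, and it holds locally and hence globally by descent. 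So the solid equivalence restricts to a fully faithful functor $\Nuc(X^{\mathrm{k-alg}})\to \Nuc(X^{\mathrm{an}})$, and what remains is essential surjectivity onto nuclear objects.

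\textbf{Step 2: essential surjectivity.} Suppose the solid equivalence holds. Take $M\in\Nuc(X^{\mathrm{an}})$; it is $\iota^*N$ for some solid $N$, and I must show that $N$ is nuclear. Nuclearity on $X^{\mathrm{k-alg}}$ can be checked Zariski-locally by \Cref{lem: descent for discrete modules}. On $R^{\mathrm{k-alg}}$ one has $\Nuc\simeq\Mod_{\uline R}(\Nuc(k))$, so it is enough to know that $\iota_*$ preserves nuclear objects, since then $N\simeq\iota_*M$. On each affinoid piece, $\iota_*$ is a forgetful functor to $\Mod_{\uline R}(\Nuc(k))$, and it preserves nuclearity by \Cref{cor: relative nuclearity affinoid case}, because $A_n$ is nuclear over $k$. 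Properness of $X$ enters here: $X^{\mathrm{an}}$ is quasi-compact and the pushforward is a finite limit of these affinoid pushforwards. Nuclear objects are closed under finite limits by stability, so $\iota_*$ lands in $\Nuc$.

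\textbf{Step 3, and the main obstacle.} The delicate point is whether \cite[Theorem 4.1.1]{wang2026relativegagatheoremapplication} is really stated in the form needed here: for the right analytic ring structures ($k_\Solid$ versus $(k,\Z)_\Solid$), and for the full solid categories rather than only for suitable coherent or pseudocoherent parts. If it only gives the statement on pseudocoherent or perfect objects, I would fall back on the following argument. By \Cref{prop: Nuc(X) is rigid}, $\Nuc(X^{\mathrm{an}})$ is rigid, and it is generated by images of compact objects. Since $X$ is proper and hence quasi-compact and separated, $\Qcoh(X)\otimes_{\Mod_k}\Nuc(k)$ is generated by $\Perf(X)\otimes\Nuc(k)^{\omega}$. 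Classical GAGA for perfect complexes then gives full faithfulness of $\iota^*$ on the generators, with mapping objects computed through $\Nuc(k)$-enriched homs and base change along $k$. Essential surjectivity would come from showing that the image generates, which I would check via the conservativity criterion of \Cref{rmk: relative generators and generated under colimits and action}: an object with vanishing $\hom$ from all analytified perfect complexes is zero locally on affinoids. I expect this generation step to be the hardest part of the fallback.
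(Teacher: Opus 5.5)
Your main line is exactly the paper's proof. You combine \Cref{prop: algebraic GAGA} with the symmetric monoidal equivalence $\cD(X^{\mathrm{k-alg}})_\Solid \simeq \cD(X^{\mathrm{an}})_\Solid$ given by pullback; the paper extracts this equivalence of the full solid categories from the proof of \cite[Theorem 4.1.1]{wang2026relativegagatheoremapplication}, so your Step 3 fallback is not needed. You then restrict to nuclear objects. Step 2 simplifies: the inverse of a symmetric monoidal equivalence is again symmetric monoidal, so it preserves nuclear objects just as $\iota^*$ does, which is all the paper uses. You can therefore drop the pushforward argument, which as written conflates a Zariski-local check with a finite affinoid cover. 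The preimage $\Spec(R)^{\mathrm{an}}$ of an affine open is not quasi-compact, and you would also have to handle pushforward along the open immersions $\Spec(R)^{\mathrm{k-alg}} \hookrightarrow X^{\mathrm{k-alg}}$.
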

\begin{proof}
    This is a consequence of \cite[Theorem 4.1.1]{wang2026relativegagatheoremapplication} and \Cref{prop: algebraic GAGA}. Indeed, as explained in the proof of \cite[Theorem 4.1.1]{wang2026relativegagatheoremapplication}, the pullback along the inclusion of the categorical locales associated with $X^{\mathrm{an}}$ and $X^{\mathrm{k-alg}}$ yields an equivalence of solid derived \cates:
    \[
    \cD(X^{\mathrm{k-alg}})_\Solid \simeq \cD(X^{\mathrm{an}})_\Solid.
    \]
    Since this pullback is a symmetric monoidal functor, it preserves the nuclear subcategories (see \cite[Theorem 2.4 (c)]{meyer2025qhodgecomplexesrefinedoperatornametc}). We can therefore conclude by applying \Cref{prop: algebraic GAGA}.
\end{proof}

\subsection{Compact generation of nuclear categories in rigid analytic geometry}

\begin{lemma}\label{lem: perfect complexes over rigid varieties}
    Let $X$ be a rigid analytic variety over $k$, and let $\cF$ be an object in $\Nuc(X)$. Then the following properties are equivalent:
    \begin{enumerate}
        \item $\cF$ is a dualizable object in $\Nuc(X)$.
        \item $\cF$ is a compact object in $\Nuc(X)$.
        \item $\cF$ is a $\Nuc(k)$-compact (equivalently, $\Nuc(k)$-atomic) object in $\Nuc(X)$.
        \item $\cF$ is a perfect complex over $X$.
    \end{enumerate}
\end{lemma}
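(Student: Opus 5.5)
We will prove the cycle of equivalences by first disposing of the formal ones and then reducing the geometric one, (1)$\Leftrightarrow$(4), to the affinoid case. Implicitly, $X$ should be quasi-compact and semi-separated so that \Cref{prop: Nuc(X) is rigid} applies; in general we reduce to this case by working locally, since each property is local, as discussed below.

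The formal equivalences come first. Since $\Nuc(k)$ is rigid, \Cref{prop: compact objects over rigid categories} gives (2)$\Leftrightarrow$(3). By \Cref{prop: Nuc(X) is rigid}, $\Nuc(X)$ is rigid. In a rigid symmetric monoidal \cat whose unit is compact, an object is compact if and only if it is dualizable, which gives (1)$\Leftrightarrow$(2). Here we use the standard fact from \cite[Chapter 1, Sec 9]{gaitsgory2019study} that in a rigid category with compact unit, compact and dualizable objects coincide. To see that the unit $\mathscr{O}_X$ is compact, we argue as follows. Global sections $p_\ast=\map(\mathscr{O}_X,-)$ composed with $\Nuc(k)\to\Sp$ is a composite of colimit-preserving functors: $p_\ast$ commutes with colimits by \Cref{lem: base change and projection}(3), and $\Nuc(k)$ has compact unit.

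The geometric step is (1)$\Leftrightarrow$(4). Dualizability in $\Nuc(-)$ is local for the analytic topology. The reason is that $\Nuc(-)$ is a sheaf of symmetric monoidal categories (\Cref{lem: Nuc is a sheaf}), pullbacks are symmetric monoidal, and dualizable objects in a limit of symmetric monoidal categories along symmetric monoidal functors are exactly the compatible families of dualizable objects. Perfectness is local by definition. This reduces us to $X=\Sp(A)$ with $A$ a classical affinoid algebra, where $\Nuc(A)\simeq\Mod_{A^\triangleright}(\Nuc(k))$ by \Cref{prop: nuclear categories as module categories}.

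In the affinoid case, dualizable objects of $\Mod_{A^\triangleright}(\Nuc(k))$ are exactly the $\Nuc(k)$-atomic ones by \Cref{cor: dualizable if and only if V-atomic}. For the comparison with perfect complexes we would invoke, as in the proof of \Cref{prop: non commutative smoothness affine case}, \cite[Theorem 5.50 and Lemma 5.47]{andreychev2021pseudocoherent}. These identify dualizable objects of $\Nuc(A)$ (equivalently of $\cD(A)_\Solid$, since dualizable objects are nuclear) with $\uline{P}$ for $P\in\Perf(A)$, that is, with the solidification of perfect $A$-modules. Conversely, $\uline{(-)}$ is symmetric monoidal, so the image of a perfect complex is dualizable; this also shows directly that perfect complexes satisfy (1).

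The main obstacle is the affinoid identification of dualizable nuclear modules with perfect complexes. If the cited results from \cite{andreychev2021pseudocoherent} do not apply verbatim to $\Nuc(A)$, we would argue directly. A dualizable $M$ is a retract of a finite colimit of $A^\triangleright$: it is compact, and $\Nuc(A)$ is generated by atomic objects of the form $A^\triangleright\otimes V$ with $V$ compact in $\Nuc(k)$. Compact objects of $\Nuc(k)$ are perfect $k$-modules, by the description of $\Nuc(k)^\omega$ in \cite{andreychev2023ktheorieadischerraume}. Hence $\Nuc(A)^\omega$ is the thick closure of $A^\triangleright$, which is $\Perf(A)$. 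A secondary point to check is the compactness of the unit on a non-affinoid but quasi-compact, semi-separated $X$, which we handled above via the finite \v{C}ech limit.
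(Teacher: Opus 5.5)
Your main line is correct and close to the paper's proof. You get (2)$\Leftrightarrow$(3) from \Cref{prop: compact objects over rigid categories}, reduce (1)$\Leftrightarrow$(4) to the affinoid case, and use \cite{andreychev2021pseudocoherent} to identify dualizable objects of $\Nuc(A)\simeq\Mod_{A}(\Nuc(k))$ with perfect complexes, all as the paper does.

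The real difference is (1)$\Leftrightarrow$(2).
\begin{itemize}
\item The paper localizes to an affinoid and applies \Cref{cor: dualizable if and only if V-atomic} in $\Mod_A(\Nuc(k))$ to get (1)$\Leftrightarrow$(3). This tacitly requires compactness (or atomicity) to be local on $X$.
\item You instead use the global rigidity of $\Nuc(X)$ from \Cref{prop: Nuc(X) is rigid}: in a rigid category, compact and dualizable objects coincide.
\end{itemize}
Your route is cleaner because it never checks that compactness descends. The paper's route only needs the module description on affinoids, but then owes the locality check. That check does hold for quasi-compact, semi-separated $X$: $\Nuc(X)$ is a finite limit along pullbacks whose right adjoints preserve colimits.

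There are two caveats.

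First, your remark that ``in general we reduce to this case by working locally, since each property is local'' is false for (2) and (3). Compactness is not local on a non-quasi-compact space. For $X=\coprod_{\mathbb N}\Sp(k)$ one has $\Nuc(X)\simeq\prod_{\mathbb N}\Nuc(k)$. There $\mathscr{O}_X$ is perfect and dualizable but not compact: mapping out of it does not commute with the direct sum of the objects that are $k$ in one slot and $0$ elsewhere. So the lemma must be read for quasi-compact $X$ (and, for your argument, semi-separated $X$). That is exactly the setting of \Cref{thm: algebraization via generator}. The paper's phrase ``all these properties are local'' glosses over the same point.

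Second, your fallback argument does not work. $\Nuc(A)$ is not generated under colimits by the objects $A^\triangleright\otimes V$ with $V\in\Nuc(k)^\omega=\Perf(k)$. The reason is that $\Nuc(k)$ is rigid but not compactly generated. If it were, it would equal $\mathrm{Ind}(\Perf(k))\simeq\Mod_k$, which contradicts the fact that non-discrete Banach spaces such as $k\langle T\rangle$ are nuclear. $\Nuc(A)$ is only $\Nuc(k)$-atomically generated by $A^\triangleright$, and that does not imply compact objects are retracts of finite colimits of $A^\triangleright$. The identification of dualizable nuclear $A$-modules with $\Perf(A)$ is genuine analytic input from \cite{andreychev2021pseudocoherent}, not a formal consequence of generation. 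You should drop the fallback and rely on the cited results, as the paper does.
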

\begin{proof} 
    The equivalence between (2) and (3) follows from \Cref{prop: compact objects over rigid categories}.

    We also observe that all these properties are local on $X$ (see \cite{andreychev2021pseudocoherent}), so we may assume $X$ to be an affinoid space. Since $\Nuc(k)$ is rigid, and locally we can assume $\Nuc(X)$ to be of the form $\Mod_{A}(\Nuc(k))$, the equivalence between (1) and (3) follows from \Cref{cor: dualizable if and only if V-atomic}. 
    
    Moreover, we note that by \cite[Corollary 5.51.1]{andreychev2021pseudocoherent}, we can identify dualizable objects in $\cD(X)_\Solid$ with perfect complexes over $X$. To conclude, we observe that by \cite[Lemma 5.45 and Theorem 5.50]{andreychev2021pseudocoherent}, we can also identify dualizable objects in $\cD(X)_\Solid$ with dualizable objects in $\Nuc(X)$. 
\end{proof}

\begin{theorem}\label{thm: algebraization via generator}
    Let $p\colon X \to \Sp(k)$ be a smooth, proper, quasi-compact, and separated rigid analytic variety over $\Sp(k)$. Then $X$ is algebraizable if and only if $\Nuc(X)$ admits a compact (equivalently, $\Nuc(k)$-atomic) generator.  
\end{theorem}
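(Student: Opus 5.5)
We prove the two implications separately, following the strategy outlined in the introduction.

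\emph{Algebraizable $\Rightarrow$ compact generator.} Suppose $X\simeq Y^{\mathrm{an}}$. Since $X$ is proper, quasi-compact and separated, we may take $Y$ proper, separated and of finite type. (If $Y$ is only an algebraic space, we first reduce to the scheme case, or run the same argument using the relative GAGA statement for stacks in \cite{wang2026relativegagatheoremapplication}.) By \Cref{cor: GAGA for nuclear categories},
\[
\Nuc(X)\simeq \Qcoh(Y)\otimes_{\Mod_k}\Nuc(k).
\]
By \cite{bondal2002generatorsrepresentabilityfunctorscommutative}, $\Qcoh(Y)$ has a single compact generator $G$, so $\Qcoh(Y)\simeq \LMod_{E}$ with $E=\mathrm{End}(G)$. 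Tensoring gives
\[
\Nuc(X)\simeq \LMod_E\otimes_{\Mod_k}\Nuc(k)\simeq \LMod_{\uline{E}}(\Nuc(k)).
\]
By \Cref{prop: LModR is molecular}, $\uline{E}$ is a $\Nuc(k)$-atomic generator of this category; it corresponds to $G\boxtimes k$. Since $\Nuc(k)$ is rigid, \Cref{prop: compact objects over rigid categories} shows that this generator is compact.

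\emph{Compact generator $\Rightarrow$ algebraizable.} Let $P$ be a compact generator of $\Nuc(X)$. By \Cref{lem: perfect complexes over rigid varieties}, $P$ is a perfect complex on $X$ and is $\Nuc(k)$-atomic. Set $A=\hom^{\Nuc(k)}(P,P)$. By \Cref{prop: LModR is molecular}, $\Nuc(X)\simeq \LMod_A(\Nuc(k))$. Since $X$ is smooth, \Cref{thm: main theorem smoothness} makes $\Nuc(X)$ internally smooth, so $A$ is a smooth algebra in $\Nuc(k)$ by \Cref{prop: smooth cates vs smooth algebras}. Since $X$ is proper, $p_*$ sends perfect complexes to perfect objects. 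Hence $A\simeq p_*\uline{\Hom}(P,P)$ is dualizable in $\Nuc(k)$, and $A$ is proper; alternatively, one can appeal to \Cref{prop: smooth and proper cat in 6 ff} using that $p$ is prim and, being smooth, suave.

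The remaining task is to descend this to the algebraic side, namely to show that $\Perf(X)$ is smooth and proper as a $k$-linear category, and then apply \Cref{thm: intro algebraization via perf}. I would first check that $P$ classically generates $\Perf(X)$. Compact objects of $\Nuc(X)$ are the perfect complexes, and since $\Nuc(X)\simeq\LMod_A(\Nuc(k))$ is $\Nuc(k)$-generated by $P$, every compact object is a retract of a finite colimit of objects $P\otimes V$ with $V$ compact in $\Nuc(k)$, that is, with $V$ a perfect $k$-complex. So $\Perf(X)$ is the thick closure of $P$, and $\Perf(X)\simeq\Perf(A(\ast))$, where $A(\ast)=\mathrm{End}_{\Perf(X)}(P)$ is an $\bbE_1$-$k$-algebra.

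\emph{Main obstacle.} The crux is to transfer smoothness and properness of $A$ in $\Nuc(k)$ to the underlying $k$-algebra $A(\ast)$. Properness is straightforward: dualizable objects of $\Nuc(k)$ are perfect $k$-complexes (\Cref{lem: perfect complexes over rigid varieties} with $X=\Sp(k)$), and these are discrete, so $A\simeq\uline{A(\ast)}$. Since $\uline{(-)}$ is symmetric monoidal (\Cref{lem: tensor product of discete modules}), we get $A\widehat\otimes A^{op}\simeq\uline{A(\ast)\otimes_k A(\ast)^{op}}$. The functor $\uline{(-)}\colon \LMod_{A(\ast)\otimes A(\ast)^{op}}\to \LMod_{A\widehat\otimes A^{op}}(\Nuc(k))$ is fully faithful on compact objects, because $k$ is compact in $\Nuc(k)$ and mapping spectra out of the free module compute $(-)(\ast)$. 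Compactness of the diagonal bimodule $A_\Delta$ in $\Nuc(k)$ then shows that $A(\ast)_\Delta$ is a retract of a finite complex of free bimodules, i.e.\ perfect, and so $A(\ast)$ is smooth. This fully faithfulness and detection of compactness is where the careful work is needed. Once it is done, $\Perf(X)$ is smooth and proper, and \Cref{thm: intro algebraization via perf} yields that $X$ is algebraizable.
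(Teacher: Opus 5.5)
Your argument follows the paper's proof in both directions. For ``algebraizable $\Rightarrow$ generator'' you combine \Cref{cor: GAGA for nuclear categories} with a compact generator of $\Qcoh(Y)$. The paper passes through \Cref{cor: non commutative algebra in alg geo}, but, as you note, only the generator is needed. For ``generator $\Rightarrow$ algebraizable'' you use Barr--Beck, then \Cref{thm: main theorem smoothness} with \Cref{prop: smooth cates vs smooth algebras} for smoothness of $A$, then finiteness of $p_*$ on perfect complexes for properness, and finally a transfer to $\Mod_k$ before invoking \Cref{thm: intro algebraization via perf}. Your transfer of smoothness is correct, and more explicit than the paper's remark that $A\widehat{\otimes}_k A\simeq A\otimes_k A$. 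Because $k$ is compact in $\Nuc(k)$, the functor $\uline{(-)}$ is fully faithful on all of $\LMod_{A(\ast)\otimes_k A(\ast)^{op}}$ and preserves colimits, hence reflects compactness.

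The step whose justification fails as written is the claim that $P$ classically generates $\Perf(X)$ ``since $\Nuc(X)$ is $\Nuc(k)$-generated by $P$''. The Thomason--Neeman argument you are implicitly using needs a set of compact objects that generates under colimits, and $\{P\otimes V : V\in\Perf(k)\}$ is not such a set. Relative generation involves $P\otimes V$ for all $V\in\Nuc(k)$, and $\Nuc(k)$ is not compactly generated: otherwise, by \Cref{lem: perfect complexes over rigid varieties}, it would be $\mathrm{Ind}(\Perf(k))\simeq\Mod_k$, so every nuclear module would be of the form $\uline{V}$, which fails for $k\langle T\rangle$.

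The repair uses properness, which you have already established. For compact $M$, the underlying object $\hom^{\Nuc(k)}(P,M)\simeq p_*\uline{\Hom}(P,M)$ is perfect, and so is $A$. Hence every term of the bar resolution $M\simeq|A\otimes A^{\otimes\bullet}\otimes M|$ is a free module $A\otimes V$ with $V\in\Perf(k)$. Compactness then makes $M$ a retract of a finite skeleton, so $M\in\mathrm{thick}(P)$. Equivalently, $M\simeq\uline{M(\ast)}$, and $M(\ast)$ is perfect over $A(\ast)$ by the same full faithfulness. The paper instead obtains $\Perf(X)\simeq\Perf(A)$ from \cite[Propositions 4.6.4.4 and 4.6.4.12]{HA}, using smoothness and properness of $A$ together.

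Finally, in your parenthetical on algebraic spaces, there is no general reduction to schemes, so the GAGA-for-algebraic-spaces route is the one you need. The paper leaves this point implicit as well, since \Cref{cor: GAGA for nuclear categories} is stated only for schemes.
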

\begin{proof} 
    The strategy of the proof is to show that $\Perf(X)$ is a smooth, proper, idempotent-complete $k$-linear \cat, and then apply \cite[Theorem 3.1]{montagnani2025algebraizationrigidanalyticvarieties}.

    Suppose there exists an object $\cF$ which is a $\Nuc(k)$-compact generator of $\Nuc(X)$. Since $\cF$ is a $\Nuc(k)$-compact generator, we can apply the Barr--Beck--Lurie theorem (\cite[Proposition 4.8.5.8]{HA}) to the functor
    \[
    \hom^{\Nuc(k)}(\cF,-) \colon \Nuc(X) \to \Nuc(k).
    \]
    In this way, we can identify $\Nuc(X)$ with $\LMod_{A}(\Nuc(k))$, where $A$ is the $\mathbb{E}_{1}$-algebra of endomorphisms of the $\Nuc(k)$-compact generator $\cF$. Explicitly,
    \[
    A \coloneqq p_{\ast}\uline{\Hom}_{\Nuc(X)}(\cF,\cF) \simeq \hom^{\Nuc(k)}(\cF, \cF) 
    \]
    as an $\mathbb{E}_{1}$-algebra in $\Nuc(k)$.

    Since dualizable objects in $\Nuc(X)$ (respectively, in $\Nuc(k)$) correspond to perfect complexes on $X$ (respectively, on $\Sp(k)$) by \Cref{lem: perfect complexes over rigid varieties}, and since $X$ is proper, \cite[Proposition 7.8]{porta2018derivedhomspacesrigid} implies that $A$ is a perfect complex of $k$-modules. In particular, it is a dualizable object in $\Nuc(k)$ and thus a proper algebra in $\Nuc(k)$. Because $X$ is smooth, \Cref{cor: non commutative smoothness global case} implies that $\Nuc(X)$ is smooth over $\Nuc(k)$. Therefore, by \Cref{prop: smooth cates vs smooth algebras}, $A$ is a smooth algebra over $\Nuc(k)$. Using \cite[Proposition 4.6.4.4 and Proposition 4.6.4.12]{HA}, we obtain
    \[
    \Perf(X)\simeq \Nuc(X)^{\mathrm{dual}} \simeq \LMod_{A}(\Nuc(k))^{\mathrm{dual}}\simeq \LMod_{A}(\Perf(k)) \simeq \Perf(A).
    \]

    Now, to prove that $\Perf(X)$ is a smooth and proper idempotent-complete $k$-linear \cat, it suffices to show that $A$ is a smooth and proper $k$-algebra. We observe that, since $A$ is a perfect complex over $k$, the completed tensor product $A\widehat\otimes_{k}A$ coincides with the uncompleted tensor product $A\otimes_{k}A$. This implies that $A$ is a smooth algebra over $\Mod_{k}$. Furthermore, since $A$ is a perfect complex of $k$-modules, it is dualizable in $\Mod_{k}$, meaning it is also a proper algebra in $\Mod_{k}$. Hence using \cite[Theorem 3.1]{montagnani2025algebraizationrigidanalyticvarieties} we deduce that $X$ is algebraizable.

    The converse follows from \Cref{cor: GAGA for nuclear categories}. Indeed, we have an equivalence
    \[
    \Nuc(X^{\mathrm{an}})\simeq \Qcoh(X)\otimes_{\Mod_{k}}\Nuc(k).
    \]
    In particular, using \Cref{cor: non commutative algebra in alg geo}, we deduce the existence of a discrete, smooth, and proper algebra $A$ in $\Mod_{k}$ such that
    \[
    \Qcoh(X) \simeq \LMod_{A}.
    \]
    This implies that 
    \[
    \Nuc(X) \simeq \LMod_{\uline{A}}(\Nuc(k)).
    \]
    Since $A$ is a perfect complex over $k$, the same argument explained above shows that $\uline{A}$ is a smooth and proper algebra in $\Nuc(k)$.
\end{proof}

\begin{corollary}\label{cor: counterxample}
    Let $p\colon X \to \Sp(k)$ be a smooth, proper, quasi-compact, separated and non-algebraizable rigid analytic variety over $\Sp(k)$. Then $\Nuc(X)$ is internally smooth over $\Nuc(k)$, but it does not have a compact (or, equivalently, a $\Nuc(k)$-atomic) generator. 
\end{corollary}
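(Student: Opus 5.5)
The corollary follows by combining three earlier results, and I would split it into its two assertions.

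\emph{Internal smoothness.} Since $X$ is smooth, \Cref{thm: main theorem smoothness} says directly that $\Nuc(X)$ is internally smooth in $\LPr_{\Nuc(k)}$. More concretely, \Cref{cor: non commutative smoothness global case} shows that $\Delta_\ast\mathscr{O}_X$ is compact in $\Nuc(X\widehat\times X)$. Because $X$ is quasi-compact and separated, \Cref{prop: Nuc(X) is rigid} applies, and the Künneth formula (\Cref{prop: kunneth for nuclear moduels}) identifies $\Nuc(X\widehat\times X)$ with $\Nuc(X)\cotimes\Nuc(X)$. Then \Cref{cor: smooth categories 6ff rigid case} applies with the rigid base $\Nuc(k)$ and $\Delta_!=\Delta_\ast$. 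By \Cref{lem: smooth and proper over a rigid base}, there is no difference here between smooth and internally smooth.

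\emph{No single compact generator.} Argue by contradiction. Suppose $\Nuc(X)$ had a compact, equivalently $\Nuc(k)$-atomic, generator; the two notions agree by \Cref{prop: compact objects over rigid categories} since $\Nuc(k)$ is rigid. Since $X$ is smooth, proper, quasi-compact and separated, \Cref{thm: algebraization via generator} would then make $X$ algebraizable. This contradicts the hypothesis.

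I would also record the stronger consequence that motivates the corollary: $\Nuc(X)$ is not even $\Nuc(k)$-atomically generated. The unit of $\Nuc(k)$ is $k$, which is compact in $\Nuc(k)$, so \Cref{thm: existence of a single compact generator} applies to the smooth category $\Nuc(X)$. Atomic generation would therefore already force a single atomic generator, which we have just excluded.

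The only point needing care is checking that the hypotheses of the cited results match. In particular, the rigidity and Künneth statements need quasi-compactness and semi-separatedness, and separated implies semi-separated here. Beyond that I expect no real obstacle: all the substantive work sits in \Cref{thm: algebraization via generator} and \Cref{thm: main theorem smoothness}.
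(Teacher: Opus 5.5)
Your proposal is correct and follows the paper's own argument. Internal smoothness comes from the smoothness criterion: the paper cites \Cref{cor: non commutative smoothness global case} directly, and \Cref{thm: main theorem smoothness} is exactly that corollary combined with \Cref{cor: smooth categories 6ff rigid case}. The absence of a compact generator is the contrapositive of \Cref{thm: algebraization via generator}, and your extra deduction via \Cref{thm: existence of a single compact generator}, that $\Nuc(X)$ is not even $\Nuc(k)$-atomically generated, is the argument the paper gives in its introduction.
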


\begin{proof}
    In this case, \Cref{cor: non commutative smoothness global case} shows that $\Nuc(X)$ is smooth over $\Nuc(k)$, while \Cref{thm: algebraization via generator} demonstrates that $\Nuc(X)$ does not admit a compact generator.
\end{proof}

\begin{remark}
     Using \Cref{cor: counterxample} we are able to produce an example of a category that is internally smooth over a rigid base but fails to have a compact generator. It suffices to consider the category of nuclear modules over a non-algebraizable, smooth, and proper rigid analytic variety, such as the $p$-adic Hopf surface studied in \cite{mustafin1977padic, voskuil1991nonarchimedean}.
     This provides a counterexample to \Cref{conj: congettura setting enriched}, illustrating that Stefanich's theorem (\cite{stefanich2023classification}) cannot be extended to arbitrary rigid symmetric monoidal bases.
\end{remark}

\begin{remark}
    Our previous theorem implies that it is not, in general, possible to glue local, relatively compact generators of a sheaf of categories in the analytic topology to obtain a global compact generator. Locally in the analytic topology over $X$, the sheaf $\Nuc(-)$ is of the form $\Mod_{A}(\Nuc(k))$ for affinoid algebras $A$. However, \Cref{thm: algebraization via generator} demonstrates that, globally, such a generator algebra does not necessarily exist.
    
    This property of gluing local compact generators is one of the foundational features of Non-Commutative Algebraic Geometry, and it highlights a key distinction between algebraic and analytic topologies. This gluing procedure for compact generators has been shown to hold for various topologies utilized in algebraic geometry; see, for example, \cite{toen2007moduliobjectsdgcategories} and \cite{bondal2002generatorsrepresentabilityfunctorscommutative}.
\end{remark}
\appendix
\section{Smooth and proper categories in derived algebraic geometry}\label{Appendix 1}

In this appendix, we will always work over a discrete field $k$ of characteristic $0$, and we will employ the foundations for derived geometry as described in \cite{SAG} and \cite{hag}. We emphasize that the results presented here are known to experts; however, we provide an alternative proof based on the six-functor formalism and the theory of smooth \cates as developed in \Cref{section 1}. In particular, we will prove the following theorem.

\begin{theorem}\label{thm: smooth and proper cat in alg geo}
    Let $f \colon X \to Y$ be a morphism of finite Tor-amplitude between quasi-compact, semi-separated derived schemes of finite type. Then the following hold:
    \begin{enumerate}
        \item The map $f$ is smooth if and only if $\mathrm{QCoh}(X)$ is smooth over $\Qcoh(Y)$.
        \item If $\mathrm{QCoh}(X)$ is proper over $\Qcoh(Y)$, then $f$ is universally closed (i.e., for every derived scheme $W$ and for every map $W \to Y$, the induced map of topological spaces $|X\times_{Y}W| \to |W|$ is closed).
        \item If $f$ is proper, then $\mathrm{QCoh}(X)$ is proper over $\Qcoh(Y)$.
    \end{enumerate}
\end{theorem}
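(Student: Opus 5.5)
We place ourselves in \Cref{assumptions 6 functor}, taking $\cD=\Qcoh$ on quasi-compact semi-separated derived schemes. Every map $g$ between such schemes is $!$-able with $g_!=g_\ast$. Indeed, $g_\ast$ preserves colimits, satisfies base change and the projection formula for quasi-compact quasi-separated maps, and so every such map is $\Qcoh$-prim as in the Remark following \Cref{thm: 6 functor for nuclear modules in rigid geometry}. Then \Cref{eq: coevaluation 6 functor formalism} identifies $\coev$ with $\Delta_\ast f^\ast\colon \Qcoh(Y)\to \Qcoh(X\times_Y X)$ and $\ev$ with $f_\ast\Delta^\ast$. Since $\Qcoh(Y)$ is rigid for $Y$ quasi-compact and semi-separated, \Cref{lem: smooth and proper over a rigid base} removes the distinction between smooth and internally smooth. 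By \Cref{cor: smooth categories 6ff rigid case}, smoothness of $\Qcoh(X)$ over $\Qcoh(Y)$ is equivalent to $\Delta_\ast\cO_X$ being compact, that is, perfect, in $\Qcoh(X\times_Y X)$.

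For (1), the forward direction: if $f$ is smooth then $\Delta$ is a quasi-smooth closed immersion, locally cut out by a regular sequence. The Koszul resolution then shows $\Delta_\ast\cO_X$ is perfect. For the converse, assume $\Delta_\ast\cO_X$ is perfect. Mimicking \Cref{prop: non commutative smoothness affine case}, we base change along a point $x\colon \Spec \kappa\to X$ lying over $y\in Y$, using the pullback square built from $\mathrm{id}\times x$. This shows $\kappa$ is perfect over $X_y\otimes\kappa$, i.e. the fibre is regular at every point after any field extension, hence geometrically regular. Together with flatness this gives smoothness. Flatness comes from finite Tor-amplitude and \emph{derived} fibre considerations: perfection of $\Delta_\ast\cO_X$ gives $L_{X/Y}\simeq \Delta^\ast$ of the conormal, hence $L_{X/Y}$ is perfect of Tor-amplitude bounded in $[-1,0]$, and geometric regularity of fibres forces it to be concentrated in degree $0$ and locally free. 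I expect this converse to be the main obstacle: passing from ``the diagonal is perfect'' to ``$L_{X/Y}$ is locally free of finite rank'' in the derived, non-flat setting. I would first reduce to the affine case (smoothness and perfection are Zariski local), then use Serre's criterion \cite[Lemma 11.3.3.3]{SAG} fibrewise, and finally use the fibrewise criterion for smoothness given finite Tor-amplitude.

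For (3), apply \Cref{prop: proper categories in a 6 functor}. First, $\Delta_\ast$ preserves colimits and is $\Qcoh(X)$-linear, hence $\Qcoh(Y)$-linear by the projection formula, since $\Delta$ is affine. Second, $f^!$, the right adjoint of $f_\ast$, must preserve colimits. For $f$ proper and of finite Tor-amplitude (hence of finite type, locally almost of finite presentation), $f_\ast$ preserves perfect complexes. A right adjoint of a functor preserving compact objects between compactly generated categories preserves filtered colimits, and linearity again follows from rigidity of $\Qcoh(Y)$.

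For (2), assume $\ev^R=\Delta_\ast f^!$ preserves colimits. Then $f^!$ preserves colimits, since $\Delta^\ast\Delta_\ast f^!$ contains $f^!$ as a retract via the unit of the affine map $\Delta$ (or we check this directly). Hence $f_\ast$ preserves compact objects, i.e. $f_\ast\cO_X$ and more generally $f_\ast\Perf(X)$ lie in $\Perf(Y)$. This holds stably under base change $W\to Y$, since properness of the category is preserved by $-\otimes_{\Qcoh(Y)}\Qcoh(W)$ and we have the Künneth formula $\Qcoh(X\times_Y W)\simeq \Qcoh(X)\otimes_{\Qcoh(Y)}\Qcoh(W)$. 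Universal closedness then follows by a support argument. For a closed $Z\subseteq X_W$, take a perfect complex $\cF$ with support exactly $Z$, for instance a Koszul complex, possible locally and then glued via Thomason. The image of $Z$ is the support of $f_\ast(\cF\otimes\cG)$ as $\cG$ ranges over pullbacks of perfect complexes from $W$. This image is closed because supports of perfect complexes are closed. The subtle point is showing $f(Z)\subseteq \mathrm{supp}\, f_\ast\cF$ pointwise; I would handle it by base changing to points $\Spec\kappa(w)\to W$ and using that a nonzero perfect complex on a nonempty proper-over-field-categorically scheme has nonzero global sections after twisting.
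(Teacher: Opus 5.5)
Your setup follows the paper: the formalism with $g_!=g_*$, rigidity of $\Qcoh(Y)$, and the reduction of smoothness to compactness of $\Delta_*\cO_X$. Parts (1) and (3) also run along the paper's lines. For one direction of (1) you use the Koszul resolution of the quasi-smooth diagonal. For the other you use the graph pullback square plus Serre's criterion, as in \Cref{prop: non commutative smoothness affine case}. For (3) you need continuity of $f^!$, which you deduce from $f_*$ preserving perfect complexes; the paper cites \cite[Theorem 8.13]{scholze2025sixfunctorformalisms} for the same fact.

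The genuine divergence is (2). After the same base-change reduction (Künneth formula and \cite[Proposition 11.1.1.5]{SAG}), the paper simply invokes \cite[Lemma 11.1.4.6]{SAG}. You instead exhibit $f(Z)$ as the support of a perfect complex. This is more explicit and needs only Thomason's existence result, Nakayama and base change. As written, though, the twist is wrong. By the projection formula, $\mathrm{supp}\, f_*(\cF\otimes f^*\cG)=\mathrm{supp}(f_*\cF)\cap\mathrm{supp}(\cG)$, so twisting by pullbacks from $W$ cannot enlarge anything. Twist by $\cF^\vee$ on $X_W$ instead. Then $P\coloneqq f_*(\cF\otimes\cF^\vee)$ is perfect, and base change along $\Spec\kappa(w)\to W$ gives $\kappa(w)\otimes P\simeq\mathrm{map}(\cF_w,\cF_w)$, where $\cF_w$ is the restriction of $\cF$ to the derived fibre $X_w$. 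This is nonzero iff $\cF_w\neq 0$, iff $w\in f(Z)$. Hence $f(Z)=\mathrm{supp}(P)$ is closed, which gives both inclusions at once.

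In the converse of (1), your cotangent-complex step is not valid. Perfection of $\Delta_*\cO_X$ does not let you identify $L_{X/Y}[1]\simeq L_{X/X\times_Y X}$ with a shifted conormal bundle, because that presupposes $\Delta$ is quasi-smooth. A closed immersion whose structure sheaf pushes forward to a perfect complex need not be quasi-smooth: every closed subscheme of a regular scheme has this property, e.g. the union of the three coordinate axes in $\mathbb{A}^3_k$, which is not lci. For diagonals the conclusion holds only a posteriori, so the step is circular as written. It is also unnecessary. Serre's criterion \cite[Lemma 11.3.3.3]{SAG}, applied at the points produced by your graph square, already shows the local rings of the derived fibres are regular, hence discrete. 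Flatness then follows from the local flatness criterion, and smoothness from regularity of fibres in characteristic $0$. Equivalently, reduce to $Y=\Spec\kappa$ via \cite[Proposition 11.2.3.6]{SAG} as the paper does; this is the fibrewise criterion you name at the end, so drop the detour.

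Finally, a caveat that applies to your proof and to the paper's alike. In the direction ``$f$ smooth $\Rightarrow$ $\Qcoh(X)$ smooth'', $\Delta$ is a closed immersion only if $f$ is separated; for merely semi-separated $f$ it is an affine immersion, and the claim fails. Take the affine line with doubled origin $X=U_1\cup U_2$ over $k$, which is smooth, quasi-compact and semi-separated. The restriction of $\Delta_*\cO_X$ to $U_1\times U_2\simeq\mathbb{A}^2_k$ is the $k[x,y]$-module $k[t^{\pm1}]$ with $x,y\mapsto t$. This module is not finitely generated, hence not perfect, so $\Qcoh(X)$ is not smooth over $\Mod_k$. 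This agrees with the paper's own remark that smoothness of $\Qcoh(X)$ forces separatedness. That direction of (1) therefore needs $f$ separated as an extra hypothesis.
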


\begin{remark}
    In particular, under the hypotheses of \Cref{thm: smooth and proper cat in alg geo}, if $f$ is separated, then $f$ is proper if and only if $\Qcoh(X)$ is proper over $\Qcoh(Y)$. Moreover, as explained in \cite[Theorem 11.4.2.1]{SAG}, if we assume $\Qcoh(X)$ to be smooth, then $X$ is automatically separated (since the diagonal is a closed immersion; see \cite[Lemma 11.3.6.2]{SAG}). This implies that, under the hypotheses of \Cref{thm: smooth and proper cat in alg geo}, $X$ is smooth and proper if and only if $\Qcoh(X)$ is smooth and proper over $\Qcoh(Y)$.
\end{remark}

As a corollary, we immediately deduce the following, which is the algebraic geometry analogue of \Cref{thm: main theorem smoothness}.

\begin{corollary}
    Let $X \to \Spec(k)$ be a quasi-compact and separated underived scheme of finite type over $k$. Then $X$ is smooth (respectively, proper) if and only if $\Qcoh(X)$ is smooth (respectively, proper).
\end{corollary}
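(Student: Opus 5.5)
The corollary is stated for a quasi-compact separated classical scheme $X$ of finite type over $k$, with $Y = \Spec(k)$. I would deduce it directly from \Cref{thm: smooth and proper cat in alg geo} applied to the structure map $f\colon X \to \Spec(k)$.

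First I check the hypotheses. Every morphism to a field has finite Tor-amplitude, since every $k$-module is flat. A separated scheme is in particular semi-separated: its diagonal is a closed immersion, hence affine. Also $X$ is quasi-compact and of finite type by assumption, and $\Spec(k)$ is trivially so. Moreover $\Qcoh(\Spec(k)) \simeq \Mod_k$. Since $\Mod_k$ is rigid, \Cref{lem: smooth and proper over a rigid base} shows that smoothness and properness over $\Mod_k$ agree with their internal variants. So ``smooth'' and ``proper'' in the corollary are unambiguous.

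For smoothness, part (1) of \Cref{thm: smooth and proper cat in alg geo} gives exactly that $f$ is smooth if and only if $\Qcoh(X)$ is smooth over $\Mod_k$. For properness, part (3) gives one direction: if $X$ is proper then $\Qcoh(X)$ is proper. Conversely, suppose $\Qcoh(X)$ is proper. Part (2) says $f$ is universally closed. Since $X$ is separated and of finite type over $k$, it is proper by definition. This is the content of the remark following the theorem.

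The only step needing care is checking that the classical scheme $X$, viewed as a derived scheme, fits the derived framework of the theorem. The issue is that being of finite type, being separated and being universally closed should mean the same thing in the classical and derived senses. This holds because these properties depend only on the underlying classical scheme and its topological space, and here $X$ is already classical. Granting the theorem, there is no real obstacle.
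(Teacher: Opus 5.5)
Your proposal is correct and is essentially the paper's argument. The paper states the corollary as an immediate consequence of \Cref{thm: smooth and proper cat in alg geo} applied to the structure map $X \to \Spec(k)$, and obtains the properness converse, as in the remark following that theorem, from universal closedness (part (2)) together with separatedness and finite type; your checks of the hypotheses (flatness over a field giving finite Tor-amplitude, separated implying semi-separated, rigidity of $\Mod_k$ removing the internal versus non-internal ambiguity) fill in exactly the routine verifications the paper leaves implicit.
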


When $f$ is the structure map $X \to \Spec(k)$ from a (classical) scheme to a field, a similar statement has been proved in \cite[Proposition 3.30]{Orlov_2016noncommutativeschemes} regarding properness, and in \cite[Proposition 3.13]{lunts2009categoricalresolutionsingularities} regarding smoothness, by considering the DG category of perfect complexes over $X$. Our corollary is also closely related to \cite[Theorem 11.4.2.1]{SAG} (see also \cite[Proposition 11.1.4.3 and Theorem 11.3.6.1]{SAG} for results similar to those obtained in \Cref{thm: smooth and proper cat in alg geo}).

However, the techniques we employ in the proof of \Cref{thm: smooth and proper cat in alg geo} are different and rely on a specific six-functor formalism for quasi-compact and semi-separated schemes.

We begin by reviewing some classical definitions to clarify our statements.

We consider derived schemes to be those derived stacks that admit a Zariski cover by affine derived schemes of finite type over $k$, glued together along Zariski open immersions.

We recall the following definitions of quasi-compact, semi-separated, and separated maps.

\begin{definition}
    Let $f\colon X \to Y$ be a map of derived schemes of finite type over $k$. 
    \begin{enumerate}
        \item We say that $f$ is quasi-compact if, for every map $\Spec(A) \to Y$, the pullback $\Spec(A)\times_{Y}X$ admits a finite cover by affine schemes.
        \item We say that $f$ is \emph{semi-separated} if the diagonal $\Delta_f$ is affine.
        \item We say that $f$ is \emph{separated} if the diagonal $\Delta_f$ is a closed immersion. 
        \item We say that a derived stack $X$ over $\Spec(k)$ is quasi-compact (respectively, semi-separated) if the canonical map $X \to \Spec(k)$ is quasi-compact (respectively, semi-separated). In particular, in this case, $X$ admits a finite atlas of affine derived schemes whose intersections are again affine.
    \end{enumerate}
\end{definition}

\begin{theorem}[{\cite[Theorem 8.11]{scholze2025sixfunctorformalisms}}]
\label{thm:6-functor-for-qc-qs-schemes}
    There exists a symmetric monoidal six-functor formalism on derived schemes
    \[
    \Qcoh(-)\colon \mathrm{Corr(Sch_{k})}_{\mathrm{qcss, all}} \to \LPr_{\mathrm{st}}
    \]
    with the following properties. A derived stack $X$ is assigned to the \cat $\mathrm{QCoh}(X)$, and for every map of schemes $f\colon X \to Y$, we consider the usual pullback and pushforward functors. We take the class of !-able maps to be the maps between quasi-compact and semi-separated derived schemes, for which we define $f_{!} \coloneqq f_{\ast}$.
\end{theorem}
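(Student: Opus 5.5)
The plan is to follow the proof of \Cref{thm: 6 functor for nuclear modules in rigid geometry} essentially verbatim, with $\Nuc(-)$ on derived rigid spaces replaced by $\Qcoh(-)$ on derived schemes of finite type over $k$. I will apply \cite[Proposition A.5.10]{mann2022padic6functorformalismrigidanalytic} (equivalently, the construction of \cite[Lecture 8]{scholze2025sixfunctorformalisms}) to the geometric setup in which:
\begin{itemize}
\item $E$ is the class of maps between quasi-compact, semi-separated derived schemes;
\item $I$ is the class of equivalences;
\item $P = E$, with $f_! \coloneqq f_*$.
\end{itemize}
First I will check that $E$ is stable under base change and composition and contains the diagonals of its members. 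For semi-separatedness this uses that affine morphisms are stable under base change and composition. The symmetric monoidal functor $X \mapsto \Qcoh(X)$ with $f^*$ is obtained by right Kan extension from affine derived schemes, where $\Qcoh(\Spec A) = \Mod_A$, using Zariski descent.

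The three inputs required, exactly as in \Cref{lem: base change and projection}, are stated for $f\colon X\to Y$ in $E$:
\begin{enumerate}
\item $f_*$ preserves colimits;
\item base change $g^*f_* \simeq f'_*g'^*$ holds for every pullback square along $g\colon V \to Y$;
\item the projection formula $f_*(N\otimes f^*M)\simeq f_*N\otimes M$ holds.
\end{enumerate}
Since all three are local on the target by Zariski descent for $\Qcoh$, I may take $Y=\Spec(B)$ affine; for base change I may also take $V=\Spec(C)$ affine. Because $X$ is quasi-compact and semi-separated, it has a finite affine cover $\{U_i\}$ whose finite intersections $U_{i_1\cdots i_n}$ are affine. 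Then $\Qcoh(X)\simeq \lim_{\bDelta}\prod \Mod_{A_{i_1\cdots i_n}}$, and $f_*$ is the corresponding \v{C}ech limit of the forgetful functors $\Mod_{A_{i_1\cdots i_n}}\to\Mod_B$.

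In the affine case, all three properties are formal:
\begin{itemize}
\item the forgetful functor preserves colimits and is $\Mod_B$-linear;
\item base change is the equivalence $C\otimes_B A \simeq$ the ring of $\Spec(A)\times_{\Spec B}\Spec(C)$.
\end{itemize}

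The main obstacle is passing these through the \v{C}ech limit. In general a totalization need not commute with colimits or with $C\otimes_B-$. Here, however, the cover is finite, so the \v{C}ech nerve can be replaced by its finite truncation $\bDelta_{\le n}$ (alternating \v{C}ech complex). A finite limit in a stable $\infty$-category commutes with colimits and with exact functors such as $C\otimes_B-$ and $-\otimes M$. Semi-separatedness is exactly what makes every term of this finite diagram affine, so each term is handled by the affine case.

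With (1)--(3) established, \cite[Proposition A.5.10]{mann2022padic6functorformalismrigidanalytic} yields the six-functor formalism $\mathrm{Corr}(\mathrm{Sch}_k)_{\mathrm{qcss},\mathrm{all}}\to\LPr_{\mathrm{st}}$ with $f_!=f_*$. The right adjoint $f^!$ exists by (1) and the adjoint functor theorem.
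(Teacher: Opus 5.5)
Your construction of the formalism follows the paper's. You also invoke \cite[Proposition A.5.10]{mann2022padic6functorformalismrigidanalytic} with $I$ the equivalences and $P$ the maps between quasi-compact semi-separated derived schemes, setting $f_!\coloneqq f_*$. The only difference here is how the inputs are obtained. The paper cites \cite[Proposition 1.3.13]{montagnani2025axiomaticapproachanalytic1affineness} for colimit-preservation of $f_*$, base change and the projection formula. You instead verify them directly, using a finite (alternating) \v{C}ech limit whose terms are affine by semi-separatedness. That argument is fine, because in a stable $\infty$-category finite limits commute with all colimits and with exact functors such as $C\otimes_B-$ and $-\otimes M$.

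\textbf{The gap is the ``symmetric monoidal'' clause.} The paper treats this as a separate step, and your proposal does not address it. Mann's proposition only produces a \emph{lax} symmetric monoidal functor out of $\mathrm{Corr}$. Feeding it a functor $X\mapsto\Qcoh(X)$ valued in commutative algebras does not make the K\"unneth comparison maps $\Qcoh(X)\otimes_{\Qcoh(S)}\Qcoh(Y)\to\Qcoh(X\times_S Y)$ into equivalences.

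That K\"unneth property is what the paper actually uses later. In the proof of \Cref{thm: smooth and proper cat in alg geo}, the composite $\Delta_{f,!}f^*$ is treated as the coevaluation of $\Qcoh(X)$ as a dualizable $\Qcoh(Y)$-module. This requires $\Qcoh(X)\otimes_{\Qcoh(Y)}\Qcoh(X)\simeq\Qcoh(X\times_Y X)$.

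The paper deduces it from rigidity of $\Qcoh$ of a quasi-compact semi-separated derived scheme \cite[Proposition 1.3.17]{montagnani2025axiomaticapproachanalytic1affineness}. The argument runs as in the proof of \Cref{prop: kunneth for nuclear moduels}. Dualizability lets $-\otimes\Qcoh(Y)$ commute with the descent limit computing $\Qcoh(X)$, which reduces the claim to $\Mod_A\otimes_{\Mod_C}\Mod_B\simeq\Mod_{A\otimes_C B}$. You should add this step; your finite \v{C}ech presentation of $\Qcoh(X)$ already does most of the work.
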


We remark here that this six-functor formalism is not the one commonly used in algebraic geometry; in particular, our upper shriek functors are not the ones used in Grothendieck--Serre duality. However, for our purposes, this six-functor formalism is sufficient.

\begin{proof}
    This statement follows from \cite[Proposition 1.3.13]{montagnani2025axiomaticapproachanalytic1affineness} and \cite[Proposition A.5.10]{mann2022padic6functorformalismrigidanalytic} by taking $I$ to be the class of identity maps and $P$ to be the class of maps between quasi-compact, semi-separated derived schemes. The fact that this six-functor formalism is symmetric monoidal easily follows from the fact that, for a quasi-compact and semi-separated derived scheme $X$, the \cat $\Qcoh(X)$ is rigid (see \cite[Proposition 1.3.17]{montagnani2025axiomaticapproachanalytic1affineness} for a proof of this fact).
\end{proof}

\begin{proof}[Proof of \Cref{thm: smooth and proper cat in alg geo}]
We begin by proving (1). We consider the coevaluation functor obtained as in \Cref{eq: coevaluation 6 functor formalism}:
\[
\begin{tikzcd}
	& {\mathrm{coev}\colon\Qcoh(Y)} && {\Qcoh(X)} && {\Qcoh(X\times_{Y} X).}
	\arrow["{f^{\ast}}"', from=1-2, to=1-4]
	\arrow["{\Delta_{f_!}}"', from=1-4, to=1-6]
\end{tikzcd}
\]

By \Cref{lem: smooth and proper over a rigid base}, we deduce that $\Qcoh(X)$ is smooth over $\Qcoh(Y)$ if and only if $\Delta_\ast \cO_X$ is compact in $\Qcoh(X \times_Y X)$. Since both conditions are local with respect to the Zariski topology, we may assume for the remainder of the proof that $X$ and $Y$ are affine. Suppose now that $\Qcoh(X)$ is smooth. To prove that $f$ is also smooth, we may assume, by virtue of \cite[Proposition 11.2.3.6]{SAG}, that $Y \simeq \Spec(k)$ for some field $k$. In this setting, the claim follows by adapting the argument of \Cref{prop: non commutative smoothness affine case}. Indeed, that proof relies on the base-change property and on the fact that compact and dualizable objects coincide in $\Nuc(A)$; both of these properties hold true in the setting considered here. Conversely, if $f$ is smooth, then the diagonal morphism $\Delta_f$ is a quasi-smooth (i.e., local complete intersection) closed immersion. These properties ensure that the pushforward along $\Delta_f$ preserves perfect complexes (see, e.g., \cite{toen2012properlocalcompleteintersection}).

To prove (3), we assume $f$ is proper, and we need to prove that the evaluation functor
\[
\begin{tikzcd}
	{\ev\colon\Qcoh(X\otimes_{Y} X)} && {\Qcoh(X)} && {\Qcoh(Y)}
	\arrow["{\Delta^*}"', from=1-1, to=1-3]
	\arrow["{f_!}"', from=1-3, to=1-5]
\end{tikzcd}
\]
has a continuous right adjoint. Since $\Delta_\ast \simeq \Delta_!$, it is enough to show that $f^{!}$ is continuous, which follows from \cite[Theorem 8.13]{scholze2025sixfunctorformalisms}. 

To prove statement (2), let $W$ be a derived scheme over $k$ and consider a map $W \to Y$, we must prove that the induced map $X\times_Y W \to W$ is closed. As also explained in \cite[\href{https://stacks.math.columbia.edu/tag/0CL3}{Tag 0CL3}]{stacks-project} by covering $W$ with affine derived schemes, we can reduce to prove the statement when $W$ is affine. Now in this setting the \cates of quasi-coherent sheaves are rigid, this easily implies the following K\"unneth formula (see, for example, the proof of \Cref{prop: kunneth for nuclear moduels}, or \cite{benzvi2010integraltransformsdrinfeldcenters}):
\[
\Qcoh(X\times_Y W) \simeq \Qcoh(X)\otimes_{\Qcoh(Y)} \Qcoh(W).
\]

Since proper \cates are stable under base change (see \cite[Proposition 11.1.1.5]{SAG}), we deduce that $\Qcoh(X\times_Y W)$ is proper over $\Qcoh(W)$. The statement now follows from \cite[Lemma 11.1.4.6]{SAG}.
\end{proof}

As a corollary of \Cref{thm: smooth and proper cat in alg geo}, we obtain the following.

\begin{corollary}[{\cite{bondal2002generatorsrepresentabilityfunctorscommutative, toen2011derivedazumayaalgebrasgenerators}}]\label{cor: non commutative algebra in alg geo}
    Let $p\colon X \to \Spec(k)$ be a smooth, proper, quasi-compact, and semi-separated scheme over $\Spec(k)$. Then there is a smooth and proper $\mathbb{E}_{1}$-algebra $A$ in $\Mod_{k}$ such that $\Qcoh(X)$ is equivalent to $\mathrm{RMod}_{A}$.
\end{corollary}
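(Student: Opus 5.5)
The plan is to deduce the statement from \Cref{thm: smooth and proper cat in alg geo}, \Cref{thm: existence of a single compact generator} and \Cref{prop: smooth cates vs smooth algebras}, with $\cV=\Mod_k$ and $Y=\Spec(k)$.

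\textbf{Step 1: $\Qcoh(X)$ is smooth and proper over $\Mod_k$.} Since $X$ is smooth and proper over $k$, \Cref{thm: smooth and proper cat in alg geo}(1) and (3) show that $\Qcoh(X)$ is smooth and proper over $\Qcoh(\Spec k)=\Mod_k$. The finite Tor-amplitude hypothesis is automatic over a field. Because $\Mod_k$ is rigid, \Cref{lem: smooth and proper over a rigid base} upgrades this to internal smoothness and internal properness.

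\textbf{Step 2: a single compact generator.} $\Qcoh(X)$ is compactly generated by perfect complexes, since $X$ is quasi-compact and semi-separated, hence qcqs; this is standard, e.g.\ from \cite{bondal2002generatorsrepresentabilityfunctorscommutative} or \cite[Proposition 9.6.1.1]{SAG}. By \Cref{prop: compact over Sp}, compact and $\Mod_k$-atomic objects coincide, so $\Qcoh(X)$ is $\Mod_k$-atomically generated. The unit $k$ is compact in $\Mod_k$, so \Cref{thm: existence of a single compact generator} applies and produces a single compact generator $G$.

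\textbf{Step 3: identify $\Qcoh(X)$ with a module category.} By \Cref{prop: LModR is molecular} (Barr--Beck--Lurie), $\Qcoh(X)\simeq \LMod_{R}(\Mod_k)$ with $R=\hom^{\Mod_k}(G,G)$. Setting $A:=R^{op}$, this is $\RMod_{A}$, matching the convention in the statement.

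\textbf{Step 4: $A$ is smooth and proper.} For smoothness, \Cref{prop: smooth cates vs smooth algebras} together with Step 1 shows that $R$, hence $A$, is smooth. Opposite algebras of smooth algebras are smooth, since the diagonal bimodule is preserved under swapping factors.

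For properness, I must show $A$ is dualizable, i.e.\ perfect in $\Mod_k$. Since $G$ is compact and hence dualizable, the complex $\map(G,G)\simeq \Gamma(X, G^\vee\otimes G)$ is the global sections of a perfect complex. Properness of $X$ then gives finite-dimensional total cohomology, hence a perfect $k$-module. Alternatively, this is the standard fact that a proper compactly generated category has perfect endomorphism algebras of compact objects. Concretely, $\ev^R$ preserves colimits, so $\ev$ preserves compacts; then $\map(G,G)=\ev(G^\vee\boxtimes G)$ is compact in $\Mod_k$.

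I expect this properness check to be the main point needing care: I have to verify that $G^\vee\boxtimes G$ is compact in $\Qcoh(X)^\vee\otimes\Qcoh(X)$ and that the evaluation functor computes $\map(G,G)$. Both follow from the identification $\Qcoh(X)^\vee\simeq \Qcoh(X)$ via the six-functor description of $\ev$, namely $p_*\Delta^*$.
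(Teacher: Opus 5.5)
Your proposal is correct and follows essentially the same route as the paper. Compact generation by perfect complexes plus smoothness of $\Qcoh(X)$ gives a single compact generator, and Barr--Beck--Lurie identifies $\Qcoh(X)$ with modules over its endomorphism algebra; smoothness then comes from \Cref{prop: smooth cates vs smooth algebras} and properness from $p_{\ast}$ preserving perfect complexes. The differences are minor: you cite the paper's own \Cref{thm: existence of a single compact generator} and \Cref{prop: LModR is molecular} where the paper cites \cite[Proposition 11.3.2.4]{SAG} and checks the $\Mod_k$-linearity condition for Barr--Beck--Lurie by hand, and you make the $\LMod$/$\RMod$ convention explicit via $A = R^{op}$.
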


\begin{proof}
    It is a standard result that $\Qcoh(X)$ is compactly generated by its full subcategory of perfect complexes (see, for example, \cite[Theorem 8.3]{scholze2025sixfunctorformalisms}). By \Cref{thm: smooth and proper cat in alg geo}, $\Qcoh(X)$ is also smooth; using \cite[Proposition 11.3.2.4]{SAG}, we can deduce that it admits a single compact generator, say $\cG$. 
    
    Using the same argument as in \Cref{prop: compact over Sp}, we can see that $\cG$ is also a compact generator relative to $\Mod_{k}$. To show the existence of the algebra $A$, we will use the Barr--Beck--Lurie theorem (\cite[Proposition 4.8.5.8]{HA}). To apply \textit{loc.\ cit.}, we only need to show that for every $M \in \Mod_{k}$ and $\cF \in \Qcoh(X)$, the natural map
    \begin{equation}\label{eq: equivalence for Bar-Back}
    \hom^{\Mod_{k}}(\cG, \cF)\otimes_{k} M \to \hom^{\Mod_{k}}(\cG, p^{\ast}(M)\otimes_{\cO_{X}} \cF) 
    \end{equation}
    is an equivalence. To prove this, we consider $\cX$, the stable full subcategory of $\Mod_{k}$ spanned by all objects $M \in \Mod_{k}$ such that the map in \Cref{eq: equivalence for Bar-Back} is an equivalence. Obviously, $k$ is in $\cX$, and $\cX$ is closed under colimits since $\cG$ is compact relative to $\Mod_{k}$. This implies that $\cX$ is the entirety of $\Mod_{k}$, and \Cref{eq: equivalence for Bar-Back} is an equivalence for all $M$. Defining $A$ as the $\mathbb{E}_{1}$-algebra
    \[
    A \coloneq p_{\ast}\uline{\Hom}(\cG, \cG) \simeq \hom^{\Mod_{k}}(\cG,\cG),
    \]
    we thus obtain an equivalence between $\Qcoh(X)$ and $\RMod_{A}$. Finally, we need to show that $A$ is a smooth and proper algebra. The fact that $A$ is smooth follows from \Cref{prop: smooth cates vs smooth algebras} and \Cref{thm: smooth and proper cat in alg geo}. Moreover, $A$ is proper since $X$ is proper, and in this case $p_{\ast}$ preserves perfect complexes (see \cite[Theorem 8.10]{scholze2025sixfunctorformalisms}).
\end{proof}

We conclude by recalling that, for a quasi-compact and quasi-separated scheme, the existence of a compact generator was proved in \cite{bondal2002generatorsrepresentabilityfunctorscommutative}, and in a more general setting in \cite{toen2011derivedazumayaalgebrasgenerators}.

\section{Smooth and proper categories in analytic stacks}\label{Appendix 2}

In this section, we apply the results obtained in \Cref{section 1} to provide examples of smooth and proper \cates for analytic stacks, as introduced by Clausen and Scholze. We employ the theory of analytic stacks developed in \cite{Clausen_Scholze_lectures}. For a comprehensive overview, we refer the reader to \cite{kesting2025categoricalkunnethformulasanalytic}. In what follows, we consider the \cat of analytic stacks defined as the \cat of sheaves with values in $\mathrm{Anima}$ with respect to the $\mathscr{D}$-topology on the \cat of analytic rings.

In this appendix, we focus on analytic stacks arising from rigid analytic geometry. However, it should be straightforward to adapt these ideas to other contexts, for example, by viewing schemes as analytic stacks endowed with the trivial analytic ring structure, or complex manifolds equipped with the liquid (or gaseous) analytic ring structure.

In this section, we fix a non-Archimedean field $k$ with a pseudo-uniformizer $\omega$ and a ring of power-bounded elements $k^\circ$. and we will prove the following result.

\begin{proposition}\label{prop: smooth D-solid cat rigid geometry}
    Let $f\colon X \to \Sp(k)$ be a smooth, proper, quasi-compact, and separated rigid analytic variety. Then, considering the induced map of analytic stacks $f_\Solid \colon X_\Solid \to \mathrm{AnSpec}(k_\Solid)$, the \cat $\mathscr{D}(X)_\Solid$ is internally smooth and proper over $\mathscr{D}(k)_\Solid$. 
\end{proposition}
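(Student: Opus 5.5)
The natural route is \Cref{prop: smooth and proper cat in 6 ff}, applied to the six-functor formalism $\mathscr{D}(-)_\Solid$ on analytic stacks, as in \cite{Clausen_Scholze_lectures, camargo2024analyticrhamstackrigid}, and to the map $f_\Solid\colon X_\Solid \to \mathrm{AnSpec}(k_\Solid)$. By that proposition it suffices to show that $f_\Solid$ is both $\mathscr{D}$-suave and $\mathscr{D}$-prim. Then $\Delta_f$ inherits both properties by \cite[Lemma 4.5.9]{heyer20246functorformalismssmoothrepresentations}, and \Cref{cor: smooth categories 6ff suave and prim case} and \Cref{cor: proper cat in 6 ff} give internal smoothness and internal properness. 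Dualizability of $\mathscr{D}(X)_\Solid$ over $\mathscr{D}(k)_\Solid$ comes for free, since $f_\Solid$ is $!$-able and we use the span description of $\ev$ and $\coev$ from \eqref{eq: coevaluation 6 functor formalism} and \eqref{eq: evaluation 6 functor formalism}.

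\textbf{Primness.} I would apply \Cref{cor: D-prim map}.
\begin{itemize}
\item Since $X$ is quasi-compact, choose a finite affinoid cover $\{\Sp(A_i)\}$ and set $W=\coprod_i \mathrm{AnSpec}(A_{i,\Solid})$, with $g\colon W\to X_\Solid$.
\item Because $X$ is separated, all finite intersections are affinoid. The Čech nerve is therefore a finite limit of affinoids, and analytic descent gives the $n$-truncated colimit presentation required in condition (2). Here we can take $n$ to be the number of opens, since the higher terms are degenerate intersections.
\item The projections $W\times_X W\to W$ are disjoint unions of rational (or Weierstrass/closed) immersions of affinoids. For a map of affinoid analytic rings $A_\Solid\to B_\Solid$ with the induced structure, $h_!=h_*$ is the forgetful functor, so $\boldone$ is prim. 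This gives condition (3).
\item For condition (1), I need that $W\to \mathrm{AnSpec}(k_\Solid)$ is prim, i.e. that each $\mathrm{AnSpec}(A_\Solid)\to \mathrm{AnSpec}(k_\Solid)$ is prim. This is the subtle point: $A_\Solid$ does not have the induced analytic structure from $k_\Solid$.
\end{itemize}

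\textbf{Reducing (1) to the closed unit disk.} Write $A$ as a quotient of a Tate algebra $k\langle T_1,\dots,T_n\rangle$. The closed immersion into the disk is prim, since it has the induced structure. So it suffices to treat $\mathbb{D}^n=\mathrm{AnSpec}(k\langle T\rangle_\Solid)\to \mathrm{AnSpec}(k_\Solid)$. Here I would invoke the known computation in \cite{Clausen_Scholze_lectures} that the closed disk is prim (and even cohomologically proper) over $k_\Solid$. Honestly, I am unsure whether this holds for the disk with the $k\langle T\rangle^\circ$-solid structure, or only after an extra argument. The hard case is already visible at $n=1$ for $\mathrm{AnSpec}(k\langle T\rangle_\Solid)\to\mathrm{AnSpec}(k_\Solid)$: primness of $\boldone$ amounts to checking the defining map of \Cref{def: prim maps} against the explicit dualizing object, using the explicit description of $f_*$ and $f_!$ for the solid disk.

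\textbf{Suaveness.} Suaveness is local on the source, so a smooth rigid variety can be covered by affinoids étale over a disk. Étale maps are suave. Smooth maps such as the open unit disk, or the torus with the $k_\Solid$-induced structure, are suave by Clausen–Scholze's Poincaré duality computation, with $\omega_f$ an invertible shift of $\Omega^d$ (cf. \cite{camargo2024analyticrhamstackrigid}). Properness of $X$ is then what makes $X_\Solid$ (not just an affinoid) carry both properties simultaneously: for a proper smooth $X$, the overconvergent and solid structures agree, so "$f_!=f_*$" and suaveness are compatible.

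The main obstacle I expect is precisely this compatibility of analytic ring structures. Affinoids with the solid structure are prim but not suave, while open disks are suave but not prim. I would resolve it by using properness to show that $X_\Solid$ agrees with its overconvergent variant $X^\dagger$, i.e. that $X$ is an affinoid-cover-closure of itself. Then suaveness can be checked on the overconvergent side and primness on the solid side.
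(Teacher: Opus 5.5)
\textbf{The gap is in primness.} The reduction itself is the paper's: you show that $f_\Solid$ is $\mathscr{D}$-suave and $\mathscr{D}$-prim, then apply \Cref{prop: smooth and proper cat in 6 ff}. The problem is your choice $W=\coprod_i\mathrm{AnSpec}(A_i,A_i^\circ)_\Solid$. With it, neither condition (1) nor condition (3) of \Cref{cor: D-prim map} holds.

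\emph{Condition (1).} The map $\mathrm{AnSpec}(A,A^\circ)_\Solid\to\mathrm{AnSpec}(k_\Solid)$ factors in two steps:
\begin{itemize}
\item first the open immersion $\mathrm{AnSpec}(A,A^\circ)_\Solid\hookrightarrow\mathrm{AnSpec}(A,k^\circ)_\Solid$ into the compactification, whose complement is cut out by an idempotent algebra (for the disc, the locus ``$|T|>1$'');
\item then the map with induced structure.
\end{itemize}
For smooth $A$ this composite is suave, not prim. This is solid Serre duality, the same input behind the suaveness of $f_\Solid$: the solid closed disc behaves cohomologically like an open disc. So the step you flagged, ``the closed disc is prim over $k_\Solid$'', is false for the $k\langle T\rangle^\circ$-solid structure. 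Your closing dichotomy also has the roles reversed. $A^\circ$-solid affinoids are the suave side, and affinoids carrying the structure induced from $k_\Solid$ are the prim side.

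\emph{Condition (3).} The projections $W\times_{X_\Solid}W\to W$ are rational localizations, for example the annulus $\{|T|=1\}$ inside the disc. The $A_{ij}^\circ$-solid structure is not induced from the $A_i^\circ$-solid one; for the annulus one must additionally impose $T^{-1}$-solidity. So these projections are open immersions, not maps with $h_!=h_*$, and your argument for (3) only covers genuinely closed immersions. Your argument for condition (2), the $n$-truncated \v{C}ech nerve, is fine.

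\textbf{The missing idea.} The paper's route (\Cref{lem: prim cover proper rigid variety}) runs the whole \v{C}ech argument with the compactified affinoids $\mathrm{AnSpec}(A_n,k^\circ)_\Solid$, all carrying the analytic ring structure induced from $k_\Solid$. Then both $W\to\mathrm{AnSpec}(k_\Solid)$ and the projections $W\times_{X_\Solid}W\to W$ have induced structure, so $h_!=h_*$ and they are prim automatically.

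Properness and separatedness of $X$ are used exactly here. They let one describe the $n$-truncated \v{C}ech colimit presenting $X$ with the compactifications $\Spa(A_n,k^\circ)$ in place of $\Sp(A_n)$. As a result, $\mathrm{AnSpec}(A,k^\circ)_\Solid\to X_\Solid$ is a descendable $\mathscr{D}$-prim cover by \Cref{prop: D-prim cover}, and \Cref{cor: D-prim map} then gives that $f_\Solid$ is prim. So properness enters on the prim side, not in some comparison with an overconvergent model.

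\textbf{On suaveness.} Suaveness needs no properness; the paper simply quotes \cite[Theorem 1.62]{mikami2026finitenessdualitycohomologyvarphigammamodules}. Your local sketch (affinoids \'etale over a solid closed polydisc) is reasonable in outline once the roles are corrected. Drop ``the torus with the $k_\Solid$-induced structure'' as an example of a suave map: with induced structure it is prim, not suave.
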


\begin{remark}
    We can interpret the preceding proposition as stating that the coevaluation functor
    \begin{equation*}
        \begin{tikzcd}
	{\coev\colon\cD(k)_\Solid} && {\cD(X)} && {\cD(X\widehat{\times}X)}
	\arrow["{f_\Solid^*}"', from=1-1, to=1-3]
	\arrow["{\Delta_{f_{\Solid},!}}"', from=1-3, to=1-5]
\end{tikzcd}
    \end{equation*}
    admits a continuous and linear right adjoint. By the results in \Cref{section 1}, we deduce that this implies $\Delta_{f_\Solid, !}(\boldone_{X})$ is atomic relative to $\mathscr{D}(k)_\Solid$. However, the precise relationship between $\mathscr{D}(k)_\Solid$-atomic objects in $\mathscr{D}(X\widehat{\times}X)$ and perfect complexes over $X\widehat{\times}X$ remains unclear to the author. This prevents us from deducing the smoothness of the variety from the smoothness of the \cat $\mathscr{D}(X)_\Solid$ and the analogous results established in \Cref{section 4}.
\end{remark}

We begin by explaining how to construct a functor from rigid analytic varieties to analytic stacks. For this reason, throughout this appendix, we restrict our attention to Huber pairs weakly of finite type. Related ideas have also been discussed in \cite{mikami2026finitenessdualitycohomologyvarphigammamodules} and \cite{andreychev2021pseudocoherent}.

We first recall the following result.

\begin{lemma}[{\cite[Lemma 2.14]{porta2016higheranalyticstacksgaga}}]\label{lem: colimit preserving functor sheaves}
    Let $\mathscr{C}$ and $\mathscr{X}$ be $\infty$-sites, and let $F\colon \mathscr{C} \to \mathscr{X}$ be a functor that preserves coverings and finite limits. Then $F$ can be extended to a functor   $\mathrm{Shv}(\mathscr{C}) \to \mathrm{Shv}(\mathscr{X})$ that preserves finite limits and colimits. 
\end{lemma}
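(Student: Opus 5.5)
The plan is the standard argument: left Kan extend $F$ along the Yoneda embedding, sheafify, and check that the resulting functor is a left exact left adjoint. Write $y\colon \mathscr{C} \to \mathrm{PSh}(\mathscr{C})$ and $L_{\mathscr{X}}\colon \mathrm{PSh}(\mathscr{X}) \to \mathrm{Shv}(\mathscr{X})$ for sheafification.

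\begin{enumerate}
\item Consider $\mathscr{C} \xrightarrow{F} \mathscr{X} \to \mathrm{Shv}(\mathscr{X})$, the composite of $F$ with the sheafified Yoneda embedding. Since $\mathrm{Shv}(\mathscr{X})$ is presentable, this composite extends uniquely to a colimit-preserving functor $\tilde F\colon \mathrm{PSh}(\mathscr{C}) \to \mathrm{Shv}(\mathscr{X})$. Its right adjoint is $G \mapsto G\circ F$, the restriction of $G$ along $F$.

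\item Show that $\tilde F$ factors through $\mathrm{Shv}(\mathscr{C})$. Because $\mathrm{Shv}(\mathscr{C})$ is the localization of $\mathrm{PSh}(\mathscr{C})$ at covering sieves, it suffices that $\tilde F$ inverts each covering sieve inclusion $R \hookrightarrow y(U)$. Equivalently, the right adjoint should send sheaves to sheaves: for every sheaf $G$ on $\mathscr{X}$, the presheaf $G\circ F$ must be a sheaf on $\mathscr{C}$.
\begin{itemize}
\item Write the sheaf condition for $G\circ F$ on a covering $\{U_i \to U\}$ via its \v{C}ech nerve, whose terms are iterated fiber products $U_{i_0}\times_U \cdots \times_U U_{i_n}$.
\item Since $F$ preserves finite limits, it sends this \v{C}ech nerve to the \v{C}ech nerve of $\{F(U_i) \to F(U)\}$.
\item Since $F$ preserves coverings, $\{F(U_i)\to F(U)\}$ is a covering in $\mathscr{X}$. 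The sheaf condition for $G$ on it is exactly the sheaf condition for $G\circ F$ on $\{U_i\to U\}$.
\end{itemize}
Hence $\tilde F$ descends to a colimit-preserving functor $F^{\mathrm{sh}}\colon \mathrm{Shv}(\mathscr{C}) \to \mathrm{Shv}(\mathscr{X})$.

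\item Prove that $F^{\mathrm{sh}}$ preserves finite limits; I expect this to be the main obstacle.
\begin{itemize}
\item At the presheaf level, the left Kan extension $F_!\colon \mathrm{PSh}(\mathscr{C}) \to \mathrm{PSh}(\mathscr{X})$ is left exact, because $F$ preserves finite limits. The reason is that the comma categories $\mathscr{C}_{/x}$, for $x\in \mathscr{X}$, are cofiltered (their opposites are filtered), and filtered colimits commute with finite limits in $\mathrm{Ani}$.
\item Sheafification $L_{\mathscr{X}}$ is left exact.
\item It remains to identify $F^{\mathrm{sh}}$ with $L_{\mathscr{X}}\circ F_!\circ \iota$, where $\iota\colon \mathrm{Shv}(\mathscr{C}) \hookrightarrow \mathrm{PSh}(\mathscr{C})$ is the inclusion. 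Note that $\iota$ preserves limits, so it poses no problem for left exactness.
\item For the identification, use that $\tilde F \simeq L_{\mathscr{X}}\circ F_!$ and that $\tilde F\circ \iota$ agrees with $F^{\mathrm{sh}}$: both are determined by restricting $\tilde F$ to sheaves, since $\tilde F$ inverts $L_{\mathscr{C}}$-equivalences.
\end{itemize}
If $\mathscr{C}$ only has finite limits in a weaker sense (for example, only fiber products over objects of $\mathscr{C}$), I would run the same argument with finite limits of representables.

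\item Conclude that $F^{\mathrm{sh}}$ preserves all small colimits, being a left adjoint, and preserves finite limits by the previous step. It extends $F$, since it agrees with $F$ on representables up to sheafification.
\end{enumerate}
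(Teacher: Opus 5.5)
Your proposal is correct and is essentially the standard argument behind this lemma, which the paper does not prove but quotes from Porta--Yu. As there, continuity of $F$ makes restriction $G\mapsto G\circ F$ preserve sheaves, and the left adjoint $L_{\mathscr{X}}\circ F_!\circ\iota$ is left exact because $F_!$ and sheafification are. Two small corrections: the category indexing $F_!P(x)$ is that of pairs $(U, x\to F(U))$ (an under-category, not $\mathscr{C}_{/x}$), which is cofiltered because it inherits finite limits from $\mathscr{C}$ via $F$; and your closing aside about sites with only fibre products would not work as stated, since then $F_!(\ast)(x)$ is the classifying space of this category of pairs, which need not be contractible, so only pullbacks (not the terminal object) would be preserved.
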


To every analytic Huber pair $(A,A^+)$ we associate an analytic ring $(A,A^+)_\Solid$, see \cite[Theorem 3.28]{andreychev2021pseudocoherent}.

\begin{proposition}
    The functor from the category of analytic Huber pairs weakly of finite type over $k$ to analytic stacks
    \[
    (-)_\Solid \colon\mathrm{AnHuber_{\text{k}}^{wft}} \to \mathrm{AnRing}_{k_\Solid}
    \]
    defined by sending $(A,A^+)$ to $\mathrm{AnSpec}(A,A^+)_\Solid$ is a fully faithful embedding that commutes with finite limits. Moreover, it sends covers in the analytic topology to $\mathscr{D}$-covers. 
\end{proposition}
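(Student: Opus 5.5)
The statement has three parts: full faithfulness, compatibility with finite limits, and the claim that analytic covers go to $\mathscr{D}$-covers. I will take them in that order, working throughout with the analytic ring $(A,A^+)_\Solid$ attached to an analytic Huber pair.

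\textbf{Full faithfulness.} I will reduce this to \cite[Theorem 3.33]{andreychev2021pseudocoherent}. That result says $(A,A^+)\mapsto (A,A^+)_\Solid$ is a fully faithful embedding of complete Huber pairs into analytic rings. It remains to check that passing to $\mathrm{AnSpec}$ loses nothing. The Yoneda embedding of analytic rings into analytic stacks is fully faithful provided the $\mathscr{D}$-topology is subcanonical on the objects involved. For this I will use that $\mathscr{D}$-covers satisfy descent for $\mathscr{D}(-)$. Since an analytic ring is recovered from its module category together with the unit, maps out of an analytic ring satisfy $\mathscr{D}$-descent. Hence representables are sheaves, and full faithfulness follows.

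\textbf{Finite limits.} Products and fiber products of Huber pairs weakly of finite type are computed by completed tensor products
\[
(B,B^+)\widehat{\otimes}_{(A,A^+)}(C,C^+).
\]
I will show that the solid analytic ring structure is compatible with this operation, that is,
\[
(B,B^+)_\Solid\otimes_{(A,A^+)_\Solid}(C,C^+)_\Solid\simeq \bigl(B\widehat{\otimes}_A C,\ (B\widehat{\otimes}_A C)^+\bigr)_\Solid.
\]
The route is the one already used in \Cref{rmk: completed tensor}, following \cite[Proposition 2.14]{mikami2023fppfdescentcondensedanimatedrings}: in the weakly finite type case, Tate algebras tensor to Tate algebras, and the induced analytic ring structure is the one determined by the integral closure of the $+$-rings. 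This is where the weakly-of-finite-type hypothesis is used. Because $\mathrm{AnSpec}$ turns pushouts of analytic rings into fiber products of stacks, finite limits are preserved. The terminal object is handled directly, since $(k,k^\circ)_\Solid=k_\Solid$.

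\textbf{Covers.} Given a finite rational cover $\{(A_i,A_i^+)\}$ of $(A,A^+)$, I will apply \cite[Lemma 4.7.4]{heyer20246functorformalismssmoothrepresentations}. It then suffices to show that $C=\prod_i A_i$ is descendable in $\mathscr{D}(A,A^+)_\Solid$, and that this persists after any base change. Descendability can be obtained from \cite[Theorem 4.15]{mikami2023fppfdescentcondensedanimatedrings}, as in \Cref{lem: Nuc is a sheaf}, or more directly from the finite \v{C}ech (Tate–\v{C}ech) complex. Rational localizations are idempotent, i.e.\ homotopy epimorphisms, so the \v{C}ech nerve is finite. Moreover, $\mathscr{D}(A,A^+)_\Solid\simeq\lim\mathscr{D}(A_{i_0\cdots i_n})_\Solid$ by \cite[Theorem 4.1]{andreychev2021pseudocoherent}. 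Together these make $\boldone$ a finite limit of $C$-modules, which gives descendability. Stability under pullback then follows from the finite-limit part, since rational covers are stable under base change.

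\textbf{Main obstacle.} I expect the hardest step to be the finite-limit compatibility of the $+$-rings. The completed tensor product of weakly finite type Huber pairs must carry exactly the solid structure coming from the pushout analytic ring, and the completion must agree with the condensed tensor product. My first attempt will be to reduce to Tate algebras $k\langle T\rangle\widehat{\otimes}k\langle S\rangle=k\langle T,S\rangle$, together with quotients and rational localizations, using the explicit description in \eqref{eq: rational loc}.
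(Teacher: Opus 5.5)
\textbf{What matches the paper.} Your full-faithfulness and finite-limit steps follow the paper's proof. Full faithfulness comes from Andreychev's embedding theorem; your remark that subcanonicality of the $\mathscr{D}$-topology is also needed to pass to $\mathrm{AnSpec}$ is a point the paper leaves implicit. Finite limits reduce to the initial object $(k,k^\circ)$ plus pushouts, with the underlying ring identified via \Cref{rmk: completed tensor} and the solid structure via the $+$-ring.

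You do not need to reduce to Tate algebras. The paper uses \cite[Proposition 12.12]{analytic}: the pushout carries the completion of the induced structure. It then notes that $B\widehat{\otimes}_A C$ is nuclear over $(B,B^+)_\Solid$ and $(C,C^+)_\Solid$, so no further completion occurs, and concludes with \cite[Proposition 3.32]{andreychev2021pseudocoherent}.

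Be aware that \Cref{rmk: completed tensor} only controls $\pi_0$. When the square is not Tor-independent, the animated pushout has higher homotopy: $k\otimes_{k\langle T\rangle}k$ along $T\mapsto 0$ has $\pi_1\cong k$. So the ``quotients'' part of your plan cannot succeed, and the identification only holds for Tor-independent squares, e.g.\ flat base change along rational localizations. The paper's proof passes over the same point.

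\textbf{The gap: covers.} This is where you also depart from the paper, which simply cites \cite[Lemma 3.2.9]{camargo2024analyticrhamstackrigid}. A $\mathscr{D}$-cover needs universal $!$-descent as well as $*$-descent, and nothing in your argument controls $!$-pullbacks. Descendability of $f_*\boldone=\prod_i A_i$ is what the Heyer--Mann criterion needs for \emph{prim} maps. This is how the paper uses \cite[Lemma 4.7.4]{heyer20246functorformalismssmoothrepresentations} in \Cref{prop: D-prim cover}: only after first proving primness.

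Rational localizations are not prim. By \eqref{eq: rational loc} they are built from base changes of $(\Z[U],\Z)_\Solid\to\Z[U]_\Solid$, which is an open immersion. Its $j^*$ kills the complementary idempotent algebra $N=\Z((U^{-1}))$, while $j_*\boldone\otimes N=N\neq 0$, so $j_*$ violates the projection formula that primness would force. This persists for, e.g., $\{|T|\le 1\}\subset\{|T|\le|\varpi|^{-1}\}$: a projection formula there would split the unit of $\cD(k\langle\varpi T\rangle)_\Solid$ and give a nontrivial idempotent in the domain $k\langle\varpi T\rangle$.

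Concretely, $\Mod_C(\cD(A,A^+)_\Solid)$ is in general the module category for the \emph{induced} analytic structure on $C$, not $\prod_i\cD(A_i,A_i^+)_\Solid$. So descendability of $C$ does not give descent to the right categories. Andreychev's sheaf property supplies $*$-descent, at least after base change along Huber pairs, but says nothing about $!$.

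To close the gap you need an extra input on $!$-pullbacks along rational localizations. For instance, show they behave as open immersions with $j^!\simeq j^*$, so that $!$-descent reduces to the $*$-descent you have, and use the suave rather than the prim form of the criterion. Alternatively, cite Camargo's lemma as the paper does.
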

\begin{proof}
    The fact that the embedding is fully faithful was proved in \cite[Theorem 3.34]{andreychev2021pseudocoherent}. The final assertion follows from \cite[Lemma 3.2.9]{camargo2024analyticrhamstackrigid}. Since the functor $(-)_\Solid$ preserves the initial object it is enough to show that the functor $(-)_\Solid$ commutes with tensor products. Indeed, given a pushout diagram of Huber pairs weakly of finite type
    \[
    \begin{tikzcd}
	{(A,A^+)} && {(B,B^+)} \\
	\\
	{(C,C^+)} && {(D,D^+),}
	\arrow[from=1-1, to=1-3]
	\arrow[from=1-1, to=3-1]
	\arrow[from=1-3, to=3-3]
	\arrow[from=3-1, to=3-3]
\end{tikzcd}
    \]
    we can identify the ring $D$ with the completed tensor product of Tate algebras $B\widehat{\otimes}_A C$, and $D^+$ with the minimal open and integrally closed subring containing the images of $B^+$ and $C^{+}$. The analogous tensor product in the \cat of analytic rings yields an analytic ring $E\coloneqq(E^{\triangleright},\mathscr{D}(E))$, where $E^{\triangleright}$ coincides with the condensed ring associated to the completed tensor product $B\widehat{\otimes}_A C$ (see \Cref{rmk: completed tensor}). By \cite[Proposition 12.12]{analytic}, the analytic ring structure on $E$ is obtained by completing the analytic ring structure induced by $(B,B^+)_\Solid$ and $(C,C^+)_\Solid$. Since $E^\triangleright$ is an affinoid algebra, it is nuclear over $(B,B^+)_\Solid$ and $(C,C^+)_\Solid$; in particular, the induced analytic ring structure is already complete. Applying \cite[Proposition 3.32]{andreychev2021pseudocoherent}, we deduce that $(D,D^+)_\Solid$ coincides with $E$.
\end{proof}

Applying \Cref{lem: colimit preserving functor sheaves}, we obtain the following colimit-preserving functor:
    \[
    \begin{tikzcd}
	{\mathrm{AnHuber_{\text{k}}^{wft}}} && {\mathrm{AnStack}}_{k_\Solid} \\
	{\mathrm{Shv}(\mathrm{AnHuber^{wft}}_k)}
	\arrow["{(-)_\Solid}", from=1-1, to=1-3]
	\arrow[hook, from=1-1, to=2-1]
	\arrow["{(-)_\Solid}"', from=2-1, to=1-3]
\end{tikzcd}
    \]
In particular, by composing this functor with the natural map $\mathrm{AnAdic} \to \mathrm{Shv}_{\mathrm{an}}(\mathrm{AnHuber})$, we obtain the required functor:
\[
    \mathrm{RigVar} \to \mathrm{AnStacks}.
\]
Explicitly, for a rigid analytic variety $X$, we can consider the associated analytic stack, defined as: 
\begin{equation}\label{eq: rigid varieties as an stacks}
X_{\Solid} \coloneqq \varinjlim_{\mathrm{Sp}(A) \hookrightarrow X} \mathrm{AnSpec}(A,A^\circ)_{\Solid}.    
\end{equation}

For a rigid analytic variety $X$, we denote by $\mathscr{D}(X)_\Solid$ the \cat of ``analytic quasi-coherent sheaves'' on $X$, defined via descent using \Cref{eq: rigid varieties as an stacks}. 
Moreover, if $f\colon \Sp(A) \to X$ is a cover, we can identify the analytic stack $X_\Solid$ with the \v{C}ech nerve of $f_\Solid$. 

\begin{lemma}\label{lem: prim cover proper rigid variety}
    Let $f \colon X \to \mathrm{Spa}(k,k^\circ)$ be a proper and quasi-compact rigid analytic variety. Then there exists a descendable $\mathscr{D}$-prim cover of analytic stacks of the form $f_{k_\Solid}\colon \mathrm{AnSpec}(A,k^\circ)_\Solid \to X_\Solid$.
\end{lemma}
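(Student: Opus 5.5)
The plan is to apply \Cref{prop: D-prim cover} in the $6$-functor formalism on analytic stacks, with $W\coloneqq \mathrm{AnSpec}(A,k^\circ)_\Solid$. First I build $W$. Since $X$ is quasi-compact, choose a finite cover of $X$ by affinoids $\Sp(A_1),\dots,\Sp(A_n)$ and set $A\coloneqq \prod_i A_i$. This is again an affinoid algebra by \Cref{prop: animated affinoid stable under products and tensor products}. Because $X$ is separated (proper implies separated), the intersections $\Sp(A_{i_1})\times_X\cdots\times_X\Sp(A_{i_m})$ are affinoid, so every term of the \v{C}ech nerve of $g\colon \Sp(A)\to X$ is affinoid. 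Analytic covers go to $\mathscr{D}$-covers under $(-)_\Solid$, and this functor preserves finite limits and colimits. Hence $g_\Solid$ is an effective epimorphism along which $\mathscr{D}(-)_\Solid$ satisfies $*$-descent, and $X_\Solid$ is the colimit of the \v{C}ech nerve of $g_\Solid$.

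Next I check hypothesis (1) of \Cref{prop: D-prim cover}, the $n$-truncation. With finitely many opens, the Čech nerve is determined by the subsets of $\{1,\dots,n\}$: the $(m+1)$-fold product $W\times_X\cdots\times_X W$ is the disjoint union over tuples $(i_0,\dots,i_m)$ of the intersections, and these intersections depend only on the underlying set of indices. I would therefore argue that the colimit over $\bDelta^{\mathrm{op}}$ agrees with the colimit over $\bDelta_{\leq n}^{\mathrm{op}}$, because the simplicial object is $(n-1)$-coskeletal. The cleaner way to see this is to replace the Čech nerve by the finite diagram of intersections indexed by nonempty subsets of $\{1,\dots,n\}$ and invoke descent for this finite hypercover. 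This is the step I expect to require the most care. If the literal truncation statement is awkward, I would adapt the proof of \Cref{prop: D-prim cover} to work directly with this finite cubical limit, since descendability only uses a finite limit presentation of $\boldone_X$.

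For hypothesis (2), each projection $W\times_X W\to W$ is, on components, a rational open immersion $\Sp(A_{ij})\to\Sp(A_i)$ between affinoids. I would check that such a map is $\mathscr{D}$-prim: for affinoid analytic rings of this type, $f_*$ is the forgetful functor and satisfies the projection formula, and $f_!=f_*$ because the induced analytic ring structure on a rational localization is the one coming from $(A_i,A_i^\circ)_\Solid$. The reference for this is \cite[Lemma 3.2.9]{camargo2024analyticrhamstackrigid}, together with the criterion \cite[Lemma 4.5.5]{heyer20246functorformalismssmoothrepresentations}. I also need that $+$-rings are compatible. Since $A^\circ$ is the integral closure of $k^\circ$ in $A$ for affinoids, solid modules over $(A,k^\circ)_\Solid$ and $(A,A^\circ)_\Solid$ coincide. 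This justifies writing the cover as $\mathrm{AnSpec}(A,k^\circ)_\Solid$.

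With (1) and (2) verified, \Cref{prop: D-prim cover} gives that $g_\Solid$ is a descendable $\mathscr{D}$-cover and is $\mathscr{D}$-prim, which is the claim. Properness of $f$ enters only through separatedness, which makes the intersections affinoid, and through quasi-compactness, which makes the cover finite.
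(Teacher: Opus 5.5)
\textbf{There is a genuine gap in your check of hypothesis (2) of \Cref{prop: D-prim cover}, and it rests on a false identification.}

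The ring $A^\circ$ of power-bounded elements is not the integral closure of $k^\circ$ in $A$. For $A=k\langle T\rangle$, the power-bounded element $T$ is not even algebraic over $k$. The Huber pairs $(A,k^\circ)$ and $(A,A^\circ)$ are genuinely different: $\Spa(A,k^\circ)$ is the compactification of $\Spa(A,A^\circ)$ relative to $\Spa(k,k^\circ)$, with extra higher-rank boundary points. By full faithfulness of $(-)_\Solid$, their solid analytic rings differ.

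So what you actually use is the open cover by $\mathrm{AnSpec}(A_i,A_i^\circ)_\Solid$. Its projections $\mathrm{AnSpec}(A_{ij},A_{ij}^\circ)_\Solid\to\mathrm{AnSpec}(A_i,A_i^\circ)_\Solid$ are rational open immersions, and these are \emph{not} prim in the Clausen--Scholze formalism:
\begin{itemize}
\item $(A_{ij},A_{ij}^\circ)_\Solid$ is not induced from $(A_i,A_i^\circ)_\Solid$, since it imposes solidity for new elements such as $f_\ell/g$.
\item Consequently $j_!$ is the left adjoint of $j^\ast$ and differs from $j_\ast$.
\item Since $j$ is a monomorphism, $\delta_j\simeq\boldone$, so primness would force $j_!\simeq j_\ast$.
\end{itemize}
The reference \cite[Lemma 3.2.9]{camargo2024analyticrhamstackrigid} tells you that analytic covers become $\mathscr{D}$-covers. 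It does not give $j_!=j_\ast$.

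\textbf{The missing idea, which is what the paper uses, is to pass to the relative compactifications $\Spa(A_{i_0\cdots i_m},k^\circ)$ of every term of the \v{C}ech nerve.} The analytic rings $(A_{i_0\cdots i_m},k^\circ)_\Solid$ are all induced from $k_\Solid$. Hence the transition maps carry the induced structure, satisfy $f_!=f_\ast$, and are prim by \cite[Lemma 4.5.5]{heyer20246functorformalismssmoothrepresentations}.

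This is also where properness enters essentially, not only through separatedness:
\begin{itemize}
\item Properness is what produces maps $\mathrm{AnSpec}(A_{i_0\cdots i_m},k^\circ)_\Solid\to X_\Solid$ at all. For $X=\Sp(A)$ affinoid, the identity of $A$ only induces $\mathrm{AnSpec}(A,A^\circ)_\Solid\to\mathrm{AnSpec}(A,k^\circ)_\Solid$, which goes in the wrong direction.
\item Properness is also what lets one identify $X_\Solid$ with the colimit of the compactified (closed) finite diagram, to which \Cref{prop: D-prim cover} is then applied.
\end{itemize}

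Your handling of the truncation, via the finite diagram indexed by nonempty subsets of indices, is fine. It is more careful than the paper's one-line assertion and transfers verbatim to the compactified diagram.
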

\begin{proof}
We observe that since $X$ is quasi-compact, there exists a cover $f\colon \Sp(A) \to X$ consisting of the disjoint union of finitely many open affinoids in $X$. Because $X$ is separated, the \v{C}ech nerve of $f$ is $n$-truncated for some $n$, taking the form:
\[
X \simeq \colim_{[m]\in\bDelta_\leq n}(\mathrm{Sp}(A) \stackrev{3} \mathrm{Sp}(A_{2}) \stackrev{5} \cdots)
\]
where we denote by $\Sp(A_n)$ the $n$-fold intersection $\Sp(A)\times_{X}\dots \times_{X}\Sp(A)$. Since $X$ is proper and separated, this colimit can be equivalently described in the category of adic spaces using the analogous diagram with the adic compactifications $\Spa(A_n,k^\circ)$. Applying \Cref{lem: colimit preserving functor sheaves}, we obtain an effective epimorphism: 
\[
f_{k_\Solid}\colon \mathrm{AnSpec}(A,k^\circ)_\Solid \to X_\Solid.
\]
This map satisfies the assumptions of \Cref{prop: D-prim cover}; it follows that $f_{k_\Solid}$ is a descendable $\mathscr{D}$-prim cover.
\end{proof}

\begin{proof}[Proof of \Cref{prop: smooth D-solid cat rigid geometry}]
    The proof relies on \Cref{prop: smooth and proper cat in 6 ff}. To apply loc.\ cit., we must verify that $f_\Solid$ is $\mathscr{D}$-prim and $\mathscr{D}$-suave. The fact that $f_\Solid$ is $\mathscr{D}$-suave follows from \cite[Theorem 1.62]{mikami2026finitenessdualitycohomologyvarphigammamodules}. Furthermore, using \Cref{lem: prim cover proper rigid variety} and \Cref{cor: D-prim map}, we deduce that $f_\Solid$ is indeed $\mathscr{D}$-prim.
\end{proof}

\printbibliography

\Addresses

\end{document}